\newtheorem{theorem}{Theorem}[section]
\newtheorem{lemma}[theorem]{Lemma}
\newtheorem{proposition}[theorem]{Proposition}
\newtheorem{corollary}[theorem]{Corollary}
\newtheorem{definition}[theorem]{Definition}
\newtheorem{remark}[theorem]{Remark}
\numberwithin{equation}{section}
\def\alphaone{{\alpha}^{({\bf 1})}}
\def\rhoone{{\rho}^{({\bf 1})}}
\def\les{\lesssim}
\def\sigmaone{{\sigma}^{({\bf 1})}}
\def\alphabone{{\underline{\alpha}}^{({\bf 1})}}
\def\A{\mathbf{A}}
\def\alphab{\underline{\alpha}}
\def\B{\mathcal{B}}
\def\D{\mathcal{D}}
\def\Dt{\widehat{D}}
\def\H{\mathcal{H}}
\def\S{\mathcal{S}}
\def\F{\mathcal{F}}
\def\Fb{{\underline{\mathcal{F}}}}
\def\E{\mathcal{E}}
\def\Hb{\underline{\mathcal{H}}}
\def\Lb{{\underline{L}}}
\def\cL{\mathcal{L}}
\def\Div{\mathbf{div\,}}
\def\Divergence{\mathbf{Div\,}}
\def\snabla{\slashed{\nabla}}
\def\Fc{{\mathring{F}}}
\def\alphac{\mathring{\alpha}}
\def\alphabc{\underline{\mathring{\alpha}}}
\def\rhoc{\mathring{\rho}}
\def\sigmac{\mathring{\sigma}}
\def\alphat{\widetilde{\alpha}}
\def\alphabt{\underline{\widetilde{\alpha}}}
\def\rhot{\widetilde{\rho}}
\def\sigmat{\widetilde{\sigma}}
\def\k{\mathbf{k}}
\def\I{\mathbf{I}}
\def\II{\mathbf{I\!I}}
\def\R{\mathbf{R}}
\def\SS{\mathbf{S}}
\def\T{\mathbf{T}}
\def\LZk{\mathcal{L}_Z^\mathbf{k}}
\def\DZtk{\widehat{D}_Z^\mathbf{k}}
\def\alphak{\alpha^{(\mathbf{k})}}
\def\alphabk{\underline{\alpha}^{(\mathbf{k})}}
\def\rhok{\rho^{(\mathbf{k})}}
\def\sigmak{\sigma^{(\mathbf{k})}}
\def\Jk{{J^{(\mathbf{k})}}}
\def\phik{{\phi^{(\mathbf{k})}}}
\def\sJk{\slashed{J}^{(\mathbf{k})}}
\def\LJk{J_L^{(\mathbf{k})}}
\def\LbJk{J_{\underline{L}}^{(\mathbf{k})}}
\def\LbJl{J_{\underline{L}}^{(\mathbf{l})}}
\def\Nk{{N^{(\mathbf{k})}}}
\def\epsilonc{\mathring{\varepsilon}}
\begin{document}
\title[MKG]{On global dynamics of the Maxwell-Klein-Gordon equations}



\author[Shiwu Yang]{Shiwu Yang}
\address{Beijing International Center for Mathematical Research, Peking University\\ Beijing, China}
\email{shiwuyang@math.pku.edu.cn}

\author[Pin Yu]{Pin Yu}
\address{Department of Mathematics and Yau Mathematical Sciences Center, Tsinghua University\\ Beijing, China}
\email{yupin@mail.tsinghua.edu.cn}

\begin{abstract}
On the three dimensional Euclidean space, for data with finite energy, it is well-known that the Maxwell-Klein-Gordon equations admit global solutions. However, the asymptotic behaviours of the solutions for the data with non-vanishing charge and arbitrary large size are unknown. It is conjectured that the solutions disperse as linear waves and enjoy the so-called peeling properties for pointwise estimates. We provide a gauge independent proof of the conjecture.
\end{abstract}
\maketitle




The Maxwell-Klein-Gordon (MKG) equations are a nonlinear system modeling the motion of a charged particle moving in an electric-magnetic field. From the physics point of view, the electric-magnetic field fades away and the final state of the particle must be static. The mathematical interpretation of this conjecture is that the MKG equations admit global \textsl{decaying} solutions. The global solvability of this system has been well understood since the work of Eardley-Moncrief in 1980's.
But very little is known on the aysmptotic behaviours of the solutions for the general large initial data. The aim of our current study is to show that the solutions enjoy the peeling estimates for the massless case.

\bigskip

We begin with a quick introduction to the MKG equations. Let $A=A_\mu d x^\mu$ be a $\mathbb{R}$-valued connection 1-form for a given complex line bundle $\mathbf{L}$ over the Minkowski spacetime $\mathbb{R}^{3+1}$. The curvature 2-form $F=\big(F_{\mu\nu}\big)$ associated to $A$ is simply $dA$, i.e., $F_{\mu\nu} = \partial_\mu A_\nu-\partial_\nu A_\mu$. In particular, $F$ is a closed 2-form or equivalently $\nabla_{[\alpha}F_{\beta\gamma]}=0$. The pair of square brackets denote the anti-symmetrization of the three indices $\alpha, \beta$ and $\gamma$. We will use Einstein summation convention throughout the paper. The above equation is also called the Bianchi equation of $F$. It is also equivalent to
\begin{equation*}
\nabla^\mu \,^*F_{\mu\nu}=0,
\end{equation*}
where $\,^*F_{\mu\nu}$ is the Hodge dual of $F_{\mu\nu}$. We recall that the Hodge dual $\,^*G_{\alpha\beta}$ of a 2-form $G_{\alpha\beta}$ is $\frac{1}{2}\mathscr{E}_{\alpha\beta\gamma\delta}G^{\gamma\delta}$ with $\mathscr{E}_{\alpha\beta\gamma\delta}$ is the volume form of the Minkowski metric $m_{\alpha\beta}$.

A section of the bundle $\mathbf{L}$ can be represented by a $\mathbb{C}$-valued function $\phi$.  The covariant derivative of $\phi$ with respect to $A$ is
\begin{equation*}
D_\mu \phi =  \partial_\mu \phi+ \sqrt{-1}A_\mu \cdot \phi.
\end{equation*} The curvature form measures the non-commutativity of the covariant derivatives
\begin{equation*}
[D_\mu, D_\nu]\phi = \sqrt{-1}F_{\mu\nu}\cdot\phi.
\end{equation*}
The massless MKG equations is a system of equations for a connection $A$ on $\mathbf{L}$ and a section $\phi$ of $\mathbf{L}$:
\begin{equation}\label{MKG}
\left\{
\begin{aligned}
\nabla^\mu F_{\mu\nu} &=-J_\nu, \\
\Box_A \phi &=0,\\
\end{aligned}
\right.
\end{equation}
where $J_\mu = \Im(\phi\cdot \overline{D_\mu\phi})$ is called the \emph{current} 
 and $\Box_A =D^\mu D_\mu$. It can be derived as the Euler-Lagrange equations for the action
\begin{equation*}
\mathcal{L}(A,\phi)=\frac{1}{2}\int_{\mathbb{R}^{3+1}}D^\mu \phi \cdot \overline{D_\mu \phi} +\frac{1}{4}\int_{\mathbb{R}^{3+1}}F^{\mu\nu}F_{\mu\nu}.
\end{equation*}
We use the volume form of the Minkowski metric in the action. The system is a $\mathbf{U}(1)$-gauge theory, namely, if $(A,\phi)$ is a solution of \eqref{MKG}, then $(A-d\chi, e^{i\chi}\phi)$ is also a solution for any smooth function $\chi$.

The total charge of the system is given by
\begin{equation*}
q_0 =\frac{1}{4\pi}\int_{\mathbb{R}^3}\mathbf{div}E dx= \frac{1}{4\pi}\int_{\mathbb{R}^3}\Im(\overline{D_t\phi}\cdot \phi)=\frac{1}{4\pi}\lim\limits_{r\rightarrow\infty}\int_{|\omega|=1}r^{2}E_i \frac{x_i}{r} d\omega,
\end{equation*}
where $E_i = F_{0i}$ is the electric field. It is easy to check that the total charge is conserved. This in particular implies that the electric field $E$ has the nontrivial tail $q_0 r^{-3}x$ at any fixed time.

\bigskip


The pioneering works \cite{Moncrief1} and \cite{Moncrief2} of Eardley-Moncrief established the celebrated global existence result to the general Yang-Mills-Higgs equations with sufficiently smooth initial data. Around ten years later, by introducing the weighted Sobolev spaces, Klainerman-Machedon systematically studied the bilinear estimates of the null forms. As a consequence, they derived the notable global existence result for data merely bounded in the energy space. The idea of proving bilinear estimates of null form introduced in \cite{MKGkl} leads to an revolution on the global well-posedness of PDEs of classical field theory, such as MKG equations, Yang-Mills equations, wave maps, etc., aiming at studying low regularity initial data in order to construct global solutions, see \cite{YMkl} and references therein. For a more recent and comprehensive summary of the progresses along this line, we refer to the work of Oh-Tataru \cite{OhMKG4}. The common feature of all these works is to construct a local solution with rough data so that the global well-posedness follows from conserved energy quantities. However regarding the global dynamics of the solutions, very little can be obtained through this approach.

\medskip

The long time dynamics of solutions of MKG equations have only been well understood for sufficiently small initial data or data which are essentially compactly supported. The robust vector field method introduced by Klainerman in \cite{klinvar} has been successfully applied to derive the decay estimates for linear fields in \cite{asymLkl} or nonlinear spin fields in \cite{Shu} with small initial data. If the data are compactly supported, one can also use the conformal compactification method (see e.g. \cite{fieldschrist}) but this approach requires strong decay of the initial data, which in particular forces the total charge to be vanishing. To tackle the general case with nonzero charge, Shu in \cite{shu2} proposed a frame work but without details. A complete proof towards this direction was contributed by Lindblad-Sterbenz in \cite{LindbladMKG}, also see a recent work \cite{Lydia:MKG:small} of Bieri-Miao-Shahshahani. However all these works are restricted to the small data regime or can be viewed as global stability problems of trivial solutions.

\medskip

As for the large data problem, by using the conformal compactification method together with Eardley-Moncrief's results, Petrescu in \cite{Deivy:decayMKG:trivialout} obtained the asymptotic decay properties of solutions to MKG equations with essentially compactly supported data, i.e., the scalar field has compact support and the Maxwell field is electrostatic outside the support. A similar result for Yang-Mills equation on $\mathbb{R}^{3+1}$ was obtained by Georgiev-Schirmer in \cite{Pedro:YM:sph} but with spherically symmetric data bounded in the conformal energy space (in particular charges must be vanishing!). For general initial data, the global asymptotic behaviour is only partially known. A Morawetz type of integrated local energy estimate was obtained by Psarelli in \cite{Maria:timedecay:mMKG} for solutions of massive Maxwell-Klein-Gordon equations with data bounded in the energy space. For massless MKG equations, Yang in \cite{yangILEMKG} derived the stronger inverse polynomial decay of the energy flux through outgoing null cones with data bounded in weighted energy space. Both results allow the existence of nonzero charges. However the decay estimates in \cite{Maria:timedecay:mMKG} are too weak to see the effect of the charges while the latter work in addition affirmatively answered a conjecture of Shu in \cite{Shu} that the nonzero charge can only affect the asymptotic behaviours of the solutions outside a forward light cone. Another consequence of the method used in \cite{yangILEMKG} allowed the author to improve the small data results to data merely small on the scalar field while the Maxwell field can be arbitrarily large, see details in \cite{yangMKG}. This result can be interpreted as the global nonlinear stability of linear Maxwell fields.

 \medskip

The aim of the present paper is to prove the final state conjecture of charged scalar fields: the solution should eventually decay as long as it decays suitably initially. More precisely, we show that the global solutions of the MKG equations with general large initial data enjoy the peeling decay properties, which in particular improves the existing results with restricted data, either with compact support or small size. In addition, our general data indicate that the conserved charge is arbitrary large, which will cause a long range effect to the asymptotic behaviour of the solutions. In the following, we will propose a sequence of new ideas to handle this long range effect and prove the conjecture for rapidly decaying data.

\section{Introduction to the main result}

Throughout the paper, we use the following conventions:
\begin{itemize}
\item The Greek letters $\alpha,\beta,\cdots$ denote indices from $0$ to $3$. The capital Latin letters $A,B,\cdots$ denote indices from $1$ to $2$. The little Latin letters $i,j,k,\cdots$ denote indices from $1$ to $3$.
\item $(\phi,F)$ is a \emph{given} finite energy smooth solution of the MKG equations. It exists globally and remains smooth according to the classical result of Klainerman-Machedon \cite{MKGkl}.
\item The letter $f$ denotes an \emph{arbitrary} section of the bundle $\mathbf{L}$ (it may not be $\phi$). The letter $G$ denotes an \emph{arbitrary} 2-form $G_{\mu\nu}$ (it may not be $F$).
\item We define $\psi=r\phi$.
\end{itemize}
We use two coordinate systems on the Minkowski spacetime $\mathbb{R}^{3+1}$: the Cartesian coordinates $(x^0=t,x^1,x^2,x^3)$ and the polar coordinates $(t,r,\vartheta)$. The optical functions $u$ and $v$ are defined as
\begin{equation*}
u=\frac{1}{2}(t-r), \ \ v=\frac{1}{2}(t+r), \quad u_+=1+|u|,\quad v_+=1+|v|.
\end{equation*}
A \emph{null frame} is defined by $(e_1,e_2,e_3=\Lb,e_4=L)$, where $L=\partial_t+\partial_r$, $\Lb=\partial_t-\partial_r$ and $e_1, e_2$ is an orthonormal complement of $L$ and $\Lb$.  

The level sets of $u$ and $v$ define (locally) null foliations of the Minkowski spacetime. Given $r_2 >r_1 >0$, we define the outgoing (or incoming) null hypersurfaces $\H_{r_1}^{r_2}$ (or $\Hb_{r_2}^{r_1}$) as
\begin{equation*}
\H_{r_1}^{r_2} :=\big\{(t,r,\vartheta) \,\big|\, t\geq 0, u=-\frac{1}{2}r_1, r_1\leq r\leq r_2\big\}\ \ \ \text{or}\ \ \
\Hb_{r_2}^{r_1} :=\big\{(t,r,\vartheta) \,\big|\, t\geq 0, v= \frac{1}{2}r_2, r_1\leq r\leq r_2\big\}
\end{equation*}
respectively.
On the initial time slice $\big\{ t=0 \big\}$ where the Cauchy datum is given, we define
\begin{equation*}
\B_{r_1}^{r_2} :=\big\{(t,r,\vartheta) \,\big|\, t= 0, r_1\leq r\leq r_2\big\}.
\end{equation*}
In the limiting case where $r_2=\infty$, we write $\H_{r_1}=\H_{r_1}^{\infty}$, $\Hb_{r_1}=\Hb_{r_1}^{\infty}$ and $\B_{r_1}=\B_{r_1}^{\infty}$. Three hypersurfaces $\H_{r_1}^{r_2}$, $\Hb_{r_2}^{r_1}$ and $\B_{r_1}^{r_2}$ bound a spacetime region and it is denoted by $\D_{r_1}^{r_2}$. In the following picture, the gray region is $\D_{r_1}^{r_2}$. The truncated light cones $\H_{r_1}^{r_2}$ and $\Hb_{r_2}^{r_1}$ are denoted by the dashed line segments. Their intersection is a 2-sphere of radius $\frac{r_1+r_2}{2}$ and it is the tip of $\D_{r_1}^{r_2}$ in the picture. We denote this sphere by  $\S_{r_1}^{r_2}$. The dashed-dotted line segment on the bottom is $\B_{r_1}^{r_2}$.

\ \ \ \ \ \ \ \ \ \ \ \ \ \ \ \ \ \ \ \ \ \ \ \ \includegraphics[width=3.3in]{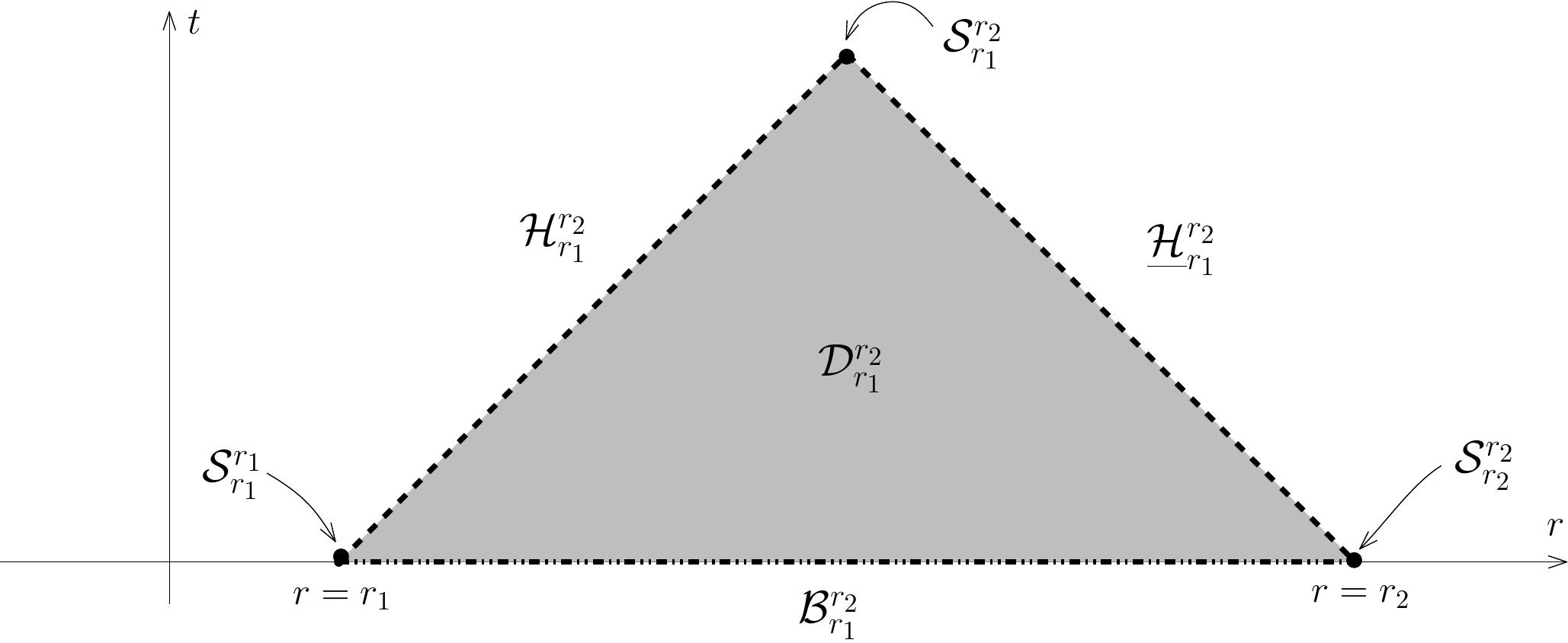}

In the null frame, we have $\nabla_L L= \nabla_L \Lb=\nabla_{\Lb} L=\nabla_\Lb \Lb=0$. Moreover, we have
\begin{equation*}
\begin{split}
\nabla_{e_A} L =\frac{1}{r} e_A, \  \nabla_{e_A} \Lb =-\frac{1}{r} e_A, \ \ \nabla_{e_A} e_B = \slashed{\nabla}_{e_A}e_B + \frac{1}{2r}\slashed{g}_{AB}(\Lb-L),
\end{split}
\end{equation*}
where $\slashed{\nabla}_{e_A}e_B$ is the projection of ${\nabla}_{e_A} e_B$ to a 2-sphere $\S_{r_1}^{r_2}$ (or to the span of $e_1$ and $e_2$) and $\slashed{g}_{AB}$ is the restriction of the Minkowski metric to  $\S_{r_1}^{r_2}$.

We can decompose $G_{\mu\nu}$ with respect to the null frame:
\begin{equation*}
\alpha(G)_A :=G(L,e_A), \ \alphab(G)_A:=G(\Lb,e_A), \ \rho(G) :=\frac{1}{2}G(\Lb,L), \ \sigma(G)_{AB} := G_{AB}.
\end{equation*}
For the special case $G_{\mu\nu}=F_{\mu\nu}$, we write
\begin{equation*}
\alpha_A =F(L,e_A), \ \alphab_A=F(\Lb,e_A), \ \rho :=\frac{1}{2}F(\Lb,L), \ \sigma_{AB} := F_{AB}.
\end{equation*}
Since $\sigma_{AB}$ is a 2-form on $\S_{r_1}^{r_2}$, there exists a function $\sigma$ so that $
\sigma_{AB} = \sigma \slashed{\mathscr{E}}_{AB}$ where $\slashed{\mathscr{E}}_{AB}$ is the volume form on $\S_{r_1}^{r_2}$. For the Hodge dual $\,^*F$ of $F$, if we denote $\,^*\alpha_A = -\slashed{\mathscr{E}}_{A}{}^{B}\alpha_B$ (the Hodge dual of $\alpha$ on $\S_{r_1}^{r_2}$), we have
\begin{equation*}
\alpha_A(\,^*F) 
=\,^*\alpha_A, \ \alphab_A(\,^*F)
=-\,^*\alphab_A, \ \rho(\,^*F) =\sigma, \ \sigma(\,^*F)_{AB}= -\rho\slashed{\mathscr{E}}_{AB}.
\end{equation*}

\subsection{The main theorem}
We consider Cauchy problem to \eqref{MKG} with initial data given by
\begin{equation*}
\phi_0(x) = \phi(0,x), \ \phi_1(x)=\partial_t\phi(0,x), \ E^{\text{(ini)}}_i(x)=E_i(0,x), \ B^{\text{(ini)}}_i(x)=B_{i}(0,x).
\end{equation*}
The initial data set $(\phi_0, \phi_1, \ E^{\text{(ini)}}, \ B^{\text{(ini)}} )$
is said to be \textsl{admissible} if it satisfies the compatibility condition
\begin{equation}
\label{eq:comp:cond}
\mathbf{div}(E^{\text{(ini)}})=\Im(\phi_0\cdot \overline{\phi_1}),\quad \mathbf{div} (B^{\text{(ini)}})=0,
\end{equation}
To impose precise assumptions on the initial data, split the electric field $E^{\text{(ini)}}$ into the divergence free part $E^{df}$ and the curl free part $E^{cf}$, that is,
\[
\mathbf{div}(E^{df})=0,\quad \mathbf{curl} (E^{cf})=0,\quad E^{\text{(ini)}}=E^{df}+E^{cf}.
\]
From the above constraint equation, $E^{cf}$ is uniquely determined by $\Im(\phi_0\cdot \overline{\phi_1})$. In particular we can freely assign $(\phi_0, \phi_0, E^{df}, B^{\text{(ini)}})$ as long as $E^{cf}$, $B^{\text{(ini)}}$ are divergence free on the initial hypersurface $\{t=0\}$. We require this part of data decay rapidly and belong to certain weighted Sobolev space. However, since $E^{cf}$ satisfies an elliptic equation on $\mathbb{R}^3$, it has a nontrivial tail $\frac{q_0 x}{r^3}$ even with $(\phi_0, \phi_1)$ compactly supported. To describe the asymptotic behaviour of the solutions, we need to precisely capture the asymptotic behaviour of the solution contributed by the charge. By formally expanding the Green's function for Laplacian:
\begin{align*}
|x-y|^{-1}=|x|^{-1}+|x|^{-3}x\cdot y+\sum_{i,j=1}^3\frac{1}{2} |x|^{-3}(3|x|^{-2}x_ix_j-\delta_{ij})y_iy_j+o(|y|^2),
\end{align*}
we can define a potential function $V(x)$ as
\[
V(x)=|x|^{-1}\frac{1}{4\pi}\int_{\mathbb{R}^3}(1+|x|^{-2} x\cdot y+\frac{1}{2} |x|^{-2}(3|x|^{-2}(x\cdot y)^2-|y|^2))\Im(\phi_0\cdot \bar \phi_1)dy,\quad |x|>0.
\]
The potential is well defined if the initial data $(\phi_0, \phi_1)$ of the scalar field  decay rapidly. With the potential $V(x)$, we can define the general charge 2-form $F[q_0]$ with components
\[
F[q_0]_{0i}=E_i[q_0]=\partial_{i}V(x),\quad F[q_0]_{ij}=0.
\]
It is straightforward to check that $F[q_0]$ satisfies the linear Maxwell equation on the region away from the axis $\{x=0\}$. Moreover, there is a constant $C$, depending only on $\phi_0$ and $\phi_1$, so that
\begin{equation}
\label{eq:bd4Fq}
|\rho(F[q_0])|\leq C r^{-2},\quad |\alphab(F[q_0])|=|\alpha(F[q_0])|\leq C r^{-3},\quad |\sigma(F[q_0])|=0.
\end{equation}
We remark that most commonly one uses $F[q_0]=\frac{q_0}{r^2}dt\wedge dr$ to denote the charge part near special infinity and it is a special case of the above construction.

Let $\varepsilon_0$ be a small positive constant (say $10^{-2}$ ). We assume that the initial data is bounded in the following gauge invariant weighted Sobolev norm
\begin{equation}\label{initial data 1}
\begin{split}
 C_0:=\sum\limits_{k\leq 2}\int_{\mathbb{R}^3} &\Big[(1+r)^{2k+6+8\varepsilon_0}\big(|D^{k+1} \phi_0|^2+|D^k\phi_1|^2 + |\nabla^k \big(E^{\text{(ini)}}-E[q_0]\mathbf{1}_{|x|\geq 1}\big)|^2\\
 &+ |\nabla^k B^{\text{(ini)}}|^2\big)+r^{4+8\varepsilon_0}|\phi_0|^2\Big] dx.
\end{split}
\end{equation}
The main theorem of the paper is as follows:
\begin{theorem}[\bf Main result] Consider the Cauchy problem to the massless MKG equation \eqref{MKG} with admissible initial data $(\phi_0,\phi_1,  E^{\text{(ini)}}_i,  B^{\text{(ini)}}_i)$ bounded in the above weighted norm \eqref{initial data 1}. Then there is a global in time solution $(\phi, F)$ satisfying the following pointwise peeling estimates
\begin{equation}\label{peeling estimates}
\begin{split}
|\phi| \leq  C u_+^{-1}v_+^{-1},\quad |D_L(r\phi)| \leq  C u_+^{-1} v_+^{-2},\quad |\alpha(\mathring{F})| \leq C u_+^{-1} v_+^{-3} ,\\
|\rho(\mathring{F})|+|\sigma(\mathring{F})| +|\slashed{D} \phi| \leq C u_+^{-2}v_+^{-2},\ \ |\alphab(\mathring{F})|+|D_\Lb \phi|\leq C u_+^{-3}v_+^{-1}.
\end{split}
\end{equation}
for some constant $C$ depending only on $C_0$, where $\mathring{F}=F-F[q_0]\mathbf{1}_{\{1+t\leq |x|\}}$ with $\mathbf{1}_{\{1+t\leq |x|\}}$ the characteristic function of the exterior region $\{(t, x)| t+1\leq |x|\}$.
\end{theorem}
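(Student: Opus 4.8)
The plan is to treat this as a problem about \emph{asymptotics} rather than existence: a global finite-energy smooth solution $(\phi,F)$ is already furnished by Klainerman-Machedon \cite{MKGkl}, so I would fix an arbitrarily large time $T$, work on the slab $\{0\le t\le T\}$ (equivalently on the truncated regions $\D_{r_1}^{r_2}$), establish every estimate with constants depending only on $C_0$, and let $T\to\infty$ at the end. Throughout I would split the spacetime into the exterior $\{|x|\ge 1+t\}$, where the charge two-form $F[q_0]$ is genuinely present and $\psi=r\phi$ enjoys fast decay because the scalar data is weighted-decaying, and the interior $\{|x|\le 1+t\}$, where $\mathring F=F$ and the Coulombic part of the field has already been radiated out to the wave zone; the two regions are reconnected across the cone $|x|=1+t$ by a cutoff, and the subtraction of $F[q_0]$ in the statement is exactly what makes the two analyses comparable.

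The main engine would be a coupled hierarchy of $r^p$-weighted covariant energy estimates for $\psi$ and for the renormalized curvature $\mathring F$, in the spirit of the vector field and $r^p$ method of Dafermos-Rodnianski and of Yang's integrated-local-energy work \cite{yangILEMKG}. For the Maxwell part one uses $\nabla^\mu \mathring F_{\mu\nu}=-J_\nu$ --- note that $F[q_0]$ solves the vacuum Maxwell system off the axis, so $\mathring F$ carries the \emph{same} source $J$ as $F$ --- together with the Bianchi identity, and applies the multipliers $\partial_t$, the Morawetz vector field, and $r^pL$ after commuting with the rotations and the scaling field. For the scalar part one works with $\psi=r\phi$, whose covariant wave equation reads schematically $D_LD_\Lb\psi=\snabla^2\psi+\rho\,\psi$ (modulo lower order), and runs the same multiplier and commutator scheme covariantly, using $D_L$, $D_\Lb$ and the covariant rotations $rD_{e_A}$. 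The two hierarchies are coupled through $[D_\mu,D_\nu]\phi=\sqrt{-1}F_{\mu\nu}\cdot\phi$ and through $J=\Im(\phi\cdot\overline{D\phi})$; the coupling errors are controlled by the null structure of $J$ and by the decay already supplied lower in the hierarchy, and the bootstrap closes because the Morawetz and $r^p$ terms are coercive and the weights in \eqref{initial data 1} --- with the $r^{8\varepsilon_0}$ room and the three orders of regularity --- exactly furnish the range of $p$ and the commutations that the argument consumes.

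From the weighted energies I would extract the pointwise statement \eqref{peeling estimates} in the standard way: two commutations with the rotations together with Sobolev embedding on the spheres $\S$ turn $L^2$ control into $L^\infty$ control of each null component, while integrating the $r^p$-weighted fluxes in $r$ along $\H_u$ and $\Hb_v$ (a weighted Hardy / fundamental-theorem-of-calculus argument) converts flux decay into pointwise decay, with $\alpha(\mathring F)$ getting $v_+^{-3}$, the middle group $\rho(\mathring F),\sigma(\mathring F),\snabla\phi$ getting $u_+^{-2}v_+^{-2}$, and $\alphab(\mathring F),D_\Lb\phi$ getting $u_+^{-3}v_+^{-1}$; the bound $|\phi|\le Cu_+^{-1}v_+^{-1}$ then follows by integrating $|D_L(r\phi)|\le Cu_+^{-1}v_+^{-2}$ from the initial slice, on which $\psi$ is controlled by $C_0$.

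The hard part --- and where genuinely new ideas are required --- is the long-range effect of the non-vanishing charge. The connection in $D_\mu\phi=\partial_\mu\phi+\sqrt{-1}A_\mu\phi$ carries a Coulombic part of size $|q_0|r^{-1}$, equivalently $F$ retains a tail $|q_0|r^{-2}$ which the cutoff removes only in the exterior, and this sits exactly at the threshold of integrability along the characteristics, so in the weighted energy identities for $\phi$ the charge-driven curvature-current interaction term, when paired with the conjectured peeling rates, produces a logarithmically divergent spacetime integral; it cannot be absorbed perturbatively, and for the same reason one cannot commute the covariant equations with the scaling field or the other conformal Killing fields and still close the conformal weights. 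The way around this is to isolate the charge explicitly --- splitting not merely $F=F[q_0]\mathbf 1_{\{|x|\ge 1+t\}}+\mathring F$ but also the current and the effective connection into a charge piece, built from the explicit potential $V$, and a short-range remainder --- and then to exploit that the charge piece, being that of a static Coulomb solution, couples to $\phi$ as an \emph{exact divergence} plus a remainder that decays as fast as the scalar data: here one uses the elementary identity $\Re(\psi\,\overline{D_\mu\psi})=\tfrac12\partial_\mu|\psi|^2$ together with the Maxwell equation satisfied by $F[q_0]$, so that the divergence integrates into finite, $u$-summable boundary fluxes once the pointwise bound on $r\phi$ is in hand, while the genuinely short-range remainders are absorbed by the coercive terms; in the interior, where $\mathring F=F$ has already peeled, the analogous estimate closes directly off the bootstrap. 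Making this renormalization compatible with the \emph{entire} commuted $r^p$-hierarchy and --- most delicately --- not destroying the coercivity that lets the large-data bootstrap close without any small parameter is the technical heart of the paper.
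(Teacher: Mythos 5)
There is a genuine gap, and it sits exactly where you locate ``the technical heart'': you assert that a coupled $r^p$/Morawetz bootstrap for $(\psi,\mathring F)$ can be closed \emph{without any small parameter}, both in the exterior and ``directly off the bootstrap'' in the interior, but you offer no mechanism for this, and none is known. The paper does not attempt it. Instead it manufactures smallness in the exterior by restricting the data to $\{r\ge R_*\}$ with $R_*$ large depending only on $C_0$, so that the \emph{chargeless} part of the data is small ($\mathring{\E}_{\ge R_*}\le\mathring\varepsilon$) while only the charge remains large; your split along the cone $|x|=1+t$ provides no such smallness, and the quadratic error terms (e.g.\ $F\cdot J[\phi^{(\k)}]$, the commuted null forms $Q$) are then paired with quantities that are not small, so the continuity/bootstrap argument has nothing to absorb them with. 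In the interior, where the data are genuinely large, the paper abandons the vector field bootstrap altogether: it uses the hyperboloid $\Sigma$, the inversion-type conformal map $\Phi$ (admissible because $\Box\Lambda=0$, and because the charge tail has finite conformal flux through null cones though not through constant-time slices), transports the problem to a compact backward light cone, invokes the Klainerman--Machedon large-data $H^2$ theory there, and reads off the peeling rates from the conformal factors. Your proposal contains no substitute for this step.

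Two further points would defeat your argument even in a small-chargeless-data exterior region. First, your renormalization of the charge coupling rests on $\Re(\psi\,\overline{D_\mu\psi})=\tfrac12\partial_\mu|\psi|^2$, but the dangerous term in the $p=2$ weighted identity is $q_0 r^{-2}J_L=q_0 r^{-2}\Im(\overline{D_L\psi}\cdot\psi)$ --- the \emph{imaginary} part, which is not a total derivative; this is precisely why the paper needs the key technical lemma (Lemma~\ref{lemma key}), trading a potential logarithmic divergence for an $\varepsilon_0$ loss in the $r$-decay rate via a Gronwall argument in $u$ combined with Hardy inequalities, with the $\varepsilon_0$-reductive hierarchy in the ansatz $(\mathbf B)$ built to accommodate that loss at each order of commutation. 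Second, commuting only with rotations and scaling cannot produce the sharp rates $|\alpha(\mathring F)|\lesssim v_+^{-3}$ and $|D_L(r\phi)|\lesssim v_+^{-2}$ (the paper notes these were unknown even for small data); the extra $r$-weight comes from commuting with the Morawetz field $K$, which is only possible thanks to the new identities $[r^2\Divergence,\mathcal L_Z]G=0$ and $[r^2\Box_A,\Dt_Z]\phi=r^2Q(\phi,F;Z)$, the null structure of $Q$, its iterative stability under double commutation, and the algebraic cancellation $r^2J[\phi]=J[r\phi]$. None of these ingredients appear in your outline, so the claimed extraction of the full peeling hierarchy from your energies does not go through.
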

We give several remarks.
\begin{remark}
There is no restriction on the size or on the support of the data. In particular, the charge $q_0$ can be large. Besides the above pointwise estimates, uniform energy estimates as well as weighted energy estimates can also be derived in the course of the proof.
\end{remark}
\begin{remark}
  The peeling estimates \eqref{peeling estimates} for the chargeless part of the solution together with the trivial bound \eqref{eq:bd4Fq} of the charge part describe the asymptotic behaviour of the full solution in the exterior region. Moreover the estimate implies that the nontrivial charge can only affect the asymptotic behaviour of the solution in the exterior region. This confirms the conjecture of Shu in \cite{Shu}.
\end{remark}
\begin{remark}
There is a heuristic explanation of the construction of the charge part $F[q_0]$ from the dipole expansion perspective: if we expand the Maxwell field $F$ in a Taylor series near spatial infinity $r=\infty$ as
\begin{equation*}
F = F_2 +F_3+F_4+F_5+\cdots,
\end{equation*}
where $F_k = O(r^{-k})$. The formal expansion of the Green function gives the $F[q_0]=F_2+F_3+F_4$. In this work we require that the perturbation starts from $F_5$. Indeed, the main reason for doing this is to make $F-F[q_0]$ decay sufficiently fast initially so that the chargeless part is bounded in the weighted Sobolev norm defined in \eqref{initial data 1}.
\end{remark}
\begin{remark}
Regarding the dependence of the constant $C$ on the size of the initial data, our proof can easily imply that $C$ depends exponentially on the zeroth order weighted energy (without derivative of the initial data) but polynomially on the higher order weighted energies. Simply from the charge part, it seems that exponential dependence on the zeroth order energy can not be improved. However from the point of view of the bilinear estimates in \cite{MKGkl}, we conjecture that the dependence on higher order energy should be linear.
\end{remark}

\subsection{An outline of the proof: difficulties, ideas and novelties}
The proof uses almost all the existing techniques and results for Maxwell-Klein-Gordon equations: the vector field method, the conformal compactification, the conformal analogues in the vector field method and the low regularity existence results of Klainerman-Machedon. Besides these, we will also introduce new commutation vector fields, new null forms and study some new structure of the nonlinearities. In the rest of the section, we will first sketch the proof in three steps. Then, we will present the difficulties in each step and provide heuristic ideas to handle these difficulties. Finally, we will summarize some new aspects of the proof.

\subsubsection{The structure of the proof}
The proof consists of three steps:
\begin{itemize}
\item[Step 1]
We take a positive number $R_*$ and it determines the so-called exterior region $\mathcal{D}_{R_*}$ (grey part).

\ \ \ \ \ \ \ \ \ \ \ \ \ \ \ \ \ \ \ \ \ \includegraphics[width=3.8in]{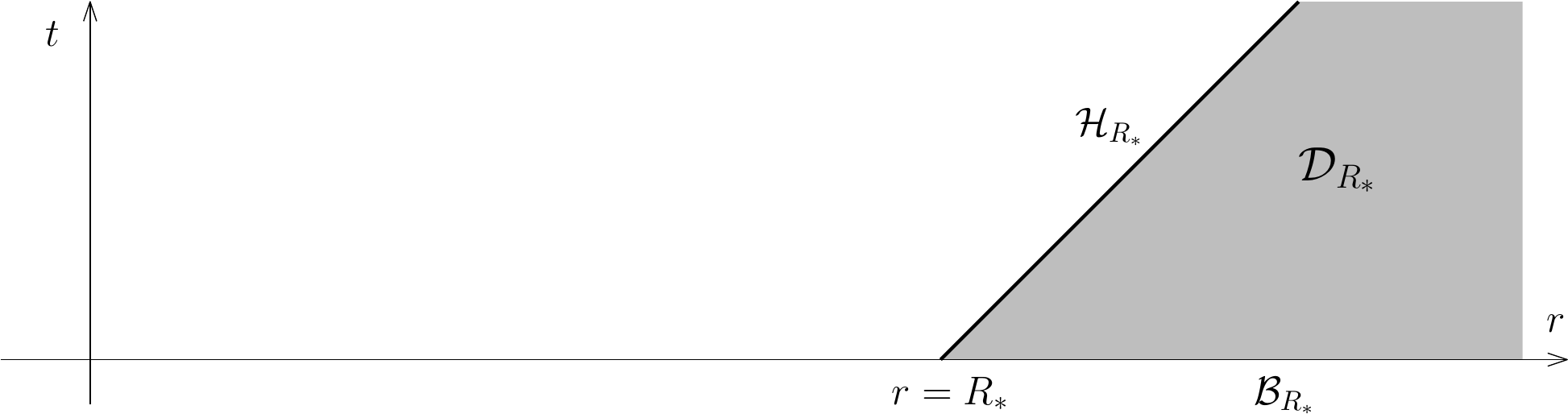}

\noindent For large $R_*$, by restricting data on the region where $r\geq R_*$, i.e., $\mathcal{B}_{R_*}$ (as the bottom of the grey region), we can assume the chargeless part of the restricted data is small. Since the grey region is the domain of dependence of $\mathcal{B}_{R_*}$, the solution in $\mathcal{D}_{R_*}$ is completely determined by the restricted data on $\mathcal{B}_{R_*}$. We therefore study the long time behaviour of solutions of MKG equations in the grey region $\mathcal{D}_{R_*}$ with data small in the chargeless part. We emphasize that this is not a small data problem as the charge part of the solution is large and is independent of the radius $R_*$.

\smallskip

\item[Step 2] This step connects the first step to the third. First of all, we will carefully choose a hyperboloid in $\mathcal{D}_{R_*}$ (on which we have precise control on the solution from the previous step). This hypersurface is denoted by $\Sigma_+$ in the next picture.

\ \ \ \ \ \ \ \ \ \ \ \ \ \ \ \ \ \ \ \ \ \ \ \ \ \ \ \ \ \ \ \ \ \ \ \ \ \ \includegraphics[width=3 in]{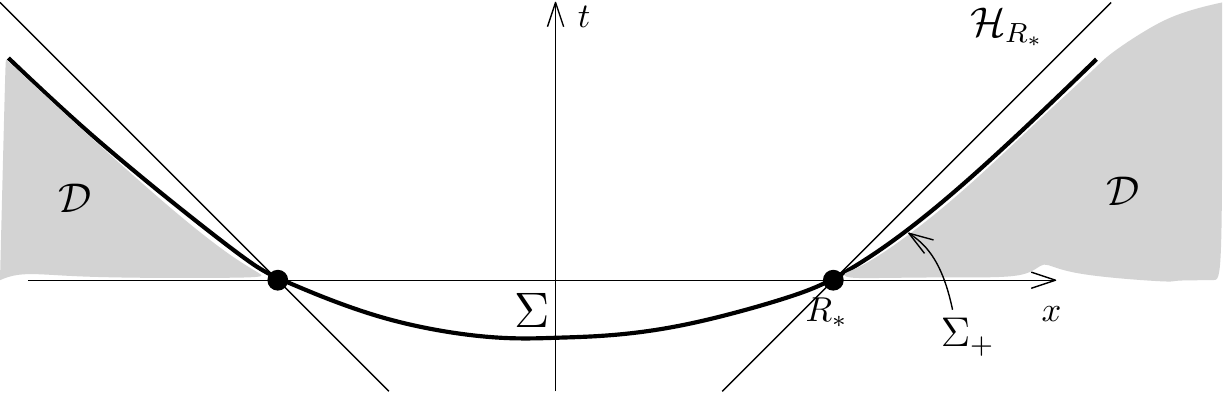}

\noindent The solution restricted on this hyperboloid can be viewed as initial datum for the solution in the interior region which is unknown so far. This step is devoted to showing that the solution obtained from the previous step is sufficient regular on $\Sigma_+$ so that we can conduct the next step.

\smallskip

\item[Step 3] In this last step, we will study the asymptotics of the solution in the causal future $\mathcal{J}^+(\Sigma)$ which is the grey region in the left figure (this is the white region in the previous picture).

\ \ \ \ \ \ \ \ \ \ \ \ \ \ \ \ \ \ \includegraphics[width=5in]{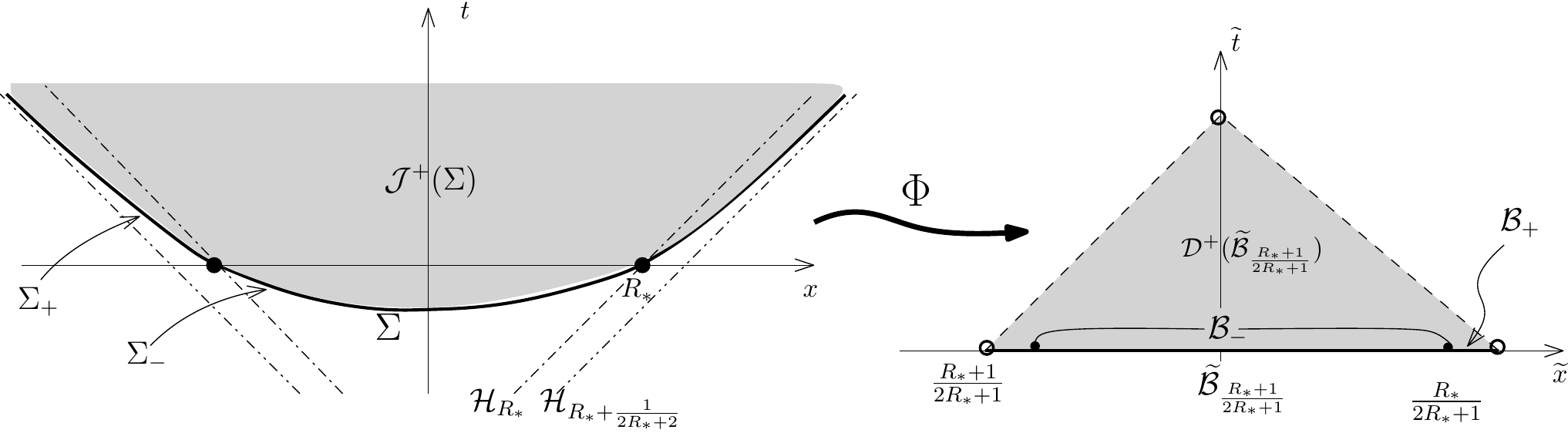}

The hypersurface $\Sigma$ consists of two parts: $\Sigma_-$ and $\Sigma_+$. Since $\Sigma_-$ is finite, the solution on $\Sigma_-$ can be well controlled by the data on the compact region $\{t=0, |x|\leq R_*\}$. This indeed follows from the classical result of Eardley-Moncrief or the result of Klainerman-Machedon. Together with Step 2, the restriction of the solution on $\Sigma$ will be well-understood in terms of the initial data.

Then we will perform a conformal transformation $\Psi$ to map $\mathcal{J}^+(\Sigma)$ to a backward finite light cone (the grey cone on the right hand side of the picture). The hypersurface $\Sigma$ will be mapped to the bottom of the cone. By multiplying conformal factors appropriately, the global dynamics of solutions to MKG equations defined on the left of the picture is then reduced to understanding the solution to MKG equations defined on the right of the picture. The estimates from Step 2 provide a bound of the $H^2$-norm of the solution on the bottom of the cone on the right hand side of the picture. This allows us to use the classical theory of Klainerman-Machedon to bound the solution on the cone up to two derivatives hence the $L^\infty$ norm of the solution. Finally, we undo the conformal transformation by rewriting the solution on the left hand side in terms of the solutions on the right hand side. The conformal factors then give the decay estimates of the solution in $\mathcal{J}^+(\Sigma)$. Together with the decay estimates from Step 1, we can derive the peeling estimates in the main theorem.
\end{itemize}

\subsubsection{Difficulties in the proof}
We list several difficulties which did not appear in previous works on MKG equations. We would like to emphasize that the first difficulty (the largeness of charge) listed below is related to all the rest. The remaining difficulties arise in course of the resolution of the first one. We also want to point out that the most difficult part of the proof is Step 1.
\begin{enumerate}
\item The large nonzero charge.

\smallskip

Although the energy norm of the chargeless part of the data in Step 1 is small, the charge $q_0$ can be large. First of all, the traditional conformal compactification method used in \cite{ChristodoulouYangM} by Christodoulou-Bruhat  requires strong decay of the data which forces the charge to be vanishing. Secondly, the presence of nonzero charge may cause a logarithmic divergence in the energy estimates, see a more thorough discussion in the work \cite{LindbladMKG} of Lindblad-Sterbenz for the purely small data case. The error term caused by the charge can in fact be absorbed if the charge is sufficiently small. We overcome this large charge difficulty by using the method developed by Yang in \cite{yangILEMKG}.

 We would also like to compare this charge difficulty with the massive case of recent work \cite{yang:mMKG} of Klainerman-Wang-Yang, in which they studied the massive MKG equations with small initial data. Their method also applies to the case with arbitrary large charge. However due to the existence of mass which gives control of the scalar field itself, the effect of nonzero charge can be easily controlled (see more detailed discussion in the next subsection). The main difficulty there, however, lies in the inconsistent asymptotic behaviours of Maxwell fields and solutions of Klein-Gordon equations.

\smallskip

\item The sharp peeling estimates.

\smallskip

Since in Step 3 we have to compactify $\mathcal{J}^+(\Sigma)$, the estimates for the solution obtained from Step 1 on $\Sigma$ must be sufficiently regular so that the solution on its conformal compactification are bounded in the right Sobolev spaces. In particular it requires to obtain the sharp decay estimates such as $D_L(r\phi)=O(r^{-2})$ and $\alpha=O(r^{-3})$ along outgoing light cones. So far as we know, even for the small data regime (with small charge), these estimates are unknown. 

\smallskip

\item New commutators to prove the necessary sharp peeling estimates.

\smallskip

The idea to obtain the above sharp peeling estimates is straightforward: we need to put more $r$-weights in the usual energy estimates so that the $r$-weights will be converted into extra decay via Sobolev inequality. We will use the conformal Morawetz vector field $K$ as commutators.  This vector field is of order $2$ in terms of weights $r$  and is used traditionally only as multipliers. In such a way, the structure of the nonlinear terms after commutation becomes the primary concern and we will show that it has some new null structure.

\smallskip

\item The hidden null structure of the MKG equations related to commutators.

\smallskip

This is related to point (2) above. When one commutes vector fields with the MKG equations, although it may generate many error terms, one needs to at least make sure some of the fundamental structures remain unchanged. Very often, these structures are important in the analytic perspective and more precisely they should be phrased in such a way that they fit into the energy estimates. We will show that there is a new null structure of the nonlinear terms which is invariant after commuting correct vector fields. Also, there is another important type structure, which we will call it \textsl{reduced structure}, also remains unchanged after commutations.

\smallskip

\item The choice of conformal compactifications.

\smallskip

The presence of nonzero charge prevents us to use the usual Penrose type compactification for the entire spacetime (see \cite{ChristodoulouYangM}): the $\rho$-component of the Maxwell field behaves as $\frac{q_0}{r^2}$ which cannot be compactified near the spatial infinity. However this effect of charge does not propagate from the spatial infinity to the future null infinity so that we can indeed perform a conformal transformation inside a null cone to avoid spatial infinity.

\smallskip

\item Precise energy estimates on the hypersurface $\Sigma$ in Step 2.

\smallskip

Since $\Sigma$ is a hyperboloid in Minkowski spacetime, the energy estimates on $\Sigma$, especially those needed in the Klainerman-Machedon theory after the compactification, are not straightforward. Nevertheless, this part is less serious compared to all the previous ones and can be derived by using the classical energy estimates in a geometric way.
\end{enumerate}

\subsubsection{Key ideas and novelties of the proof}

In this subsection, we will list all the ideas and new features of the proof in order to deal with the difficulties mentioned in the previous subsection.

\begin{enumerate}
\item The reduced structure and converting spatial decay against the logarithmic growth.

\smallskip

We first explain the reduced structure of the nonlinearity. Let $F=dA$. We may think of $A$ as $\phi$. Thus, the Maxwell equations are reduced to the form
\begin{equation*}
\Box A = \phi \cdot D\phi.
\end{equation*}
While most commonly, a nonlinear wave equation with quadratic interaction looks like
\begin{equation*}
\Box \phi = \nabla \phi \cdot \nabla \phi.
\end{equation*}
The MKG equations is one derivative less in the nonlinearities. This is the reduced structure.

In terms of energy estimates, the reduced structure will be reflected in the following formula:
\begin{equation*}
\int_{\H_{r_1}^{\infty}} |D_L \phi|^2 \leq C_1 \epsilonc r_1^{-\gamma_0}+C_2\int_{r_1}^{\infty}\int_{\H_{s}^{\infty}}\frac{|q_0|}{r^2}|\phi||D_L \phi|.
\end{equation*}
The left hand side is a classical energy flux term through outgoing null cones $\{u=r_1\}$. The first term on the right hand side is coming from the data and the exponent $-\gamma_0$ reflects the decay of the data near spatial infinity. The second term on the right hand side contains a $\phi$ without any derivative acting on it. We remark that the $\frac{q_0}{r^2}$ factor is arising from the charge. Heuristically for waves, a $\frac{1}{r}$ factor can be regard as $D_L$-derivative so that we should think of the second term as $\frac{1}{r}|D_L\phi|^2$ thus we see that there is a logarithmic growth when we integrate. We remark here that for the massive case in \cite{yang:mMKG}, since solution of massive Klein-Gordon equation decays as quickly as its derivatives, i.e., one can regard $\phi$ as $D_L\phi$, the above error term can be easily absorbed by using Gronwall's inequality.

We use an idea introduced by Yang in \cite{yangILEMKG} to handle this logarithmic loss. The precise statement is summarized and proved in Lemma \ref{lemma key}. Morally speaking, to obtain the estimates for the energy flux, we can afford a loss in $r$ instead of in time:
\begin{equation*}
\int_{\H_{r_1}^{\infty}} |D_L \phi|^2 \leq C\epsilonc \cdot r_1^{-\gamma_0+\varepsilon_0}.
\end{equation*}
In other words, the decay rate near null infinity changes from $\gamma_0$ to $\gamma_0 -\varepsilon_0$.

\smallskip

\item The $\varepsilon_0$-reductive argument for higher order energy estimates.

\smallskip

The argument is designed to make a better use of the reduced structure of the nonlinearity when we do higher order energy estimates. Although the charge vanishes when taking derivatives, the above type error term arises from the connection field $A$ and has the same structure as described previously. We design an ansatz which allows higher order derivative to lose more decay. To be more precise,  we will lose $2(k+1)\varepsilon_0$-decay for the $k$-th order derivatives. We will use the following example to illustrate how the argument works. In the course of deriving energy estimates for the first order derivatives, schematically, the nonlinear terms look like $\int |\nabla \phi||\nabla^{2}\phi |$. On the other hand, $|\nabla\phi|$ is already controlled when one derives estimates for the solution itself without commuting vector fields with equations, thus the estimates on $\nabla \phi$ only lose $2\varepsilon_0$ decay. Compared to the $4\varepsilon_0$ loss in the first order derivative case, we indeed have a gain in decay for the nonlinear terms. This gain will play an essential role in closing the estimates.

\smallskip
\item Morawetz vector field as commutator and new commutation formulas.

\smallskip

Traditionally, the Morawetz vector field $K$ is only used as multipliers in the energy estimates. In this work, we will commute $K$ with the equation. The extra weights compared to the classical commutators such as rotations and scaling provide an extra decay factor for the solutions near null infinity. This extra decay factor is indispensable when we perform the conformal compactification. We would like to remark that, since $K$ is the image of $\partial_t$ under the inversion map, commuting $K$ with the equation can be regarded as the usual commutation of $\partial_t$ after the conformal transformation. Thus, this idea should be viewed as a vector field method version of conformal transformations. We also remark here that using such weighted vector fields with order $2$ has been used in the works \cite{Stefanos:avectorfield}, \cite{Stefanos:Latetime} of Angelopoulos-Aretakis-Gajic for deriving the sharp decay of linear waves on black hole spacetimes. However in those works, commuting the equation with the vector field $r^2 L$ is straightforward when writing the equation in terms of the radiation field $r\phi$ under a suitable null frame. This idea also applies to the scalar field equation of the MKG system in our situation but may not be so successful for the Maxwell equation since the Maxwell equation does not commute with the vector field $K$. Our new observation is as follows: for $Z\in \mathcal{Z}=\big\{T,\Omega_{12},\Omega_{23},\Omega_{31},S, K\big\}$, where $T$ is the time translation, $\Omega_{ij}$ are rotations and $S$ is scaling, for $\mathbf{Div}$ (the principle part of the Maxwell equations) and $\Box_A$, we have the following two formulas
\begin{equation}
[r^2 \Divergence, \mathcal{L}_Z]G = 0, \ \
[r^2\Box_A, D_Z+\frac{Z(r)}{r}]\phi = r^2 Q(\phi, F;Z)
\end{equation}
for any closed 2-form $G$ and complex scalar field $\phi$. In other words, although the equations do not commute with the vector fields $S$ or $K$, they commutes with the equations multiplied by $r^2$.
We emphasize that the formula holds for $K$ and $Q(\phi, F;Z)$ is quadratic in $\phi$ and $F=dA$. We also remark that to our knowledge these commutator formulas are new.

\smallskip

\item A new null form.

\smallskip

The quadratic form $Q(\phi, F;Z)$ is indeed a null form. Take $Z=S$ for example. It can be shown that
\begin{equation*}
\begin{split}
|Q(\phi,F;Z)| \lesssim &\big(\frac{r}{|u|}|\rho|+|\alphab|\big)|D_L(r\phi)|+\big(\frac{r}{|u|}|\alpha|+\frac{|u|}{r}|\alphab|+|\sigma|\big)|\slashed{D}(r\phi)| \\ &+\big(|\alpha|+\frac{|u|}{r}|\rho|\big)|D_\Lb(r\phi)|+\big(|\rho|+|\sigma|\big)|\phi|+{\text{cubic terms}}.
\end{split}
\end{equation*}
Similar estimates hold for other vector fields in $\mathcal{Z}$. We remark that rather than $\phi$ itself, the derivatives of $r\phi$ appear naturally in the above null structure estimate.

The most remarkable property of $Q(\phi,F;Z)$ is that it has an iterative structure. This is crucial when we commutate multiple derivatives with equations. More precisely, if we define $\Dt_{Z} = D_Z + \frac{Z(r)}{r}$, we can show that
\begin{equation*}
\big[\Dt_{Y},[\Dt_{X}, r^2\Box_A]\big]\phi = -r^2Q(\phi,F;[Y,X])-r^2Q(\phi,\mathcal{L}_Y F;X)+2r^2F_{Y\mu}F_{X}{}^{\mu}\phi.
\end{equation*}
The right hand side after commutating two derivatives can still be expressed in terms of $Q$  and it still satisfies the null structure. This is one of the keys in the proof.

We remark that to our knowledge this null structure is also new.

\smallskip

\item The algebraic structure of $J$.

\smallskip

We have seen that $r\phi$ appears naturally in the null form estimates. We would like to point out another perspective. We mentioned previously that $D_L(r\phi) = O(\frac{1}{r^2})$. We can also show that the best decay estimates for $D_L \phi$ is still $O(\frac{1}{r^2})$ instead of $O(\frac{1}{r^3})$. From this point of view, we may consider $r\phi$ to be "better" than $\phi$ itself. On the other hand, for the Maxwell equation, instead of commuting with the operator $\mathbf{Div}$, we commute with $r^2\mathbf{Div}$. It thus requires to analyze $r^2\cdot J$, where the charge density $J$ has components $J_\mu = \Im(\phi\cdot \overline{D_\mu\phi})$. The special algebraic form implies
\begin{equation*}
r^2 \cdot J_\mu[\phi] = \Im\big((r\phi)\cdot \overline{D_\mu(r\phi)}\big)=J_\mu[r\phi].
\end{equation*}
Therefore, we only have to deal with the "better" field $r\phi$ rather than $\phi$ itself. This special cancellation from the algebraic structure is crucial to obtain the sharp peeling estimates and to close the energy estimates.

\smallskip

\item The conformal compactification.

Since the trace of the energy momentum tensor for MKG equations are not zero, this field theory is not conformal. However, for special conformal transformations, it can still be conformally invariant, e.g., if $\Box \Lambda = 0$ where $\Lambda$ is the conformal factor. The inversion map restricted in the forward light cone is such a conformal map in $\mathbb{R}^{3+1}$ (not in other dimensions).

On the other hand, there is another important observation: although the presence of a nonzero charge does not allow compactification around the spatial infinity, this effect indeed does not appear on the null infinity. This was first pointed out by Shu in \cite{Shu}. The following computation for $F[q_0]$ justifies this observation: on a outgoing light cone $\H_u$ defined by $r-t=2u$, the conformal energy flux passing through this light cone (this is the basic energy quantity needed after the conformal transformation) is given by
\begin{equation*}
\E\big[F[q_0]\big] \approx \int_{\H_u} |u|^4|\rho|^2.
\end{equation*}
Since $|\rho|\approx \frac{q_0}{r^2}$ (as $F[q_0]$ has the leading term $q_0 dt\wedge dr$) and $u$ is a constant on $\H_u$, the above energy flux is finite. On the other hand, if one considers conformal energy on a constant time slice, the factor $u^4$ would be replace by $r^2 u^2$ (near spatial infinity) so that the contribution of the charge part of the field would be divergent. This is why we choose inversions as the conformal mappings.

\smallskip

\item $r^p$-weighted energy estimates.

\smallskip

We use the $r^p$-weighted energy estimates which was first introduced by Dafermos-Rodnianski in \cite{newapp} for the study of decay of linear waves on black hole spacetimes. The method has also been used in the first author's works on MKG equations, see \cite{yangILEMKG, yangMKG}, where $p<2$. The new point in the current work is that we have to deal with the end point case $p=2$ to get the sharp peeling estimates.
\end{enumerate}

\subsection{Further discussions}\label{section future}

It is instructive to make a comparison to the works \cite{Kl:peeling:EE}, \cite{kl:EE} of Klainerman-Nicol{\`o} to prove higher peeling estimates near Minkowski spacetime in an exterior region and the work \cite{Jonathan:stabilityofEE} of Luk-Oh for proving global nonlinear stability of dispersive solutions to Einstein equations. Indeed, for a given initial datum of the vacuum Einstein field equations, one can work in the region $r\geq R_*$ and can assume that the datum is small provided $R_*$ is sufficiently large. The mass $m$ for the Einstein equations plays a similar role as the charge $q_0$ for the Maxwell-Klein-Gordon equations: they all represent a slow decay tail representing a static solution at spatial infinity (which is the Schwarzschild solutions in the Einstein equations' case). The proof of Klainerman and Nicol{\`o} indeed does not use the smallness of the mass $m$ and this is similar to our case where we do not assume $q_0$ is small. For vacuum Einstein field equations, the mass $m$ comes in through the $\rho$-component of the curvature:
\[\rho=\frac{m}{r^3}+\rhoc,
\]
where $\rhoc$ decays as $\frac{1}{r^4}$. However, for MKG equations, the charge $q_0$ comes in through the $\rho$-component of the Maxwell field:
\[\rho=\frac{q_0}{r^2}+\rhoc.
\]
The $r^{-3}$ decay is sufficient to apply Gronwall's inequality in the Einstein equations' case while for MKG equations we have to find a new way to compensate the logarithmic loss as we mentioned before.

Alternatively, for this large mass issue for Einstein equations, Luk-Oh in \cite{Jonathan:stabilityofEE} choose a special gauge condition so that such mass problem does not appear. Since our approach in this paper is gauge invariant and the charge is inherited in the connection field $A$, the charge difficulty is essentially different from the mass problem for Einstein field equations.

For Einstein field equations coupled with other fields, say a scalar field, the coupling field may bring a slower decay tail. We believe that our method in the exterior region can also be applied to these cases to derive sharp peeling estimates.

\textbf{Acknowledgments.} The authors would like to thank Sergiu Klainerman for helpful suggestions on the manuscript. The second author is also deeply indebt to Pengyu Le for teaching him the conformal aspects for the Maxwell-Klein-Gordon theory. The first author is partially supported by the Recruitment Program of Global Experts in China and a start-up grant at Peking University. The second author is supported by NSFC-11522111 and China National Support Program for Young Top-Notch Talents.

\section{Preparations}
\subsection{The null decompositions of equations}
Recall from the main theorem that the chargeless part $\mathring{F}$ of the solution is defined as
\[
\mathring{F}=F-F[q_0]\mathbf{1}_{\{t+1\leq |x|\}}.
\]
It is straightforward to see that $\Fc$ satisfies the same equations as $F$:
\begin{equation}\label{Maxwell for Fc}
\nabla^\mu \Fc_{\mu\nu}=-J_\nu
\end{equation}
in the exterior region $\{t+1\leq |x|\}$.
In terms of the null components, we can rewrite this equation as
\begin{equation}\label{Maxwell null Fc}
\left\{\begin{aligned}
L(r^2\rhoc)+\slashed{\Div} (r^2\alphac) =r^2 J_L, \ \ \Lb(r^2\rhoc)&-\slashed{\Div} (r^2\alphabc) =-r^2 J_\Lb,\\
L(r^2\sigmac)+\slashed{\Div} (r^2\,^*\alphac) =0, \ \ \Lb(r^2\sigmac)&+\slashed{\Div} (r^2\,^*\alphabc) =0,\\
\snabla_\Lb (r \alphac)_A-\snabla_A(r\rhoc)-\,^*\snabla_A(r\sigmac) &=rJ_A,\\
\snabla_L (r \alphabc)_A+\snabla_A(r\rhoc)-\,^*\snabla_A(r\sigmac) &=rJ_A.
\end{aligned}
\right.
\end{equation}
Here for simplicity, $(\mathring{\alpha}, \mathring{\alphab}, \mathring{\rho}, \mathring{\sigma})$ are the null components associated to the 2-form $\mathring{F}$. For any complex scalar field ${f}$, the covariant wave operator $\Box_A$ can be expressed in null frames:
\begin{equation}\label{scalar box null}
\begin{split}
r\Box_A {f} = -D_\Lb D_L \big(r {f} \big) + \slashed{D}^2\big( r{f} \big) + i\rho\cdot\big(r{f}\big)= -D_L D_\Lb \big(r {f} \big) + \slashed{D}^2\big( r{f} \big) - i\rho\cdot\big(r{f}\big),
\end{split}
\end{equation}
where $\slashed{D}^2(r{f}) =\sum_{A,B=1}^2 m^{AB} D_{e_A} D_{e_B}(r{f})$.

\subsection{Commutator vector fields and null structures}\label{section commutator vector fields}
We shall use the following set of vector fields as commutators:
\begin{equation*}
\mathcal{Z}=\big\{T,\Omega_{12},\Omega_{23},\Omega_{31},S, K\big\},
\end{equation*}
where $K=v^2 L + u^2\Lb$ is the Morawetz vector field, $S = vL+u\Lb$ is the scaling vector field, $\Omega_{ij}=x_i\partial_j-x_j\partial_i$ are the rotation vector fields and $T=\partial_t$ is the time translation. For vector fields in $\mathcal{Z}$, we define their discrepancy index as
 \[
 \xi(T)=-1,\quad \xi(\Omega_{ij})=\xi(S)=0,\quad \xi(K)=1.
 \]
In the energy estimates, it involves the the deformation tensor of these vector fields: $$\,^{(Z)}\pi_{\mu\nu}=\frac{1}{2}\mathcal{L}_Z m_{\mu\nu}=\frac{1}{2}(\nabla_\mu Z_\nu + \nabla_\nu Z_\mu), $$
where $\cL_Z m$ is the Lie derivative of the Minkowski metric.
 By computation, we have
\begin{equation*}
\,^{(K)}\pi_{\mu\nu}=t\cdot m_{\mu\nu}, \ \  \,^{(S)}\pi_{\mu\nu}= m_{\mu\nu},\quad ^{(\Omega_{ij})}\pi_{\mu\nu}=0,\quad ^{(T)}\pi_{\mu\nu}=0,
\end{equation*}
where $m_{\mu\nu}$ is the flat metric of the Minkowski spacetime. We also remark that the set $\mathcal{Z}$ is closed under the Lie bracket: the only non-vanishing $[Z_1,Z_2]$'s for $Z_1,Z_2\in \mathcal{Z}$ are
\begin{equation*}
[T,S]=T, \ \ [T,K]=2S, \ \ [S,K]=K.
\end{equation*}

For $Z\in \mathcal{Z}$, we define the modified covariant derivative acting on complex scalar field associated to the 1-form $A$ as follows:
\begin{equation*}
\Dt_Z=D_Z + \frac{Z(r)}{r}.
\end{equation*}
This is the conjugate of $D_Z$ by the function $r$, i.e., $\Dt_Z f = r^{-1}D_Z(r f)$.
\begin{lemma}[Commutator formula]\label{commutator lemma}
For any closed 2-form $G$ and any complex scalar field $f$, we have
\begin{equation}\label{commutator formula 2}
[r^2 \Divergence, \mathcal{L}_Z]G = 0,
\end{equation}
\begin{equation}\label{commutator formula 3}
[r^2\Box_A, \Dt_Z]f = 2\sqrt{-1}r^2 F_{\mu \nu}Z^\nu D^\mu f+\sqrt{-1}r^2\nabla^\mu \big( Z^\nu F_{\mu \nu} \big)f
\end{equation}
for all $Z\in \mathcal{Z}$.
\end{lemma}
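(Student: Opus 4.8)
The plan is to establish both commutator formulas by direct computation, exploiting the fact that the vector fields $Z\in\mathcal{Z}$ are conformal Killing with conformal factor linear in $t$, so that the conjugation by $r^2$ (resp. by $r$ for scalars) exactly cancels the non-Killing defect. For the first identity \eqref{commutator formula 2}, I would start from the general commutator formula for the divergence operator acting on a $2$-form under a Lie derivative: for any vector field $Z$ and any $2$-form $G$,
\[
[\Divergence,\mathcal{L}_Z]G = -\,^{(Z)}\pi^{\mu\alpha}\nabla_\alpha G_{\mu\nu} + (\text{lower order terms in }^{(Z)}\pi),
\]
and then specialize to $Z\in\mathcal{Z}$, where by the table in the excerpt $^{(Z)}\pi_{\mu\nu}$ is either $0$ (for $T$ and the rotations, so there is nothing to prove) or a pure-trace tensor $\lambda\, m_{\mu\nu}$ with $\lambda = 1$ for $S$ and $\lambda = t$ for $K$. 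For a pure-trace deformation tensor the commutator collapses to a scalar multiple of $\Divergence G$ plus a term involving $\nabla\lambda$ contracted with $G$; inserting the factor $r^2$ and using that $G$ is closed (so $\nabla_{[\alpha}G_{\beta\gamma]}=0$, equivalently $\nabla^\mu\,^*G_{\mu\nu}=0$) lets the $\nabla\lambda$-terms recombine into a total that cancels against the $r^2$-weight derivatives $\nabla(r^2)$. Tracking the $S$ and $K$ cases separately, using $S(r)=r$, $K(r)=2vr$ and the bracket relations $[S,K]=K$ etc., the right-hand side should vanish identically.

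For the scalar identity \eqref{commutator formula 3}, I would compute $[\Box_A, D_Z]f$ first, without the $r$-modifications. The standard gauge-covariant commutator gives
\[
[\Box_A, D_Z]f = \,^{(Z)}\pi^{\mu\nu}D_\mu D_\nu f + (\nabla^\mu\,^{(Z)}\pi_{\mu\nu} - \tfrac12\nabla_\nu\,^{(Z)}\pi)D^\mu f\, m^{\cdots} + 2\sqrt{-1}F_{\mu\nu}Z^\nu D^\mu f + \sqrt{-1}(\nabla^\mu(Z^\nu F_{\mu\nu}))f,
\]
where the first two groups of terms are the ones that would appear even for the uncharged wave operator $\Box$ and the last two are the genuinely MKG-specific terms coming from $[D_\mu, D_\nu] = \sqrt{-1}F_{\mu\nu}$ and from differentiating the connection. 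Then I would bring in the conjugation: since $\Dt_Z f = r^{-1}D_Z(rf)$ and $r\Box_A f = $ the operator displayed in \eqref{scalar box null}, the conjugated operator $r\Box_A(r^{-1}\cdot)$ differs from $\Box_A$ by zeroth-order terms built from $\Box(r) = \tfrac{2}{r}$ and $|\nabla r|^2$; combined with the conformal-Killing property ($^{(Z)}\pi$ pure trace for $S,K$) the "wave-operator" part of the commutator — the first two groups above — is designed to cancel against the $r$-weights and the factor $r^2$, leaving only the electromagnetic terms $2\sqrt{-1}r^2F_{\mu\nu}Z^\nu D^\mu f + \sqrt{-1}r^2\nabla^\mu(Z^\nu F_{\mu\nu})f$. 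The key algebraic input here is exactly that $^{(K)}\pi = t\,m$ and $^{(S)}\pi = m$ are pure trace with the trace functions $t$ and $1$ annihilated (up to controlled lower-order terms) by the combination $\Box$ appearing after the $r^2$-renormalization.

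I expect the main obstacle to be the bookkeeping in the second formula: one must verify that every term \emph{not} of electromagnetic type genuinely cancels, and this requires being careful about (i) the order of the covariant derivatives $D_\mu D_\nu f$ versus $D_\nu D_\mu f$ (which itself reintroduces $F$-terms), (ii) the interplay between the $Z(r)/r$ correction and the $\frac{1}{r}$-terms in the null form of $\Box_A$ in \eqref{scalar box null}, and (iii) the precise values of $\nabla^\mu\,^{(Z)}\pi_{\mu\nu}$ and $\nabla_\nu\,^{(Z)}\pi$ for $Z = S, K$, which are where the weights $r^2$ and $r$ must be matched exactly. The cleanest route is probably to verify the formula for $T$ (trivial), the rotations $\Omega_{ij}$ (trivial, since they are Killing and commute with $r$), then $S$, and finally reduce $K$ to $S$ and $T$ via $K = v^2L + u^2\Lb$ together with the bracket identity $[T,K] = 2S$ — though one still has to check that the $r^2$-renormalized commutators themselves obey a Leibniz-type rule under iterated brackets, which is what ultimately makes the iterative structure claimed in the introduction work. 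Throughout, closedness of $G$ (Bianchi) is the one structural fact that must be invoked to kill the stray curl-type terms in \eqref{commutator formula 2}.
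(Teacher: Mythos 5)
Your overall route --- prove both identities by direct computation, dispose of the Killing fields $T,\Omega_{ij}$ trivially, and for $S,K$ exploit that $\,^{(Z)}\pi=\lambda\, m$ is pure trace (with $\lambda=1$, resp.\ $\lambda=t$) so that the $r^2$-weight absorbs the conformal anomaly --- is exactly the paper's. However, two of the concrete cancellation mechanisms you describe are misattributed, and as written they would send you looking for cancellations in the wrong place. First, closedness of $G$ is never used in the proof of \eqref{commutator formula 2}: the identity $[\Divergence,\mathcal{L}_Z]G_\nu=\Box Z^\mu\,G_{\mu\nu}+\nabla_\nu\nabla^\mu Z^{\delta}\,G_{\mu\delta}+2\,^{(Z)}\pi^{\mu\delta}\nabla_{\delta}G_{\mu\nu}$ holds for an arbitrary $2$-form, and for $Z=K$ the two ``stray'' terms $\Box K^\mu G_{\mu\nu}$ and $\nabla_\nu\nabla^\mu K^{\delta}G_{\mu\delta}$ cancel \emph{each other} (a short Cartesian check, using only antisymmetry of $G$ and the explicit form of $K$: both reduce to multiples of $G(\partial_\nu,\partial_t)$); for $Z=S$ they vanish. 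What survives is the pure-trace piece $2\lambda\,\Divergence G$, and it is this term --- not the $\nabla\lambda$-terms --- that the weight kills, since $Z(r^2)=2\lambda r^2$ gives $\mathcal{L}_Z\big(r^2\Divergence G\big)=Z(r^2)\Divergence G+r^2\mathcal{L}_Z\big(\Divergence G\big)=r^2\Divergence\big(\mathcal{L}_Z G\big)$. So Bianchi is not ``the one structural fact'' here; it plays no role in this lemma.

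The scalar identity has the same bookkeeping: the clean intermediate statement is $[\Box_A,\Dt_Z]f=\tfrac{2Z(r)}{r}\Box_A f+2\sqrt{-1}F_{\mu\nu}Z^\nu D^\mu f+\sqrt{-1}\nabla^\mu\big(Z^\nu F_{\mu\nu}\big)f$, obtained because the first-order term $\Box Z_\mu D^\mu f$ from $[\Box_A,D_Z]$ cancels against $2\nabla^\mu\big(\tfrac{Z(r)}{r}\big)D_\mu f$ coming from $[\Box_A,\tfrac{Z(r)}{r}]$ (note $\tfrac{K(r)}{r}=t$ and $\tfrac{S(r)}{r}=1$, both harmonic), while the pure-trace part $2\,^{(Z)}\pi_{\mu\nu}D^\mu D^\nu f$ produces exactly $\tfrac{2Z(r)}{r}\Box_A f$. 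Then $[r^2\Box_A,\Dt_Z]f=r^2[\Box_A,\Dt_Z]f-Z(r^2)\Box_A f$ and $Z(r^2)=2rZ(r)$ removes the anomaly; no manipulation of the null form \eqref{scalar box null} or of $\Box r$ is needed, contrary to your point (ii). Finally, reducing $K$ to $S,T$ via $[T,K]=2S$ is unnecessary (and would force you to prove the Leibniz-type rule for iterated brackets that you flag as a worry); the paper handles $S$ and $K$ directly, which is shorter. With these corrections your plan is the paper's proof.
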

\begin{remark}
To our knowledge, this set of commutator formulas are new and it is one of the key ingredients to the proof.
\end{remark}
\begin{proof}
We first show the following formula
\begin{equation}\label{commutator formula 1}
[\Box_A, D_Z + \frac{Z(r)}{r}]{f} = \frac{2Z(r)}{r}\Box_A{f} + 2\sqrt{-1}F_{\mu \nu}Z^\nu D^\mu{f}+\sqrt{-1}\nabla^\mu \big( Z^\nu F_{\mu \nu} \big)f.
\end{equation}
By commuting derivatives, we have
\begin{equation*}
[\Box_A, D_Z]{f} = \Box Z_\mu D^\mu {f} + 2\,^{(Z)}\pi_{\mu\nu}D^\mu D^\nu{f} +  2\sqrt{-1}F_{\mu \nu}Z^\nu D^\mu{f}+\sqrt{-1}\nabla^\mu \big( Z^\nu F_{\mu \nu} \big){f}.
\end{equation*}
For any function $f_1$, we have
\begin{equation*}
[\Box_A, f_1]{f} = \Box f_1\cdot {f}+2\nabla^\mu f_1 D_\mu{f},
\end{equation*}
where $f_1$ will be $\frac{Z(r)}{r}$.

For $Z\in \mathcal{Z}$, if $Z \neq K$ or $S$, we have $\,^{(Z)}\pi_{\mu\nu}=0$ and $f=0$, therefore, \eqref{commutator formula 1} holds.

For $K$, we have $f_1=t$, $[\Box_A, f_1]{f} = 2\nabla^\mu f_1 D_\mu{f}$ and $\Box K =-T$. Hence,
  \begin{equation*}
[\Box_A, D_K+\frac{K(r)}{r}]{f} = -T_\mu D^\mu {f} + 2t\Box_A{f} +  2\sqrt{-1}F_{\mu \nu}Z^\nu D^\mu{f}+\sqrt{-1}\nabla^\mu \big( Z^\nu F_{\mu \nu} \big){f}+2\nabla^\mu t D_\mu{f}.
\end{equation*}
The first term and the last term on the right hand side cancel. This proves the case for $Z=K$.

For $S$, we have $f_1=1$ and the proof follows exactly in the same manner. Thus formula \eqref{commutator formula 3} holds.

\medskip

We turn to the proof of \eqref{commutator formula 2}. By commuting the derivatives, we have
\begin{equation*}
[\Divergence, \mathcal{L}_Z]G_\nu =\Box Z^\mu \,G_{\mu\nu}+\nabla_\nu\nabla^\mu Z^{\delta}\,G_{\mu\delta} + 2\,^{(Z)}\pi^{\mu\delta}\nabla_{\delta}G_{\mu\nu}.
\end{equation*}
If $Z\in \mathcal{Z}$ but $Z\neq K$ or $S$, then $[r^2,\mathcal{L}_Z]=0$. The above formula shows that $[\Divergence, \mathcal{L}_Z]=0$. Hence, \eqref{commutator formula 2} holds.

For $K$, the above formula implies
\begin{align*}
[\Divergence, \mathcal{L}_K]G_\nu &= -2T^\mu \,G_{\mu\nu}+\nabla_\nu\nabla^\mu K^{\delta}\,G_{\mu\delta} + 2t\nabla^{\mu}G_{\mu\nu}.
\end{align*}
In the Cartesian coordinates, one can check immediately that
\begin{equation*}
\nabla_\nu\nabla^\mu K^{\delta}\,G_{\mu\delta}=2G(\partial_\nu,\partial_t).
\end{equation*}
Therefore, we obtain
\begin{align*}
[\Divergence, \mathcal{L}_K]G &= 2t\,\Divergence{G}.
\end{align*}
Finally, we have
\begin{align*}
 \mathcal{L}_K\big(r^2 \Divergence {G}\big) &= K(r^2)\Divergence{G}+r^2 \mathcal{L}_K \big(\Divergence {G}\big)\\
 &=2tr^2 \Divergence{G} + r^2 \Divergence \big( \mathcal{L}_K {G}\big) -r^2[\Divergence, \mathcal{L}_K]{G}\\
 &= r^2 \Divergence \big( \mathcal{L}_K G \big).
\end{align*}
For $Z=S$, recall that ${}^{(S)}\pi=m$. The computation in this case is straightforward. This yields \eqref{commutator formula 2}.
\end{proof}
Motivated by the formula \eqref{commutator formula 3}, we introduce the following commutator null form.
\begin{definition}
For any closed 2-form $G$ and any complex scalar field $f$, we define for any vector field $Z$ the quadratic form
\begin{equation*}
Q(f,G;Z)  =2\sqrt{-1} G_{\mu \nu}Z^\nu D^\mu f+\sqrt{-1} \nabla^\mu \big( Z^\nu G_{\mu \nu} \big)f.
\end{equation*}
\end{definition}
We then can write \eqref{commutator formula 3} as
\begin{equation}\label{commutator formula 4}
[r^2\Box_A, \Dt_Z]f = r^2 Q(f,F;Z).
\end{equation}

To avoid to many constants, in the sequel we use the convention that $B\lesssim K$ means that there is a constant $C$, depending only on the charge $q_0$ and the size of the initial data $C_0$ such that $B\leq CK$.
The next proposition manifests the  null structure of the quadratic form $Q(f,G;Z)$:
\begin{proposition}[Pointwise estimate of null form]\label{lemma null form}
For all $Z\in \mathcal{Z}$, $r\geq 1$ and $|u|\geq 1$, we have
\begin{equation}\label{null form estimate}
\begin{split}
|u|^{-\xi(Z)}|Q(f,G;Z)| &\lesssim \big(\frac{r}{|u|}|\rho|+|\alphab|\big)|D_L(rf)|+\big(\frac{r}{|u|}|\alpha|+\frac{|u|}{r}|\alphab|+|\sigma|\big)|\slashed{D}(rf)| \\
&\ \ \ +\big(|\alpha|+\frac{|u|}{r}|\rho|\big)|D_\Lb(rf)|+ \big(|\rho|+|\sigma|\big)|f|+\big(|u||J_\Lb|+\frac{r^2}{|u|}|J_L|+r|\slashed{J}|\big)|f|
\end{split}
\end{equation}
for all $G$ and  $f$. The current $J$ is associated to $G$, i.e., $J_\nu=\nabla^\mu G_{\mu\nu}$. Similarly, the null components $\alpha, \rho, \sigma$ and $\alphab$ are all defined with respect to $G$.
\end{proposition}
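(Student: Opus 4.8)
The plan is to expand the quadratic form $Q(f,G;Z) = 2\sqrt{-1}\,G_{\mu\nu}Z^\nu D^\mu f + \sqrt{-1}\,\nabla^\mu(Z^\nu G_{\mu\nu})f$ in the null frame for each of the six vector fields $Z \in \mathcal{Z}$ and read off which null components of $G$ multiply which covariant derivatives of $f$. The first term is a contraction $G_{\mu\nu}Z^\nu D^\mu f$; writing $Z = a^L L + a^{\Lb}\Lb + a^A e_A$ and $D f = -\tfrac12 (D_L f)\Lb - \tfrac12 (D_{\Lb} f) L + (D_{e_A}f)e^A$ (raising with the Minkowski metric, which pairs $L$ with $\Lb$), one gets terms of the schematic shape (component of $G$) $\times$ (coefficient of $Z$) $\times$ (derivative of $f$). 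Since $T = \tfrac12(L+\Lb)$, $S = vL + u\Lb$, and $K = v^2 L + u^2\Lb$ have only $L$ and $\Lb$ components with coefficients bounded by $1$, $v \sim r$ resp. $|u|$, and $v^2\sim r^2$ resp. $u^2$, while the rotations $\Omega_{ij}$ are tangent to the spheres with coefficients of size $\lesssim r$, the discrepancy index $\xi(Z)$ is exactly designed so that $|u|^{-\xi(Z)}$ times the worst coefficient of $Z$ is bounded by $r$ (for the $L$-slot) or $|u|$ (for the $\Lb$-slot): e.g.\ for $K$, $|u|^{-1}v^2 \lesssim r^2/|u|\cdot |u| \cdot \ldots$, more simply $v^2/|u| \lesssim r^2/|u|$ and $u^2/|u| = |u|$. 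This is what produces the asymmetric weights $\tfrac{r}{|u|}$ and $\tfrac{|u|}{r}$ in the statement.

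The second step is to convert derivatives of $f$ into derivatives of $rf$, since the right-hand side of \eqref{null form estimate} is phrased in terms of $D_L(rf)$, $\slashed{D}(rf)$, $D_{\Lb}(rf)$ and $f$ itself. Using $D_L(rf) = r D_L f + f$ (as $L(r) = 1$), $D_{\Lb}(rf) = r D_{\Lb}f - f$ (as $\Lb(r) = -1$), and $D_{e_A}(rf) = r D_{e_A}f$ (as $e_A(r) = 0$), each $|D_L f|$ is replaced by $\tfrac1r(|D_L(rf)| + |f|)$, etc.; the extra $|f|$ terms generated this way are harmless because they are multiplied by null components of $G$ of size $\lesssim |\rho|$, $|\alpha|$, $|\alphab|$, and after dividing by the various weights they are dominated by the $(|\rho|+|\sigma|)|f|$ term already present (here one uses $|u|\le r$ so that, e.g., $\tfrac{|u|}{r}|\rho| \le |\rho|$). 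The net effect is a clean bilinear expression in the null components of $G$ and the frame derivatives of $rf$, exactly matching the first four groups of terms on the right-hand side.

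The third and last step handles the divergence term $\sqrt{-1}\,\nabla^\mu(Z^\nu G_{\mu\nu})f$. Write $\nabla^\mu(Z^\nu G_{\mu\nu}) = (\nabla^\mu Z^\nu) G_{\mu\nu} + Z^\nu \nabla^\mu G_{\mu\nu} = {}^{(Z)}\pi^{\mu\nu}G_{\mu\nu} + Z^\nu J_\nu$. The deformation-tensor contraction ${}^{(Z)}\pi^{\mu\nu}G_{\mu\nu}$ vanishes for $T$ and the $\Omega_{ij}$, and for $S$, $K$ it is proportional to $m^{\mu\nu}G_{\mu\nu} = 0$ since $G$ is antisymmetric (this is why only ${}^{(S)}\pi = m$, ${}^{(K)}\pi = t\,m$ appear — the trace kills it). So the divergence term reduces to $Z^\nu J_\nu f$, and expanding $J$ in the null frame with the coefficients of $Z$ bounded as before gives precisely $\big(|u||J_{\Lb}| + \tfrac{r^2}{|u|}|J_L| + r|\slashed{J}|\big)|f|$ after multiplying by $|u|^{-\xi(Z)}$. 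I expect the main obstacle to be purely bookkeeping: keeping track of the six vector fields simultaneously and verifying that in every case the coefficient weights collapse into the same two-weight pattern $\tfrac{r}{|u|}$ / $\tfrac{|u|}{r}$ after the $|u|^{-\xi(Z)}$ normalization — in particular checking that $K$, with its $O(r^2)$ and $O(u^2)$ coefficients, does not generate anything worse than the stated bound. No genuinely hard analysis is involved; the content is entirely in the algebraic structure that makes $Q$ a null form, which the normalization $\Dt_Z$ and the index $\xi$ were tailored to expose.
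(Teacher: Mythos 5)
There is a genuine gap in your Step 3, and it propagates back into the absorption claim at the end of Step 2. You identify $(\nabla^\mu Z^\nu)G_{\mu\nu}$ with $^{(Z)}\pi^{\mu\nu}G_{\mu\nu}$ and conclude that this piece vanishes for every $Z\in\mathcal{Z}$. But $^{(Z)}\pi$ is only the \emph{symmetric} part of $\nabla Z$; the antisymmetric part also contracts nontrivially with the 2-form $G$, and it is nonzero precisely for the rotations and for $K$. For $Z=\Omega_{ij}$ it produces a term $\sim\nabla^A\Omega^B_{ij}G_{AB}\,f$, i.e.\ a $\sigma\cdot f$ contribution, and for $Z=K$ (with the paper's normalization $K=v^2L+u^2\Lb$) it gives $\nabla^\mu K^\nu G_{\mu\nu}=-2r\rho$, i.e.\ an $r\rho f$ contribution. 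These terms are not decorative: they are exactly what cancels the dangerous zeroth-order corrections you create in Step 2 when replacing $D^\mu f$ by $r^{-1}D^\mu(rf)$. For $K$ those corrections amount to $2r^{-1}G(\partial_r,K)f=-\tfrac{2(u^2+v^2)}{r}\rho f$; only after adding the $+2r\rho f$ coming from the antisymmetric part does one obtain $-\tfrac{4uv}{r}\rho f$, which after multiplying by $|u|^{-\xi(K)}=|u|^{-1}$ is $\lesssim|\rho||f|$. If the antisymmetric piece is discarded, you are left with $\sim\tfrac{u^2+v^2}{r|u|}|\rho||f|\approx\tfrac{r}{|u|}|\rho||f|$, which is \emph{not} dominated by the $(|\rho|+|\sigma|)|f|$ term in \eqref{null form estimate} (the weight $\tfrac{r}{|u|}$ there only accompanies $|D_L(rf)|$ and $|\slashed{D}(rf)|$, never $|f|$). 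Likewise for $Z=\Omega_{ij}$ your conversion leaves corrections of size $(|\alpha|+|\alphab|)|f|$ (from $r^{-1}G(\Lb,\Omega)f$ and $r^{-1}G(L,\Omega)f$), which cannot be bounded pointwise by $(|\rho|+|\sigma|)|f|$; in the paper's computation they cancel identically against $\nabla_\Lb\Omega^A_{ij}G_{LA}+\nabla_L\Omega^A_{ij}G_{\Lb A}$, leaving only the admissible $|\sigma||f|$ term. So the assertion that the leftover $|f|$ terms ``are dominated by the $(|\rho|+|\sigma|)|f|$ term already present'' is false, and the argument as written does not yield \eqref{null form estimate} for $\Omega_{ij}$ or $K$ (it does work for $T$ and $S$, where $\nabla Z$ is zero, respectively purely symmetric).

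The repair is exactly the paper's bookkeeping: write $Q/\sqrt{-1}=2r^{-1}G_{\mu\nu}Z^\nu D^\mu(rf)-(Z^\nu J_\nu)f-\big(2r^{-1}\nabla^\mu r\,G_{\mu\nu}Z^\nu-\nabla^\mu Z^\nu G_{\mu\nu}\big)f$ and estimate the last bracket as a single object for each $Z$, exploiting the cancellations above, instead of discarding $\nabla^\mu Z^\nu G_{\mu\nu}$ and trying to absorb the radial corrections separately. Your Steps 1 and 2 (null-frame decomposition, weight counting via $\xi$, and the treatment of the current term $Z^\nu J_\nu f$) otherwise follow the same case-by-case computation as the paper.
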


\begin{proof}
We show bound $Q(f, G;Z)$ for each $Z\in \mathcal{Z}$ one by one. We have
\begin{equation*}
\frac{Q(f,G;Z)}{\sqrt{-1}}  =\underbrace{2 r^{-1}\,G_{\mu \nu}\,Z^\nu D^\mu(rf)}_{\I_1}-  \underbrace{\big(Z^\nu J_\nu\big)\cdot f}_{\I_2} -\underbrace{\big(2r^{-1}\nabla^\mu r G_{\mu\nu}Z^\nu-\nabla^\mu Z^\nu G_{\mu\nu}\big)f}_{\I_3}.
\end{equation*}
For $Z=T$, we have
\begin{align*}
\I_1&=-\frac{1}{r}(\alpha+\alphab)\cdot \slashed{D}(rf)+\frac{1}{r}\rho\big(D_\Lb (rf)-D_L(rf)\big),\ \I_2=\frac{1}{2}(J_L+J_\Lb)f, \ \I_3=-r^{-1}\rho f.
\end{align*}
Therefore, we have
\begin{equation*}
|Q(f,G;T)| \lesssim\frac{|\slashed{D}(rf)|}{r}\big(|\alpha|+|\alphab|\big)+\frac{|\rho|}{r}\big(|f|+|D_L(rf)|+|D_\Lb (rf)|\big)+\big(|J_L|+|J_\Lb|\big)|f|.
\end{equation*}
For $Z=\Omega_{ij}$, we have
\begin{align*}
\I_1&\lesssim |D_L(rf)||\alphab|+|D_\Lb (rf)||\alpha|+|\sigma||\slashed{D} (rf)|,\ \ I_2\leq r|\slashed{J}|f,\\
\I_3&=\Big(\frac{1}{r}\big(G_{L\Omega_{ij}}-G_{\Lb \Omega_{ij}}\big) + \nabla_\Lb\Omega^A_{ij} G_{LA} + \nabla_L \Omega_{ij}^A G_{\Lb A}-\nabla^{A}\Omega_{ij}^{B}G_{AB}\Big) f =-\nabla^{A}\Omega_{ij}^{B}G_{AB}f.
\end{align*}
Therefore, we have
\begin{equation}\label{null form Omega}
|Q(f,G;\Omega_{ij})| \lesssim |D_L(rf)||\alphab|+|D_\Lb (rf)||\alpha|+|\sigma||f|+r |\slashed{J}||f|+|\sigma||\slashed{D} (rf)|.
\end{equation}
For $Z=S$, we have
\begin{align*}
\I_1&= 2\frac{u}{r}\rho D_\Lb (rf) - 2\frac{v}{r}\rho D_L (rf) -2\frac{v}{r}\alpha\cdot \slashed{D}(rf)-2\frac{u}{r}\alphab\cdot \slashed{D}(rf),\\
\I_2&=-v J_L f-u J_{\Lb}f,\ \ \I_3=-2\frac{u+v}{r}\rho f.
\end{align*}
Therefore, we have
\begin{equation*}
|Q(f,G;S)| \lesssim r^{-1} {|u|}\big(|\rho|| D_\Lb (rf)| + |\alphab||\slashed{D}(rf)|\big) + \big(|\rho||D_L (rf)| + |\alpha||\slashed{D}(rf)|\big)+|\rho||f|+r |J_L||f|+ |u| |J_{\Lb}||f|.
\end{equation*}
For $Z=K$, we have
\begin{align*}
\I_1&= -2\frac{u^2}{r}\rho D_\Lb (rf) + 2\frac{v^2}{r}\rho D_L (rf) + 2\frac{v^2}{r}\alpha\cdot \slashed{D}(rf)+2\frac{u^2}{r}\alphab\cdot \slashed{D}(rf),\\
\I_2&=-v^2 J_L f-u^2 J_{\Lb}f,\ \ \I_3=-4\frac{uv}{r}\rho f.
\end{align*}
Therefore, we have
\begin{equation}\label{null form estimate for K}
\begin{split}
|Q(f,G;K)| \lesssim & r^{-1} {u^2}\big(|\rho|| D_\Lb(rf)| + |\alphab||\slashed{D}(rf)|\big) + {r}\big(|\rho||D_L (rf)| + |\alpha||\slashed{D}(rf)|\big)\\
&+|u||\rho||f|+r^2 |J_L||f|+ u^2 |J_{\Lb}||f|.
\end{split}
\end{equation}
The lemma is an immediate consequence of the above estimates.
\end{proof}
 To analyze the higher order energy estimates of the solution, we will commute the equations with the vector fields twice. From the commutation formula \eqref{commutator formula 4}, we have the following identity:
\begin{equation}\label{formula to compute box z z}
\begin{split}
&\ \ \ r^2\Box_A\Dt_{Z_{1}} \Dt_{Z_2}{f}\\
&= [r^2\Box_A,\Dt_{Z_{1}}] \Dt_{Z_2}{f} +[r^2\Box_A, \Dt_{Z_2}]  \Dt_{Z_{1}}{f}+\big[\Dt_{Z_{1}},[r^2\Box_A, \Dt_{Z_2}]\big]{f}+\Dt_{Z_{1}} \Dt_{Z_2} \big(r^2\Box_A{f}\big)\\
&= r^2Q(\Dt_{Z_1}{f},F;Z_2)  + r^2Q(\Dt_{Z_2}{f},F;Z_1) +\big[\Dt_{Z_{1}},[r^2\Box_A, \Dt_{Z_2}]\big]{f}+\Dt_{Z_{1}} \Dt_{Z_2} \big(r^2\Box_A{f}\big).
	\end{split}
\end{equation}
Note that for solution of MKG equations the last term vanishes. In particular to derive the equation for the second order derivative of the solution, we need to estimate the double commutator.
\begin{proposition}\label{prop twice commutated}
For all $X,Y \in \mathcal{Z}$, we have
\begin{equation}\label{commutator twice commutated}
\big[\Dt_{Y},[r^2\Box_A, \Dt_{X}]\big]{f} = r^2Q({f}, F;[Y,X])+r^2Q({f}, \mathcal{L}_Y F;X)-2r^2F_{Y\mu}F_{X}{}^{\mu}{f}.
\end{equation}
\end{proposition}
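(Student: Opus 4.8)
The plan is to compute $\big[\Dt_{Y},[r^2\Box_A, \Dt_{X}]\big]f$ directly from the single-commutator formula \eqref{commutator formula 4}, which identifies $[r^2\Box_A,\Dt_X]f$ with $r^2Q(f,F;X)$. Thus the double commutator is
\[
\big[\Dt_{Y},[r^2\Box_A,\Dt_X]\big]f = \Dt_Y\big(r^2Q(f,F;X)\big) - r^2Q(\Dt_Y f, F;X).
\]
The right-hand side is a commutator of $\Dt_Y$ against the \emph{operator} $f\mapsto r^2Q(f,F;X)$, so it records only the terms where $\Dt_Y$ fails to pass through $Q$: namely the action of $Y$ (as a Lie derivative, or rather as $D_Y$ on scalar/tensor pieces) on the coefficients $F$ and $Z^\nu$ appearing in $Q$. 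Recalling the definition $Q(f,G;X) = 2\sqrt{-1}\,G_{\mu\nu}X^\nu D^\mu f + \sqrt{-1}\,\nabla^\mu(X^\nu G_{\mu\nu})f$, I would differentiate each factor in turn.

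First I would treat the principal (first) term $2\sqrt{-1}\,F_{\mu\nu}X^\nu D^\mu f$. Applying $\Dt_Y = D_Y + \frac{Y(r)}{r}$ and using $\Dt_Y f = r^{-1}D_Y(rf)$, the derivative hits three things: the curvature $F_{\mu\nu}$, the vector field coefficient $X^\nu$, and the covariant derivative operator $D^\mu$ acting on $f$. The derivative on $F$ produces a $\mathcal{L}_Y F$ term (after accounting for how $D_Y$ acts on the tensor indices of $F$, which is where the Lie-derivative structure $\mathcal{L}_Y F$ rather than a bare $D_Y F$ emerges — the index corrections combine with the $[Y,\cdot]$ terms below). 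The derivative on $X^\nu$, via $[D_Y, D^\mu]$ reindexing and the term $\nabla_\mu Y^\nu X^\mu$, reassembles into the vector field $[Y,X]$, producing $Q(f,F;[Y,X])$. The term where $D_Y$ commutes past $D^\mu$ generates a curvature commutator $[D_Y, D^\mu]f = \sqrt{-1}F_Y{}^\mu f$ contracted against the remaining $F_{\mu\nu}X^\nu$, which is precisely the source of the quadratic-in-$F$ term $-2r^2 F_{Y\mu}F_X{}^\mu f$. Finally I would handle the zeroth-order (second) term $\sqrt{-1}\nabla^\mu(X^\nu F_{\mu\nu})f$ analogously: $\Dt_Y$ acting on it produces the corresponding zeroth-order pieces of $Q(f,F;[Y,X])$ and $Q(f,\mathcal{L}_Y F;X)$, together with terms involving $\nabla^\mu\big(\nabla_\mu Y^\nu \cdot(\text{stuff})\big)$ that must be shown to cancel or to be absorbed. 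Throughout, one uses that $\mathcal{Z}$ is closed under Lie brackets (so $[Y,X]\in\mathcal{Z}$ and all the formulas of this section apply to it) and that $\,^{(Y)}\pi$ is a multiple of $m$, so the $\,^{(Y)}\pi$-type terms from commuting $D_Y$ with $\Box_A$ do not contribute new structure.

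The main obstacle I expect is bookkeeping the index contractions carefully enough to see the clean split into exactly these three terms — in particular, distinguishing the part of $D_Y F$ that is $\mathcal{L}_Y F$ from the part that gets reabsorbed into the $[Y,X]$-reindexing, and verifying that all genuinely extraneous terms (second derivatives of the deformation tensor, cross terms from $\nabla^\mu$ hitting coefficients twice) cancel. A cleaner route, which I would pursue in parallel, is to note that both sides are bilinear/natural in $f$ and polynomial in $F$, so it suffices to verify the identity for $Z = T, S, \Omega_{ij}, K$ against each other case-by-case using the explicit formulas $\,^{(K)}\pi = t\,m$, $\,^{(S)}\pi = m$, $\Box K = -T$, $[T,S]=T$, $[T,K]=2S$, $[S,K]=K$; since each vector field is polynomial of low degree in the coordinates, these become finite computations. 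One should also separately check the case $Y = X$, where $[Y,X]=0$ and the formula reduces to $\big[\Dt_X,[r^2\Box_A,\Dt_X]\big]f = r^2Q(f,\mathcal{L}_X F;X) - 2r^2 F_{X\mu}F_X{}^\mu f$, as a sanity check on signs.
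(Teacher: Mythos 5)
Your plan is essentially the paper's own proof: start from the single-commutator formula $[r^2\Box_A,\Dt_X]f=r^2Q(f,F;X)$, differentiate directly so that the hits on $X^\nu$, on $F$ (via $\nabla_Y F=\mathcal{L}_Y F$ plus $\nabla Y$-corrections), and on $D^\mu$ (curvature commutator) produce $Q(f,F;[Y,X])$, $Q(f,\mathcal{L}_Y F;X)$ and the $F_{Y\mu}F_X{}^{\mu}f$ term, and then dispose of the residual terms case-by-case using $^{(Y)}\pi\propto m$, $\Box K=-T$, $Y(r)$, exactly as the paper does for $Y\in\{T,\Omega\}$ versus $Y\in\{S,K\}$ (where those residuals conspire to cancel the $Y(r^2)Q(f,F;X)$ piece). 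The bookkeeping you flag as the main obstacle is precisely what the paper's proof carries out, so the approach is sound and matches.
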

\begin{proof}
First from Lemma \ref{commutator lemma}, we can write
\begin{equation*}
[r^2\Box_A, \Dt_{X}]{f}=r^2(2\sqrt{-1} X^\nu F_{\mu\nu}D^{\mu}{f}+\sqrt{-1} \nabla^\mu( F_{\mu\nu}X^\nu){f}).
\end{equation*}
Then for any two vector fields $X$ and $Y$, direction computation implies that
\begin{align*}
&\big[\Dt_{Y},[\Dt_{X}, r^2\Box_A]\big]{f}\\
&=-\nabla_Y\big(2\sqrt{-1}r^2X^\nu F_{\mu\nu}\big)D^\mu{f} - \nabla_Y\big(\sqrt{-1}r^2\nabla^\mu(F_{\mu\nu}X^\nu)\big){f}\\
&\quad +2\sqrt{-1}r^2X^\nu F_{\mu\nu}\nabla^\mu\big(\frac{Y(r)}{r}\big){f}
 -2\sqrt{-1}r^2X^\nu F_{\mu\nu}[D_Y,D^\mu]{f}\\
&=-\Big(\underbrace{\nabla_Y\big(2\sqrt{-1}r^2X^\nu F_{\mu\nu}\big)D^\mu{f}}_{\I_1} +\underbrace{\nabla_Y\big(\sqrt{-1}r^2\nabla^\mu(F_{\mu\nu}X^\nu)\big){f}}_{\I_2}\Big)\\
&\quad +2\sqrt{-1}r^2X^\nu F_{\mu\nu}\nabla^\mu\big(\frac{Y(r)}{r}\big){f} +2\sqrt{-1}r^2 X^\nu \nabla^\mu Y^\delta F_{\mu\nu} D_\delta {f} +2r^2F_{Y\mu}F_{X}{}^{\mu}{f}.
\end{align*}
Now for the term $\I_1$, we can compute that
\begin{align*}
\I_1&=2\sqrt{-1}Y(r^2)X^\nu F_{\mu\nu}D^\mu{f}+2\sqrt{-1}r^2\nabla_Y X^\nu F_{\mu\nu}D^\mu{f}+2\sqrt{-1}r^2X^\nu \nabla_Y F_{\mu\nu}D^\mu{f}\\
&=2\sqrt{-1}Y(r^2)X^\nu F_{\mu\nu}D^\mu{f}+2\sqrt{-1}r^2\big(\mathcal{L}_Y X^\nu +\nabla_X Y^\nu\big) F_{\mu\nu}D^\mu{f}\\
& \ \ \ +2\sqrt{-1}r^2X^\nu \big(\mathcal{L}_Y F_{\mu\nu}-\nabla_\mu Y^\delta F_{\delta\nu}-\nabla_\nu Y^\delta F_{\mu\delta}\big)D^\mu{f}\\
&=\underbrace{2\sqrt{-1}Y(r^2)X^\nu F_{\mu\nu}D^\mu{f}}_{\I_{11}}+\underbrace{2\sqrt{-1}r^2 \mathcal{L}_Y X^\nu F_{\mu\nu}D^\mu{f}}_{\I_{12}}\\
&\quad+\underbrace{2\sqrt{-1}r^2X^\nu \mathcal{L}_Y F_{\mu\nu} D^\mu{f}}_{\I_{13}}-2\sqrt{-1}r^2X^\nu \nabla_\mu Y^\delta F_{\delta\nu}D^\mu{f}.
\end{align*}
As for the term $\I_2$, we can further show that
\begin{align*}
\I_2&=\sqrt{-1}Y(r^2)\nabla^\mu(F_{\mu\nu}X^\nu) {f}+\sqrt{-1}r^2\nabla^\mu(F_{\mu\nu}\nabla_Y X^\nu) {f}\\
&\quad +\sqrt{-1}r^2\nabla^\mu(\nabla_Y F_{\mu\nu}X^\nu) {f}+\sqrt{-1}r^2[\nabla_Y, \nabla^\mu](F_{\mu\nu}X^\nu) {f}\\
&=\sqrt{-1}Y(r^2)\nabla^\mu(F_{\mu\nu}X^\nu) {f}+\sqrt{-1}r^2\nabla^\mu\big(F_{\mu\nu}(\mathcal{L}_Y X^\nu +
\nabla_X Y^\nu)\big) {f}-\sqrt{-1}r^2\nabla^\mu Y^\delta  F_{\mu\nu}\nabla_\delta X^\nu {f}\\
& \ \ \ +\sqrt{-1}r^2\nabla^\mu\Big(\big(\mathcal{L}_Y F_{\mu\nu}-\nabla_\mu Y^\delta F_{\delta\nu}-\nabla_\nu Y^\delta F_{\mu\delta}\big)X^\nu\Big) {f}-\sqrt{-1}r^2\nabla^\mu Y^\delta \nabla_\delta F_{\mu\nu}X^\nu {f} \\
&=\underbrace{\sqrt{-1}Y(r^2)\nabla^\mu(F_{\mu\nu}X^\nu) {f}}_{\I_{21}}+\underbrace{\sqrt{-1}r^2\nabla^\mu\big(F_{\mu\nu} \mathcal{L}_Y X^\nu \big) {f}}_{\I_{22}}+\underbrace{\sqrt{-1}r^2\nabla^\mu\big( \mathcal{L}_Y F_{\mu\nu}X^\nu\big) {f}}_{\I_{23}}\\
&\ \ \ -\sqrt{-1}r^2\nabla^\mu \big( \nabla_\mu Y^\delta F_{\delta\nu} X^\nu\big) {f}-\sqrt{-1}r^2\nabla^\mu Y^\delta \nabla_\delta F_{\mu\nu}X^\nu {f} -\sqrt{-1}r^2\nabla^\mu Y^\delta  F_{\mu\nu}\nabla_\delta X^\nu {f}
\end{align*}
We notice that the $\I_{1i}+\I_{2i}$'s can be expressed in terms of the quadratic form $Q$. We therefore derive that
\begin{align*}
&\big[\Dt_{Y},[\Dt_{X}, r^2\Box_A]\big]{f} \\
&= -Y(r^2)Q({f},F;X)-r^2Q({f}, F;[Y,X])-r^2Q({f},\mathcal{L}_Y F;X)+2r^2F_{Y\mu}F_{X}{}^{\mu}{f}\\
&\ \ \ +2\sqrt{-1}r^2X^\nu F_{\mu\nu}\nabla^\mu\big(\frac{Y(r)}{r}\big){f}+4\sqrt{-1}r^2X^\nu \,^{(Y)}\pi^{\delta\mu}F_{\mu\nu}D_\delta{f}\\
& \ \ +\sqrt{-1}r^2\nabla^\mu \big( \nabla_\mu Y^\delta F_{\delta\nu} X^\nu\big) {f}+\sqrt{-1}r^2\nabla^\mu Y^\delta \nabla_\delta F_{\mu\nu}X^\nu {f} +\sqrt{-1}r^2\nabla^\mu Y^\delta  F_{\mu\nu}\nabla_\delta X^\nu {f}\\
&= -Y(r^2)Q({f}, F;X)-r^2Q({f}, F;[Y,X])-r^2Q({f}, \mathcal{L}_Y F;X)+2r^2F_{Y\mu}F_{X}{}^{\mu}{f}\\
&\ \ \ +2\sqrt{-1}r^2X^\nu F_{\mu\nu}\nabla^\mu\big(\frac{Y(r)}{r}\big){f}+4\sqrt{-1}r^2X^\nu \,^{(Y)}\pi^{\delta\mu}F_{\mu\nu}D_\delta{f}\\
& \ \ +\sqrt{-1}r^2\Box Y^\delta F_{\delta\nu} X^\nu  {f}+2\sqrt{-1}r^2 \,^{(Y)}\pi^{\delta\mu}\left( \nabla_\delta F_{\mu\nu}X^\nu {f} + F_{\mu\nu} \nabla_\delta  X^\nu {f}\right).
\end{align*}
Note that the last two terms can be written as
\begin{equation*}
\,^{(Y)}\pi^{\delta\mu} (\nabla_\delta F_{\mu\nu}X^\nu {f} +F_{\mu\nu} \nabla_\delta  X^\nu {f})
=\,^{(Y)}\pi^{\delta\mu} \nabla_\delta\big(F_{\mu\nu}X^\nu \big){f}.
\end{equation*}
We now simplify the previous identity by checking vector fields $Y\in\mathcal{Z}$. We basically have two cases: when $Y=K$, $S$ or $Y=T$, $\Omega_{ij}$. For the latter situation, we notice that $Y$ is Killing and
\[
Y(r)=0,\quad ^{(Y)}\pi=0,\quad \Box Y^{\delta}=0.
\]
Therefore we conclude from the previous identity that
\begin{equation*}
\begin{split}
\big[\Dt_{Y},[\Dt_{X}, r^2\Box_A]\big]{f} &= -r^2Q({f},F;[Y,X])-r^2Q(f,\mathcal{L}_Y F;X)+2r^2F_{Y\mu}F_{X}{}^{\mu}{f}.
\end{split}
\end{equation*}
Now for the first case when $Y=K$ or $S$, note that we can write these two vector fields in a uniform way
\[
Y=u^p\Lb+v^p L,\quad p=1, 2,
\]
in which $p=1$ corresponds to the scaling vector field $S$ while $p=2$ stands for the conformal Killing vector field $K$. We then can compute that
\begin{align*}
  ^{(Y)}\pi=t^{p-1}m,\quad r^{-1}Y(r)=t^{p-1},\quad \Box Y^{\delta}\partial_{\delta}=p(p-1)\partial_t, \quad Y(r^2)=2 rY(r)=2r^2 t^{p-1}.
\end{align*}
We therefore can show that
\begin{equation*}
\begin{split}
&4X^\nu \,^{(K)}\pi^{\delta\mu}F_{\mu\nu}D_\delta{f}  +2X^\nu F_{\mu\nu}\nabla^\mu\big(\frac{K(r)}{r}\big){f}+\Box K^\delta F_{\delta\nu} X^\nu  {f}+2 ^{(K)}\pi^{\delta\mu} \nabla_\delta\big(F_{\mu\nu}X^\nu \big){f}\\
&=4X^\nu t^{p-1}m^{\delta\mu}F_{\mu\nu}D_\delta{f}  +2X^\nu F_{\mu\nu}\nabla^\mu(t^{p-1}){f}+ p(p-1) F_{0\nu} X^\nu  {f}+2 t^{p-1} m^{\delta\mu} \nabla_\delta\big(F_{\mu\nu}X^\nu \big){f}\\
&=4X^\nu t^{p-1}F_{\mu\nu}D^\mu {f}  + (p-2)(p-1) F_{0\nu} X^\nu  {f}+2 t^{p-1}  \nabla^\mu\big(F_{\mu\nu}X^\nu \big){f}\\
&=-\sqrt{-1}r^{-2} Y(r^2)Q({f}, F; X).
\end{split}
\end{equation*}
The last step follows by the definition of $Q({f}, F;X)$ and the fact that $p=1$ or $2$. In particular we have shown that estimate \eqref{commutator twice commutated} holds for all $X$, $Y\in\mathcal{Z}$.
\end{proof}
We are now ready to commute vector fields with MKG equations \eqref{MKG}. First of all, recall that we have defined the discrepancy index $\xi$ for  $Z\in \mathcal{Z}$, that is, the value of $T$, $\Omega_{ij}$, $S$ and $K$ are $-1$, $0$, $0$ and $1$ respectively. Let $\k =(k_0,k_1,k_2)$ be a triplet of nonnegative integers. The number $k_0$, $k_1$ and $k_2$ denote the number of index $-1$, $0$ and $1$ vector fields respectively. For a given $\k$, we define the {\bf discrepancy index} $\xi(\k)$ as
\begin{equation*}
\xi(\k)=k_2-k_0.
\end{equation*}
We also define $|\k|=k_0+k_1+k_2$. For derivatives on forms, for example the Maxwell field $F$ or the charge density $J$, we take the Lie derivative $\cL$. For any given tensor field $\mathcal{T}$, we use the expression $\LZk \mathcal{T}$ to denote the following $\k$-derivatives on forms for $Z \in \mathcal{Z}$:
\begin{equation*}
\LZk\mathcal{T} = \mathcal{L}_{Z_1}  \mathcal{L}_{Z_2}\cdots  \mathcal{L}_{Z_{|k|}} \mathcal{T},
\end{equation*}
where there are exactly $k_0$ degree $-1$ vector fields, exactly $k_1$ degree $0$ vector fields and exactly $k_2$ degree $1$ vector fields in the collection $\{Z_i | 1\leq i \leq |\k|\}$.
In the sequel we only consider situations where $|k|\leq 2$. It corresponds to commute at most two derivatives with the Maxwell-Klein-Gordon equations \eqref{MKG}.

As for derivatives on the complex scalar fields, we use the modified covariant derivative $\Dt$. Define
\begin{equation*}
\DZtk{f} = \Dt_{Z_1} \Dt_{Z_2}\cdots \Dt_{Z_{|\k|}}{f}.
\end{equation*}
For the solution $\phi$, we also define shorthand notations $\phi^{(\k)}= \Dt^{\k}_Z\phi$ and $\psi^{(\k)}= r\Dt^{\k}_Z\phi$. The previous commutator calculations allow us to derive the wave equations for $\phi^{(\k)}$. Define
\[
\Nk = \Box_A \phik.
\]
In particular we have $N^{(\mathbf{0})}=0$. By definition of $Q$, we see that $N^{(\mathbf{1})}=Q(\phi, F;Z)$. For the second order derivative $\phi^{(\mathbf{k})}=\Dt_{Z_1}\Dt_{Z_2}\phi$, Proposition \ref{prop twice commutated} together with the identity \eqref{formula to compute box z z} implies that
\begin{equation}\label{commutated wave equation}
 N^{(\mathbf{2})}=Q(\Dt_{Z_1}\phi,F;Z_2)  + Q(\Dt_{Z_2}\phi,F;Z_1)+Q(\phi, F;[Z_1,Z_2])+Q(\phi, \mathcal{L}_{Z_1} F;Z_2)-2F_{Z_1\mu}F_{Z_2}{}^{\mu}\phi.
\end{equation}
We now turn to the Maxwell part. We first explain our notations. For $r\neq 0$, we shall use the following shorthand notations to derivatives of the chargeless part of $F$:
\begin{equation*}
\alphak =\alpha(\LZk\Fc), \ \ \alphabk =\alphab(\LZk\Fc), \ \ \rhok =\rho(\LZk\Fc), \ \ \sigmak =\sigma(\LZk\Fc).
\end{equation*}
We notice that $\rho^{(\mathbf{0})} \neq \rho$ for $q_0\neq 0$. In most of the cases in this paper,  only the total number of derivatives in $\mathcal{L}^\k_{Z}$ is important. The exact form of $\k$ is usually irrelevant unless it is emphasized. Therefore, we will use shorthand notations ${\mathbf{(1)}}$ and ${\mathbf{(2)}}$ rather than writing down the explicit expression of $\k$, e.g., for $\alpha(\mathcal{L}_T\mathcal{L}_\Omega \Fc)$ we simply write it as $\alpha^{(\mathbf{2})}$.

For a given $\k$, we also define
\begin{equation}
\Jk = \LZk (r^2 J).
\end{equation}
We remark that $J^{(\mathbf{0})} = r^2 J$ which is {\bf not} the current $J$. The null components of $\Jk$ are denoted by $\LJk$, $\LbJk$ and $\sJk$. More precisely, we define
\begin{equation*}
\LJk = -\frac{1}{2} m(\LZk (r^2 J), \Lb), \ \ \LbJk=-\frac{1}{2}m(\LZk (r^2 J),L), \ \ \sJk_A = m(\LZk (r^2 J),e_A) \ \ \text{for} \ A=1,2.
\end{equation*}
In view of \eqref{Maxwell for Fc}, \eqref{Maxwell null Fc} and \eqref{commutator formula 2}, we can commute $\mathcal{L}_Z^{\k}$ to derive
\begin{equation}\label{Maxwell null commuted}
\left\{\begin{aligned}
L(r^2\rho^{(\k)})+\slashed{\Div} (r^2\alpha^{(\k)}) = \LJk, \ \ \Lb(r^2\rho^{(\k)})&-\slashed{\Div} (r^2\alphab^{(\k)}) =-\LbJk,\\
L(r^2\sigma^{(\k)})+\slashed{\Div} (r^2\,^*\alpha^{(\k)}) =0, \ \ \Lb(r^2\sigma^{(\k)})&+\slashed{\Div} (r^2\,^*\alphab^{(\k)}) =0,\\
\snabla_\Lb (r \alpha^{(\k)})_A-\snabla_A(r\rho^{(\k)})-\,^*\snabla_A(r\sigma^{(\k)}) &=r^{-1}\sJk_A,\\
\snabla_L (r \alphab^{(\k)})_A+\snabla_A(r\rho^{(\k)})-\,^*\snabla_A(r\sigma^{(\k)}) &=r^{-1}\sJk_A.
\end{aligned}
\right.
\end{equation}

\subsection{Multiplier vector fields and energy quantities}\label{sectioin multiplier vector fields}
One can associate the so-called energy momentum 2-tensor $T[G, {f}]_{\alpha\beta}$ to a closed 2-form $G$ and any complex scalar field ${f}$:
\begin{equation*}
\begin{aligned}
T[{G}, {f}]_{\alpha\beta} = \underbrace{{G}_{\alpha\mu}{G}_{\beta}{}^{\mu}-\frac{1}{4}m_{\alpha\beta}{G}_{\mu\nu}{G}^{\mu\nu}}_{T[{G}]_{\alpha\beta}}+\underbrace{\Re\big(\overline{D_\alpha{f}}D_\beta{f}\big)-\frac{1}{2}m_{\alpha\beta}\overline{D^\mu{f}}D_\mu {f}}_{T[{f}]_{\alpha\beta}}.
\end{aligned}
\end{equation*}
Given a smooth $\mathbb{R}-$valued function $\chi$ and a vector field $Y^\mu$, for any (multiplier) vector field $X$, we define the associated current as:
\begin{equation}\label{current twisted}
^{(X)}\widetilde{J}[{G}, {f}]_\mu =T[{G}, {f}]_{\mu\nu}X^\nu-\frac{1}{2}\nabla_\mu\chi \cdot |{f}|^2+\frac{1}{2}\chi\cdot \nabla_\mu\big(|{f}|^2\big) + Y_\mu.
\end{equation}
It can be computed that the space-time divergence of $^{(X)}\widetilde{J}[{G}, {f}]$ is given by the following formula:
\begin{equation}\label{divergence of J}
\begin{split}
\Divergence \big(\,^{(X)}\widetilde{J}[{G}, {f}]\big)&=\underbrace{T[{G}, {f}]_{\mu\nu}\,^{(X)} \pi^{\mu\nu}+\chi \overline{D^\mu{f}}D_\mu {f}-\frac{1}{2}\Box \chi \cdot |{f}|^2+\Divergence Y}_{\mathbf{D}_1}\\
&\ \ +\underbrace{\Re\big(\overline{\Box_A{f}} (D_X {f}+\chi {f})\big)+\nabla^\mu {G}_{\mu\nu}\cdot{G}^{\delta\nu} X_\delta+X^\mu F_{\mu\nu} {J}[{f}]^\nu}_{\mathbf{D}_2},
\end{split}
\end{equation}
where the current $J_\mu[{f}] = \Im({f}\cdot \overline{D_\mu{f}})$.

In this paper, we will use two types of vector fields as multipliers. In particular the multiplier $X$ will be chosen as $X=\partial_t$ or $X=r^p L$ ($0\leq p \leq 2$). Their deformation tensors are recorded in the following table:

\ \ \ \ \ \ \ \ \ \ \ \ \ \ \ \ \ \ \ \ \ \ \ \ \ \ \ \ \ \
\begin{tabular}{|c|*{6}{c}|}
\hline
 & $\pi_{LL}$ & $\pi_{L\Lb}$ & $\pi_{\Lb\Lb}$ & $\pi_{L A}$ & $\pi_{\Lb A}$ & $\pi_{AB}$\\
\hline
$X=\partial_t$ & $0$ & $0$ & $0$ & $0$ & $0$ & $0$\\
\hline
$X=r^p L$ & $0$ & $-p r^{p-1}$ & $2 p r^{p-1}$ & $0$ & $0$ & $r^{p-1}\slashed{g}_{AB}$\\
\hline
\end{tabular}

To define energy quantities, we first clarify the measure over different regions or hypersurfaces. In the sequel, the variable $\vartheta$ denotes for a coordinate on the unit sphere $\mathbf{S}^2$. We have
\begin{equation*}
\begin{split}
&\int_{\H_{r_1}^{r_2}} \cdot  = \int_{\frac{r_1}{2}}^{\frac{r_2}{2}}\int_{\mathbf{S}^2} \cdot \, \, \,r^2 dv d\vartheta, \quad\int_{\Hb_{r_2}^{r_1}} \cdot  = \int_{-\frac{r_2}{2}}^{-\frac{r_1}{2}}\int_{ \mathbf{S}^2} \cdot \, \, \,r^2 du d\vartheta, \\
 &\int_{\B_{r_1}^{r_2}} \cdot  = \int_{r_1}^{r_2}\int_{\mathbf{S}^2} \cdot \, \, \,r^2 dr d\vartheta, \quad \int_{\D_{r_1}^{r_2}} \cdot  =\frac{1}{2}\int \int \int_{ \mathbf{S}^2} \cdot \, \, \,r^2 du dv d\vartheta.
\end{split}
\end{equation*}
Given ${G}$ and ${f}$, the energy through $\B_{r_1}^{r_2}$ and the energy flux through $\H_{r_1}^{r_2}$ or $\Hb_{r_2}^{r_1}$ are defined as
\begin{equation*}
\begin{split}
\E[{G},{f}](\B_{r_1}^{r_2})&:=\int_{\B_{r_1}^{r_2}} |\alpha({G})|^2+ |\alphab({G})|^2+ |\rho({G})|^2+ |\sigma({G})|^2 + |D{f}|^2,\\
\F[{G},{f}](\H_{r_1}^{r_2})&:=\int_{\H_{r_1}^{r_2}} |\alpha({G})|^2+ |\rho({G})|^2+ |\sigma({G})|^2 + |D_L{f}|^2 + |\slashed{D}{f}|^2,\\
\Fb[{G},{f}](\Hb_{r_2}^{r_1})&:=\int_{\Hb_{r_2}^{r_1}} |\alphab({G})|^2+ |\rho({G})|^2+ |\sigma({G})|^2 + |D_\Lb{f}|^2 + |\slashed{D}{f}|^2.
\end{split}
\end{equation*}
One can take $X=\partial_t$, $\chi=0$, $Y=0$ and then integrate \eqref{divergence of J} over $\D_{r_1}^{r_2}$. This leads to the classical energy identity:
\begin{lemma}[Classical energy identity]\label{classical energy identity}
For all closed 2-form $G$ and any complex scalar field ${f}$ and all $ 0<r_1 < r_2$, we have
\begin{equation}\label{classical energy inequality}
\F[G,{f}](\H_{r_1}^{r_2}) + \Fb[G,{f}](\Hb_{r_2}^{r_1}) = \E[G,{f}](\B_{r_1}^{r_2})-\int_{D_{r_1}^{r_2}}\Re\big(\overline{\Box_A{f}} \cdot D_{\partial_t} {f} \big)+\nabla^\mu G_{\mu\nu}\cdot G_0{}^{\nu} + F_{0\mu} {J}[{f}]^\mu.
\end{equation}
\end{lemma}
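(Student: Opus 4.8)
The identity \eqref{classical energy inequality} is nothing but the integrated version of the divergence formula \eqref{divergence of J} specialized to the multiplier $X=\partial_t$, the twisting function $\chi=0$ and the auxiliary vector field $Y=0$. The plan is therefore to feed these choices into \eqref{divergence of J}, identify the resulting bulk term $\Divergence\big(\,^{(\partial_t)}\widetilde{J}[G,f]\big)$, and then apply the divergence theorem on the spacetime region $\D_{r_1}^{r_2}$, whose boundary consists of the three hypersurfaces $\H_{r_1}^{r_2}$, $\Hb_{r_2}^{r_1}$ and $\B_{r_1}^{r_2}$ together with the sphere $\S_{r_1}^{r_2}$ at the tip (which contributes nothing, being codimension two).

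First I would compute the current. With $\chi=0$ and $Y=0$, \eqref{current twisted} collapses to $\,^{(\partial_t)}\widetilde{J}[G,f]_\mu = T[G,f]_{\mu\nu}\,\partial_t^\nu = T[G,f]_{\mu 0}$. Since $\,^{(\partial_t)}\pi_{\mu\nu}=0$ (recorded in the table, as $\partial_t$ is Killing) and $\chi=0$, the term $\mathbf{D}_1$ in \eqref{divergence of J} vanishes identically, so $\Divergence\big(\,^{(\partial_t)}\widetilde{J}[G,f]\big) = \mathbf{D}_2 = \Re\big(\overline{\Box_A f}\,D_{\partial_t}f\big)+\nabla^\mu G_{\mu\nu}\cdot G^{\delta\nu}(\partial_t)_\delta + (\partial_t)^\mu F_{\mu\nu} J[f]^\nu$, which is exactly the integrand on the right of \eqref{classical energy inequality} (up to writing $G_0{}^{\nu} = G^{\delta\nu}(\partial_t)_\delta$ and $F_{0\mu}=(\partial_t)^\nu F_{\nu\mu}$). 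Next I would integrate this over $\D_{r_1}^{r_2}$ using the volume element $\frac12 r^2\,du\,dv\,d\vartheta$ fixed in the measure conventions, and apply Stokes' theorem to convert the left-hand side into boundary integrals of the contraction of $\,^{(\partial_t)}\widetilde{J}$ with the respective (null or timelike) conormals.

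The remaining point is purely bookkeeping: one must check that the boundary flux of $T[G,f]_{\mu 0}$ through $\B_{r_1}^{r_2}$ reproduces exactly $\E[G,f](\B_{r_1}^{r_2})$, and through $\H_{r_1}^{r_2}$ (resp. $\Hb_{r_2}^{r_1}$) reproduces $\F[G,f](\H_{r_1}^{r_2})$ (resp. $\Fb[G,f](\Hb_{r_2}^{r_1})$), with the correct signs. This is a standard null-frame computation: decomposing $T[G]_{\mu 0}$ and $T[f]_{\mu 0}$ against the frame $(e_1,e_2,\Lb,L)$ and using the definitions $\alpha(G)_A=G(L,e_A)$, $\rho(G)=\tfrac12 G(\Lb,L)$, etc., one finds that the $L$-flux picks out precisely $|\alpha(G)|^2+|\rho(G)|^2+|\sigma(G)|^2+|D_L f|^2+|\slashed D f|^2$ (the combination that is manifestly nonnegative because $\partial_t$ is causal), while the $\partial_t$-flux on $\{t=0\}$ gives the full energy density with all four null components of $G$ and $|Df|^2$. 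I would also note that $G_{0\mu}G^{\delta\nu}\cdots$-type terms integrate by parts cleanly since there is no weight. The main obstacle, if any, is simply getting the orientation and the null-frame normalization constants right (the factor $\tfrac12$ in the $du\,dv$ measure, the $r^2$ area element, and the fact that $L$ and $\Lb$ are each null so their "norm" in the flux is the other one); there is no analytic difficulty here, and in particular no need for the charge estimates \eqref{eq:bd4Fq} or any decay — this is an exact identity valid for arbitrary closed $2$-forms $G$ and arbitrary sections $f$, on any truncated region $\D_{r_1}^{r_2}$.
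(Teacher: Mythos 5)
Your proposal is correct and is exactly the argument the paper intends: the text introducing Lemma \ref{classical energy identity} says precisely that one takes $X=\partial_t$, $\chi=0$, $Y=0$ in \eqref{divergence of J} and integrates over $\D_{r_1}^{r_2}$, with $\mathbf{D}_1$ vanishing because $\partial_t$ is Killing and $\mathbf{D}_2$ giving the bulk term, while the boundary fluxes of $T[G,f]_{\mu\nu}\partial_t^\nu$ through $\H_{r_1}^{r_2}$, $\Hb_{r_2}^{r_1}$ and $\B_{r_1}^{r_2}$ reproduce $\F$, $\Fb$ and $\E$ after the standard null-frame decomposition. Your null-frame bookkeeping remarks match what the paper leaves implicit, so there is nothing to add.
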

If we choose $X=r^p L$, $\chi = r^{p-1}$ and $Y=\frac{p}{2}r^{p-2} |{f}|^2 L$, this leads to the $r$-weighted energy identity
\begin{lemma}[$r$-weighted energy identity]\label{r weighted energy estimates}
For all closed 2-form ${G}$ and complex scalar field ${f}$, we have
\begin{equation}\label{r wieghted}
\begin{split}
&\ \ \ \ \int_{\B_{r_1}^{r_2}}r^{p-2}\big(|D_L (rf)|^2+|\slashed{D} (rf)|^2\big)+r^{p}\big(|\alpha({G})|^2+|\rho({G})|^2+|\sigma({G})|^2\big)\\
&=\int_{\H_{r_1}^{r_2}}r^{p-2}\big(|D_L (rf)|^2+r^2|\alpha({G})|^2\big)+\int_{\Hb_{r_2}^{r_1}}r^{p-2}\big(|\slashed{D} (rf)|^2+r^2|\rho({G})|^2+r^2|\sigma({G})|^2\big)\\
&\ \ \ +\frac{1}{2}\int_{\D_{r_1}^{r_2}}r^{p-3}\Big(p\big(|D_L (rf)|^2+r^2|\alpha({G})|^2\big)+(2-p)(|\slashed{D}(rf)|^2+r^2|\rho({G})|^2+r^2|\sigma({G})|^2)\Big)\\
&\ \ \ + \underbrace{\int_{\D_{r_1}^{r_2}} r^{p-1}\Re\big(\overline{\Box_A{f}} \cdot D_L (rf)\big)+r^p \nabla^\mu {G}_{\mu\nu}\cdot{G}_{L}{}^{\nu}+r^p F_{L\mu} {J}[{f}]^\mu}_{\text{$r$-weighted error term} \ \mathbf{Err_p}}
\end{split}
\end{equation}
for all $ 0<r_1 < r_2$ and $p \in [0,2]$.
\end{lemma}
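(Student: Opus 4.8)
The plan is to specialize the general divergence identity \eqref{divergence of J} to the multiplier data $X=r^pL$, $\chi=r^{p-1}$ and $Y=\frac{p}{2} r^{p-2}|f|^2L$, and then integrate the resulting equation over the region $\D_{r_1}^{r_2}$ using the divergence theorem; the output should split into the three pieces one recognizes in \eqref{r wieghted}: the boundary fluxes through $\H_{r_1}^{r_2}$, $\Hb_{r_2}^{r_1}$ and $\B_{r_1}^{r_2}$, the $r$-weighted spacetime bulk integral, and the error term $\mathbf{Err_p}$. A first observation that costs nothing is that with these choices $\mathbf{D}_2$ in \eqref{divergence of J} is \emph{already} exactly $\mathbf{Err_p}$: indeed $D_Xf+\chi f=r^pD_Lf+r^{p-1}f=r^{p-1}(rD_Lf+f)=r^{p-1}D_L(rf)$ since $L(r)=1$, while $G^{\delta\nu}X_\delta=r^pG_L{}^{\nu}$ and $X^\mu F_{\mu\nu}=r^pF_{L\nu}$, so no manipulation of $\mathbf{D}_2$ is required.

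The bulk term comes from $\mathbf{D}_1=T[G,f]_{\mu\nu}\,^{(X)}\pi^{\mu\nu}+\chi\,\overline{D^\mu f}D_\mu f-\frac12\Box\chi\,|f|^2+\Div Y$. Here I would insert the deformation tensor of $X=r^pL$ recorded in the table, namely $\pi_{L\Lb}=-pr^{p-1}$, $\pi_{\Lb\Lb}=2pr^{p-1}$, $\pi_{AB}=r^{p-1}\slashed{g}_{AB}$ with all other components vanishing, compute the contraction $T[G,f]_{\mu\nu}\,^{(X)}\pi^{\mu\nu}$ in the null frame using $T[G]_{LL}=|\alpha(G)|^2$, $T[G]_{L\Lb}=|\rho(G)|^2+|\sigma(G)|^2$, $T[f]_{LL}=|D_Lf|^2$, $T[f]_{L\Lb}=|\slashed{D}f|^2$, and then combine it with $\chi\,\overline{D^\mu f}D_\mu f$ (which contains $-\Re(\overline{D_Lf}D_\Lb f)+|\slashed{D}f|^2$), with $-\frac12\Box r^{p-1}|f|^2$ and with $\Div\big(\frac{p}{2} r^{p-2}|f|^2L\big)$. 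After rewriting everything through the rescaled field $rf$ by means of $D_L(rf)=rD_Lf+f$, $\slashed{D}(rf)=r\slashed{D}f$ and $L(|f|^2)=2\Re(\overline fD_Lf)$, all the $|f|^2$-terms should cancel and what survives is precisely $\frac12 r^{p-3}\big(p(|D_L(rf)|^2+r^2|\alpha(G)|^2)+(2-p)(|\slashed{D}(rf)|^2+r^2|\rho(G)|^2+r^2|\sigma(G)|^2)\big)$, which is nonnegative exactly for $p\in[0,2]$. This cancellation is the whole reason behind the specific choices $\chi=r^{p-1}$ and $Y=\frac{p}{2} r^{p-2}|f|^2L$.

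For the boundary I would compute the relevant null contractions of the current $^{(X)}\widetilde{J}[G,f]$ — with $\partial_t$ on the spacelike slice $\B_{r_1}^{r_2}$ and with the null generators $L$, $\Lb$ on the cones. Using $T[G,f](L,r^pL)=r^p(|\alpha(G)|^2+|D_Lf|^2)$ and $T[G,f](\Lb,r^pL)=r^p(|\rho(G)|^2+|\sigma(G)|^2+|\slashed{D}f|^2)$ together with the same substitution to $rf$, these become, respectively, the integrands $r^{p-2}(|D_L(rf)|^2+r^2|\alpha(G)|^2)$ on $\H_{r_1}^{r_2}$, $r^{p-2}(|\slashed{D}(rf)|^2+r^2|\rho(G)|^2+r^2|\sigma(G)|^2)$ on $\Hb_{r_2}^{r_1}$, and the full left-hand side of \eqref{r wieghted} on $\B_{r_1}^{r_2}$. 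The delicate point — and the step I expect to be the main obstacle — is the bookkeeping of the lower-order terms: the twisting pieces $\frac12\chi\,L(|f|^2)-\frac12 L(\chi)|f|^2$ in the current, together with the contraction of $Y$, do not by themselves assemble the square $|D_L(rf)|^2$, so one must integrate the exact derivative $\frac12 L(r^{p-1}|f|^2)$ along the generators of $\H_{r_1}^{r_2}$ (and likewise on $\Hb_{r_2}^{r_1}$), which produces codimension-two integrals over the sphere $\S_{r_1}^{r_2}$ and over the 2-spheres where the cones meet $\B_{r_1}^{r_2}$; one has to check that, once the analogous manipulations on $\B_{r_1}^{r_2}$ and in the spacetime bulk are carried out, all of these sphere terms cancel so that \eqref{r wieghted} holds with no leftover terms. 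Granting this, collecting the boundary fluxes, the bulk term coming from $\mathbf{D}_1$ and the error term $\mathbf{D}_2=\mathbf{Err_p}$ yields the stated identity for all $0<r_1<r_2$ and $p\in[0,2]$.
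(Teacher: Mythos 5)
Your proposal is correct and follows essentially the same route as the paper's own proof: the same multiplier data $X=r^pL$, $\chi=r^{p-1}$, $Y=\frac{p}{2}r^{p-2}|f|^2L$ in \eqref{divergence of J}, the identification $\mathbf{D}_2=\mathbf{Err_p}$ via $D_Xf+\chi f=r^{p-1}D_L(rf)$, the completion of the square in the bulk, and the boundary fluxes on $\B_{r_1}^{r_2}$, $\H_{r_1}^{r_2}$, $\Hb_{r_2}^{r_1}$. The sphere terms you flag as the delicate point are exactly the $\pm\frac{1}{2}\int_{\S}r^{p-1}|f|^2$ corner integrals produced by integrating $\partial_r(r^{p+1}|f|^2)$, $L(r^{p+1}|f|^2)$, $\Lb(r^{p+1}|f|^2)$ in the paper's computation, and they do cancel pairwise when the three boundary contributions are combined in Stokes' formula.
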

One can find the detailed proof in \cite{yangILEMKG}. For reader's interest, we provide the proof here.
\begin{proof}
The identity \eqref{r wieghted} is equivalent to the following one:
\begin{align}
\notag
&\ \ \ \ \underbrace{\int_{r_1}^{r_2}\int_{\mathbf{S}^2}r^{p}\big(|D_L (rf)|^2+|\slashed{D} (rf)|^2\big)+r^{p+2}\big(|\alpha({G})|^2+|\rho({G})|^2+|\sigma({G})|^2\big) dr d\vartheta}_{L_1}\\
\notag
&=\underbrace{\int_{\frac{r_1}{2}}^{\frac{r_2}{2}}\int_{\mathbf{S}^2} r^{p} \big(|D_L (rf)|^2+r^2|\alpha({G})|^2\big)dvd\vartheta}_{R_1}+\underbrace{\int_{-\frac{r_2}{2}}^{-\frac{r_1}{2}}\int_{\mathbf{S}^2}r^{p}\big(|\slashed{D} (rf)|^2+r^2|\rho({G})|^2+r^2|\sigma({G})|^2\big) du d\vartheta}_{R_2}\\
\notag
&\ \ \ +\int_{u}\int_{\vartheta}\int_{v}r^{p-1}\Big(p(|D_L (rf)|^2+r^2|\alpha({G})|^2\big)+(2-p)(|\slashed{D}(rf)|^2+r^2|\rho({G})|^2+r^2|\sigma({G})|^2)\Big) dv d\vartheta du\\
&\ \ \ + \int_{\D_{r_1}^{r_2}} r^{p-1}\Re\big(\overline{\Box_A{f}} \cdot D_L (rf)\big)+r^p \nabla^\mu {G}_{\mu\nu}\cdot{G}_{L}{}^{\delta}+r^p F_{L\mu} {J}[{f}]^\mu. \label{r wieghted'}
\end{align}
We take $X=r^p L$, $\chi = r^{p-1}$ and $Y=\frac{p}{2}r^{p-2} |{f}|^2 L$ in \eqref{divergence of J}. We can compute that
\begin{align*}
T[{G}, {f}]_{\mu\nu}\,^{(X)} \pi^{\mu\nu} &= -\frac{p-2}{2}r^{p-1}\big(\rho({G})^2+\sigma({G})^2\big)-\frac{p}{2}r^{p-1}|\slashed{D}{f}|^2 \\ & \quad\ +\frac{p}{2}r^{p-1}\big(|D_L{f}|^2+|\alpha({G})|^2\big)+r^{p-1}D_L{f} D_\Lb{f}, \\
\chi \overline{D^\mu{f}}D_\mu {f} &=-r^{p-1}D_L{f} D_\Lb{f} +r^{p-1}|\slashed{D}{f}|^2,\ \ \
-\frac{1}{2}\Box \chi \cdot |{f}|^2=-\frac{p(p-1)}{2}r^{p-3}|{f}|^2,\\
\Divergence Y&=\frac{p^2}{2}r^{p-3}|{f}|^2+pr^{p-2}\Re(\overline{D_L {f}}\cdot \phi).
\end{align*}
Since $r^2|D_L f|^2=|D_L (rf)|^2-L(r|{f}|^2)$, we obtain
\begin{equation}\label{C3}
\begin{split}
\mathbf{D}_1&=\frac{2-p}{2}r^{p-3}\big(r^2\rho({G})^2+r^2\sigma({G})^2+|\slashed{D}(rf)|^2\big)+\frac{p}{2}r^{p-3}\big(|\alpha({G})|^2+|D_L(rf)|^2\big),\\
\mathbf{D}_2&= r^{p-1}\Re\big(\overline{\Box_A{f}} \cdot D_L (rf)\big)+r^p \nabla^\mu {G}_{\mu\nu}\cdot{G}_{L}{}^{\nu}+r^p F_{L\mu} {J}[{f}]^\mu,
\end{split}
\end{equation}
where $\mathbf{D}_i$'s are defined in \eqref{divergence of J}. Now we turn to the boundary integrals. On $\B_{r_1}^{r_2}$, the normal $n^\mu$ is $\partial_t$, we have
\begin{equation*}
\begin{split}
^{(X)}\widetilde{J}[{G}, {f}]^\mu n_\mu = &\frac{1}{2}r^{p-2}\big(r^2\alpha({G})^2+r^2\rho({G})^2+r^2\sigma({G})^2+|D_L(rf)|^2+|\slashed{D}(rf)|^2\big)\\
&-\frac{1}{2}\big((p+1)r^{p-2}|{f}|^2+r^{p-1}\partial_r(|{f}|^2)\big).
\end{split}
\end{equation*}
Therefore we derive that
\begin{equation}\label{C5}
\begin{split}
\int_{\B_{r_1}^{r_2}}\,^{(X)}\widetilde{J}[{G}, {f}]^\mu n_\mu &=\frac{1}{2} \underbrace{\int_{\B_{r_1}^{r_2}} r^2\alpha({G})^2+r^2\rho({G})^2+r^2\sigma({G})^2+|D_L(rf)|^2+|\slashed{D}(rf)|^2}_{L_1 \ in \ \eqref{r wieghted'}} \\
&\quad \quad -\frac{1}{2}\int_{r_1}^{r_2}\int_{\mathbf{S}^2}\underbrace{(p+1)r^{p}|{f}|^2+r^{p+1}\partial_r(|{f}|^2)}_{=\partial_r(r^{p+1}|{f}|^2)} d\vartheta dr\\
&=\frac{1}{2}L_1+\frac{1}{2}\int_{\S_{r_1}^{r_1}}r^{p-1}|{f}|^2-\frac{1}{2}\int_{\S_{r_2}^{r_2}}r^{p-1}|{f}|^2.
\end{split}
\end{equation}
On $\H_{r_1}^{r_2}$, the normal $n^\mu$ is $L$. Hence,
\begin{equation*}
^{(X)}\widetilde{J}[{G}, {f}]^\mu n_\mu = r^{p-2}\big(r^2\alpha({G})^2+|D_L(rf)|^2+|\slashed{D}(rf)|^2\big)-\frac{1}{2}\big((p+1)r^{p-2}|{f}|^2+r^{p-1}L(|{f}|^2)\big).
\end{equation*}
Therefore, we have
\begin{equation}\label{C6}
\begin{split}
\int_{\H_{r_1}^{r_2}}\,^{(X)}\widetilde{J}[{G}, {f}]^\mu n_\mu &=\underbrace{\int_{\frac{r_1}{2}}^{\frac{r_2}{2}}\int_{\mathbf{S}^2} r^{p} \big(|D_L (rf)|^2+r^2|\alpha({G})|^2\big)dvd\vartheta}_{R_1 \ in \ \eqref{r wieghted'}} \\ &\quad-\frac{1}{2}\int_{\frac{r_1}{2}}^{\frac{r_2}{2}}\int_{\mathbf{S}^2}\underbrace{(p+1)r^{p}|{f}|^2+r^{p+1}L(|{f}|^2)}_{=L (r^{p+1}|{f}|^2)} d\vartheta dv\\
&=R_1+\frac{1}{2}\int_{\S_{r_1}^{r_1}}r^{p-1}|{f}|^2-\frac{1}{2}\int_{\S_{r_1}^{r_2}}r^{p-1}|{f}|^2.
\end{split}
\end{equation}
On $\Hb_{r_1}^{r_2}$, the normal $n^\mu$ is $\Lb$. Hence,
\begin{equation*}
^{(X)}\widetilde{J}[{G}, {f}]^\mu n_\mu = r^{p-2}\big(r^2\rho({G})^2+r^2\sigma({G})^2+|\slashed{D}(rf)|^2\big)+\frac{1}{2}\big(-(p+1)r^{p-2}|{f}|^2+r^{p-1}\Lb(|{f}|^2)\big).
\end{equation*}
Therefore, we have
\begin{equation}\label{C7}
\begin{split}
\int_{\Hb_{r_1}^{r_2}}\,^{(X)}\widetilde{J}[{G}, {f}]^\mu n_\mu &=\underbrace{\int_{-\frac{r_2}{2}}^{-\frac{r_1}{2}}\int_{\mathbf{S}^2}r^{p}\big(|\slashed{D} (rf)|^2+r^2|\rho({G})|^2+r^2|\sigma({G})|^2\big) du d\vartheta}_{R_2 \ in \ \eqref{r wieghted'}} \\
&\quad +\frac{1}{2}\int_{-\frac{r_2}{2}}^{-\frac{r_1}{2}}\int_{\mathbf{S}^2}\underbrace{-(p+1)r^{p}|{f}|^2+r^{p+1}\Lb(|{f}|^2)}_{=\Lb (r^{p+1}|{f}|^2)} d\vartheta dv\\
&=R_2+\frac{1}{2}\int_{\S_{r_1}^{r_2}}r^{p-1}|{f}|^2-\frac{1}{2}\int_{\S_{r_2}^{r_2}}r^{p-1}|{f}|^2.
\end{split}
\end{equation}
By combining\eqref{C3}-\eqref{C7} we can the use Stokes formula to complete the proof.
\end{proof}

To end this section, we introduce energy norms. For all $r_1>0$, $p \in[0,2]$, $\k \leq 2$ and a given small $\delta>0$, we define the standard energy norms
\begin{equation*}
\begin{split}
\mathcal{E}^{(\k)}(\phi;r_1)&=\F[0, \phi^{(\k)}](\H_{r_1})+\sup_{r_2 \geq r_1}\Fb[0,\phi^{(\k)}](\Hb_{r_2}^{r_1}),\\
\mathcal{E}^{(\k)}(\Fc;r_1)&=\F[\LZk(\Fc), 0](\H_{r_1})+\sup_{r_2 \geq r_1}\Fb[\LZk(\Fc),0](\Hb_{r_2}^{r_1}),
\end{split}
\end{equation*}
and the $r^p$-weighted energy norms
\begin{equation*}
\begin{split}
\mathcal{E}^{(\k)}(\phi;p;r_1)&=\int_{\H_{r_1}}r^{p-2}|D_L \psi^{(\k)}|^2+\sup_{r_2 \geq r_1}\int_{\Hb_{r_2}^{r_1}}r^{p-2}|\slashed{D} \psi^{(\k)}|^2 \\&\qquad  +\int_{\D_{r_1}} r^{p-3}\Big(p|D_L\psi^{(\k)}|^2+(2-p)|\slashed{D}\psi^{(\k)}|^2\big)\Big),\\
\mathcal{E}^{(\k)}(\Fc;p;r_1)&=\int_{\H_{r_1}}r^p|\alphak|^2+\sup_{r_2 \geq r_1}\int_{\Hb_{r_2}^{r_1}}r^p\big(|\rhok|^2+|\sigmak|^2\big) \\ &\qquad+\int_{\D_{r_1}} r^{p-1}\Big(p|\alphak|^2+(2-p)\big(|\rhok|^2+|\sigmak|^2\big)\Big).
\end{split}
\end{equation*}

\section{The analysis in the exterior region 0: set-up and zeroth order energy estimates}

We emphasize again that, till the end of the paper, $(\phi,F)$ is a given solution of \eqref{MKG} associated to a given finite energy smooth initial datum. According to the result of Klainerman-Machedon \cite{MKGkl}, the solution exits globally.

\subsection{The exterior region}
We take a positive number $R_*$ and require that $R_*\geq 1$. The number $R_*$ should be understood as a large number and its size will be determined later on (solely by the initial datum). It determines the so-called exterior region $\mathcal{D}_{R_*}$. It is the grey region in the following picture.

\ \ \ \ \ \ \ \ \ \ \ \ \ \ \ \ \ \ \ \ \ \includegraphics[width=3.8in]{exterior.pdf}

The boundary of the exterior region consists of two pieces: the outgoing null hypersurface $\mathcal{H}_{r_1}^{r_2}$ and its bottom $\mathcal{B}_{R_*}$. The exterior region is also the domain of dependence of $\mathcal{B}_{R_*}$.

According to \eqref{initial data 1}, the following number is the initial energy for $\phi$ and $\Fc$ on $\mathcal{B}_{R_*}$:
\begin{equation}\label{def for mathring E}
\mathring{\E}_{\geq R_*}  = \sum_{k=0}^2 \int_{r\geq R_*}\int_{\mathbf{S}^2}\Big[r^{2k+6+8\varepsilon_0}\big(|D^k D\phi_0|^2+|D^k\phi_1|^2 + |\nabla^k \Fc|^2\big)+r^{4+8\varepsilon_0}|\phi_0|^2\Big]\, r^2dr d\vartheta.
\end{equation}
Since we will eventually take a large $R_*$, we can assume that for a given small positive number $\mathring{\varepsilon}<1$ one has
\begin{equation*}
\mathring{\E}_{\geq R_*}  \leq \mathring{\varepsilon}.
\end{equation*}

Before we proceed to the energy estimates, we prove a key technical lemma. The lemma is indispensable to the estimate on terms with critical decay (coming from the charge term) of the current $J$.
\begin{lemma}\label{lemma key}(Key technical lemma) Let $C_0, C_1,C_2, \gamma_0$ and $\varepsilon_0$ be positive numbers.  The constant $\varepsilon_0$ is small, say $\varepsilon_0=0.001$ and $\gamma_0 > 100 \varepsilon_0$. Let ${f}$ be an arbitrary scalar field satisfying the following two conditions:

1). For all $r_1 \geq R_*$, we have
\begin{equation}\label{condition 1 in lemma}
\int_{\B_{r_1}} r^{-2}|{f}|^2 \leq C_0 \epsilonc r_1^{-\gamma_0}.
\end{equation}

2). For all $r_2>r_1\geq R_*$, we have
\begin{equation}\label{condition 2 in lemma}
\int_{\H_{r_1}^{r_2}} |D_L {f}|^2 \leq C_1 \epsilonc r_1^{-\gamma_0}+C_2\int_{\D_{r_1}^{r_2}}\frac{1}{r^2}|{f}||D_L {f}|.
\end{equation}
Then there exists a constant $C$ depending only on $C_0,C_1,C_2$ and $\varepsilon_0$ such that
\begin{equation}
\int_{\H_{r_1}^{r_2}} |D_L {f}|^2 \leq C\epsilonc \cdot r_1^{-\gamma_0+\varepsilon_0}.
\end{equation}
\end{lemma}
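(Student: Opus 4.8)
The plan is to read hypothesis 2) as a Gronwall-type inequality in the radial parameter $r_1$ and to convert the logarithmic loss hidden in its error term into an $\varepsilon_0$-loss of decay, following the scheme of Yang \cite{yangILEMKG}. Foliating the bulk region $\D_{r_1}^{r_2}$ by the outgoing cones $\H_s$, $r_1\le s\le r_2$ (i.e.\ by the level sets of $u$), and abbreviating $h(s):=\int_{\H^{r_2}_s}|D_L f|^2$, hypothesis 2) becomes, up to an irrelevant numerical constant,
\[
h(r_1)\ \le\ C_1\epsilonc r_1^{-\gamma_0}+C_2\int_{r_1}^{r_2}\Big(\int_{\H_s\cap\D_{r_1}^{r_2}}\tfrac{1}{r^2}\,|f|\,|D_L f|\Big)\,ds ,
\]
so everything reduces to estimating the bulk flux $\int_{\H_s}\frac1{r^2}|f||D_L f|$ in terms of $h(s)$ and the $\B$-data. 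Note that on $\H_s$ the weight $\frac1{r^2}$ is \emph{critical}: heuristically $\frac1r$ counts as a $D_L$-derivative, so $\frac1{r^2}|f||D_L f|$ behaves like $\frac1r|D_L f|^2$, whose $s$-integral is logarithmically divergent.

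First I would establish a Hardy inequality along each cone. Integrating $L(|f|^2)=2\Re(\overline f\,D_L f)$ from the bottom sphere of $\H_s$, which lies on $\B_{r_1}$ where $r=s$, and applying Cauchy--Schwarz in $r$ gives
\[
\int_{\H_s}\frac{|f|^2}{r^{4}}\ =\ \int_{\mathbf S^2}\!\int_s^{r_2}\!\frac{|f|^2}{r^2}\,dr\,d\vartheta\ \les\ \frac{b(s)}{s}+\int_{\H_s}\frac{|D_L f|^2}{r^2}\ \les\ \frac{b(s)}{s}+\frac{h(s)}{s^2},
\]
where $b(s):=\int_{\mathbf S^2}|f|^2(0,s,\vartheta)\,d\vartheta$, so that $\int_{r_1}^\infty b(s)\,ds=\int_{\B_{r_1}}r^{-2}|f|^2\le C_0\epsilonc r_1^{-\gamma_0}$ by hypothesis 1). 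Writing $\int_{\H_s}\frac1{r^2}|f||D_L f|=\int_{\mathbf S^2}\int_s^{r_2}|f|\,|D_L f|\,dr\,d\vartheta$ and splitting the two factors as $r^{-1}|f|$ and $r|D_L f|$, Cauchy--Schwarz bounds it by $\big(\int_{\H_s}\frac{|f|^2}{r^4}\big)^{1/2}h(s)^{1/2}$, hence by
\[
\int_{\H_s}\frac1{r^2}|f|\,|D_L f|\ \les\ \Big(\tfrac{b(s)h(s)}{s}\Big)^{1/2}+\Big(\int_{\H_s}\tfrac{|D_L f|^2}{r^2}\Big)^{1/2}h(s)^{1/2}.
\]
Inserting this into the displayed form of hypothesis 2) and using AM--GM on the first summand produces a closed inequality of the schematic shape $h(r_1)\le A_0\epsilonc r_1^{-\gamma_0}+C_4\int_{r_1}^{r_2}\frac{h(s)}s\,ds+(\text{cross terms})$, with $A_0,C_4$ depending only on $C_0,C_1,C_2$.

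The hard part — which I expect to be by far the most delicate step — is the term $C_4\int_{r_1}^{r_2}\frac{h(s)}s\,ds$: because the weight $\frac1s$ is critical, a plain Gronwall argument would be licensed only if $C_4<\gamma_0$, and this fails precisely when the charge — hence $C_2$, hence $C_4$ — is large. Here I would use Yang's device for the logarithmic loss. Replacing $h(s)$ throughout by the cone-flux bound $h(s)\le C_1\epsilonc s^{-\gamma_0}+C_2\,\mathrm{Err}$, where $\mathrm{Err}:=\int_{\D_{r_1}^{r_2}}\frac1{r^2}|f||D_L f|$ is \emph{independent of $s$} (using $\D^{r_2}_s\subset\D^{r_2}_{r_1}$ for $s\ge r_1$), turns the estimate for $\mathrm{Err}$ into a self-improving inequality: in the Cauchy--Schwarz bound above $\mathrm{Err}$ enters the $b$-term only to the power $\tfrac12$, while a small power $r_1^{\varepsilon_0}$ is spent to make every radial integral converge — all legitimate since $\gamma_0>100\varepsilon_0$ — and to absorb the stray $\ln(r_2/r_1)$ carried by the critical weight, which lives in the remaining ``pure flux'' term $\int_{r_1}^{r_2}\big(\int_{\H_s}r^{-2}|D_L f|^2\big)^{1/2}h(s)^{1/2}\,ds$ and is exactly where the $\varepsilon_0$ is indispensable. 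The upshot is a bound $\mathrm{Err}\les\epsilonc r_1^{-\gamma_0+\varepsilon_0}$ with constant depending only on $C_0,C_1,C_2,\varepsilon_0$; plugging it back into hypothesis 2) gives $\int_{\H_{r_1}^{r_2}}|D_L f|^2\le C\epsilonc r_1^{-\gamma_0+\varepsilon_0}$. Equivalently one may run a continuity/bootstrap argument directly on the weighted quantity $\sup_{s\ge r_1}s^{\gamma_0-\varepsilon_0}h(s)$; the mechanism is the same — the forbidden rate $\gamma_0$ is relaxed to $\gamma_0-\varepsilon_0$, and nothing about the size of the charge is used beyond the fixed constants $C_2,C_4$.

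A technical preliminary is to keep all quantities finite: one works throughout on the truncated region $\D_{r_1}^{r_2}$ (so that all cone fluxes and the bulk integral $\mathrm{Err}$ are finite for the given smooth field), derives the estimate with a constant \emph{uniform in $r_2$}, and only afterwards lets $r_2\to\infty$ to obtain the bound on the complete cone $\H_{r_1}=\H_{r_1}^{\infty}$; the stated inequality for the truncated cone $\H_{r_1}^{r_2}$ is then immediate by monotonicity.
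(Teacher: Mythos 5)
Your first stage is sound and essentially parallels the paper: foliating $\D_{r_1}^{r_2}$ by the cones $\H_s^{r_2}$, proving the cone Hardy inequality $\int_{\H_s^{r_2}} r^{-4}|f|^2 \lesssim s^{-1}b(s)+s^{-2}h(s)$, and combining it with Cauchy--Schwarz, AM--GM and hypothesis 1) to reach the closed inequality $h(r_1)\le A_0\epsilonc r_1^{-\gamma_0}+C_4\int_{r_1}^{r_2}s^{-1}h(s)\,ds$ is exactly the paper's intermediate estimate (obtained there via a $u_+$-weighted Cauchy--Schwarz plus its Hardy lemma). The gap is in the step you yourself identify as the hard one, and neither of the two mechanisms you offer actually closes it. Substituting $h(s)\le C_1\epsilonc s^{-\gamma_0}+C_2\,\mathrm{Err}$ with $\mathrm{Err}$ independent of $s$ produces the term $C_2\ln(r_2/r_1)\,\mathrm{Err}$ on the right, and no factor $r_1^{\varepsilon_0}$ can absorb $\ln(r_2/r_1)$ uniformly in $r_2$ (take $r_2=e^{r_1}$): the constant must be uniform in $r_2$ before you let $r_2\to\infty$. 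Likewise the ``equivalent'' bootstrap on $M:=\sup_{s\ge r_1}s^{\gamma_0-\varepsilon_0}h(s)$ does not close: inserting it into the critical term gives $M\le C\epsilonc+\frac{C_4}{\gamma_0-\varepsilon_0}M$, which can be absorbed only if $C_4<\gamma_0-\varepsilon_0$, i.e.\ only when $C_2$ (hence the charge) is small relative to $\gamma_0$ --- precisely the restriction the lemma is meant to remove; relaxing the rate from $\gamma_0$ to $\gamma_0-\varepsilon_0$ makes this absorption constant worse, not better.

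What is actually needed --- and what constitutes ``Yang's device'' in the paper --- is a two-regime argument rather than a single absorption. First one applies Gronwall with the critical kernel as it stands, accepting the multiplicative loss $(r_2/r_1)^{CC_2}$, but only on short truncated cones: for $r_2\le r_1^*:=r_1^{1+\varepsilon_0/(2CC_2)}$ this loss is at most $r_1^{\varepsilon_0/2}$, giving a preliminary estimate $\int_{\H_{r_1}^{r_2}}|D_Lf|^2\lesssim\epsilonc r_1^{-\gamma_0+\varepsilon_0/2}$ valid for $r_2\le r_1^*$. Second, for the full cone one redoes the Cauchy--Schwarz with the integrable weight $u_+^{-1-\varepsilon_0/(2CC_2)}$ instead of $u_+^{-1}$ (so the pure flux term now obeys Gronwall with no loss), and splits each cone $\H_{2u}$ at the radius $(2u)^*$: on the far portion the Hardy inequality based at the sphere of radius $(2u)^*$ hands the flux the extra factor $((2u)^*)^{-2}\approx u_+^{-2-2\delta}$, which makes its $u$-weight integrable, while the near portion and the boundary sphere term are controlled by the regime-one estimate. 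Without this splitting (or an equivalent dyadic device in which the critical kernel acts only over ranges of length $\sim\varepsilon_0/C_2$ in $\log r$), the inequality cannot be closed for large $C_2$, so as written your proposal does not yet prove the lemma.
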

\begin{remark}
As we have mentioned in the introduction that the error term caused by the charge may lead to a logarithmic growth by using the standard Gronwall's inequality. The importance of this lemma is to avoid this log-loss with the price of losing a bit of decay. This technique was introduced by the first author in \cite{yangILEMKG} to derive the energy flux decay. For completeness we summarize it as a Lemma which will also be used to obtain higher order energy estimates.
\end{remark}
\begin{proof}
Recall that $u_+=1+|u|$. By virtue of Cauchy-Schwarz inequality, we have
\begin{align*}
\mathbf{I}&:=\int_{\D_{r_1}^{r_2}}\frac{1}{r^2}|{f}||D_L {f}|\lesssim \underbrace{\int_{u}\int_{v}\int_{\vartheta} u_+^{-1}r^2|D_L {f}|^2 du dv d\vartheta}_{\mathbf{I}_1}+\underbrace{\int_{u}\int_{v}\int_{\vartheta}u_+ r^{-2}|{f}|^2 du dv d\vartheta}_{\mathbf{I}_2}.
\end{align*}
We first deal with $\mathbf{I}_2$. In view of the case $\gamma=4$ in \eqref{inequality hardy on H} of Appendix \ref{Appendix tools}, we have
\begin{align*}
\mathbf{I_2} &\lesssim \int_{\frac{r_1}{2}}^{\frac{r_2}{2}}u_+\Big(\int_{\H_{2u}^{r_2}}r^{-4}|{f}|^2 \Big)du \lesssim \int_{\frac{r_1}{2}}^{\frac{r_2}{2}}u_+\Big( u_+^{-3}\int_{\S_{2u}^{2u}}|{f}|^2+u_+^{-2}\int_{\H_{2u}^{r_2}}\big|D_L{f}\big|^2 \Big)du\\
&=\underbrace{\int_{\frac{r_1}{2}}^{\frac{r_2}{2}}u_+^{-2}\Big(\int_{\S_{2u}^{2u}}|{f}|^2 \Big)du}_{\ \ \ \ \ \ \ \ \ \ \ \ \ \ \displaystyle\approx \int_{\B_{r_1}^{r_2}}|{f}|^2 \lesssim  C_0 r_1^{-\gamma_0}\epsilonc}+\int_{\D_{r_1}^{r_2}}u_+^{-1}\big|D_L{f}\big|^2.
\end{align*}
Here the implicit constant is a universal constant. In particular there exists a universal constant $C$ such that
\begin{equation*}
\begin{split}
\int_{\H_{r_1}^{r_2}} |D_L {f}|^2
&\leq C C_1 r_1^{-\gamma_0}\epsilonc + C C_2 \int_{\D_{r_1}^{r_2}}\big(1+|u|\big)^{-1}|D_L {f}|^2 \\
&= C C_1 r_1^{-\gamma_0}\epsilonc + C C_2 \int_{{r_1}}^{r_2}\frac{1}{s}\Big(\int_{\H_{s}^{r_2}}|D_L {f}|^2\Big) ds.
\end{split}
\end{equation*}
We now apply Gronwall's inequality in Lemma \ref{lemma gronwall} (by setting $f(s) = \int_{\H_{s}^{r_2}}|D_L {f}|^2$) to conclude that
\begin{equation*}\label{A1}
\int_{\H_{r_1}^{r_2}} |D_L {f}|^2 \leq C(C_1+C_2)\epsilonc \cdot r_1^{-\gamma_0} \big(r_2 r_1^{-1})^{CC_2}.
\end{equation*}
For a given $r_1$, define $r_1^* := r_1^{1+\frac{\varepsilon_0}{2CC_2}}$. Then for all $r_2 \leq r_1^*$, we have
\begin{equation}\label{first case losing epsilon}
\int_{\H_{r_1}^{r_2}} |D_L {f}|^2 \lesssim_{C_1,C_2}\epsilonc \cdot r_1^{-\gamma_0+\frac{\varepsilon_0}{2}}, \ \ \ r_2 \leq r_1^*=r_1^{1+\frac{\varepsilon_0}{2CC_2}}.
\end{equation}
Here the implicit constant depends only on $C_1$ and $C_2$.
We now study the case $r_2 > r_1^*$ in a different way. In fact, we take $r_2=\infty$ and we have
\begin{equation}\label{II2}
\begin{split}
\mathbf{I}&=\int_{\D_{r_1}}\frac{1}{r^2}|{f}||D_L {f}| \leq \int_{\D_{r_1}} u_+^{-1-\frac{\varepsilon_0}{2CC_2}}|D_L {f}|^2+u_+^{1+\frac{\varepsilon_0}{2CC_2}}r^{-4}|{f}|^2\\
&=\underbrace{\int_{\frac{r_1}{2}}   u_+^{-1-\frac{\varepsilon_0}{2CC_2}} \int_{\H_{2u}}|D_L{f}|^2 du }_{\mathbf{II}_1}+\underbrace{\int_{\frac{r_1}{2}}^{\infty} u_+^{1+\frac{\varepsilon_0}{2CC_2}}\Big(\overbrace{\int_{v}\int_{\vartheta}r^{-2}|{f}|^2dv d\vartheta}^{\displaystyle \mathbf{II'}_2(u) = \int_{\H_{2u}} r^{-4}|{f}|^2} \Big) du}_{\mathbf{II}_2},
\end{split}
\end{equation}
where $C$ is the constant in the definition of $r_1^*$. Because $u_+^{-1-\frac{\varepsilon_0}{2CC_2}}$ is integrable in $u$, Gronwall's inequality enables us to bound ${\mathbf{II}_1}$ by the righthand side of \eqref{condition 2 in lemma}. It suffices to control ${\mathbf{II}_2}$.

\ \ \ \ \ \ \ \ \ \ \ \ \ \ \ \ \ \ \ \ \ \ \ \ \ \ \ \ \ \ \ \ \ \ \ \ \ \ \ \  \includegraphics[width=2.5 in]{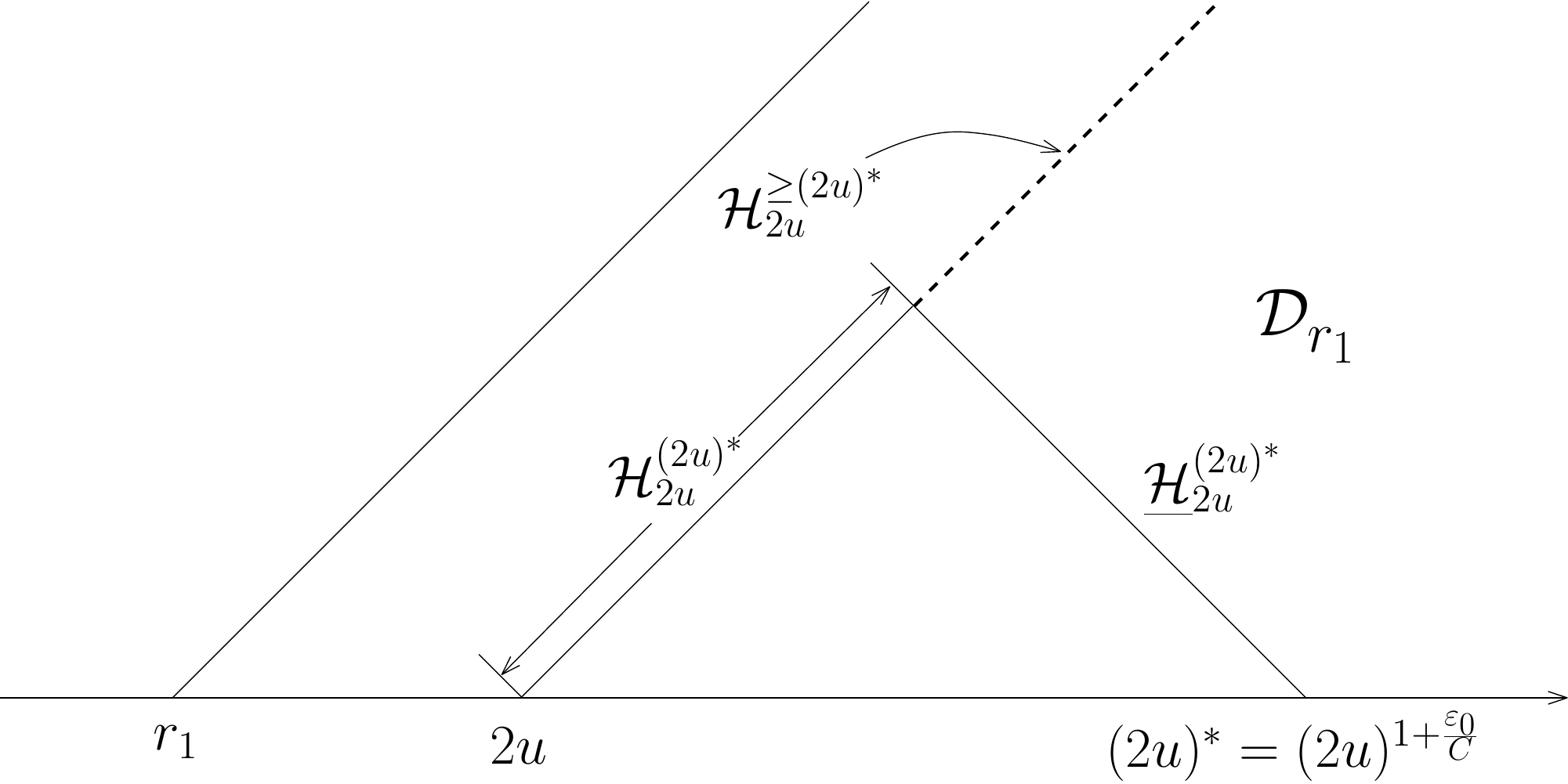}

The cone $\H_{2u}$ is the union of $\H_{2u}^{(2u)^*}$ and $\H_{2u}^{\geq (2u)^*}$ which is the cone emanated from the sphere $\S_{2u}^{(2u)^*}$ ( $(2u)^*= (2u)^{1+\frac{\varepsilon_0}{2CC_2}}$). In the picture, $\H_{2u}^{\geq (2u)^*}$ is denoted by the dashed line. Thus, we have
\begin{align*}
\mathbf{II'}_2(u) = \underbrace{\int_{\H_{2u}^{(2u)^*}} r^{-4} |{f}|^2}_{\mathbf{A}}+\underbrace{\int_{\H_{2u}^{\geq (2u)^*}} r^{-4} |{f}|^2}_{\mathbf{B}}.
\end{align*}
For the term $\mathbf{A}$, we can apply the $\gamma=4$ case of \eqref{inequality hardy on H} and we obtain
\begin{equation*}
\begin{split}
\int_{\H_{2u}^{(2u)^*}}\frac{1}{r^4}|{f}|^2 &\lesssim  \underbrace{u_+^{-3}\int_{\S_{2u}^{2u}}|{f}|^2}_{\mathbf{A}_1}+u_+^{-2}\underbrace{\int_{\H_{2u}^{(2u)^*}}\big|D_L{f}\big|^2}_{\text{$\mathbf{A}_2$, use \eqref{first case losing epsilon}}}\lesssim_{C_1,C_2}  u_+^{-3}\int_{\S_{2u}^{2u}}|{f}|^2+\epsilonc \cdot u_+^{-\gamma_0-2+\frac{\varepsilon_0}{2}}.
\end{split}
\end{equation*}
So the contribution of $\mathbf{A}$ in $\mathbf{II}_2$ is bounded by (we can always assume that $CC_2\geq 1$)
\begin{align*}
\int_{\frac{r_1}{2}}^{\infty}u_+^{1+\frac{\varepsilon_0}{2CC_2}}\mathbf{A}\, du &\lesssim_{C_1,C_2}\underbrace{ \int_{\frac{r_1}{2}}^{\infty}u_+^{-2+\frac{\varepsilon_0}{2CC_2}} \int_{\S_{2u}^{2u}}|{f}|^2 du}_{\text{use \eqref{condition 1 in lemma} and Lemma \ref{lemma changing decay rates}}}+\epsilonc\int_{\frac{r_1}{2}}^{\infty}u_+^{-\gamma_0-1+\varepsilon_0+\frac{\varepsilon_0}{2CC_2}} du\\
&\lesssim_{C_0,C_1,C_2}\epsilonc \cdot u^{-\gamma_0+\varepsilon_0}.
\end{align*}
For the term $\mathbf{B}$, we can apply Lemma \ref{lemma Hardy on H} with $\gamma=4$ and $r_2=\infty$ to obtain that
\begin{equation}
\mathbf{B} \lesssim \underbrace{\big((2u)^*\big)^{-3}\int_{\S_{2u}^{(2u)^*}}|{f}|^2}_{\mathbf{B}_1}+\underbrace{(u^*)^{-2}\int_{\H_{2u}^{\geq (2u)^*}}\big|D_L{f}\big|^2}_{\mathbf{B}_2}.
\end{equation}
The contribution of $\mathbf{B}_2$ in $\mathbf{II}_2$ is bounded by
\begin{align*}
\int_{\frac{r_1}{2}}^{\infty}u_+^{1+\frac{\varepsilon_0}{2CC_2}} \mathbf{B}_2 du
&\lesssim \int_{\frac{r_1}{2}}^{\infty}u_+^{-1-\frac{\varepsilon_0}{2CC_2}} \int_{\H_{2u}}|D_L \psi|^2 du.
\end{align*}
Therefore, this is the same expression as $\mathbf{II_1}$ and we can ignore this term.

For $\mathbf{B}_1$, up to a universal constant, according to Lemma \ref{lemma Hardy on H} for $r_1 =2u$ and $r_2 = (2u)^*$, we have
\begin{align*}
\mathbf{B}_1\leq |u|^{-3}\int_{\S_{2u}^{2u}}|\psi|^2
+\int_{\H_{2u}^{(2u)^*}}\frac{1}{r^2}|D_L\psi|^2.
\end{align*}
Now the first term on the right hand side is $\mathcal{A}_1$ and the second term is $\mathcal{A}_2$ which have already been estimated. In particular we have bounded all the terms and thus complete the proof.
\end{proof}

\subsection{Zeroth order energy estimates} We prove the zeroth order energy estimate.
\begin{proposition}\label{proposition energy estimate 0th order} For $r_1 \geq R_*$ and $1\leq p \leq 2$, we have
\begin{equation}\label{energy estimate 0th order}
\begin{split}
\mathcal{E}^{(0)}(\phi;r_1)+\mathcal{E}^{(0)}(\Fc;r_1) &\leq 2\epsilonc \cdot r_1^{-6-6\varepsilon_0},\\
\mathcal{E}^{(0)}(\phi;p;r_1)+\mathcal{E}^{(0)}(\Fc;p;r_1) &\leq 2\epsilonc \cdot r_1^{p-6-6\varepsilon_0}.
\end{split}
\end{equation}
\end{proposition}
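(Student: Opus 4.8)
The plan is to run a continuity/bootstrap argument on the $r$-weighted and classical energy identities of Lemmas \ref{classical energy identity} and \ref{r weighted energy estimates}, using Lemma \ref{lemma key} to defeat the logarithmic loss coming from the charge. Since the region $\D_{R_*}$ is the domain of dependence of $\B_{R_*}$ and the chargeless part of the restricted data has $\mathring{\E}_{\geq R_*}\leq \epsilonc$, all boundary terms on $\B_{r_1}$ for $r_1\geq R_*$ are controlled: from \eqref{def for mathring E} the weight $r^{2\cdot 0+6+8\varepsilon_0}=r^{6+8\varepsilon_0}$ attached to $|D\phi_0|^2,|\phi_1|^2,|\Fc|^2$ and $r^{4+8\varepsilon_0}$ attached to $|\phi_0|^2$ give, after extracting the $r^2$ in the measure, decay of the form $\E[\Fc,\phi](\B_{r_1})\lesssim \epsilonc\, r_1^{-6-8\varepsilon_0}$ and $\int_{\B_{r_1}}r^{p}(|\alpha|^2+\cdots)+r^{p-2}(|D_L\psi|^2+|\snabla\psi|^2)\lesssim\epsilonc\, r_1^{p-6-8\varepsilon_0}$, and similarly $\int_{\B_{r_1}}r^{-2}|\psi|^2=\int_{\B_{r_1}}|\phi|^2\lesssim\epsilonc\, r_1^{-4-8\varepsilon_0}$ so hypothesis \eqref{condition 1 in lemma} of Lemma \ref{lemma key} holds with $\gamma_0$ essentially $4+8\varepsilon_0$ (and, for the fluxes one wants, $\gamma_0=6+8\varepsilon_0$ after one more $r^{-2}$). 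First I would set up the bootstrap assumption: there is a large constant $\mathbf{B}$ so that for all $r_1\geq R_*$, $\mathcal{E}^{(0)}(\phi;r_1)+\mathcal{E}^{(0)}(\Fc;r_1)\leq \mathbf{B}\epsilonc r_1^{-6-6\varepsilon_0}$ and $\mathcal{E}^{(0)}(\phi;p;r_1)+\mathcal{E}^{(0)}(\Fc;p;r_1)\leq \mathbf{B}\epsilonc r_1^{p-6-6\varepsilon_0}$, and aim to improve $\mathbf{B}$ to $2$.

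The core step is estimating the error terms $\mathbf{Err_p}$ in \eqref{r wieghted} and the corresponding error in \eqref{classical energy inequality}. For the $\phi$-equation the relevant term is $\int_{\D_{r_1}}r^{p-1}\Re(\overline{\Box_A\phi}\,D_L(r\phi))$; since $\Box_A\phi=0$ this \emph{vanishes} for the actual solution — this is precisely the ``reduced structure''. The genuinely dangerous terms are the Maxwell ones $\int r^p\,\nabla^\mu\Fc_{\mu\nu}\Fc_L{}^\nu=-\int r^p J_\nu\Fc_L{}^\nu$ and the coupling $\int r^p F_{L\mu}J[\phi]^\mu$. Writing $r^2 J_\mu=\Im(\psi\cdot\overline{D_\mu\psi})=J_\mu[\psi]$ via the algebraic identity, these become integrals like $\int r^{p-2}|\psi|\,|D_L\psi|\,|\rhoc|$ etc.; the worst piece contains the charge contribution $\rho=\tfrac{q_0}{r^2}+\rhoc$, producing exactly the term $\int_{\D_{r_1}}\tfrac{|q_0|}{r^2}|\psi||D_L\psi|$ that appears in \eqref{condition 2 in lemma}. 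All other pieces (those involving $\rhoc,\alphac,\sigmac,\alphabc$ rather than the bare charge) are genuinely quadratic in chargeless quantities and can be absorbed using Cauchy–Schwarz, the Hardy inequalities of Appendix \ref{Appendix tools}, the null-decay gains, and the smallness $\epsilonc$ together with the bootstrap bounds — here one uses $p\leq 2$ so the spacetime integral on the right of \eqref{r wieghted} has a favourable sign and can soak up small multiples of itself.

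The remaining irreducible charge term is handled exactly by Lemma \ref{lemma key}: the classical flux identity gives
\[
\F[0,\phi](\H_{r_1})\leq C\epsilonc\, r_1^{-6-6\varepsilon_0}+C\int_{\D_{r_1}}\frac{|q_0|}{r^2}|\psi|\,|D_L\psi|,
\]
which is precisely hypothesis \eqref{condition 2 in lemma} with $f=\psi$, $\gamma_0=6+6\varepsilon_0$; combined with \eqref{condition 1 in lemma} verified above, Lemma \ref{lemma key} upgrades this to $\int_{\H_{r_1}}|D_L\psi|^2\lesssim\epsilonc r_1^{-6-6\varepsilon_0+\varepsilon_0}$. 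Since the data carries $8\varepsilon_0$ of decay and we only claim $6\varepsilon_0$, the $+\varepsilon_0$ loss (and a second such loss when one feeds the improved flux back into the $\Hb$ fluxes and the $r^p$-weighted quantities) is affordable: $8\varepsilon_0-\varepsilon_0-\varepsilon_0=6\varepsilon_0$. One then closes: the improved fluxes control the weighted quantities via \eqref{r wieghted} with $p$ decreased by $1$ if needed (a standard $r^p$-hierarchy/pigeonhole step à la Dafermos–Rodnianski), the constant in front is driven below $2$ by taking $R_*$ large (hence $\epsilonc$ small), and the bootstrap closes, giving \eqref{energy estimate 0th order}. The main obstacle is the bookkeeping of the charge error term — ensuring that after applying Lemma \ref{lemma key} the net $\varepsilon_0$-losses stay within the $8\varepsilon_0$ budget while simultaneously propagating the estimate across all of $\mathcal{E}^{(0)}(\phi;\cdot)$, $\mathcal{E}^{(0)}(\Fc;\cdot)$ and their $r^p$-weighted versions for the full range $1\leq p\leq 2$, since these are coupled through the Maxwell–scalar interaction and through the $\Hb$-flux suprema.
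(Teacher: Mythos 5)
Your overall skeleton for the charge term (control the boundary terms from the weighted data norm, reduce to the error $\int_{\D_{r_1}}\frac{|q_0|}{r^2}|\psi||D_L\psi|$ via the identity $r^2J_\mu=\Im(\psi\cdot\overline{D_\mu\psi})$, invoke Lemma \ref{lemma key}, and keep the $\varepsilon_0$-losses inside the $8\varepsilon_0$ budget) matches the paper, but there is a genuine gap in how you treat the remaining error terms in $\mathbf{Err_p}$ and in the classical identity. You claim that the pieces involving $\rhoc,\alphac,\sigmac,\alphabc$ ``can be absorbed using Cauchy--Schwarz, the Hardy inequalities, the null-decay gains, and the smallness $\epsilonc$ together with the bootstrap bounds.'' Those pieces are in fact cubic: with $G=\Fc$, $f=\phi$, the Maxwell error is $r^p\nabla^\mu\Fc_{\mu\nu}\,\Fc_L{}^\nu=-r^pJ_\nu\,\Fc_L{}^\nu$, a product of the current $J=\Im(\phi\cdot\overline{D\phi})$ (quadratic in the scalar field) with a chargeless curvature component. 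At the zeroth-order stage no pointwise bounds on $\phi$ or $\Fc$ are available — the $L^\infty$ estimates of Propositions \ref{Proposition pointwise decay of Maxwell} and \ref{Proposition pointwise decay of scalar field} rest on the higher-order ansatz $(\mathbf{B})$, $(\mathbf{C})$, which in turn presupposes this proposition — so Cauchy--Schwarz against purely zeroth-order $L^2$ quantities cannot close these cubic terms, and the zeroth-order bootstrap constant $\mathbf{B}$ you introduce supplies no sup-norm control either. Note also that the paper proves the proposition unconditionally, with no bootstrap at this order.

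The missing idea is that no absorption is needed: these terms cancel identically. Since $\nabla^\mu\Fc_{\mu\nu}=-J_\nu$ in the exterior region and $F=\Fc+F[q_0]$ there, one has $-r^pJ_\nu\Fc_L{}^\nu + r^pF_{L\mu}J[\phi]^\mu = r^pF[q_0]_{L\mu}J[\phi]^\mu$, so the integrand of $\mathbf{Err_p}$ reduces exactly to $q_0\,r^{p-2}J_L$ (and the analogous cancellation occurs in the $X=\partial_t$ identity), while $\Box_A\phi=0$ kills the scalar source term as you correctly noted. This algebraic cancellation is the crux of the paper's proof; once it is in place, your remaining steps — rewriting $r^2J_L=\Im(\psi\cdot\overline{D_L\psi})$, verifying \eqref{condition 1 in lemma} from the data, applying Lemma \ref{lemma key} with an affordable $\varepsilon_0$-loss, and taking $R_*$ large to beat the constants and get the factor $2$ — do coincide with the paper's argument.
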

\begin{proof}
 We first prove the second estimate for the endpoint case $p=2$ (This is the only case which has applications in the current work. Indeed, for $p<2$, the proof is exactly the same and one may also see \cite{yangILEMKG}). We set $G=\Fc$, ${f}=\phi$ and $rf=\psi=r\phi$ in Lemma \ref{r weighted energy estimates}. Thus, \eqref{r wieghted} yields
\begin{equation}\label{r weighted for Fc}
\begin{split}
 & \ \ \int_{\H_{r_1}^{r_2}}|D_L \psi|^2+r^2|\alphac|^2+\int_{\Hb_{r_2}^{r_1}}|\slashed{D} \psi|^2+r^2|\rhoc|^2+r^2|\sigmac|^2 +\int_{\D_{r_1}^{r_2}}r^{-1}\big(|D_L \psi|^2+r^2|\alphac|^2\big)+\mathbf{Err_p}\\
 &=\int_{\B_{r_1}^{r_2}}|D_L \psi|^2+|\slashed{D} \psi|^2+r^{2}\big(|\alphac|^2+|\rhoc|^2+|\sigmac|^2\big) \leq \epsilonc \cdot r_1^{-4-8\varepsilon_0}.
\end{split}
\end{equation}
It suffices to bound the term $\mathbf{Err_p}$ of \eqref{r wieghted}. It is straightforward to see that the integrand of $\mathbf{Err_p}$ is $q_0 r^{p-2}J_L$. Hence,
\begin{align*}
\big|\mathbf{Err_p}\big|&\stackrel{p=2}{=}\big| q_0 \int_{\D_{r_1}^{r_2}}J_L\big|=\big|q_0 \int_{\D_{r_1}^{r_2}}\Im(\overline{D_L\phi}\cdot\phi)\big|=\big|q_0 \int_{\D_{r_1}^{r_2}}r^{-2}\Im(\overline{D_L\psi}\cdot\psi)\big|.
\end{align*}
In particular, \eqref{r weighted for Fc} implies
\begin{equation*}
\begin{split}
\int_{\H_{r_1}^{r_2}}|D_L \psi|^2 \lesssim \epsilonc \cdot r_1^{-4-8\varepsilon_0}+ \int_{\D_{r_1}^{r_2}}r^{-2}|\psi||D_L\psi|
\end{split}
\end{equation*}
We now can use Lemma \ref{lemma key} (with $\gamma=-4-8\varepsilon_0$) and we obtain that
\begin{equation*}
\begin{split}
\int_{\H_{r_1}^{r_2}}|D_L \psi|^2 + \int_{\D_{r_1}^{r_2}}r^{-2}|\psi||D_L\psi|\lesssim \epsilonc \cdot r_1^{-4-7\varepsilon_0}.
\end{split}
\end{equation*}
 This leads to the $r$-weighted energy estimates with $p=2$. The case when $p<2$ follows in a similar way. Once we have control on the error term caused by the nonzero charge, the first estimate of the proposition is an immediate consequence of the basic energy identity \eqref{classical energy inequality}. We may always assume that $\R_*$ is large enough so that by afford a factor $r_1^{-\varepsilon_0}$ we beat all the constants from Lemma \ref{lemma key}.This completes the proof.
\end{proof}

\section{The analysis in the exterior region 1: bootstrap ansatz and decay estimates}

\subsection{Bootstrap ansatz}
We make two sets of ansatz on the exterior region $\D_{R_*}$. The first set is on the energy quantities:
\begin{equation*}
\boxed{
\begin{aligned}
\mathcal{E}^{(\k)}(\Fc;r_1)+\mathcal{E}^{(\k)}(\phi;r_1)& \leq 4{\epsilonc}r_1^{-6+2\xi(\k)-(6-2|\k|)\varepsilon_0},\\
\mathcal{E}^{(\k)}(\Fc;p;r_1)+\mathcal{E}^{(\k)}(\phi;p;r_1)&\leq 4{\epsilonc}r_1^{p-6+2\xi(\k)-(6-2|\k|)\varepsilon_0},
\end{aligned}
\ \  \ r_1\geq R_*, \ |\k| = 1, 2 \ \text{and} \ p\in[0,2]. \quad {\mathbf{(B)}}}
\end{equation*}
The second set is on the current terms:
\begin{equation*}
\boxed{
\begin{aligned}
& \ \text{For all} \ r_1\geq R_*, \ |\k| \leq 1, \ \text{we assume}\\
& \int_{\H_{r_1}}\frac{{|\LJk|^2}}{r_1^2}+\int_{\H_{r_1}}\frac{|\sJk|^2}{r^2}+\sup_{r_2 \geq r_1}\int_{\Hb_{r_2}^{r_1}} \frac{|\sJk|^2}{r^2} +\sup_{r_2 \geq r_1}r_1^{\frac{3}{2}}\int_{\Hb_{r_2}^{r_1}} \frac{|\LbJk|^2}{r^{\frac{7}{2}}}\leq 4{\epsilonc}^2 r_1^{-8+2\xi(\k)-4\varepsilon_0}\\
&\text{and for } |\mathbf{k}|=2, \ \text{we assume} \\
& \int_{\H_{r_1}}\frac{|\LJk|^2}{r_1^2}+\int_{\H_{r_1}}\frac{|\sJk|^2}{r^2} +\sup_{r_2 \geq r_1}r_1^{\frac{3}{2}}\int_{\Hb_{r_2}^{r_1}} \frac{|\LbJk|^2}{r^{\frac{7}{2}}}\leq 4{\epsilonc}^2 r_1^{-8+2\xi(\k)-4\varepsilon_0}.
\end{aligned}\quad {\mathbf{(C)}}}
\end{equation*}

We will show that if  $\epsilonc$ is sufficiently small (by setting $R_*$ to be sufficiently large),  the constant $4$ in the ansatz can be improved to be $2$. In the sequel, the bootstrap argument should be understood dynamically (as one does in solving the Cauchy problem): we assume that the solution is defined in the region where $0\leq t \leq T_*$ and $T_*$ is a fixed positive number. Therefore, for  sufficiently small $T_*$, $\mathbf{(B)}$ holds. The bootstrap argument will show that one can indeed replace the constant $4$ by $2$ and this is independent of $T_*$. Therefore, by letting $T_*\rightarrow \infty$, we obtain estimates on the entire spacetime.

Based on these ansatz, we will first derive pointwise estimates on $\Fc$ and $\phi$.
\subsection{Pointwise decay estimates of the Maxwell field}
We use $(\mathbf{B})$ to bound $\alphac$, $\rhoc$, $\sigmac$ and $\alphabc$.
\begin{proposition}\label{Proposition pointwise decay of Maxwell}
We have the following decay estimates:
\begin{equation*}
\begin{split}
|\alphac| &\lesssim \sqrt{{\epsilonc}} r^{-3} u_+^{-1-\varepsilon_0}, \ \ |\rhoc|+|\sigmac| \lesssim \sqrt{{\epsilonc}} r^{-2} u_+^{-2-\varepsilon_0},\ \ |\alphabc| \lesssim  \sqrt{{\epsilonc}} r^{-1} u_+^{-3-\varepsilon_0}.
\end{split}
\end{equation*}
\end{proposition}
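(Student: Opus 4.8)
The claimed bounds are of Klainerman--Sobolev type, so the plan is to convert the weighted energy fluxes contained in the bootstrap ansatz $\mathbf{(B)}$ (together with the zeroth order bounds of Proposition \ref{proposition energy estimate 0th order}) into pointwise decay by Sobolev embedding on the $2$--spheres $\S_{t,r}=\{(t,r)\}\times\mathbf{S}^2$ foliating $\D_{R_*}$. First I would fix a point $(t,r,\vartheta)\in\D_{R_*}$ and apply the scale invariant Sobolev inequality on $\S_{t,r}$, $\|g\|_{L^\infty(\S_{t,r})}^2\lesssim r^{-2}\sum_{j\le2}\|(r\snabla)^jg\|_{L^2(\S_{t,r})}^2$. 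Since the rotation fields $\Omega_{ij}$ are Killing, span the tangent space of $\S_{t,r}$, and commute with the null frame modulo terms of the same order, the angular derivatives $(r\snabla)^j$ of a null component of $\Fc$ may be replaced, up to harmless lower order contributions, by the corresponding null components of $\mathcal{L}_Z^{\k}\Fc$ with $Z\in\{\Omega_{ij}\}$ and $|\k|\le2$; for such multi--indices $\xi(\k)=0$, so the relevant exponents in $\mathbf{(B)}$ are $r_1^{-6-(6-2|\k|)\varepsilon_0}$ for the energy and $r_1^{p-6-(6-2|\k|)\varepsilon_0}$ for the $r^p$--weighted energy (and for $|\k|=0$ the stronger $r_1^{-6-6\varepsilon_0}$, $r_1^{p-6-6\varepsilon_0}$ from Proposition \ref{proposition energy estimate 0th order}). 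It then remains to bound the sphere integrals $\int_{\S_{t,r}}|\alpha^{(\k)}|^2$, $\int_{\S_{t,r}}|\rho^{(\k)}|^2$, $\int_{\S_{t,r}}|\sigma^{(\k)}|^2$ and $\int_{\S_{t,r}}|\alphab^{(\k)}|^2$ for all rotational $\k$ with $|\k|\le2$.

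To estimate these sphere integrals I would integrate the null transport equations in \eqref{Maxwell null commuted} along the generators through $\S_{t,r}$. For $\alpha^{(\k)},\rho^{(\k)},\sigma^{(\k)}$ I view $(t,r,\vartheta)$ as a point on the outgoing cone $\H_{r_1}$ with $r_1=-2u$ and integrate in the $L$ (equivalently $v$) direction from the vertex of the cone; to keep the $r$--weights sharp one must use the endpoint $p=2$ version of the $r^p$--weighted flux, which is exactly what supplies the extra $r^2$ needed to reach $r^{-3}$ for $\alphac$ and $r^{-2}$ for $\rhoc,\sigmac$. For $\alphab^{(\k)}$ the only available estimate is the unweighted incoming flux $\Fb$, which accounts for the weaker $r^{-1}$ weight in the claim; here I would integrate the $\alphab^{(\k)}$--transport equation of \eqref{Maxwell null commuted} starting from the data on $\B_{R_*}$, whose contribution is controlled by $\mathring{\E}_{\geq R_*}$. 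The crucial structural point throughout is that the number of rotational derivatives must not increase: instead of differentiating the transported quantity one integrates by parts on $\S_{t,r}$ and feeds the angular divergence and curl of one null component into the transport equation of another --- e.g. $\slashed{\Div}(r^2\alphab^{(\k)})$ and the curl of $r^2\,{}^*\alphab^{(\k)}$ are rewritten through $\Lb(r^2\rho^{(\k)})+\LbJk$ and $\Lb(r^2\sigma^{(\k)})$ via \eqref{Maxwell null commuted} --- so that the Hodge system on the sphere closes at the same order. The source terms $\LJk$, $\LbJk$, $\sJk$ produced in \eqref{Maxwell null commuted} are estimated by the current ansatz $\mathbf{(C)}$.

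For the bookkeeping, note that on the cone through $\S_{t,r}$ one has $r_1=-2u$, and since $r_1\geq R_*\geq1$ the two weights are comparable, $u_+=1+|u|\approx r_1$. Inserting the $|\k|=2$, $\xi(\k)=0$, $p=2$ bound $\int_{\H_{r_1}}r^4|\alpha^{(\k)}|^2\lesssim\epsilonc r_1^{-4-2\varepsilon_0}$ into the transport plus Sobolev chain yields $|\alphac|^2\lesssim\epsilonc r^{-6}u_+^{-2-2\varepsilon_0}$, that is $|\alphac|\lesssim\sqrt{\epsilonc}\,r^{-3}u_+^{-1-\varepsilon_0}$; running the same scheme for $\rho,\sigma$ and $\alphab$ gives $|\rhoc|+|\sigmac|\lesssim\sqrt{\epsilonc}\,r^{-2}u_+^{-2-\varepsilon_0}$ and $|\alphabc|\lesssim\sqrt{\epsilonc}\,r^{-1}u_+^{-3-\varepsilon_0}$. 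The square root of $\epsilonc$ is simply the square root of the energy size, and the $R_*$ may be taken large enough to absorb all universal constants from the transport and Sobolev steps.

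The main obstacle is the passage from fluxes to single--sphere integrals: the null generator through $\S_{t,r}$ connects $2$--spheres of different radii, so one must commute $\snabla$ past $\snabla_L$ (or $\snabla_\Lb$) and track the evolving area form, and --- because only two rotational commutations are available --- the Hodge--theoretic reorganization above is essential and has to be performed without generating any net derivative or weight loss. Getting the sharp $r^{-3}$ decay of $\alphac$, which relies on the endpoint $p=2$ weighted flux, and the $\alphab$ estimate, for which only the unweighted flux is at hand, are the two places where this bookkeeping is tightest.
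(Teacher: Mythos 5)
Your scheme (sphere Sobolev fed by null fluxes, with transversal derivatives recovered from the transport equations in \eqref{Maxwell null commuted} and the currents from $\mathbf{(C)}$) is essentially the paper's argument for $\alphabc$, $\rhoc$ and $\sigmac$, and the weight counting you give for those components is right, up to the minor point that the paper handles the $\mathcal{L}_\Lb$-derivative needed in the incoming $L^4$ trace by writing $\Lb=2T-L$ and invoking the $T$-commuted flux rather than integrating from $\B_{R_*}$. The genuine gap is in your treatment of $\alphac$: you claim the endpoint $p=2$ flux together with rotational commutations (all with $\xi(\k)=0$) ``supplies the extra $r^2$ needed to reach $r^{-3}$.'' It does not. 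The best outgoing flux available from $\mathbf{(B)}$ with $p=2$ is $\int_{\H_{r_1}}r^2|\alpha^{(\k)}|^2\lesssim\epsilonc r_1^{-4-\cdots}$, and any trace argument transferring this to a single sphere consumes part of that weight: schematically $\|r\alpha^{(\k)}\|_{L^2(\S_{r_1}^{r_2})}^2$ is controlled by the initial sphere plus $\|\nabla_L(r\alpha^{(\k)})\|_{L^2(\H_{r_1})}\|r\alpha^{(\k)}\|_{L^2(\H_{r_1})}$, and $\nabla_L(r\alpha^{(\k)})$ (obtained via $L=2T-\Lb$ and the $\Lb$-transport equation) carries no additional $r$-weight in its flux. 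Running this through the sphere Sobolev inequality yields only $|\alphac|\lesssim\sqrt{\epsilonc}\,r^{-2}u_+^{-2-\varepsilon_0}$ --- exactly the suboptimal bound the paper records in the remark after its Step 2 before discarding this route.

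The missing idea is the use of the Morawetz vector field $K$ as a commutator, which is one of the advertised novelties of the paper and is the only source of the third power of $r$. Since $\alpha(\mathcal{L}_K G)=v^{-1}\nabla_L(v^3\alpha(G))+u^2\nabla_\Lb\alpha(G)+u\,\alpha(G)$, the $K$-commuted energies in $\mathbf{(B)}$ (admissible precisely because $\xi(K)=+1$ allows the flux to decay two powers of $r_1$ more slowly) give control of $\nabla_L(v^3\mathcal{L}_\Omega\alphac)$ on $\H_{r_1}$; the trace argument is then run on $v^2\mathcal{L}_\Omega\alphac$, so two extra powers of $r$ survive and one power of $u$-decay is sacrificed, matching the shape $r^{-3}u_+^{-1-\varepsilon_0}$ of the claimed bound. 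Even then the conversion to $L^\infty$ is not a routine $L^2$--$L^4$ Sobolev step: the paper must interpolate the resulting $L^2$ and $L^4$ sphere bounds and prove an improved $L^q$ estimate for $2<q<\tfrac94$ (integrating $L(|r^3\mathcal{L}_\Omega\alphac|^q)$ along the cone) before applying the sphere Sobolev inequality at a fixed $q\in(2,\tfrac94)$. Your proposal, having restricted all commutations to rotations and excluded the $K$-commuted norms, has no mechanism to generate this extra weight, so the $\alphac$ estimate as claimed cannot be closed along the route you describe.
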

\begin{proof}
{\bf Step 1. $L^\infty$ estimate of $\alphabc$.}

In view of \eqref{formula compare Lie and nablaslash}, Lemma \ref{lemma commuting Z with null decomposition}, the last equation in \eqref{Maxwell null commuted} and the fact that $\Lb=2T-L$, we have
\begin{align*}
\int_{\Hb_{r_2}^{r_1}} |\mathcal{L}_\Lb \big(\mathcal{L}_\Omega\alphabc\big)|^2 &\leq\int_{\Hb_{r_2}^{r_1}} |\mathcal{L}_T \big(\mathcal{L}_\Omega\alphabc\big)|^2+\int_{\Hb_{r_2}^{r_1}} |\mathcal{L}_L \big(\mathcal{L}_\Omega\alphabc\big)|^2\\
&= \int_{\Hb_{r_2}^{r_1}} |\alphab^{(\mathbf{2})}|^2 +  \int_{\Hb_{r_2}^{r_1}} |-\snabla \rho^{(\mathbf{1})} +\,^*\snabla \sigma^{(\mathbf{1})} + r^{-2}\slashed{J}^{(\mathbf{1})} + \frac{1}{r}\alphab^{(\mathbf{1})}|^2.
\end{align*}
We remark that in this case $\xi(\mathbf{2})= - 1$ and $\xi(\mathbf{1})=0$. By $\mathbf{(B)}$, we then have
\begin{align*}
\int_{\Hb_{r_2}^{r_1}} |\mathcal{L}_\Lb \big(\mathcal{L}_\Omega\alphabc\big)|^2 &\lesssim {\epsilonc} r_1^{-8-2\varepsilon_0}+ {\epsilonc} r_1^{-6-4\varepsilon_0}+ \int_{\Hb_{r_2}^{r_1}} \frac{|\slashed{J}^{(\mathbf{1})}|^2}{r^4}.
\end{align*}
We can use the first term of $\mathbf{(C)}$ to bound the last term in the above inequality.
 Recall that for forms $\Xi$, we have $ r^2 |\snabla \Xi|^2 \lesssim |\mathcal{L}_\Omega \Xi|^2+|\Xi|^2$. Therefore, $\mathbf{(B)}$ together $(\mathbf{C})$ imply that
\begin{align*}
\int_{\Hb_{r_2}^{r_1}} |\mathcal{L}_\Lb \big(\mathcal{L}_\Omega\alphabc\big)|^2 \lesssim {\epsilonc} r_1^{-6-4\varepsilon_0}.
\end{align*}
By $\mathbf{(B)}$, we also have
\begin{align*}
\int_{\Hb_{r_2}^{r_1}} |\mathcal{L}_\Omega \big(\mathcal{L}_\Omega\alphabc\big)|^2 \lesssim  {\epsilonc} r_1^{-6-2\varepsilon_0}.
\end{align*}
We then can apply \eqref{Sobolve on incoming null hypersurfaces} to derive
\begin{equation*}
\begin{split}
\|\mathcal{L}_\Omega\alphabc\|_{L^4(\S_{r_1}^{r_2})} &\lesssim r_2^{-\frac{1}{2}} \sqrt{{\epsilonc}} r_1^{-3-\varepsilon_0}.
\end{split}
\end{equation*}
We can repeat the above argument by switching $\mathcal{L}_\Omega\alphabc$ to $\alphabc$ and we obtain
\begin{equation*}
\begin{split}
\|\alphabc\|_{L^4(\S_{r_1}^{r_2})} &\lesssim r_2^{-\frac{1}{2}} \sqrt{{\epsilonc}} r_1^{-3-2\varepsilon_0}.
\end{split}
\end{equation*}
Compared to the $L^4$ bound of $\mathcal{L}_\Omega \alphabc$, this bound gains an extra $r^{-\varepsilon_0}$ because we use one less derivative in this case. This is clear from the bootstrap ansatz $\mathbf{(B)}$. We then apply the Sobolev inequality \eqref{Sobolev on sphere} on $\S^{r_1}_{r_2}$. In view of the fact that $\frac{r_1+r_2}{2} \approx r_2$ and $|u|\approx r_1$ on $\S^{r_1}_{r_2}$, we obtain
\begin{equation}
|\alphabc| \lesssim  \sqrt{{\epsilonc}} r^{-1} u_+^{-3-\varepsilon_0}.
\end{equation}

{\bf Step 2. $L^\infty$ estimate of $\rhoc$ and $\sigmac$.}
We only derive the bound on $\rhoc$ since $\sigmac$ can be bounded exactly in the same manner. First of all, for $\mathbf{l}=(0,1,0)$ and $\k = (0,2,0)$, we have
\begin{align*}
\Lb\big(\mathcal{L}_\Omega (r\rhoc)\big)=r^{-1}\Lb(r^2\rho^{(\mathbf{1})})-\rho^{(\mathbf{1})}.
\end{align*}
Thus by using the null equation for $\rhoc$ as well as the bootstrap assumptions we can show that
\begin{align*}
\int_{\Hb_{r_2}^{r_1}}|\Lb \big(\mathcal{L}_\Omega(r \rhoc)\big)|^2 &\leq\int_{\Hb_{r_2}^{r_1}}r^{-2}|\mathcal{L}_\Lb \big(r^2 \rho^{(\mathbf{l})}\big)|^2+|\rho^{(\mathbf{l})}|^2 \stackrel{\eqref{Maxwell null commuted}}{=}\int_{\Hb_{r_2}^{r_1}}r^{-2}|\slashed{\Div} (r^2\alphab^{(\mathbf{l})}) -\LbJl|^2+|\rho^{(\mathbf{l})}|^2\\
&\leq \int_{\Hb_{r_2}^{r_1}}|\alphab^{(\mathbf{k})})|^2+|\rho^{(\mathbf{l})}|^2 + r^{-2}|\LbJl|^2\stackrel{(\mathbf{B}),(\mathbf{C})}{\lesssim}  {\epsilonc} r_1^{-6-2\varepsilon_0}.
\end{align*}
By the $p=2$ case of $(\mathbf{B})$, we also have
\begin{align*}
\int_{\Hb_{r_2}^{r_1}}r^2 |\mathcal{L}_\Omega\rhoc|^2+ \int_{\Hb_{r_2}^{r_1}}r^2 |\mathcal{L}_\Omega \big(\mathcal{L}_\Omega\rhoc\big)|^2 &\lesssim   {\epsilonc} r_1^{-4-2\varepsilon_0}.
\end{align*}
Therefore, we obtain that
\begin{equation*}
\int_{\Hb_{r_2}^{r_1}} |\mathcal{L}_\Omega \big(r \rhoc\big)|^2+\int_{\Hb_{r_2}^{r_1}} \Big|\mathcal{L}_\Lb \Big(\mathcal{L}_\Omega\big(r\rhoc\big)\Big)\Big|^2 +\int_{\Hb_{r_2}^{r_1}}\Big|\mathcal{L}_\Omega \Big(\mathcal{L}_\Omega\big(r\rhoc\big)\Big)\Big|^2 \lesssim  {\epsilonc} r_1^{-4-2\varepsilon_0}.
\end{equation*}
According to \eqref{Sobolve on incoming null hypersurfaces}, the above energy estimate implies that
\begin{equation*}
\begin{split}
\|\mathcal{L}_\Omega \big(r\rhoc\big)\|_{L^4(\S_{r_1}^{r_2})} &\lesssim r_2^{-\frac{1}{2}}  \sqrt{{\epsilonc}} r_1^{-2-\varepsilon_0}.
\end{split}
\end{equation*}
Similarly, we have
\begin{equation*}
\| r\rhoc\|_{L^4(\S_{r_1}^{r_2})} \lesssim r_2^{-\frac{1}{2}}  \sqrt{{\epsilonc}} r_1^{-2-2\varepsilon_0}.
\end{equation*}
We notice that this is a similar bound but with an extra $r_1^{-\varepsilon_0}$ due to one less derivative compared to the previous case.
We then apply \eqref{Sobolev on sphere} on $\S^{r_1}_{r_2}$ and conclude that
\begin{equation}
|\rhoc| \lesssim \sqrt{{\epsilonc}} r^{-2} u_+^{-2-\varepsilon_0}.
\end{equation}
\begin{remark}
By using the flux on $\H_{r_1}^{r_2}$, the same argument yields:
\begin{equation*}
|\alphac| \lesssim \sqrt{{\epsilonc}} r^{-2} u_+^{-2-\varepsilon_0}.
\end{equation*}
This is not optimal and we will obtain a better decay in the next step.
\end{remark}

{\bf Step 3. $L^\infty$ estimate of $\alphac$.} The sharp decay of $\alphac$ relies on the commutator $K$ and the $r^p$-weighted energy estimate. Note that for an arbitrary two form $G$, we have
\begin{equation*}
\alpha(\mathcal{L}_{K} G)_A = v^{-1}\nabla_L (v^3 \alpha(G))_A+u^2\nabla_\Lb \alpha(G)_A+u\alpha(G)_A.
\end{equation*}
Therefore, we have
\begin{equation*}
v\alpha(\mathcal{L}_{K} G)_A = \nabla_L \big(v^3 \alpha(G)\big)_A+u^2\nabla_\Lb \big(r\alpha(G)\big)_A+(u^2+uv)\alpha(G)_A.
\end{equation*}
If we take $G=\mathcal{L}_\Omega \Fc$, in view of the third equation in \eqref{Maxwell null commuted},  we also have
\begin{equation}\label{eq:1}
\nabla_{L}(v^3\mathcal{L}_{\Omega}\alphac)=v\alpha^{(0,1,1)}-(u^2+uv)\alpha^{(0,1,0)}-u^2\big[\snabla(r\rho^{(0,1,0)})+\,^*\snabla(r\sigma^{(0,1,0)})-r^{-1}\slashed{J}^{(0,1,0)}\big].
\end{equation}
By virtue of the bootstrap assumptions $(\textbf{B})$, $(\textbf{C})$ and $|u|\lesssim r$,  especially the $r^p$-weighted energy norms, we have
\begin{align*}
\int_{\H_{r_1}} |\nabla_{L}(v^3\mathcal{L}_{\Omega}\alphac)|^2 & \lesssim \int_{\H_{r_1}} v^2|\alpha^{(0,1,1)}|^2+|u|^2v^2|\alpha^{(0,1,0)}|^2+|u|^4\big(|\rho^{(0,2,0)}|^2+|\sigma^{(0,2,0)}|^2\big) +\frac{|u|^4}{r^2}|\slashed{J}^{(0,1,0)}|^2\\
&\lesssim {\epsilonc} r_1^{-2-2\varepsilon_0}.
\end{align*}
In view of $v=u+r$, we have
\begin{equation}\label{highest weight estimate for alpha}
 \|\nabla_{L}(rv^2\mathcal{L}_{\Omega}\alphac)\|_{L^2(\H_{r_1})}  \lesssim \sqrt{\epsilonc} r_1^{-1-\varepsilon_0}.
\end{equation}
This estimate can be used to get a sharp decay estimates for $\|\mathcal{L}_\Omega\alphac\|_{L^2(\S_{r_1}^{r_2})}$. In fact, we have
\begin{align*}
 \|v^2\mathcal{L}_\Omega\alphac\|_{L^2(\S_{r_1}^{r_2})}^2- \|v^2\mathcal{L}_\Omega\alphac\|_{L^2(\S_{r_1}^{r_1})}^2&= \int_{\frac{r_2}{2}}^{\frac{r_1}{2}}\int_{\mathbf{S}^2}L\big(\big| r v^2 \mathcal{L}_\Omega\alphac\big|^2\big)d\vartheta dv  \\
 &\lesssim  \int_{\frac{r_2}{2}}^{\frac{r_1}{2}}\int_{\mathbf{S}^2} |\nabla_L\big( r v^2 \mathcal{L}_\Omega\alphac \big)||r \mathcal{L}_\Omega\alphac |r^2d\vartheta dv\\
 &\leq \|\nabla_{L}(rv^2\mathcal{L}_{\Omega}\alphac)\|_{L^2(\H_{r_1})}\|r\mathcal{L}_{\Omega}\alphac 	\|_{L^2(\H_{r_1})}.
\end{align*}
Thus,
\begin{align*}
 \|v^2\mathcal{L}_\Omega\alphac\|_{L^2(\S_{r_1}^{r_2})}^2&\lesssim\|v^2\mathcal{L}_\Omega\alphac\|_{L^2(\S_{r_1}^{r_1})}^2+\|\nabla_{L}(rv^2\mathcal{L}_{\Omega}\alphac)\|_{L^2(\H_{r_1})}\|r\mathcal{L}_{\Omega}\alphac\|_{L^2(\H_{r_1})}\\
 &\lesssim {\epsilonc} r_1^{-3-3\varepsilon_0}.
\end{align*}
As a result, we obtain
\begin{equation}\label{L2 bound for LOmegaAlpha on sphere}
  \|\mathcal{L}_\Omega\alphac\|_{L^2(\S_{r_1}^{r_2})} \lesssim \sqrt{\epsilonc} r_2^{-2}r_1^{-\frac{3}{2}-\frac{3}{2}\varepsilon_0}.
\end{equation}
One can also bound $\|\mathcal{L}_\Omega\alphac\|_{L^4(\S_{r_1}^{r_2})}$. We take $\Xi=r\mathcal{L}_\Omega\alphac$ in \eqref{Sobolve on incoming null hypersurfaces} and we obtain
\begin{align*}
  r_2^3 \|\mathcal{L}_\Omega\alphac\|_{L^4(\S_{r_1}^{r_2})}^2 &\lesssim \int_{\H_{r_2}^{r_1}} |r\mathcal{L}_\Omega\alpha|^2+\int_{\H_{r_2}^{r_1}} \frac{1}{r^2}|\mathcal{L}_L (r^2\mathcal{L}_\Omega\alphac)|^2+\int_{\H_{r_2}^{r_1}} {r^2}|\mathcal{L}_\Omega (\mathcal{L}_\Omega\alphac)|^2\\
  & \lesssim \int_{\H_{r_2}^{r_1}} r^2|\alpha^{(0,1,0)}|^2+\underbrace{\int_{\H_{r_2}^{r_1}} \frac{1}{r^2}|\mathcal{L}_L (r^2\mathcal{L}_\Omega\alphac)|^2}_{bounded \ in \ \eqref{highest weight estimate for alpha}}+\int_{\H_{r_2}^{r_1}} {r^2}|\alpha^{(0,2,0)}|^2\\
  & \lesssim \epsilonc r_1^{-4-2\varepsilon_0}.
\end{align*}
In other words, we have
\begin{equation}\label{L4 bound for LOmegaAlpha on sphere}
  \|\mathcal{L}_\Omega\alphac\|_{L^4(\S_{r_1}^{r_2})} \lesssim \sqrt{\epsilonc} r_2^{-\frac{3}{2}}r_1^{-2-\varepsilon_0}.
\end{equation}
For $q\in [2,4]$, by interpolating \eqref{L2 bound for LOmegaAlpha on sphere} and \eqref{L4 bound for LOmegaAlpha on sphere}, we have
\begin{equation}\label{Lq bound for LOmegaAlpha on sphere}
  \|\mathcal{L}_\Omega\alphac\|_{L^q(\S_{r_1}^{r_2})} \lesssim \sqrt{\epsilonc} r_2^{-\big(1+\frac{2}{q}\big)}r_1^{-\big(\frac{5}{2}-\frac{2}{q}+(\frac{1}{2}+\frac{2}{q})\varepsilon_0\big)}, \ 2\leq q \leq 4.
\end{equation}
We now try to improve decay in $r_2$ in \eqref{Lq bound for LOmegaAlpha on sphere} for $2<q<\frac{9}{4}$. For this purpose, we choose $\gamma$ so that
\begin{equation*}
\gamma+\frac{2}{q}=3.
\end{equation*}
Therefore, we have
\begin{align*}
 \|r^\gamma\mathcal{L}_\Omega\alphac\|_{L^q(\S_{r_1}^{r_2})}^q- \|r^\gamma\mathcal{L}_\Omega\alphac\|_{L^q(\S_{r_1}^{r_1})}^q&= \int_{\frac{r_2}{2}}^{\frac{r_1}{2}}\int_{\mathbf{S}^2}L\big(\big| r^3 \mathcal{L}_\Omega\alphac\big|^q\big)d\vartheta dv  \\
 &\lesssim  \int_{\frac{r_2}{2}}^{\frac{r_1}{2}}\int_{\mathbf{S}^2} |\nabla_L\big( r v^2 \mathcal{L}_\Omega\alphac \big)|| r^3 \mathcal{L}_\Omega\alphac\big|^{q-1}d\vartheta dv.
\end{align*}
According to Cauchy-Schwarz inequality, we have
\begin{equation}\label{aux 1}
 \|r^\gamma\mathcal{L}_\Omega\alphac\|_{L^q(\S_{r_1}^{r_2})}^q \lesssim \|r^\gamma\mathcal{L}_\Omega\alphac\|_{L^q(\S_{r_1}^{r_1})}^q+\|\nabla_L\big( r v^2 \mathcal{L}_\Omega\alphac \big)\|_{L^2(\H_{r_1})} \underbrace{\|r^{3q-5}  |\mathcal{L}_\Omega\alphac |^{q-1}\|_{L^2(\H_{r_1})}}_{\mathbf{I}}.
\end{equation}
To bound $\mathbf{I}$, since $q<\frac{9}{4}$, we proceed as follows
\begin{align*}
\mathbf{I}&=\Big(\int_{\H_{r_1}}r^{6q-10}  |\mathcal{L}_\Omega\alphac |^{2q-2}\Big)^{\frac{1}{2}}=\Big(\int_{r_1}^{r_2}r^{6q-10} \|\mathcal{L}_\Omega\alphac \|_{L^{2q-2}({\S_{r_1}^r})}^{2q-2} dr\Big)^{\frac{1}{2}}\\
&\stackrel{\eqref{Lq bound for LOmegaAlpha on sphere}}{\lesssim} \Big(\int_{r_1}^{r_2}r^{6q-10}\cdot \epsilonc^{q-1}\cdot r^{-2q}r_1^{-\big(5q-7+(q+1)\varepsilon_0\big)} dr\Big)^{\frac{1}{2}}\lesssim \epsilonc^{\frac{q-1}{2}} r_1^{-\frac{1}{2}\big(q+2+(q+1)\varepsilon_0\big)} .
\end{align*}
In view of \eqref{conf estimates aux} and \eqref{highest weight estimate for alpha}, we have
\begin{equation*}
 \|r^\gamma\mathcal{L}_\Omega\alphac\|_{L^q(\S_{r_1}^{r_2})}^q \lesssim \epsilonc^{\frac{q}{2}} r_1^{-q(1+\varepsilon_0)} + \epsilonc^{\frac{q}{2}} r_1^{-\frac{1}{2}\big(q+4+(q+3)\varepsilon_0\big)}
\end{equation*}
Therefore, we have
\begin{equation}\label{Lq final estimate 1}
 \|\mathcal{L}_\Omega\alphac\|_{L^q(\S_{r_1}^{r_2})} \lesssim \sqrt{\epsilonc}r_2^{\frac{2}{q}-3} r_1^{-(1+\varepsilon_0)}, \ \text{for} \ 2<q<\frac{9}{4}.
\end{equation}
We remark that, compared to \eqref{Lq bound for LOmegaAlpha on sphere}, the decay in $r_2$ has been improved. Similary, we also have
\begin{equation}\label{Lq final estimate 2}
 \|\alphac\|_{L^q(\S_{r_1}^{r_2})} \lesssim \sqrt{\epsilonc}r_2^{\frac{2}{q}-3} r_1^{-(1+\varepsilon_0)}, \ \text{for} \ 2<q<\frac{9}{4}.
\end{equation}
We can fix a $q \in (2,\frac{9}{4})$ (say $q=\frac{17}{8}$) and apply \eqref{Sobolev on sphere}. Therefore, \eqref{Lq final estimate 1} and \eqref{Lq final estimate 2} together yield
\begin{equation*}
|\alphac| \lesssim  \sqrt{{\epsilonc}} r^{-3} u_+^{-1-\varepsilon_0}.
\end{equation*}
This completes the proof.
\end{proof}

\subsection{Pointwise decay estimates of the scalar field}

We start with the decay estimate of $\phi$ on the initial slice $\B_{R_*}$. By \eqref{Sobolve on incoming null hypersurfaces} and \eqref{Sobolev on sphere}, we have
\begin{equation*}
\begin{split}
\|\phi\|_{L^4(\S_{r_1}^{r_1})} \lesssim \sqrt{{\epsilonc}} r_1^{-\frac{5}{2}-4\varepsilon_0}, \ \ \|D_\Omega \phi\|_{L^4(\S_{r_1}^{r_1})} \lesssim \sqrt{{\epsilonc}} r_1^{-\frac{5}{2}-4\varepsilon_0}.
\end{split}
\end{equation*}
By \eqref{Sobolev on sphere}, we have
\begin{equation*}
 \|\phi\|_{L^\infty(\S_{r_1}^{r_1})}\lesssim \sqrt{{\epsilonc}} r_1^{-3-4\varepsilon_0}.
\end{equation*}

\begin{proposition}\label{Proposition pointwise decay of scalar field}
For the solution $(\phi, F)$ of the MKG equations on the exterior region $\{t+R_*\leq |x|\}$, the scalar field verifies the following decay estimates:
\begin{equation*}
\begin{split}
|\phi| &\lesssim \sqrt{{\epsilonc}} r^{-1} u_+^{-\frac{5}{2}-2\varepsilon_0},\ \ |D_\Lb \phi| \lesssim  \sqrt{{\epsilonc}} r^{-1} u_+^{-3-\varepsilon_0},\\
|\slashed{D}\phi| &\lesssim  \sqrt{{\epsilonc}} r^{-2} u_+^{-2-\varepsilon_0}, \ \ |D_L\psi| \lesssim \sqrt{{\epsilonc}} r^{-2} u_+^{-1-\varepsilon_0}.
\end{split}
\end{equation*}
\end{proposition}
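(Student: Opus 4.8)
The plan is to derive the four pointwise decay estimates for $\phi$, $D_\Lb\phi$, $\slashed{D}\phi$ and $D_L\psi$ from the bootstrap energy bounds $\mathbf{(B)}$, the current bounds $\mathbf{(C)}$, and the already-established initial decay on $\B_{r_1}^{r_1}$, following the same scheme used in Proposition \ref{Proposition pointwise decay of Maxwell}: first obtain $L^2$ and $L^4$ bounds on spheres $\S_{r_1}^{r_2}$ via the null-hypersurface Sobolev inequality \eqref{Sobolve on incoming null hypersurfaces}, then promote to $L^\infty$ via the sphere Sobolev inequality \eqref{Sobolev on sphere}. Throughout, the working object is $\psi^{(\k)}=r\phi^{(\k)}$ rather than $\phi^{(\k)}$ itself, because the $r^p$-weighted norms in $\mathbf{(B)}$ and the commuted null equations \eqref{Maxwell null commuted}, \eqref{scalar box null} are naturally phrased in terms of $\psi$.

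First I would handle $D_\Lb\phi$, which is the most directly controlled component. Using $\Lb = 2T-L$ and the wave equation \eqref{scalar box null} in the form $-D_L D_\Lb(r f) = -\slashed{D}^2(rf)+i\rho\cdot(rf)+r\Box_A f$ (with $\Box_A\phi=0$), together with $\mathbf{(B)}$ for $|\k|=1,2$ to control $D_T(\mathcal{L}_\Omega\psi)$ and $\mathbf{(C)}$ to control the current contributions, I would bound $\int_{\Hb_{r_2}^{r_1}}|D_\Lb(\mathcal{L}_\Omega \psi)|^2$ and $\int_{\Hb_{r_2}^{r_1}}|D_\Omega(\mathcal{L}_\Omega\psi)|^2$, hence by \eqref{Sobolve on incoming null hypersurfaces} get $\|\mathcal{L}_\Omega\psi\|_{L^4(\S_{r_1}^{r_2})}$, then drop one derivative to gain an extra $r_1^{-\varepsilon_0}$ (exactly as in Step 1 of the previous proof), and finally \eqref{Sobolev on sphere} gives $|\psi|\lesssim\sqrt{\epsilonc}\,u_+^{-\frac52-2\varepsilon_0}$ and $|D_\Lb\psi|\lesssim\sqrt{\epsilonc}\,u_+^{-3-\varepsilon_0}$ on $\S_{r_1}^{r_2}$, i.e. the stated bounds after dividing by $r$. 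The bound on $\slashed{D}\phi$ is analogous, using the incoming-cone flux which controls $\int_{\Hb_{r_2}^{r_1}}r^{p-2}|\slashed{D}\psi^{(\k)}|^2$ directly from the $r^p$-weighted norm with $p=2$.

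The sharp estimate $|D_L\psi|\lesssim\sqrt{\epsilonc}\,r^{-2}u_+^{-1-\varepsilon_0}$ is the main obstacle and requires the conformal commutator $K$, paralleling Step 3 of Proposition \ref{Proposition pointwise decay of Maxwell}. The idea is to write $D_L(v^3 D_L\psi)$ (or $v\cdot D_L\widetilde\psi^{(K\text{-type})}$) in terms of $\widetilde{D}$-commuted quantities: using $\psi^{(\mathbf{k})}$ with $\k$ containing one $K$ and one $\Omega$, the equation \eqref{commutated wave equation} expresses $\Box_A\phi^{(\k)}=N^{(\mathbf{2})}$ as a null form $Q$ which, by Proposition \ref{lemma null form}, is controlled by the already-established Maxwell decay and the lower-order $\psi$ bounds. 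This yields $\|D_L(r v^2 D_L(\mathcal{L}_\Omega\psi))\|_{L^2(\H_{r_1})}\lesssim\sqrt{\epsilonc}\,r_1^{-1-\varepsilon_0}$, from which the fundamental theorem of calculus along $L$ on $\H_{r_1}$ together with Cauchy-Schwarz gives $\|v^2 D_L(\mathcal{L}_\Omega\psi)\|_{L^2(\S_{r_1}^{r_2})}^2\lesssim\mathbf{(B)}$-data, hence $L^2$ and (via \eqref{Sobolve on incoming null hypersurfaces}) $L^4$ bounds on spheres with the improved $v$-weight; interpolating to some $q\in(2,\tfrac94)$ and improving the $r_2$-decay exactly as in \eqref{Lq final estimate 1}, then applying \eqref{Sobolev on sphere}, produces $|D_L\psi|\lesssim\sqrt{\epsilonc}\,u_+^{-1-\varepsilon_0}$ on $\S_{r_1}^{r_2}$.

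The delicate points I expect to fight with are: (i) verifying that after commuting with $K$ (order-$2$ weight) the null form $Q(\phi,F;K)$ and the double-commutator terms in \eqref{commutator twice commutated} are genuinely controlled by $\mathbf{(B)}$ and $\mathbf{(C)}$ without losing more than the allotted $2(k+1)\varepsilon_0$ decay — this uses crucially the cancellation $r^2 J_\mu[\phi]=J_\mu[r\phi]$ and the iterative structure of $Q$; (ii) keeping track of the $u_+$ versus $v_+\approx r$ weights when converting sphere $L^q$ bounds to pointwise bounds, since on $\S_{r_1}^{r_2}$ one has $|u|\approx r_1$ and $r\approx r_2$, and the target estimates are stated in mixed $r$-and-$u_+$ form; and (iii) the bookkeeping needed to ensure the constants from Lemma \ref{lemma key} and the Sobolev inequalities are absorbed by choosing $R_*$ large, so that the bootstrap constant $4$ genuinely improves to $2$. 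As with the Maxwell field, the $\phi$ and $D_\Lb\phi$ bounds gain an extra $r_1^{-\varepsilon_0}$ over $\slashed D\phi$ and $D_L\psi$ because they use one fewer commutation, which is consistent with the $\varepsilon_0$-reductive ansatz.
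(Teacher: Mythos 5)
Your treatment of $\phi$, $D_\Lb\phi$ and $\slashed{D}\phi$ follows the same scheme as the paper (commute with $T,\Omega$, use the fluxes from $\mathbf{(B)}$, $\mathbf{(C)}$ and the Maxwell pointwise bounds on incoming cones, then \eqref{Sobolve on incoming null hypersurfaces}--\eqref{Sobolve for scalar filed on incoming null hypersurfaces} and \eqref{Sobolev on sphere}, with the one-less-derivative trick giving the extra $r_1^{-\varepsilon_0}$ for $\phi$ and $D_\Lb\phi$). Two comments. First, for $\phi$ itself the zeroth-order sphere norm $\|\phi\|_{L^2(\S_{r_1}^{r_2})}$ is not controlled by incoming-cone fluxes alone; the paper propagates it from the initial sphere $\S_{r_1}^{r_1}$ along the outgoing cone via the trace estimate \eqref{Sobolev trace estimates on outgoing null hypersurfaces}, $\|D_\Omega^k\phi\|^2_{L^2(\S_{r_1}^{r_2})}\lesssim \|D_\Omega^k\phi\|^2_{L^2(\S_{r_1}^{r_1})}+r_1^{-1}\int_{\H_{r_1}}|D_LD_\Omega^k\psi|^2$; you allude to the initial decay but should make this mechanism explicit, as your sketch as written would leave $\int_{\Hb}|\psi|^2$ uncontrolled. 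Second, and more substantially, for the sharp bound on $D_L\psi$ you take a genuinely different route: you propose to integrate a transport equation for $v^2D_L(\mathcal{L}_\Omega\psi)$ along $\H_{r_1}$, feeding in the twice-commuted wave equation \eqref{commutated wave equation}, the null-form bound for $Q(\cdot,F;K)$, and then the $L^q$ interpolation and $r_2$-improvement exactly as in Step 3 of Proposition \ref{Proposition pointwise decay of Maxwell}. This radiation-field/$r^2L$ route is plausible (the introduction notes it works for the scalar equation), but it is considerably heavier than what the paper does: the paper never touches the double commutator here. It simply reads off from $\mathbf{(B)}$ the incoming-cone fluxes of $D_\Lb D_\Omega^k(\Dt_K\phi)$ and $\slashed{D}D_\Omega^k(\Dt_K\phi)$, applies the incoming Sobolev inequality to get the pointwise bound $|\Dt_K\phi|\lesssim\sqrt{\epsilonc}\,r^{-1}u_+^{-1-\varepsilon_0}$, and then uses the purely algebraic identity $v^2D_L(r\phi)=r\Dt_K\phi-ru^2D_\Lb\phi+u^2\phi$ together with the already-proved bounds on $\phi$ and $D_\Lb\phi$. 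Your approach would buy a self-contained transport derivation but at the cost of re-estimating $Q(\phi,F;K)$, the current terms and an interpolation step, all of which your listed ``delicate points'' correctly anticipate; the paper's identity makes all of that unnecessary, so if you pursue your route you must actually carry out those estimates rather than cite them by analogy with the Maxwell $\alpha$ argument.
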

\begin{proof}
{\bf Step 1. $L^\infty$ estimate of $\phi$.} For $k\leq 2$, by Lemma \ref{Sobolev trace estimates on outgoing null hypersurfaces} and $(\mathbf{B})$ we have
\begin{equation}\label{bound on L2 on spheres}
\begin{split}
\|D^{k}_\Omega \phi\|^2_{L^2(\S_{r_1}^{r_2})}&\lesssim \|D^{k}_\Omega \phi\|^2_{L^2(\S_{r_1}^{r_1})}+\frac{1}{r_1}\int_{\H_{r_1}} |D_L D_\Omega^k\psi|^2\stackrel{(\mathbf{B})}{\lesssim} {\epsilonc} r_1^{-5-2\varepsilon_0}.
\end{split}
\end{equation}
We now use \eqref{Sobolev on sphere} to conclude that
\begin{equation*}
\|\phi\|_{L^\infty(\S_{r_1}^{r_2})} {\lesssim}  \sqrt{{\epsilonc}} r^{-1} u_+^{-\frac{5}{2}-\varepsilon_0}.
\end{equation*}
Here note that $u_+=1+\frac{1}{2} |t-r|=1+\frac{1}{2} r_1$.
We can indeed  improve the estimates by gaining a $r_1^{-\varepsilon_0}$. First of all, notice that in \eqref{bound on L2 on spheres}, for $k\leq 1$, we have
\begin{equation*}
\begin{split}
\|D^{k}_\Omega \phi\|^2_{L^2(\S_{r_1}^{r_2})}&{\lesssim} {\epsilonc} r_1^{-5-4\varepsilon_0}.
\end{split}
\end{equation*}
To save one derivative, we can use the second equation in \eqref{Sobolve for scalar filed on incoming null hypersurfaces} to derive that
\begin{equation*}
\begin{split}
\|D_\Omega \phi\|^2_{L^4(\S_{r_1}^{r_2})}&{\lesssim} {\epsilonc} r_2^{-1}r_1^{-4-4\varepsilon_0}.
\end{split}
\end{equation*}
Thus, by \eqref{Sobolev on sphere} again, we have
\begin{equation}\label{pointwise bound on phi}
\|\phi\|_{L^\infty(\S_{r_1}^{r_2})} {\lesssim}  \sqrt{{\epsilonc}} r^{-1} r_1^{-\frac{5}{2}-2\varepsilon_0}{\lesssim}  \sqrt{{\epsilonc}} r^{-1} u_+^{-\frac{5}{2}-2\varepsilon_0}.
\end{equation}
{\bf Step 2. $L^\infty$ estimate of $D_\Lb \phi$.} We first bound $\int_{\Hb_{r_2}^{r_1}} |D_\Lb \big(D_\Omega D_\Lb \phi \big)|^2$. It can be split into:
\begin{align*}
\int_{\Hb_{r_2}^{r_1}} |D_\Lb \big(D_\Omega D_\Lb \phi \big)|^2 &\leq \underbrace{\int_{\Hb_{r_2}^{r_1}}  |D_T \big(D_\Omega D_\Lb \phi \big)|^2}_{\I_1}+\underbrace{\int_{\Hb_{r_2}^{r_1}}|D_L \big(D_\Omega D_\Lb \phi \big)|^2}_{\I_2}.
\end{align*}
To bound $\I_1$, we first commute derivatives to derive
\begin{align*}
D_T D_\Omega D_\Lb \phi &= D_T \big([D_\Omega, D_\Lb] \phi\big) +[D_T,D_\Lb]D_\Omega  \phi+ D_\Lb D_T D_\Omega  \phi\\
&=\sqrt{-1}\mathcal{L}_T F_{\Omega\Lb}\phi +\sqrt{-1}F_{\Omega\Lb}D_T \phi + \sqrt{-1}F_{T\Lb}D_\Omega\phi +D_\Lb D_T D_\Omega  \phi.
\end{align*}
We therefore can bound that
\begin{equation*}
|D_T D_\Omega D_\Lb \phi| \leq r|\alphab^{(\mathbf{1})}||\phi| + r|\alphab||\phi^{(\mathbf{1})}|+r|\rho| |\slashed{D}\phi|+|D_\Lb\phi^{(\mathbf{2})}|,
\end{equation*}
where the discrepancy indices of the $(\mathbf{1})$ and $(\mathbf{2})$ are all equal to $-1$ and we note that $\alpha$, $\alphab$ and $\rho$ are the curvature components for the full Maxwell field $F$. Therefore, we can split $\I_1$ into four terms:
\begin{align*}
\I_1 &\leq \int_{\Hb_{r_2}^{r_1}}   r^2|\alphab^{(\mathbf{1})}|^2|\phi|^2 + \int_{\Hb_{r_2}^{r_1}} r^2|\alphab|^2|\phi^{(\mathbf{1})}|^2+\int_{\Hb_{r_2}^{r_1}} r^2|\rho|^2 |\slashed{D}\phi|^2+\int_{\Hb_{r_2}^{r_1}} |D_\Lb\phi^{(\mathbf{2})}|^2.
\end{align*}
Recall that the full Maxwell field $F$ splits into the chargeless part $\Fc$ which has been bounded in Proposition \ref{Proposition pointwise decay of Maxwell} and the charge part $F[q_0]$ satisfying the trivial bound \eqref{eq:bd4Fq}. Since $F[q_0]$ is stationary, we note that $\alphab^{(\mathbf{1})}=\alphabc^{(\mathbf{1})}$. Therefore we can use \eqref{pointwise bound on phi} to bound $\phi$ in the first term, use $|\alphab|\lesssim r^{-1}u_+^{-2}$ for the second term, use $|\rho|\lesssim r^{-2}$ in the third term and the bootstrap assumption $(\mathbf{B})$ to bound the last term. In particular we can show that
\begin{align*}
\I_1 & \lesssim \int_{\Hb_{r_2}^{r_1}}   |\alphabc^{(\mathbf{1})}|^2r_1^{-5-4\varepsilon_0} +  u_+^{-4}|\phi^{(\mathbf{1})}|^2+ r^{-2} |\slashed{D}\phi|^2+ |D_\Lb\phi^{(\mathbf{2})}|^2 \\
&\lesssim {\epsilonc} r_1^{-8-2\varepsilon_0}+r_1^{-4}\int_{\Hb_{r_2}^{r_1}}   |\phi^{(\mathbf{1})}|^2.
\end{align*}
Since $\phi^{(\mathbf{1})}=D_T\phi$, according to \eqref{Sobolev trace estimates on outgoing null hypersurfaces}, for $k\leq 1$ we have
\begin{equation}\label{L2 of DT DOmega phi on sphere}
\begin{split}
\|D_T D_\Omega^k\phi\|^2_{L^2(\S_{r_1}^{r_2})}&\lesssim \|D_T D_\Omega^k\phi\|^2_{L^2(\S_{r_1}^{r_1})}+\frac{1}{r_1}\int_{\H_{r_1}} |D_L D_T D_\Omega^k\psi|^2\stackrel{(\mathbf{B})}{\lesssim}  {\epsilonc} r_1^{-7-2\varepsilon_0}.
\end{split}
\end{equation}
We now use the case $k=0$ to conclude that
\begin{align*}
\int_{\Hb_{r_2}^{r_1}}   |\phi^{(\mathbf{1})}|^2&\lesssim \int_{-\frac{r_2}{2}}^{-\frac{r_1}{2}}\big(\int_{\S_{-2u}^{r_2}} \epsilonc u_+^{-4}|D_T\phi|^2\big)  du \lesssim \epsilonc \int_{-\frac{r_2}{2}}^{-\frac{r_1}{2}} \epsilonc u_+^{-11-2\varepsilon_0 } du \lesssim {\epsilonc}^{2} r_1^{-10-2\varepsilon_0}.
\end{align*}
Here we keep in mind that $u_+=1+\frac{1}{2}r_1$. In particular we derive that
\begin{align*}
\I_{1} &\lesssim {\epsilonc}r_1^{-8-2\varepsilon_0}.
\end{align*}
Now we turn to the estimate of $\I_2$. By using the null equations for $\phi$, we first can write that
\begin{align*}
D_L D_\Omega D_\Lb \phi &= D_L \big([D_\Omega, D_\Lb] \phi\big) +r^{-1}D_L D_\Lb \big(rD_\Omega  \phi\big)+r^{-1}(D_L D_\Omega \phi-D_\Lb D_\Omega \phi)\\
&\stackrel{\eqref{scalar box null}}{=}\sqrt{-1}D_L\big(F_{\Omega\Lb}\phi \big) -\Box_A D_\Omega\phi +\slashed{D}^2 D_\Omega\phi  - \sqrt{-1} \rho\cdot D_\Omega\phi+\frac{1}{r}(D_L D_\Omega \phi-D_\Lb D_\Omega \phi)\\
&=\sqrt{-1}\mathcal{L}_L F_{\Omega\Lb}\cdot\phi -Q(\phi,F;\Omega)+ \Big(2\sqrt{-1} F_{\Omega\Lb}D_{T} \phi+\frac{2}{r}D_T D_\Omega \phi\Big)\\
&\ \ \ \ \ \ +\Big(\slashed{D}^2\big(D_\Omega\phi \big) - \sqrt{-1} \rho\cdot\big(D_\Omega\phi\big)-\frac{2}{r}D_\Lb D_\Omega \phi-\sqrt{-1} F_{\Omega\Lb} D_\Lb \phi\Big).
\end{align*}
For the integral of the last term, we use the pointwise bounds:
\[
|\rho|\lesssim r^{-2},\quad |F_{\Omega \Lb}|=r|\alphab | \lesssim r^{-2}+ \sqrt{\epsilonc} u_+^{-3-\varepsilon_0}\lesssim u_+^{-2}.
\]
We therefore can bound that
\begin{align*}
  &\int_{\Hb_{r_2}^{r_1}}|\slashed{D}^2\big(D_\Omega\phi \big) - \sqrt{-1} \rho\cdot\big(D_\Omega\phi\big)-\frac{2}{r}D_\Lb D_\Omega \phi-\sqrt{-1} F_{\Omega\Lb} D_\Lb \phi|^2\\
  &\lesssim \int_{\Hb_{r_2}^{r_1}}r^{-2}|\slashed{D} D_\Omega^2\phi |^2+r^{-4}| D_\Omega\phi|^2+r^{-2}|D_\Lb D_\Omega \phi|^2+u_+^{-4}|D_\Lb \phi|^2\\
  &\lesssim {\epsilonc}r_1^{-8-2\varepsilon_0}.
\end{align*}
For the third term in the previous identity, by using the above estimate \eqref{L2 of DT DOmega phi on sphere}, we can show that
\begin{align*}
  &\int_{\Hb_{r_2}^{r_1}} |2\sqrt{-1} F_{\Omega\Lb}D_{T} \phi+\frac{2}{r}D_T D_\Omega \phi|^2 \lesssim \int_{\Hb_{r_2}^{r_1}}r^{-2}|D_T D_\Omega \phi|^2+u_+^{-4}|D_T \phi|^2 \lesssim {\epsilonc}r_1^{-8-2\varepsilon_0}.
\end{align*}
For the first term $\sqrt{-1}\mathcal{L}_L F_{\Omega\Lb}\cdot\phi$, we use the null equation \eqref{Maxwell null commuted} to show that
\begin{align*}
\int_{\Hb_{r_2}^{r_1}} |\sqrt{-1}\mathcal{L}_L F_{\Omega\Lb}\cdot\phi|^2 &\lesssim \int_{\Hb_{r_2}^{r_1}} \big(|\rho^{(\mathbf{1})}|^2+|\sigma^{(\mathbf{1})}|^2 +r^2|\slashed{J}|^2+r^2|\alphab|^2\big)|\phi|^2.
\end{align*}
Now recall that $|\slashed{J}|=|\phi||\slashed{D}\phi|$ and we have the bounds $ |\cL_{\Omega}F[q_0]|\lesssim r^{-3}$.
Then by using the bootstrap assumptions on $\mathring{F}$ as well as the pointwise bound for $\phi$, we indeed can show that
\begin{align*}
  \int_{\Hb_{r_2}^{r_1}} |\sqrt{-1}\mathcal{L}_L F_{\Omega\Lb}\cdot\phi|^2 &\lesssim \int_{\Hb_{r_2}^{r_1}} \big(|\rhoc^{(\mathbf{1})}|^2+|\sigmac^{(\mathbf{1})}|^2 +u_+^{-5}|\slashed{D}\phi|^2+r^2|\alphabc|^2+r^{-4}\big)|\phi|^2 \lesssim {\epsilonc}r_1^{-8-2\varepsilon_0}.
\end{align*}
Finally for the quadratic term $Q(\phi, F; \Omega)$, we use the bound \eqref{null form Omega} in the proof for Proposition \ref{lemma null form} to show that
\begin{align*}
\int_{\Hb_{r_2}^{r_1}} |Q(\phi,F;\Omega)|^2 &\lesssim \int_{\Hb_{r_2}^{r_1}} |D_L\psi|^2|\alphab|^2+|D_\Lb \psi|^2|\alpha|^2+|\sigma|^2|\phi|^2+r^2 |\slashed{J}|^2|\phi|^2+|\sigma|^2|\slashed {D} \psi|^2\\
&\lesssim \int_{\Hb_{r_2}^{r_1}} |D_L\psi|^2r^{-2}u_+^{-4}+|D_\Lb \psi|^2 r^{-6}+r^{-4}u_+^{-2}(|\phi|^2+r^2|\slashed{D} \phi|^2)+u_+^{-10} |\slashed{D}\phi|^2 \\
&\lesssim \int_{\Hb_{r_2}^{r_1}} |D_T\phi|^2 u_+^{-4}+|D_\Lb \phi|^2 u_+^{-4}+r^{-4}u_+^{-2}|\phi|^2+u_+^{-4}|\slashed{D} \phi|^2\\
& \lesssim {\epsilonc}r_1^{-8-2\varepsilon_0}.
\end{align*}
Here we have used the fact that $L=2T-\Lb$ to bound $D_L\psi$ and estimate \eqref{L2 of DT DOmega phi on sphere} to bound the integral of $\phi$ as well as $D_T\phi$.
Combining the above estimate, we have shown that
\begin{equation}\label{eq 1}
\int_{\Hb_{r_2}^{r_1}} |D_\Lb \big(D_\Omega D_\Lb \phi \big)|^2 \lesssim {\epsilonc} r_1^{-8-2\varepsilon_0}.
\end{equation}
The next object is to derive estimate for $\int_{\Hb_{r_2}^{r_1}} |D_\Omega \big(D_\Omega D_\Lb \phi \big)|^2$. First for $\Omega$, $\Omega'$ being angular momentum vector fields,  recall the following commutation formula:
\begin{align*}
D_\Omega \big(D_{\Omega'} D_\Lb \phi \big)
&=\sqrt{-1}\big(\mathcal{L}_\Omega F_{\Omega' \Lb} \phi + F([\Omega,\Omega'], \Lb) \phi+ F_{\Omega' \Lb} D_\Omega \phi+F_{\Omega \Lb} D_{\Omega'}\phi\big) +D_\Lb D_\Omega D_{\Omega'}  \phi
\end{align*}
For the first four terms, we can bound the full Maxwell field by the pointwise bound according to Proposition \ref{Proposition pointwise decay of Maxwell} together with the property of the charge 2-form $F[q_0]$. More precisely we can show that
\begin{align*}
 & \int_{\Hb_{r_2}^{r_1}}|\sqrt{-1}\big(\mathcal{L}_\Omega F_{\Omega' \Lb} \phi + F([\Omega,\Omega'], \Lb) \phi+ F_{\Omega' \Lb} D_\Omega \phi+F_{\Omega \Lb} D_{\Omega'}\phi\big)|^2\\
  &\lesssim \int_{\Hb_{r_2}^{r_1}}(|\mathcal{L}_\Omega \alphabc|^2 +|\alphabc|^2)u_+^{-5} +r^{-4}|\phi|^2+ u_+^{-4}r^2|\slashed{D}\phi|^2 \lesssim {\epsilonc}r_1^{-8-2\varepsilon_0}.
\end{align*}
Then by using the ansatz $\mathbf{(B)}$, we can derive that
\begin{equation*}
\int_{\Hb_{r_2}^{r_1}} |D_\Omega \big(D_\Omega D_\Lb \phi \big)|^2\lesssim {\epsilonc} r_1^{-6-2\varepsilon_0}.
\end{equation*}
Similarly, we also have
\begin{equation*}
\int_{\Hb_{r_2}^{r_1}} |D_\Omega D_\Lb \phi |^2\lesssim  {\epsilonc}r_1^{-6-4\varepsilon_0}.
\end{equation*}
Then using the Sobolev inequality \eqref{Sobolve for scalar filed on incoming null hypersurfaces}, we derive that
\begin{equation*}
\|D_\Omega D_\Lb\phi\|_{L^4(\S_{r_1}^{r_2})} \lesssim r_2^{-\frac{1}{2}}  \sqrt{{\epsilonc}} r_1^{-3-\varepsilon_0}.
\end{equation*}
We then repeat the same argument for $D_\Lb\phi$ to derive
\begin{equation*}
\| D_\Lb\phi\|_{L^4(\S_{r_1}^{r_2})} \lesssim r_2^{-\frac{1}{2}}  \sqrt{{\epsilonc}} r_1^{-3-2\varepsilon_0}.
\end{equation*}
 Finally, by virtue of \eqref{Sobolev on sphere} and the fact that $u_+=1+\frac{1}{2}r_1$, we obtain that
\begin{equation*}
\|D_\Lb\phi\|_{L^\infty(\S_{r_1}^{r_2})} \lesssim  \sqrt{{\epsilonc}} r_1^{-1} u_+^{-3-\varepsilon_0}.
\end{equation*}

{\bf Step 3. $L^\infty$ estimate of $\slashed{D}\phi$.} By the bootstrap ansatz $(\mathbf{B})$, we have
\begin{align*}
\int_{\Hb_{r_2}^{r_1}} |D_\Lb D_{\Omega'}\big( D_\Omega \phi \big)|^2 \lesssim  {\epsilonc}r_1^{-6-2\varepsilon_0},
\end{align*}
We now use the $r^p$-weighted energy estimate with $p=2$ of the bootstrap assumption $(\mathbf{B})$ to show that
\begin{align*}
\int_{\Hb_{r_2}^{r_1}} |D_{\Omega^{''}} \big(D_{\Omega'} D_\Omega \phi \big)|^2 \lesssim \int_{\Hb_{r_2}^{r_1}} |\slashed{\nabla} \big(D_{\Omega'} D_\Omega \psi \big)|^2 &\lesssim {\epsilonc}r_1^{-4-2\varepsilon_0},
\end{align*}
Therefore by using the Sobolev embedding, we have
\begin{equation*}
\|D_{\Omega'} D_\Omega\phi\|_{L^4(\S_{r_1}^{r_2})} \lesssim r_2^{-\frac{1}{2}} \sqrt{{\epsilonc}} r_1^{-2-\varepsilon_0}.
\end{equation*}
Similarly, we can also obtain
\begin{equation*}
\| D_\Omega\phi\|_{L^4(\S_{r_1}^{r_2})} \lesssim r_2^{-\frac{1}{2}} \sqrt{{\epsilonc}} r_1^{-2-2\varepsilon_0}.
\end{equation*}
Therefore, \eqref{Sobolev on sphere} implies that, for all angular momentum vector field $\Omega$, we have
\begin{equation*}
\|D_\Omega\phi\|_{L^\infty(\S_{r_1}^{r_2})} \lesssim  \sqrt{{\epsilonc}} r_2^{-1} u_+^{-2-\varepsilon_0}.
\end{equation*}
Considering that $|D_{\Omega}\phi|=r|\slashed{D}\phi|$, the above estimate implies that
\begin{equation*}
\|\slashed{D}\phi\|_{L^\infty(\S_{r_1}^{r_2})} \lesssim  \sqrt{{\epsilonc}} r^{-2} u_+^{-2-\varepsilon_0}.
\end{equation*}
Here note that on the sphere $\S_{r_1}^{r_2}$ it holds the relation $r=\frac{r_1+r_2}{2}$.

{\bf Step 4. $L^\infty$ estimate of $D_L(r \phi)$.} The idea is to use the highest weight commutator $K=v^2L+u^2\Lb$. According to the bootstrap ansatz, we have
\begin{align*}
\sum\limits_{k\leq 1}\int_{\Hb_{r_2}^{r_1}} r_1^2|D_\Lb D_{\Omega}^k\big( \Dt_K \phi \big)|^2+r^2|\slashed{D}D_{\Omega}^k \big( \Dt_K \phi \big)|^2 \lesssim  {\epsilonc}r_1^{-2-2\varepsilon_0}.
\end{align*}
Here we may note that $D_{\Omega}=\Dt_{\Omega}$. In particular we conclude that
\begin{align*}
\int_{\Hb_{r_2}^{r_1}} |D_\Omega D_{\Omega'}\big( \Dt_K \phi \big)|^2+|D_\Omega \big( \Dt_K \phi \big)|^2 \lesssim \sqrt{\epsilonc} r_1^{-2-2\varepsilon_0}.
\end{align*}
Therefore the Sobolev embedding implies that
\begin{align*}
|\Dt_K \phi |&\lesssim  \sqrt{\epsilonc}r^{-1}u_+^{-1-\varepsilon_0}.
\end{align*}
On the other hand, we have
\begin{align*}
v^2 D_{L}(r\phi)=D_{K}(r\phi)-u^2 D_{\Lb}(r\phi)=r \Dt_{K}\phi-ru^2 D_{\Lb}\phi+u^2 \phi.
\end{align*}
Then by using the bounds for $\phi$ and $D_{\Lb}\phi$, we derive that
\begin{equation*}
v^2|D_L(r\phi)| \lesssim \sqrt{{\epsilonc}} (1+ |u|)^{-1-\varepsilon_0}.
\end{equation*}
This completes the proof.
\end{proof}

\section{The analysis in the exterior region 2: energy estimates}
\subsection{Energy estimates on Maxwell field}
For an multi-index $\k$ with $1\leq |\k| \leq 2$, we can take $G=\mathcal{L}_{Z}^{\k}\Fc$ and ${f}=0$ in \eqref{classical energy inequality} and \eqref{r wieghted} to deduce:
\begin{equation*}
\begin{split}
\mathcal{E}^{(\k)}(\Fc;r_1) &\leq \E[\mathcal{L}_Z^\k \Fc](\B_{r_1})+\int_{\D_{r_1}}r^{-2}\big|\Jk_\nu\cdot\mathcal{L}_{Z}^{\k}\Fc_0{}^{\nu}\big|\\
&\leq {\epsilonc}r_1^{-6+2\xi(\k)-8\varepsilon_0}+C\int_{\D_{r_1}}\underbrace{\frac{ |\Jk_L||\rho^{(\k)}|}{r^2}}_{\I_1}+\underbrace{\frac{ |\Jk_\Lb||\rho^{(\k)}|}{r^2}}_{\I_2}+\underbrace{\frac{|\sJk||\alpha^{(\k)}|}{r^2}}_{\I_3}+\underbrace{\frac{|\sJk||\alphab^{(\k)}|}{r^2}}_{\I_4},
\end{split}
\end{equation*}
and
\begin{equation*}
\begin{split}
\mathcal{E}^{(\k)}(\Fc;p=2;r_1) &\leq \int_{\B_{r_1}}r^{2}\big(|\alpha^{(\k)}|^2+|\rho^{(\k)}|^2+|\sigma^{(\k)}|^2\big)+ \int_{\D_{r_1}} \big|\Jk_\nu\cdot\mathcal{L}_{Z}^{\k}\Fc_L{}^{\nu}\big|,\\
&\leq {\epsilonc}r_1^{-4+2\xi(\k)-8\varepsilon_0}+C \int_{\D_{r_1}} \underbrace{|\Jk_L||\rho^{(\k)}|}_{\I_5}+\underbrace{|\sJk||\alpha^{(\k)}|}_{\I_6},
\end{split}
\end{equation*}
where $C$ is a universal constant. In this section, the constant $C$ may change but they all denote universal constants. We now bound the $\I_i$'s one by one.

For $\I_1$ and $\I_5$,  we have
\begin{align*}
\int_{\D_{r_1}}\I_5 &\lesssim \Big(\int_{\D_{r_1}} |\Jk_L|^2 \Big)^{\frac{1}{2}} \Big(\int_{\D_{r_1}} |\rho^{(\k)}|^2\Big)^{\frac{1}{2}}\\
&\lesssim \Big( \int_{r\geq r_1}  \big(\int_{\H_r} |\Jk_L|^2\big) dr\Big)^{\frac{1}{2}} \Big( \int_{r_1}^{\infty} \big(\int_{\H_{r}}|\rho^{(\k)}|^2\big) dr\Big)^{\frac{1}{2}}\\
&\lesssim \epsilonc r_1^{-\frac{5}{2}+\xi(\k)-2\varepsilon_0}\cdot {\epsilonc}^{\frac{1}{2}} r_1^{-\frac{5}{2}+\xi(\k)-(3-|\k|)\varepsilon_0} \lesssim {\epsilonc}^
{\frac{3}{2}}r_1^{-4+2\xi(\k)-(6-2|\k|)\varepsilon_0}.
\end{align*}	
The last step follows from the bootstrap assumption $\mathbf{(C)}$ as well as the bootstrap assumption $\mathbf{(B)}$. Similarly,
\begin{align*}
\int_{\D_{r_1}}\I_1
\lesssim {\epsilonc}^{\frac{3}{2}}r_1^{-6+2\xi(\k)-(6-2|\k|)\varepsilon_0}.
\end{align*}
For $\I_2$, we have
\begin{align*}
\int_{\D_{r_1}}\I_2&\les \Big(\int_{\D_{r_1}}\frac{|\Jk_\Lb|^2}{r^{\frac{19}{4}}}\Big)^{\frac{1}{2}}\Big(\int_{\D_{r_1}} r^{\frac{3}{4}}|\rho^{(\k)}|^2\Big)^{\frac{1}{2}}\\
&= \Big(\int_{\frac{r_1}{2}}^{\infty} \frac{1}{v^{\frac{5}{4}}}\big(\int_{\Hb_{2v}^{r_1}}\frac{|\Jk_\Lb|^2}{r^{\frac{7}{2}}}\big) dv \Big)^{\frac{1}{2}} \Big( \int_{\frac{r_1}{2}}^{\infty} \frac{1}{v^{\frac{5}{4}}}\big(\int_{\Hb_{2v}^{r_1}}r^2|\rho^{(\k)}|^2\big) dv\Big)^{\frac{1}{2}}\\
& \lesssim \epsilonc r_1^{-\frac{39}{8}+\xi(\k)-2\varepsilon_0}\cdot {\epsilonc}^{\frac{1}{2}} r_1^{-\frac{17}{8}+\xi(\k)-(3-|\k|)\varepsilon_0} \lesssim {\epsilonc}^
{\frac{3}{2}}r_1^{-6+2\xi(\k)-(6-2|\k|)\varepsilon_0}.
\end{align*}
We remark that in the last step we have used the bootstrap assumption $\mathbf{(B)}$ since $\int_{\Hb^{r_1}_{r}} r^2|\rho^{(\k)}|^2$ appears in the $r^p$-weighted energy. Another key point is that $v^{-\frac{5}{4}}$ is integrable on $[\frac{r_1}{2},\infty)$.

For $\I_3$ and $\I_6$, we have
\begin{align*}
\int_{\D_{r_1}}\I_6&\les \Big(\int_{\D_{r_1}}\frac{|\sJk |^2}{r^2}\Big)^{\frac{1}{2}}\Big(\int_{\D_{r_1}}r^2  |\alpha^{(\k)}|^2\Big)^{\frac{1}{2}}\\
&=\Big(\int_{r_1}^\infty\big(\int_{\H_{r_2}}\frac{|\sJk|^2}{r^2}\big)dr_2\Big)^{\frac{1}{2}} \Big( \int_{r_1}^{\infty}  \big(\int_{\H_{r}}r^2|\alpha^{(\k)}|^2\big) dr\Big)^{\frac{1}{2}} \\
&\lesssim \epsilonc r_1^{-\frac{7}{2}+\xi(\k)-2\varepsilon_0}\cdot {\epsilonc}^{\frac{1}{2}} r_1^{-\frac{3}{2}+\xi(\k)-(3-|\k|)\varepsilon_0} \lesssim {\epsilonc}^
{\frac{3}{2}}r_1^{-6+2\xi(\k)-(4-2|\k|)\varepsilon_0}.
\end{align*}
Similarly, we have
\begin{align*}
\int_{\D_{r_1}}\I_3
\lesssim {\epsilonc}^{\frac{3}{2}}r_1^{-6+2\xi(\k)-(6-2|\k|)\varepsilon_0}.
\end{align*}
For $\I_4$, we have
\begin{align*}
\int_{\D_{r_3}}\I_4&\les \Big(\int_{\D_{r_1}} \frac{ |\sJk|^2}{r^2}\Big)^{\frac{1}{2}} \Big(\int_{\D_{r_1}}\frac{|\alphab^{(\k)}|^2}{r^2}\Big)^{\frac{1}{2}}\\
&\les\Big( \int_{r_1}^\infty\big( \int_{\H_{r_2}}\frac{ |\sJk|^2}{r^2}\big)dr_2\Big)^{\frac{1}{2}} \Big( \int_{\frac{r_1}{2}}^\infty \frac{1}{v^2}\big(\int_{\Hb^{r_1}_{2v}} |\alphab^{(\k)}|^2\big) dv \Big)^{\frac{1}{2}}\\
&\lesssim \epsilonc r_1^{-\frac{7}{2}+\xi(\k)-2\varepsilon_0}\cdot {\epsilonc}^{\frac{1}{2}} r_1^{-\frac{7}{2}+\xi(\k)-(3-|\k|)\varepsilon_0} \lesssim {\epsilonc}^
{\frac{3}{2}}r_1^{-6+2\xi(\k)-(6-2|\k|)\varepsilon_0}.
\end{align*}

As a conclusion and by our convention on the implicit constant, we derive that
\begin{equation*}
\begin{split}
\mathcal{E}^{(\k)}(\Fc;r_1)&\leq {\epsilonc}r_1^{-6+2\xi(\k)-(6-2|\k|)\varepsilon_0}\big(1+C{\epsilonc}^{\frac{1}{2}}\big),\\
\mathcal{E}^{(\k)}(\Fc;p=2;r_1) &\leq {\epsilonc}r_1^{-6+2\xi(\k)-(6-2|\k|)\varepsilon_0}\big(1+C{\epsilonc}^{\frac{1}{2}}\big)
\end{split}
\end{equation*}
for some universal constant $C$.

For sufficiently small $\epsilonc$, we then has closed the bootstrap argument for the Maxwell fields in $\mathbf{(B)}$:
\begin{equation}\label{improved bootstrap maxwell}
\begin{split}
\mathcal{E}^{(\k)}(\Fc;r_1) & \leq 2 {\epsilonc}r_1^{-6+2\xi(\k)-(6-2|\k|)\varepsilon_0},\ \
\mathcal{E}^{(\k)}(\Fc;p=2;r_1) \leq 2{\epsilonc}r_1^{-6+2\xi(\k)-(6-2|\k|)\varepsilon_0}.
\end{split}
\end{equation}

\subsection{Energy estimates on scalar field}
For all multi-index $\k$ such that $1\leq |\k| \leq 2$, we take ${f}=\Dt_{Z}^{\k}\phi$ and $G=0$ in \eqref{classical energy inequality} and \eqref{r wieghted}. Let $\psi^{(\k)}=r\phi^{(\k)}$. We deduce the following energy estimates
\begin{equation}
\label{eq:EE:scal:00}
\begin{split}
\mathcal{E}^{(\k)}(\phi;r_1) &\leq \E[\phi^{(\k)}](\B_{r_1})+\int_{\D_{r_1}}\big| \Box_A\phi^{(\k)}  \cdot D_{\partial_t} \phi^{(\k)} \big|+|F_{0\mu} {J}[\phi^{(\k)}]^\mu|\\
&\leq  {\epsilonc} r_1^{-6+2\xi(\k)-8\varepsilon_0} +\underbrace{\int_{\D_{r_1}}\big| \Box_A\phi^{(\k)}
|\big(| D_L \phi^{(\k)}|+|D_\Lb\phi^{(\k)}|\big)}_{\R_1} \\
&\ \ \ \ \ \ \  \ \ \ \ \ \ \ \ \ \ \ \ \ \ \ \ \ +\underbrace{\int_{\D_{r_1}}(|\alpha|+|\alphab|)|\slashed{D}\phi^{(\k)}||\phi^{(\k)}|}_{\SS_1}+\underbrace{\int_{\D_{r_1}}|\rho|\big(|D_L\phi^{(\k)}|+|D_\Lb\phi^{(\k)}|\big)|\phi^{(\k)}|}_{\T_1}
\end{split}
\end{equation}
as well as the $r$-weighted energy estimates
\begin{equation}
\label{eq:EE:scal:000}
\begin{split}
\mathcal{E}^{(\k)}(\phi;p=2;r_1) &\leq \int_{\B_{r_1}}|D_L \psi^{(\k)}|^2+|\slashed{D} \psi^{(\k)}|^2+ \int_{\D_{r_1}} r\big|\Box_A\phi^{(\k)} \cdot D_L \psi^{(\k)}\big|+r^2 \big|F_{L\mu} {J}[\phi^{(\k)}]^\mu\big|\\
&\leq {\epsilonc}r_1^{-4+2\xi(\k)-8\varepsilon_0}+ \underbrace{\int_{\D_{r_1}} r\big|\Box_A\phi^{(\k)}\big||D_L \psi^{(\k)}|}_{\R_2}+\underbrace{\int_{\D_{r_1}} r^2|\alpha||\slashed{D}\phi^{(\k)}||\phi^{(\k)}|}_{\SS_2}\\
&\qquad\qquad\qquad\qquad+\underbrace{\int_{\D_{r_1}}|\rho||D_L\psi^{(\k)}||\psi^{(\k)}|}_{\T_2}.
\end{split}
\end{equation}
We remark that for the term $\T_2$, we have used the following structure of current term:
\begin{equation*}
r^2{J}[\phi^{(\k)}]= r^2 \Im(\phi^{(\k)}\cdot \overline{D \phi^{(\k)}})= \Im(\psi^{(\k)}\cdot \overline{D \psi^{(\k)}})={J}[\psi^{(\k)}].
\end{equation*}
This will be crucial for the estimate of $\T_2$. We first bound the $\SS_i$'s  which rely on the following lemma:
\begin{lemma}\label{lemma technical on phik}
Under the bootstrap ansatz, for $\gamma_2\geq 0$, $\gamma_1 > 1$, we have
\begin{equation*}
\int_{\D_{r_1}}\frac{|\phi^{(\k)}|^2}{r^{\gamma_1}|u|^{\gamma_2}} \lesssim {\epsilonc} r_1^{-3-\gamma_1-\gamma_2+2\xi(\k)-(6-2|\k|)\varepsilon_0}.
\end{equation*}
\end{lemma}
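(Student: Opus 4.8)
The plan is to foliate the exterior region $\D_{r_1}$ by the outgoing null cones $\H_s$, $s\geq r_1$, on each of which the weight $|u|$ equals the constant $s/2$. Using $\int_{\D_{r_1}}\,\cdot\,=\tfrac14\int_{r_1}^{\infty}\big(\int_{\H_s}\,\cdot\,\big)\,ds$, the quantity to be estimated becomes, up to a universal constant, $\int_{r_1}^{\infty}s^{-\gamma_2}\big(\int_{\H_s}r^{-\gamma_1}|\phi^{(\k)}|^{2}\big)\,ds$. On each $\H_s$ I would apply the Hardy inequality \eqref{inequality hardy on H} with weight $\gamma_1+2>1$ to the section $\psi^{(\k)}=r\,\phi^{(\k)}$, using $r^{-\gamma_1}|\phi^{(\k)}|^{2}=r^{-\gamma_1-2}|\psi^{(\k)}|^{2}$ and $u_+\approx s$ on $\H_s$, to obtain
\[
\int_{\H_s}r^{-\gamma_1}|\phi^{(\k)}|^{2}\lesssim s^{-\gamma_1-1}\int_{\S_s^s}|\psi^{(\k)}|^{2}+s^{-\gamma_1}\int_{\H_s}|D_L\psi^{(\k)}|^{2},
\]
where $\S_s^s=\{t=0,\ r=s\}$ is the sphere on the initial slice from which $\H_s$ emanates; so the first term is an initial-data contribution and the second is the $p=2$ weighted energy flux of $\phi^{(\k)}$ through $\H_s$.

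For the flux term I would use the bootstrap ansatz $\mathbf{(B)}$ in its $p=2$ form, namely $\int_{\H_s}|D_L\psi^{(\k)}|^{2}\leq\mathcal{E}^{(\k)}(\phi;2;s)\leq 4\epsilonc\,s^{-4+2\xi(\k)-(6-2|\k|)\varepsilon_0}$; inserting this and integrating in $s$ from $r_1$ to $\infty$ — the integrand's exponent $-\gamma_1-\gamma_2-4+2\xi(\k)-(6-2|\k|)\varepsilon_0$ is $<-1$ because $\gamma_1>1$, $\gamma_2\geq0$ and $\xi(\k)\leq 2$ — produces exactly $\epsilonc\,r_1^{-3-\gamma_1-\gamma_2+2\xi(\k)-(6-2|\k|)\varepsilon_0}$. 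For the data term, exactly as in the handling of $\mathbf{I}_2$ in the proof of Lemma \ref{lemma key}, summing the sphere contributions over $s$ reassembles an integral over the initial slice,
\[
\int_{r_1}^{\infty}s^{-\gamma_1-\gamma_2-1}\Big(\int_{\S_s^s}|\psi^{(\k)}|^{2}\Big)\,ds=\int_{\B_{r_1}}r^{\,1-\gamma_1-\gamma_2}\,|\phi^{(\k)}|^{2},
\]
so it remains to bound this by a constant times $\epsilonc\,r_1^{-3-\gamma_1-\gamma_2+2\xi(\k)-(6-2|\k|)\varepsilon_0}$.

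To do so I would expand $\phi^{(\k)}$ on $\{t=0\}$, where $\Dt_{\Omega}=D_\Omega$, $\Dt_T=D_t$, $\Dt_S=rD_r+1$ and $\Dt_K=\tfrac{r^2}{2}D_t$; commuting the vector fields through and using $\Box_A\phi=0$ together with the Maxwell system to exchange every $\partial_t$ for spatial covariant derivatives of the admissible data, one sees that $|\phi^{(\k)}|$ restricted to $\{t=0\}$ is bounded by $r^{w(\k)}$ times a sum of covariant derivatives of order $\leq|\k|$ of the admissible data (curvature and the quadratic current terms included), with $w(\k)\leq 2k_2+k_1=|\k|+\xi(\k)$ (each index-$0$ field contributing weight at most $1$, each index-$1$ field weight at most $2$), where $k_1,k_2$ are the numbers of index-$0$ and index-$1$ vector fields in $\k$. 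Matching against the weights carried by $\mathring{\E}_{\geq R_*}$ — where a $j$-th order term has weight $r^{2j+6+8\varepsilon_0}$ with respect to $dr\,d\vartheta$ and $j\leq 2$ — and extracting an $r_1$-power by conceding weight, this gives a decay of at least $r_1^{\,2w(\k)-2|\k|-3-\gamma_1-\gamma_2-8\varepsilon_0}$, which is $\leq r_1^{-3-\gamma_1-\gamma_2+2\xi(\k)-(6-2|\k|)\varepsilon_0}$ precisely because $w(\k)\leq|\k|+\xi(\k)$. Adding the flux and data contributions finishes the proof. The main obstacle is this last step — reducing $\phi^{(\k)}$ on $\{t=0\}$ to the admissible data while tracking the $r$-weights produced by the scaling and Morawetz fields, and checking that the weight budget $w(\k)\leq|\k|+\xi(\k)$ closes against the weights available in $\mathring{\E}_{\geq R_*}$; everything else is a direct application of the Hardy inequality and the bootstrap bounds.
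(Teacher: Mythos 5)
Your argument is correct and follows essentially the same route as the paper: both reduce the spacetime integral, cone by cone, to an initial-data contribution plus the flux $\int_{\H}|D_L\psi^{(\k)}|^2$ controlled by the $p=2$ case of ansatz $\mathbf{(B)}$, the only cosmetic difference being that you invoke the Hardy inequality \eqref{inequality hardy on H} on each outgoing cone while the paper uses the trace estimate \eqref{Sobolev trace estimates on outgoing null hypersurfaces} sphere by sphere and then integrates in $v$ using $\gamma_1>1$. One small correction: the relevant threshold for the Hardy lemma is $\gamma>3$, not $>1$, which is still satisfied since $\gamma_1+2>3$; your extra discussion of the initial-data term (which the paper simply asserts) is consistent with the weighted norm \eqref{initial data 1}.
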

\begin{proof}
Let $\S_{u,v}$ be the intersection of $\H_{u}$ and $\Hb_{v}$. By \eqref{Sobolev trace estimates on outgoing null hypersurfaces}, we then have
\begin{align*}
\int_{\D_{r_1}}\frac{|\phi^{(\k)}|^2}{r^{\gamma_1}|u|^{\gamma_2}} &=\int_{u}\int_{v}\frac{\int_{\S_{u,v}}|\phi^{(\k)}|^2}{r^{\gamma_1}|u|^{\gamma_2}}\lesssim \int_{u}\int_{v}\frac{\int_{\S_{u,u}}|\phi^{(\k)}|^2+|u|^{-1}\int_{\H_u}|D_L\psi^{(\k)}|^2}{r^{\gamma_1}|u|^{\gamma_2}}\\
&\lesssim \int_{u}\frac{\int_{\S_{u,u}}|\phi^{(\k)}|^2}{|u|^{\gamma_1+\gamma_2-1}}+\int_{u}\frac{\int_{\H_u}|D_L\psi^{(\k)}|^2}{|u|^{\gamma_1+\gamma_2}}.
\end{align*}
The first term is from the initial data and it is bounded by  ${\epsilonc} r_1^{-3-\gamma_1-\gamma_2+2\xi(\k)-8\varepsilon_0}$. We can control the second term by the bootstrap ansatz and it is bounded by $C{\epsilonc} r_1^{-3-\gamma_1-\gamma_2+2\xi(\k)-(6-2|\k|)\varepsilon_0}$. This completes the proof.
\end{proof}
For $\mathbf{S}_1$, according to Proposition \ref{Proposition pointwise decay of Maxwell} and the decay properties of the charge part $\alpha(F[q_0])$, $\alphab(F[q_0])$, we in particular have the following bounds
\[
|\alpha|\les \sqrt{{\epsilonc}} r^{-3} u_+^{-1-\varepsilon_0 }+r^{-3} \les r^{-3}, \quad |\alphab| \les \sqrt{{\epsilonc}} r^{-1}u_+^{-3-\varepsilon_0}+r^{-3}\les r^{-1}u_+^{-2}.
\]
We have used the fact that $\epsilonc$ is sufficiently small. Therefore we can show that
\begin{align*}
\SS_1&\lesssim  \int_{\D_{r_1}}\frac{|\slashed{D}\phi^{(\k)}||\phi^{(\k)}|}{r |u|^{2}}
\lesssim \Big(\int_{\D_{r_1}}\frac{|\slashed{D}\phi^{(\k)}|^2}{|u|}\Big)^{\frac{1}{2}} \Big(\int_{D_{r_1}}\frac{|\phi^{(\k)}|^2}{r^{2}|u|^{3}}\Big)^{\frac{1}{2}}\\
&= \Big( \int_{|u| \geq \frac{r_1}{2}} \frac{\int_{\H_u}|\slashed{D}\phi^{(\k)}|^2}{|u|} du\Big)^\frac{1}{2}\Big(\int_{D_{r_1}}\frac{|\phi^{(\k)}|^2}{r^{2}|u|^{3}}\Big)^{\frac{1}{2}}.
\end{align*}
We use the bootstrap ansatz to bound the first term and use Lemma \ref{lemma technical on phik} to bound the second term. Therefore, we obtain
\begin{equation*}
\SS_1 \lesssim {\epsilonc} r_1^{-6.5+2\xi(\k)-(6-2|\k|)\varepsilon_0}.
\end{equation*}
We can also derive in the same manner that
\begin{equation*}
\SS_2 \lesssim  {\epsilonc} r_1^{-4.5+2\xi(\k)-(6-2|\k|)\varepsilon_0}.
\end{equation*}
By our convention the implicit constant is independent of $R_*$. Since $r_1\geq R_*$, by choosing $R_*$ sufficiently large, we derive the following estimates
\begin{equation}\label{energy identities to be bounded}
\begin{split}
 \mathcal{E}^{(\k)}(\phi;r_1) \leq \frac{5}{4}\epsilonc r_1^{-6+2\xi(\k)-8\varepsilon_0}+\mathbf{R}_1 
 +\mathbf{T}_1,\\
\mathcal{E}^{(\k)}(\phi;p=2;r_1)  \leq \frac{5}{4}\epsilonc r_1^{-4+2\xi(\k)-8\varepsilon_0}+ \mathbf{R}_2 
 +\mathbf{T}_2
\end{split}
\end{equation}
with $\mathbf{R}_i$, $\mathbf{T}_i$ defined in \eqref{eq:EE:scal:00} and \eqref{eq:EE:scal:000}.

\subsubsection{Energy estimates on one derivatives of the scalar field}\label{section on 1 derivatives on scalar}
We consider the case where $|\k|=1$. The multi-index $\k$ then represents a vector field $Z\in \Gamma$. In view of \eqref{null form estimate} and the pointwise bounds in Proposition \ref{Proposition pointwise decay of Maxwell} and Proposition \ref{Proposition pointwise decay of scalar field}, we have
\begin{equation*}
|u|^{-\xi(Z)}|Q(\phi,F;Z)| \lesssim \frac{1}{r|u|}|D_L\psi|+\frac{1}{r^2|u|}|\slashed{D}\psi| +\frac{|u|}{r^3}|D_\Lb\psi|+\frac{1}{r^2}|\phi|.
\end{equation*}
Thus, we have
\begin{equation}\label{the first pointwise bound on Q}
\begin{split}
r^2|\Box_A\phi^{({\bf 1})}|^2 &\lesssim r^2 |Q(\phi,F;Z)|^2\\
&\lesssim |u|^{2\xi({\bf 1})-2}|D_L\psi|^2+|u|^{2\xi({\bf 1})-2}|\slashed{D}\phi|^2+\frac{|u|^{2\xi({\bf 1})+2}}{r^2}|D_\Lb\phi|^2+\frac{|u|^{2\xi({\bf 1})}}{r^2}|\phi|^2.
\end{split}
\end{equation}
Since $r^{-2}\les |u|^{-2}$, according to the bounds on the zeroth order energy estimates, we have
\begin{equation*}
\begin{split}
\int_{\D_{r_1}}  |u|^{2\xi({\bf 1})-2}|D_L\psi|^2  &\leq \int_{u}{|u|^{2\xi({\bf 1})-2}}\Big(\int_{\H_u}|D_L \psi|^2\Big)du\lesssim {\epsilonc} r_1^{2\xi({\bf 1})-5-6\varepsilon_0},
\end{split}
\end{equation*}
\begin{align*}
\int_{\D_{r_1}} |u|^{2\xi({\bf 1})-2}|\slashed{D}\phi|^2 &\lesssim \int_{u}{|u|^{2\xi({\bf 1})-4-2\varepsilon_0}}\Big(\int_{\H_u}|\slashed{D}\phi|^2\Big)du \lesssim {{\epsilonc} r_1^{2\xi({\bf 1})-7-6\varepsilon_0}},
\end{align*}
and
\begin{align*}
\int_{\D_{r_1}}\frac{|u|^{2\xi({\bf 1})+2}}{r^2}|D_\Lb\phi|^2 &\lesssim \int_{v}{|u|^{2\xi({\bf 1})}}|v|^{-2}\Big(\int_{\Hb_v}|D_\Lb\phi|^2\Big)du \lesssim {\epsilonc} r_1^{2\xi({\bf 1})-5-6\varepsilon_0}.
\end{align*}
By Lemma \ref{lemma technical on phik}, we also have
\begin{align*}
\int_{\D_{r_1}} \frac{|u|^{2\xi({\bf 1})}}{r^2}|\phi|^2 &\lesssim {\epsilonc} r_1^{2\xi({\bf 1})-5-6\varepsilon_0}.
\end{align*}
Thus, we have
\begin{equation}\label{the first null form bound on spacetime slab}
\int_{\D_{r_1}} r^2|Q(\phi,F;Z)|^2 \lesssim r_1^{2\xi({Z})-5-6\varepsilon_0}.
\end{equation}
Let $(\mathbf{2})$ denotes two vector fields $Z_1$ and $Z_2$. If we replace $\phi$ by $\Dt_{Z_1}$ in the proof between \eqref{the first pointwise bound on Q} and \eqref{the first null form bound on spacetime slab}, we obtain
\begin{equation}\label{the first null form bound on spacetime slab with one more derivatives}
\int_{\D_{r_1}} r^2|Q(\Dt_{Z_1}\phi,F;Z_2)|^2 \lesssim r_1^{2\xi(\mathbf{2})-5-6\varepsilon_0}.
\end{equation}
Similarly, we have
\begin{align*}
&\int_{\D_{r_1}}r^{-2}\big(|D_L\phi^{({\bf 1})}|+|D_\Lb\phi^{({\bf 1})}| \big)^2 \\
&\lesssim \int_{|u|\geq r_1} u_+^{-2}\big(\int_{\H_u}|D_L \phi^{({\bf 1})}|^2\big)du+\int_{v}v^{-2}\Big(\int_{\Hb_v}|D_\Lb \phi^{({\bf 1})}|^2\Big)dv\lesssim {\epsilonc} r_1^{2\xi({\bf 1})-5-4\varepsilon_0}.
\end{align*}
Therefore, we can bound $\mathbf{R}_1$ as follows
\begin{align*}
\R_1 &\leq \Big(\int_{\D_{r_1}} r^2|\Box_A\phi^{({\bf 1})}|^2 \Big)^\frac{1}{2} \Big(\int_{\D_{r_1}} r^{-2}\big(|D_L\phi^{({\bf 1})}|+|D_\Lb\phi^{({\bf 1})}| \big)^2\big)\Big)^\frac{1}{2} \lesssim {\epsilonc} r_1^{-6+2\xi({\bf 1})-5\varepsilon_0}.
\end{align*}
One can also proceed exactly in the same manner to prove that
\begin{align*}
\R_2 &\lesssim \Big(\int_{\D_{r_1}}r^2 |\Box_A\phi^{({\bf 1})}|^2 \Big)^\frac{1}{2} \Big(\int_{\D_{r_1}} |D_L\psi^{({\bf 1})}|^2\Big)^\frac{1}{2}\lesssim {\epsilonc} r_1^{-4+2\xi({\bf 1})-5\varepsilon_0}
\end{align*}
by using the $r$-weighted energy estimates.
Therefore, for sufficiently large $R_*$, since $r_1\geq R_*$, we have
\begin{equation}\label{energy identities final}
\begin{split}
 \mathcal{E}^{{({\bf 1})}}(\phi;r_1) &\lesssim \epsilonc r_1^{-6+2\xi({{\bf 1}})-5\varepsilon_0}+\T_1 \\ 
\mathcal{E}^{({\bf 1})}(\phi;p=2;r_1) & \lesssim \epsilonc r_1^{-4+2\xi({{\bf 1}})-5\varepsilon_0}+\T_2 .
\end{split}
\end{equation}
At this stage, we need to first control $\T_2$ in the second equation. In view of the definition of $\mathcal{E}^{(\k)}(\phi;p=2;r_1)$ and the fact that $|\rho|\lesssim r^{-2}$, the second inequality gives
\begin{equation*}
\int_{\H_{r_1}} |D_L\psi^{({\bf 1})}|^2 \lesssim \epsilonc r_1^{-4+2\xi{({\bf 1})}-5\varepsilon_0}+ \int_{\D_{r_1}} \frac{|D_L\psi^{({\bf 1})}||\psi^{({\bf 1})}|}{r^2}.
\end{equation*}
When we apply Lemma \ref{lemma key} in this case, we change $\varepsilon_0$ to $\frac{1}{2}\varepsilon_0$. This leads to
\begin{equation*}
\int_{\H_{r_1}} |D_L\psi^{({\bf 1})}|^2 \lesssim \epsilonc r_1^{-4+2\xi{({\bf 1})}-4.5\varepsilon_0}.
\end{equation*}
The gain of $r^{-0.5\varepsilon_0}$ can be used to improve the estimates in Lemma \ref{lemma technical on phik}. This gives
\begin{equation}\label{improved est}
\int_{\D_{r_1}} \frac{|\psi^{({\bf 1})}|^2}{r^4} =\int_{\D_{r_1}} \frac{|\phi^{({\bf 1})}|^2}{r^2}  \lesssim {\epsilonc} r_1^{-5+2\xi({\bf 1})-4.5\varepsilon_0}.
\end{equation}
Hence,
\begin{equation*}
\T_2 \lesssim \Big(\int_{\D_{r_1}} |D_L\psi^{({\bf 1})}|^2\big)^\frac{1}{2} \Big(\int_{\D_{r_1}} \frac{|\psi^{({\bf 1})}|^2}{r^4}\Big)^\frac{1}{2}\lesssim \epsilonc r_1^{-4+2\xi{({\bf 1})}-4.5\varepsilon_0}.
\end{equation*}
This improved estimate \eqref{improved est} also allows us to bound $\T_1$ as follows:
\begin{align*}
\T_1 &\lesssim\big( \underbrace{\int_{\D_{r_1}} r^{-2}|D_L\phi^{({\bf 1})}|^2 +\int_{\D_{r_1}} r^{-2}|D_\Lb\phi^{({\bf 1})}|^2 }_{\lesssim  r_1^{-7+2\xi{({\bf 1})}-4\varepsilon_0} \text{by \bf{(B)}} }\big)^\frac{1}{2}\big(\underbrace{\int_{\D_{r_1}} \frac{|\phi^{({\bf 1})}|^2}{r^2}}_{\lesssim  r_1^{-5+2\xi{({\bf 1})}-4.5\varepsilon_0}}\Big)^\frac{1}{2}\\
&\lesssim \epsilonc r_1^{-6+2\xi{({\bf 1})}-4.25\varepsilon_0}.
\end{align*}
Thus, the estimate \eqref{energy identities final} implies
\begin{equation*}
\begin{split}
 \mathcal{E}^{{({\bf 1})}}(\phi;r_1) &\lesssim \epsilonc r_1^{-6+2\xi({{\bf 1}})-4.25\varepsilon_0},\ \
\mathcal{E}^{({\bf 1})}(\phi;p=2;r_1) \lesssim \epsilonc r_1^{-4+2\xi({{\bf 1}})-4.5\varepsilon_0}.
\end{split}
\end{equation*}
For sufficiently large $R_*$, we then have closed the bootstrap argument for first order energy quantities on scalar field in $\mathbf{(B)}$:
\begin{equation}\label{improved bootstrap first order scalar}
\begin{split}
 \mathcal{E}^{{({\bf 1})}}& \leq 2 {\epsilonc}r_1^{-6+2\xi({\bf 1})-4\varepsilon_0},\ \
\mathcal{E}^{{({\bf 1})}}(\phi;p=2;r_1) \leq 2{\epsilonc}r_1^{-4+2\xi({\bf 1})-4\varepsilon_0}.
\end{split}
\end{equation}

\subsubsection{Energy estimates on second derivatives of the scalar field}
We now fix a $\k$ so that $|\k|=2$ and the first objective is to bound the $\R_1$ and $\R_2$ term in \eqref{energy identities to be bounded}. For this purpose, we first recall that, for $({\bf 2})$ representing $\Dt_{Z_1}\Dt_{Z_2}$, we have
\begin{equation*}
\Box_A \phi^{(\mathbf{2})} =Q(\Dt_{Z_1}\phi,F;Z_2)  + Q(\Dt_{Z_2}\phi,F;Z_1)+Q(\phi,  F;[Z_1,Z_2])+Q(\phi,  \mathcal{L}_{Z_1} F;Z_2)-2F_{Z_1\mu}F_{Z_2}{}^{\mu}\phi.
\end{equation*}

For $\R_1$, according to the above expression, we split it into three parts:
\begin{equation*}
\begin{split}
\R_1&\lesssim\int_{\D_{r_1}} \underbrace{\Big(\big|Q(\Dt_{Z_1}\phi,F;Z_2)\big|+\big|Q(\Dt_{Z_2}\phi,F;Z_1)\big|+\big|Q(\phi,  F;[Z_1,Z_2])\big|\Big)\big(| D_L \phi^{(\mathbf{2})}|+|D_\Lb\phi^{(\mathbf{2})}|\big)}_{\R_{11}}\\
& \ \ \ + \int_{\D_{r_1}}\underbrace{\big|Q(\phi, \mathcal{L}_{Z_1} F;Z_2)\big|\big(| D_L \phi^{(\mathbf{2})}|+|D_\Lb\phi^{(\mathbf{2})}|\big)}_{\R_{12}}+\int_{\D_{r_1}}\underbrace{\big|F_{Z_1\mu}F_{Z_2}{}^{\mu}\phi\big|\big(| D_L \phi^{(\mathbf{2})}|+|D_\Lb\phi^{(\mathbf{2})}|\big)}_{\R_{13}}
\end{split}
\end{equation*}
All the three $Q$-terms in $\R_{11}$ can be schematically written as either $Q(\phi^{({\bf 1})},F;Z)$ or $Q(\phi^{({\bf 0})},F;Z)$ due to the observation that the linear span of $\mathcal{Z}$ is closed under commutations.  These terms resemble the terms in $\R_1$ in Section \ref{section on 1 derivatives on scalar}. Thanks to \eqref{the first null form bound on spacetime slab with one more derivatives}, they can be bounded exactly in the same manner:
\begin{align*}
\R_{1}
&\lesssim \Big(\int_{\D_{r_1}} r^2\big|Q(\Dt_{Z_1}\phi,F;Z_2)\big|^2+r^2\big|Q(\Dt_{Z_2}\phi,F;Z_1)\big|^2+r^2\big|Q(\phi,  F;[Z_1,Z_2])\big|^2 \Big)^\frac{1}{2} \\
&\ \ \ \times\Big(\int_{\D_{r_1}} r^{-2}\big(|D_L\phi^{({\bf 2})}|+|D_\Lb\phi^{({\bf 2})}| \big)^2\big)\Big)^\frac{1}{2} \lesssim {\epsilonc} r_1^{-6+2\xi({\bf 2})-3\varepsilon_0}.
\end{align*}
Here we remark that compared with the estimate of $\R_1$ in the last subsection we lose a decay power of $\varepsilon_0$ is due to the weaker decay of second order energy estimates in the bootstrap assumption.

For $\R_{12}$, we use $(\mathbf{1})$ to denote the vector field $Z_1$, according to \eqref{null form estimate} and the pointwise bounds on the scalar field, we have
\begin{equation*}
\begin{split}
|u|^{-\xi(Z_2)}|Q(\phi,\mathcal{L}_{Z_1}F;Z_2)| &\lesssim \big(\frac{r}{|u|}|\rho(\mathcal{L}_{Z_1}F)|+|\alphab(\mathcal{L}_{Z_1}F)|\big)|D_L\psi|\\
&\ \ +\big(\frac{r}{|u|}|\alpha(\mathcal{L}_{Z_1}F)| +\frac{|u|}{r}|\alphab(\mathcal{L}_{Z_1}F)|+|\sigma(\mathcal{L}_{Z_1}F)|\big)|\slashed{D}\psi| \\
& \ \ + \big(|\alpha(\mathcal{L}_{Z_1}F)|+\frac{|u|}{r}|\rho(\mathcal{L}_{Z_1}F)|\big)|D_\Lb\psi|+\big(|\rho(\mathcal{L}_{Z_1}F)|+|\sigma(\mathcal{L}_{Z_1}F)|\big)|\phi|\\
& \ \ +\big(\frac{|u|}{r^2}|J(\mathcal{L}_{Z_1}F)_\Lb|+\frac{1}{|u|}|J(\mathcal{L}_{Z_1}F)_L|+\frac{1}{r}|\slashed{J}(\mathcal{L}_{Z_1}F)|\big)|\phi|.
\end{split}
\end{equation*}
Since $F=\Fc+F[q_0]$ and $F[q_0]$ solves the linear Maxwell equations, accroding to \eqref{commutator formula 2}, we have
\begin{equation*}
J(\mathcal{L}_{Z_1}F)_\Lb=J^{({\bf 1})}_\Lb, \ J(\mathcal{L}_{Z_1}F)_L=J^{({\bf 1})}_L, \ \slashed{J}(\mathcal{L}_{Z_1}F)=\slashed{J}^{({\bf 1})}.
\end{equation*}
Therefore, according to the pointwise decay for the scalar field, we have
\begin{equation*}
\begin{split}
&\ \ \ \ \ |u|^{-\xi(Z_2)}|Q(\phi,\mathcal{L}_{Z_1}F;Z_2)|\\
&\lesssim  \big(\frac{r}{|u|}|\slashed{D}\psi|+|D_\Lb\psi|\big) |\alpha(\mathcal{L}_{Z_1}F)|+  \big(\frac{r}{|u|}|D_L\psi|+\frac{|u|}{r}|D_\Lb\psi|+|\phi|\big)|\rho(\mathcal{L}_{Z_1}F)|+(|\slashed{D}\psi|+|\phi|\big)|\sigma(\mathcal{L}_{Z_1}F)|   \\
&\ \ \ + \big(|D_L\psi|+\frac{|u|}{r}|\slashed{D}\psi|\big) |\alphab(\mathcal{L}_{Z_1}F)|+\big(\frac{|u|}{r^2}|J^{({\bf 1})}_\Lb|+\frac{1}{|u|}|J^{({\bf 1})}_L|+\frac{1}{r}|\slashed{J}^{({\bf 1})}|\big)|\phi| \\
&\lesssim \underbrace{\frac{\sqrt{{\epsilonc}}}{|u|^{3+\varepsilon_0}} |\alpha(\mathcal{L}_{Z_1}F)|}_{\A_0} + \underbrace{\frac{\sqrt{{\epsilonc}}}{r|u|^{2+\varepsilon_0}}|\rho(\mathcal{L}_{Z_1}F)|+\frac{\sqrt{{\epsilonc}}}{r|u|^{2+\varepsilon_0}}|\sigma(\mathcal{L}_{Z_1}F)|}_{\A_1}   +\underbrace{\frac{\sqrt{{\epsilonc}}}{r^2|u|^{1+\varepsilon_0}}|\alphab(\mathcal{L}_{Z_1}F)|}_{\A_2} \\
&\ \ \ +\underbrace{\frac{\sqrt{{\epsilonc}}}{r|u|^{\frac{7}{2}+2\varepsilon_0}}|J^{({\bf 1})}_L|+\frac{\sqrt{{\epsilonc}}}{r^2|u|^{\frac{5}{2}+2\varepsilon_0}}|\slashed{J}^{({\bf 1})}|}_{\A_{3}}+\underbrace{\frac{\sqrt{{\epsilonc}}}{r^3|u|^{\frac{3}{2}+2\varepsilon_0}}|J^{({\bf 1})}_\Lb|}_{\A_{4}}.
\end{split}
\end{equation*}

On the other hand, according to Lemma \ref{lemma commuting Z with null decomposition}, we have
\begin{align*}
\big|\alpha(\mathcal{L}_{Z_1}F[q_0])\big|&\leq \big|\mathcal{L}_{Z_1}\big(\alpha(F[q_0])\big)\big|+r^{\xi(Z_1)}\big|\alpha(F[q_0])\big|\lesssim r^{-3+\xi(Z_1)}.
\end{align*}
Hence,
\begin{align*}
\big|\alpha(\mathcal{L}_{Z_1}F)\big|\leq \big|\alpha(\mathcal{L}_{Z_1}\Fc)\big|+\big|\alpha(\mathcal{L}_{Z_1}F[q_0])\big|\leq |\alphaone| + r^{-3+\xi(Z_1)}.
\end{align*}
Similarly, since we have
\begin{align*}
\big|\alphab(\mathcal{L}_{Z_1}F[q_0])\big|&\lesssim r^{-3+\xi(Z_1)},\ \big|\rho(\mathcal{L}_{Z_1}F[q_0])\big|\lesssim r^{-3+\xi(Z_1)},\ \sigma(\mathcal{L}_{Z_1}F[q_0])=0,
\end{align*}
We notice that the estimate on $\rho(\mathcal{L}_{Z_1}F[q_0])$ is as good as the other components. This is due to the fact that $\mathcal{L}_Z \big(\frac{1}{r^2}dt\wedge dr\big)=0$ for all $Z \in \mathcal{Z}$. We conclude that
\begin{equation}\label{onederivative of F}
\begin{split}
\big|\alpha(\mathcal{L}_{Z_1}F)&\lesssim	|\alphaone| + r^{-3+\xi(Z_1)}, \ \big|\alphab(\mathcal{L}_{Z_1}F)\lesssim	|\alphabone| + r^{-3+\xi(Z_1)},\\
\big|\rho(\mathcal{L}_{Z_1}F)&\lesssim	|\rhoone| + r^{-3+\xi(Z_1)}, \ \big|\sigma(\mathcal{L}_{Z_1}F)\lesssim	|\sigmaone|.
\end{split}
\end{equation}
We notice that for $Z_1=K$ we lose decay in $r$. For the $\alpha$ component, we can improve the decay in $r$:
\begin{lemma}
\begin{equation}\label{improved estimate for Fq0}
|\alpha(\mathcal{L}_{K}F[q_0])\big|\lesssim r^{-3}|u|.
\end{equation}
\end{lemma}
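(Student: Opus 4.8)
The plan is to compute $\mathcal{L}_K F[q_0]$ essentially by hand, exploiting the rigid algebraic structure of the charge field. Recall that $F[q_0]$ is stationary, purely electric, and generated by the potential $V$, which by construction is a finite sum $V=V_{-1}+V_{-2}+V_{-3}$ of functions homogeneous in $x$ of degrees $-1,-2,-3$ (the monopole $q_0/|x|$, the dipole, and the quadrupole terms of the formal expansion of the Newtonian Green's function). Hence $F[q_0]=F_2+F_3+F_4$, where $(F_k)_{0i}=\partial_i V_{-(k-1)}$ and $(F_k)_{ij}=0$, so each $F_k$ is stationary and has Cartesian components homogeneous of degree $-k$. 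The top piece $F_2=\tfrac{q_0}{r^2}\,dt\wedge dr$ (up to sign) already satisfies $\mathcal{L}_Z F_2=0$ for every $Z\in\mathcal{Z}$, and moreover $\alpha(F_2)=0$ because $V_{-1}$ is radial; thus $\alpha(\mathcal{L}_K F[q_0])=\alpha(\mathcal{L}_K F_3)+\alpha(\mathcal{L}_K F_4)$, and it suffices to estimate these two terms.

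For $k\in\{3,4\}$ I would write $K=\tfrac12(t^2+r^2)\partial_t+tr\,\partial_r$ in Cartesian coordinates and use $\mathcal{L}_{fX}=f\mathcal{L}_X+df\wedge\iota_X$ together with the stationarity $\mathcal{L}_{\partial_t}F_k=0$ to obtain
\[
\mathcal{L}_K F_k=(t\,dt+r\,dr)\wedge\iota_{\partial_t}F_k+tr\,\mathcal{L}_{\partial_r}F_k+(r\,dt+t\,dr)\wedge\iota_{\partial_r}F_k ,
\]
and then evaluate on the null pair $(L,e_A)$. Since $\iota_{\partial_t}F_k=dV_{-(k-1)}$ with $dV_{-(k-1)}(e_A)=\alpha(F_k)_A$, and $(t\,dt+r\,dr)(L)=t+r$, $(t\,dt+r\,dr)(e_A)=0$, the first wedge term contributes $(t+r)\,\alpha(F_k)_A$; since $\iota_{\partial_r}F_k=-\rho(F_k)\,dt$ is radial and $dt(e_A)=0$, the last wedge term contributes nothing. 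For the middle term, using $[\partial_r,L]=0$, $[\partial_r,e_A]=-\tfrac1r e_A$ and the homogeneity $\partial_r\alpha(F_k)_A=-\tfrac kr\,\alpha(F_k)_A$ one gets $(\mathcal{L}_{\partial_r}F_k)(L,e_A)=-\tfrac{k-1}{r}\alpha(F_k)_A$, hence a contribution $-(k-1)\,t\,\alpha(F_k)_A$. Adding the three terms,
\[
\alpha(\mathcal{L}_K F_k)=\bigl(r+(2-k)t\bigr)\,\alpha(F_k),
\]
so $\alpha(\mathcal{L}_K F_3)=(r-t)\,\alpha(F_3)$ and $\alpha(\mathcal{L}_K F_4)=(r-2t)\,\alpha(F_4)$. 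Finally, on the exterior region $\{t+R_*\le|x|\}$ one has $0\le t<r$, hence $r-t=2|u|$, $|r-2t|\le r$ and $|u|\ge R_*/2\ge\tfrac12$; combining this with $|\alpha(F_3)|\lesssim r^{-3}$ and $|\alpha(F_4)|\lesssim r^{-4}$, which follow from the explicit homogeneity of the dipole and quadrupole parts of $F[q_0]$, yields $|\alpha(\mathcal{L}_K F[q_0])|\lesssim 2|u|r^{-3}+r^{-3}\lesssim r^{-3}|u|$.

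The step I expect to be delicate is pinning down the coefficient of $\alpha(F_3)$: it must come out exactly $r-t=2|u|$ and not, say, $r-2t$. On the exterior region the latter is only $O(r)$, which would merely reproduce the trivial bound $r^{-2}$, whereas $r-t=2|u|$ gives the desired gain. This forces one to keep all three contributions in the displayed formula for $\mathcal{L}_K F_k$ and, crucially, not to discard the $-\tfrac1r e_A$ in $[\partial_r,e_A]$; the cancellation $t+r-2t=r-t$ in the case $k=3$ is precisely the imprint of the conformal-Killing character of $K$. The $F_4$ term is harmless since it carries one more power of $r^{-1}$, so even after the worst-case factor $r$ it still decays like $r^{-3}\lesssim|u|r^{-3}$ on the exterior region. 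Apart from this bookkeeping the argument is a pure homogeneity count, since $F[q_0]$ is completely explicit.
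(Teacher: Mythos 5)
Your proposal is correct and follows essentially the same route as the paper: split $F[q_0]$ into its homogeneous monopole/dipole/quadrupole pieces, note that the monopole is annihilated by $\mathcal{L}_K$ (and has vanishing $\alpha$) and the quadrupole is harmless by a weight count, and compute the dipole term exactly to extract the factor $r-t=2|u|$, which matches the paper's formula $\alpha(\mathcal{L}_K F^{(2)}[q_0])_A=\frac{1}{4\pi}\int\frac{(r-t)\,e_A\cdot y}{r^3}\Im(\phi_0\cdot\bar\phi_1)\,dy$. The only difference is cosmetic: you carry out the Lie-derivative computation directly via $\mathcal{L}_{fX}=f\mathcal{L}_X+df\wedge\iota_X$ together with stationarity and homogeneity, whereas the paper invokes its commutation formulas (Lemma \ref{lemma commuting Z with null decomposition}) for the same purpose, and your general identity $\alpha(\mathcal{L}_K F_k)=(r+(2-k)t)\,\alpha(F_k)$ is consistent with that lemma.
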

\begin{proof}We recall the definition for $F[q_0]$:
\[
F[q_0]_{0i}=\partial_{i}V(x),\quad F[q_0]_{ij}=0, \text{for} i,j=1,2,3,
\]
where the potential $V(x)$ is given by
\[
V(x)=\frac{1}{4\pi}\int_{\mathbb{R}^3}(\underbrace{\frac{1}{r}}_{V_1}+\underbrace{\frac{x\cdot y}{r^3}}_{V_2}+\underbrace{\frac{1}{2} \frac{(3|x|^{-2}(x\cdot y)^2-|y|^2)}{r^3}}_{V_3})\Im(\phi_0\cdot \bar \phi_1)dy,\quad |x|>0.
\]
The contribution from $V_3$ is of order $r^{-3}$ so that we can ignore it. The contribution from $V_1$ gives the charge part $\frac{1}{r^2}dt\wedge dr$ and it will vanish when one takes $\mathcal{L}_K$ derivative. Thus, we consider
\[
F^{(2)}[q_0]_{0i} = \partial_i\Big(\frac{1}{4\pi}\int_{\mathbb{R}^3} \frac{x\cdot y}{r^3}\Im\big((\phi_0\cdot \bar \phi_1)(y)\big)dy\Big), \ F^{(2)}[q_0]_{ij}=0.
\]
Thus, we have
\begin{equation*}
\alpha(F^{(2)}[q_0])_A=\frac{1}{4\pi}\int_{\mathbb{R}^3} \frac{e_A\cdot y}{r^3}\Im\big((\phi_0\cdot \bar \phi_1)(y)\big)dy.
\end{equation*}
By virtue of the formula for $\mathcal{L}_K$ in Lemma \ref{lemma commuting Z with null decomposition}, we obtain
\begin{equation*}
\alpha(\mathcal{L}_K F^{(2)}[q_0])_A=\frac{1}{4\pi}\int_{\mathbb{R}^3} \frac{(r-t)e_A\cdot y}{r^3}\Im\big((\phi_0\cdot \bar \phi_1)(y)\big)dy.
\end{equation*}
This completes the proof of the lemma.
\end{proof}
As a corollary, we have
\begin{equation}\label{precise bound on alpha F Fq0}
\big|\alpha(\mathcal{L}_{Z_1}F)\lesssim	|\alphaone| + r^{-3}|u|^{\xi(Z_1)}.
\end{equation}
\begin{lemma}We have the following spacetime estimates:
\begin{equation}\label{bound on r box phi 2}
\|rQ(\phi,\mathcal{L}_{Z_1}F;Z_2)\|_{L^2(\D_{r_1})} \lesssim \sqrt{\epsilonc} r_1^{\xi({\bf 2})-3-\varepsilon_0}.
\end{equation}
\end{lemma}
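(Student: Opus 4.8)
The plan is to bound $\|rQ(\phi,\mathcal{L}_{Z_1}F;Z_2)\|_{L^2(\D_{r_1})}$ term by term: since $|Q(\phi,\mathcal{L}_{Z_1}F;Z_2)|\lesssim|u|^{\xi(Z_2)}(\A_0+\cdots+\A_4)$ with $\A_0,\dots,\A_4$ the five quadratic expressions obtained above from \eqref{null form estimate}, it suffices to estimate each $\big\|r\,|u|^{\xi(Z_2)}\A_i\big\|_{L^2(\D_{r_1})}$ and sum over $i$ and over the finitely many pairs $Z_1,Z_2\in\mathcal Z$, recalling $\xi(\mathbf 2)=\xi(Z_1)+\xi(Z_2)$. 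For every $\A_i$ I would first decompose the Maxwell factor $\alpha(\mathcal{L}_{Z_1}F),\alphab(\mathcal{L}_{Z_1}F),\rho(\mathcal{L}_{Z_1}F),\sigma(\mathcal{L}_{Z_1}F)$ into its chargeless part $\alphaone,\alphabone,\rhoone,\sigmaone$ and its charge part via \eqref{onederivative of F} and the sharpened bound \eqref{precise bound on alpha F Fq0}, and control the current factors of $\A_3,\A_4$ through $J^{(\mathbf 1)}_L,\slashed J^{(\mathbf 1)},J^{(\mathbf 1)}_\Lb$ using \eqref{commutator formula 2}.

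For the chargeless pieces I would square and integrate over $\D_{r_1}$, foliating by outgoing cones $\H_u$ for the terms carrying $\alphaone,\rhoone,\sigmaone,J^{(\mathbf 1)}_L$ or $\slashed J^{(\mathbf 1)}$ --- whose $L^2$-masses on $\H_u$ are dominated by $\F$, by the $p=2$ weighted energy, and by the $|\k|\le 1$ bounds of $\mathbf{(C)}$ --- and by incoming cones $\Hb_v$ for the terms carrying $\alphabone$ or $J^{(\mathbf 1)}_\Lb$, dominated by $\sup_{r_2}\Fb(\Hb_{r_2}^{r_1})$ and by the $r_1^{3/2}$-weighted bound in $\mathbf{(C)}$. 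In every instance one applies Cauchy--Schwarz so as to peel off, on the incoming foliation, a factor $v^{-1-\delta}$ with $\delta>0$ (using that $r\approx v$ and $|u|\ge r_1/2$ throughout $\Hb_v\cap\D_{r_1}$), exactly as in the treatment of the error terms $\I_2$ and $\I_4$ of the previous subsection; integrability in $|u|$ on the outgoing foliation is immediate. Since the two extra derivatives on $\phi$ (hence on $J$) have already been turned into decay in Propositions \ref{Proposition pointwise decay of Maxwell} and \ref{Proposition pointwise decay of scalar field} and in the ansatz $\mathbf{(B)}$--$\mathbf{(C)}$, each chargeless piece should come out $\lesssim\epsilonc\,r_1^{\xi(\mathbf 2)-3-\varepsilon_0}$, i.e.\ strictly stronger than required.

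For the charge pieces I would insert the explicit decay $|\rho(\mathcal{L}_{Z_1}F[q_0])|+|\alphab(\mathcal{L}_{Z_1}F[q_0])|\lesssim r^{-3+\xi(Z_1)}$, $\sigma(\mathcal{L}_{Z_1}F[q_0])=0$, $|\alpha(\mathcal{L}_{Z_1}F[q_0])|\lesssim r^{-3}|u|^{\xi(Z_1)}$ into $\A_0,\A_1,\A_2$; their squares are then explicit integrals of powers of $r$ and $|u|$, evaluated directly by foliating by $\H_u$ and using that $\int_{\H_u}r^{-b}$ is finite precisely when $b>3$. The contributions of $\alpha$ in $\A_0$ and of $\alphab$ in $\A_2$ are the ones that exactly saturate $\sqrt{\epsilonc}\,r_1^{\xi(\mathbf 2)-3-\varepsilon_0}$, and this is where the only genuine difficulty lies: for $Z_1=K$ the crude bound $|\alpha(\mathcal{L}_KF[q_0])|\lesssim r^{-3+\xi(K)}=r^{-2}$ would leave the borderline integral $\int_{\H_u}r^{-2}$, which diverges logarithmically and destroys the estimate --- so the improved decay \eqref{improved estimate for Fq0}, which trades the loss in $r$ for the plentiful weight $|u|$, is what makes everything close, together with the fact that $\rho(\mathcal{L}_{Z_1}F[q_0])$ carries no $r^{-2}$ tail, a consequence of $\mathcal{L}_Z\big(\tfrac{q_0}{r^2}\,dt\wedge dr\big)=0$ for $Z\in\mathcal Z$.
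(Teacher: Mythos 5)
Most of your plan coincides with the paper's proof: you bound $\int_{\D_{r_1}}r^2|u|^{2\xi(Z_2)}|\A_i|^2$ term by term, split $\mathcal{L}_{Z_1}F$ into chargeless and charge parts via \eqref{onederivative of F}, and you correctly identify that the crude bound $|\alpha(\mathcal{L}_KF[q_0])|\lesssim r^{-2}$ is not integrable on outgoing cones, so the improved estimate \eqref{improved estimate for Fq0} is indispensable; your handling of $\A_0,\A_1,\A_2,\A_3$ (outgoing foliation with the $p=2$ flux and $\mathbf{(C)}$ for $\alphaone,\rhoone,\sigmaone,J^{(\mathbf 1)}_L,\slashed J^{(\mathbf 1)}$, incoming foliation with $v^{-2}$ for $\alphabone$) is exactly the paper's.

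The gap is in $\A_4$, i.e.\ the term $\frac{\sqrt{\epsilonc}}{r^3|u|^{3/2+2\varepsilon_0}}|J^{(\mathbf 1)}_\Lb|$. You propose to treat it with the $r_1^{3/2}$-weighted bound of $\mathbf{(C)}$ and to ``peel off a factor $v^{-1-\delta}$ \dots exactly as for $\I_2$ and $\I_4$'', but no such factor is available here. After squaring you must control $\int_{\D_{r_1}}\frac{|u|^{2\xi(Z_2)-3-4\varepsilon_0}}{r^4}|J^{(\mathbf 1)}_\Lb|^2$, and the only norm of $J^{(\mathbf 1)}_\Lb$ in $\mathbf{(C)}$ is $\int_{\Hb_{2v}^{r_1}}|J^{(\mathbf 1)}_\Lb|^2 r^{-7/2}$; peeling that off leaves the weight $|u|^{2\xi(Z_2)-3-4\varepsilon_0}r^{-1/2}$, whose supremum on each incoming cone is $\approx r_1^{2\xi(Z_2)-3-4\varepsilon_0}v^{-1/2}$, because $|u|$ can be as small as $r_1/2$ everywhere along $\Hb_{2v}^{r_1}$ and a lower bound on $|u|$ yields only $r_1$-powers, never $v$-decay (note $|u|\le v$, so the spare $|u|$-weights point the wrong way). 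The resulting $\int_{r_1/2}^\infty v^{-1/2}\,dv$ diverges, so this route cannot close; the discrepancy with $\I_2$ is that there the weight was $r^{-19/4}=r^{-7/2}\cdot r^{-5/4}$, leaving an integrable $v^{-5/4}$. This is precisely why the paper abandons $\mathbf{(C)}$ for $\A_4$ and instead expands $J^{(\mathbf 1)}_\Lb$ through the algebraic identity behind \eqref{formula and bound on J1}, $J^{(\mathbf 1)}=\mathcal{L}_{Z_1}\big(\Im(\overline{\psi}\,D\psi)\big)$, inserts the pointwise bounds $|\phi|\lesssim\sqrt{\epsilonc}\,r^{-1}u_+^{-5/2-2\varepsilon_0}$, $|D_{Z_1}\psi|\lesssim u_+^{\xi(Z_1)-1-\varepsilon_0}$, $|F_{Z_1\Lb}|\lesssim r^{\xi(Z_1)-1}u_+^{-1-\varepsilon_0}$, and reduces the integral to the incoming energy fluxes of $D_\Lb\phi^{(\mathbf 1)}$ and $D_\Lb\phi$ from $\mathbf{(B)}$, which do enter with the integrable weight $r^{-2}\approx v^{-2}$. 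Without this (or some equivalent exploitation of the product structure of the current), your argument fails on this one term.
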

\begin{proof}With the help of \eqref{onederivative of F} and \eqref{improved estimate for Fq0}, we can bound the terms $\int_{\D_{r_1}} r^2|u|^{2\xi(Z_2)}|\A_i|^2$ one by one. This will prove the lemma:
\begin{equation*}
\begin{split}
\int_{\D_{r_1}} r^2|u|^{2\xi(Z_2)}|\A_0|^2  &\lesssim {\epsilonc}\int_{\D_{r_1}}{|u|^{2\xi({Z_2})-6-2\varepsilon_0}}\big(r^2|\alphaone|^2 +r^{-4}|u|^{2\xi(Z_1)}\big) \\
&\lesssim {\epsilonc}\int_{r_1}^\infty{|u|^{2\xi({Z_2})-6-2\varepsilon_0}}\Big(\int_{\H_{r_2}}\big(r^2|\alphaone|^2 +r^{-4}|u|^{2\xi(Z_1)}\big)\Big)dr_2.
\end{split}
\end{equation*}

For $\A_1$, we have
\begin{equation*}
\begin{split}
\int_{\D_{r_1}} r^2|u|^{2\xi(Z_2)}|\A_1|^2  &\lesssim {\epsilonc}\int_{\D_{r_1}}{|u|^{2\xi({Z_2})-4-2\varepsilon_0}}\big(|\rhoone|^2 +|\sigmaone|^2 + r^{-6+2\xi(Z_1)}\big) \\
&\lesssim {\epsilonc}\int_{r_1}^\infty{|u|^{2\xi({Z_2})-4-2\varepsilon_0}}\Big(\int_{\H_{r_2}}\big(|\rhoone|^2 +|\sigmaone|^2 + r^{-6+2\xi(Z_1)}\big)\Big)dr_2.
\end{split}
\end{equation*}

For $\A_2$, we have
\begin{equation*}
\begin{split}
\int_{\D_{r_1}} r^2|u|^{2\xi(Z_2)}|\A_2|^2  &\lesssim {\epsilonc}\int_{\D_{r_1}}{|u|^{2\xi({Z_2})-2-2\varepsilon_0}}r^{-2}\big(|\alphabone|^2 + r^{-6+2\xi(Z_1)}\big) \\
&\lesssim {\epsilonc}\int_{\frac{r_1}{2}}^\infty{r_1^{2\xi({Z_2})-4-2\varepsilon_0}}v^{-2}\Big(\int_{\Hb_{2v}^{r_1}}\big(|\alphabone|^2  + r^{-6+2\xi(Z_1)}\big)\Big)dr_2. 
\end{split}
\end{equation*}

For $\A_{3}$, based on the ansatz $\mathbf{(C)}$, we can proceed in the same manner to obtain
\begin{align*}
\int_{\D_{r_1}} r^2|u|^{2\xi(Z_2)}|\A_3|^2& \lesssim {\epsilonc}\int_{\D_{r_1}}{|u|^{2\xi({Z_2})-7-4\varepsilon_0}} |J^{({\bf 1})}_L|^2+|u|^{2\xi({Z_2})-5-4\varepsilon_0}\frac{|\slashed{J}^{({\bf 1})}|^2}{r^2}\\
&={\epsilonc}\int_{r_1}^\infty \Big({|u|^{2\xi({Z_2})-7-4\varepsilon_0}} \int_{\H_{r_2}}|J^{({\bf 1})}_L|^2+|u|^{2\xi({Z_2})-5-4\varepsilon_0}\int_{\H_{r_2}}\frac{|\slashed{J}^{({\bf 1})}|^2}{r^2}\Big)dr_2.
\end{align*}
All the terms on the righthand sides of the above four inequalities now can be integrated. They are all bounded by ${\epsilonc} r_1^{2\xi({\bf 2})-6-2\varepsilon_0}$.

For $\A_4$, let the vector field $Z$ represent the index $(\mathbf{1})$, we have
\begin{align*}
J^{(\mathbf{1})} &= \mathcal{L}_Z (r^2 J)=\mathcal{L}_Z(\Im(\overline{\psi}\cdot D\psi )).
\end{align*}
Since
\begin{equation*}
\mathcal{L}_{Z}\big(\overline{\psi}\cdot D\psi\big)_\mu = \overline{D_Z\psi} \cdot D_\mu\psi + (D_\mu \log(r))\overline{\psi}\cdot D_Z \psi+r\overline{\psi} \cdot D_\mu \big(\Dt_Z\phi\big) +i F_{Z\mu}|\psi|^2,
\end{equation*}
we have
\begin{equation}\label{formula and bound on J1}
{|J^{(\mathbf{1})}_\mu|^2}\lesssim   r^4|\phi|^2|D_\mu\big(\Dt_Z\phi\big)|^2 +  \big(|D_\mu \psi|^2+|\phi|^2\big)|D_Z\psi|^2    +    r^4|F_{Z\mu}|^2|\phi|^4.
\end{equation}
In particular, we have
\begin{align*}
\int_{\D_{r_1}} r^2|u|^{2\xi(Z_2)}|\A_4|^2& \lesssim {\epsilonc}\int_{\D_{r_1}}{|u|^{2\xi({Z_2})-3-4\varepsilon_0}}\big(|\phi|^2|D_\Lb \big(\Dt_{Z_1}\phi\big)|^2  + \frac{\big(|D_\Lb \psi|^2+|\phi|^2\big)|D_{Z_1}\psi|^2}{r^4}+ |F_{{Z_1}\Lb}|^2|\phi|^4\big).
\end{align*}
In view of the pointwise bounds, we can then use the following crude bound for $D_{Z_1}\psi$ and $F_{Z_1\Lb}$:
\begin{equation*}
|D_{Z_1}\psi|\lesssim |u|^{\xi(Z_1)-1-\varepsilon_0},\ \   |F_{Z_1\Lb}| \lesssim r^{\xi(Z_1)-1}|u|^{-1-\varepsilon_0}.
\end{equation*}
Therefore, we obtain
\begin{align*}
\int_{\D_{r_1}} r^2|u|^{2\xi(Z_2)}|\A_4|^2& \lesssim {\epsilonc}\int_{\D_{r_1}}{|u|^{2\xi({Z_2})-5-4\varepsilon_0}}\big(\frac{|D_\Lb \big(\Dt_{Z_1}\phi\big)|^2+|D_\Lb \phi|^2}{r^2}  +r^{-4}|u|^{-1}\epsilonc^2\big)\\
&\lesssim \epsilonc^2 r_1^{2\xi({\bf 2})-6-2\varepsilon_0}，
\end{align*}
where we bound $D_\Lb(\Dt_{Z_1}\phi)$ and $D_\Lb \phi$ on $\Hb_{r_2}$ as before.

We complete the proof by putting the estimates of the $\A_i$'s all together.
\end{proof}
The term $\R_{12}$ can be easily bounded by the lemma:
\begin{align*}
\int_{\D_{r_1}} \R_{12}  & \lesssim \|rQ(\phi,\mathcal{L}_{Z_1}F;Z_2)\|_{L^2(\D_{r_1})} \Big(\int_{\D_{r_1}}r^{-2}(| D_L \phi^{(\mathbf{2})}|^2+|D_\Lb\phi^{(\mathbf{2})}|^2)\Big)^{\frac{1}{2}}\\
&\lesssim {\epsilonc}  r_1^{2\xi({\bf 2})-6.5-2\varepsilon_0}.
\end{align*}

To bound $\R_{13}$, we need the following lemma:
\begin{lemma}We have the following estimates:
\begin{equation}\label{bound on r box phi 2 2}
\|r F_{Z_1 \mu}F_{Z_2}{}^\mu\|_{L^2(\D_{r_1})} \lesssim \sqrt{\epsilonc} r_1^{\xi({\bf 2})-2.5-2\varepsilon_0}.
\end{equation}
\end{lemma}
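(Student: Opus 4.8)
**Proof proposal for the bound $\|r F_{Z_1 \mu}F_{Z_2}{}^\mu\|_{L^2(\D_{r_1})} \lesssim \sqrt{\epsilonc} r_1^{\xi({\bf 2})-2.5-2\varepsilon_0}$.**

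The plan is to expand the bilinear expression $F_{Z_1\mu}F_{Z_2}{}^\mu$ in the null frame and estimate it componentwise, using the pointwise decay of the Maxwell field from Proposition \ref{Proposition pointwise decay of Maxwell} together with the trivial bound \eqref{eq:bd4Fq} on the charge part $F[q_0]$. Write $F = \Fc + F[q_0]\mathbf{1}_{\{1+t\leq|x|\}}$; in the exterior region we may combine these to the crude pointwise bounds $|\alpha|\lesssim r^{-3}$, $|\rho|\lesssim r^{-2}$, $|\sigma|\lesssim r^{-2}u_+^{-2}$ and $|\alphab|\lesssim r^{-1}u_+^{-2}$ (these follow since $\epsilonc$ is small and $r^{-3}\lesssim r^{-1}u_+^{-2}$). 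For a vector field $Z\in\mathcal{Z}$ one has $Z = a_L L + a_\Lb \Lb + \text{(angular)}$ with $|a_L|\lesssim v^{\xi(Z)+1}\approx r^{\xi(Z)+1}$, $|a_\Lb|\lesssim u_+^{\xi(Z)+1}$, and the angular coefficient of size $\lesssim r\cdot r^{\xi(Z)}=r^{\xi(Z)+1}$ (for rotations) or smaller. Therefore $F_{Z\mu}dx^\mu$ has null components controlled by: the $L$-slot contribution $\lesssim a_\Lb\,\rho + (\text{ang})\cdot\alpha$, the $\Lb$-slot $\lesssim a_L\,\rho + (\text{ang})\cdot\alphab$, and the angular slot $\lesssim a_L\,\alpha + a_\Lb\,\alphab + (\text{ang})\cdot\sigma$.

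The key step is then to form the contraction $F_{Z_1\mu}F_{Z_2}{}^\mu = -\tfrac12 F_{Z_1 L}F_{Z_2 \Lb} - \tfrac12 F_{Z_1 \Lb}F_{Z_2 L} + F_{Z_1 A}F_{Z_2}{}^A$ and insert the above bounds. The dominant terms, after collecting weights, are of the schematic form $r^{\xi(Z_1)+1}u_+^{\xi(Z_2)+1}\rho\,\alphab$, $r^{\xi(Z_1)+\xi(Z_2)+2}\alpha\cdot(\cdots)$, etc.; using $\xi({\bf 2})=\xi(Z_1)+\xi(Z_2)$ and the pointwise decay one checks that every term is bounded by $r^{\xi(Z_1)+\xi(Z_2)}r^{-3}u_+^{-1}$ up to harmless factors, so that $|F_{Z_1\mu}F_{Z_2}{}^\mu|\lesssim r^{\xi({\bf 2})-3}u_+^{-1}$ (when $Z_i=K$ one must use the improved bound \eqref{improved estimate for Fq0}, $|\alpha(\mathcal{L}_K F[q_0])|\lesssim r^{-3}|u|$, but here no Lie derivatives act on $F$, only the vector fields are contracted, so the relevant improvement is simply $|a_\Lb|\lesssim u_+^{\xi(K)+1}=u_+^3$ paired against $r^{-3}$; one keeps track that the $\rho$-component is only $r^{-2}$, giving $r^{\xi(Z_1)+1}u_+^{\xi(Z_2)+1}r^{-2}r^{-1}u_+^{-2}$ which is still acceptable). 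Hence $r|F_{Z_1\mu}F_{Z_2}{}^\mu|\lesssim r^{\xi({\bf 2})-2}u_+^{-1}$.

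Finally one integrates the square over $\D_{r_1}$. Foliating by outgoing cones $\H_u$ with $|u|\geq r_1$ and using $\int_{\H_u}r^{2\xi({\bf 2})-4}u_+^{-2}\,dv\lesssim u_+^{2\xi({\bf 2})-1}$ (the $dv$-integral of $r^{2\xi({\bf 2})-4}$ converges as long as $2\xi({\bf 2})-4<-1$, which holds since $\xi({\bf 2})\leq 2$ fails only in the borderline case $\xi({\bf 2})=2$ — there one must integrate with the extra available $u_+^{-2}$ weight and exploit that the $r$-growth is compensated by restricting to the region $r\gtrsim |u|\gtrsim r_1$, or interpolate with the energy flux decay), we obtain $\int_{\D_{r_1}}r^2|F_{Z_1\mu}F_{Z_2}{}^\mu|^2\lesssim \int_{r_1}^\infty u_+^{2\xi({\bf 2})-5}\,du\cdot\epsilonc \lesssim \epsilonc\, r_1^{2\xi({\bf 2})-4}$, and after inserting the $\varepsilon_0$-decay from the pointwise estimates this improves to $\epsilonc\,r_1^{2\xi({\bf 2})-5-4\varepsilon_0}$, which yields \eqref{bound on r box phi 2 2}. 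I expect the main obstacle to be bookkeeping the borderline weights when $Z_1$ or $Z_2$ equals $K$ (discrepancy index $+1$), where the large $v^2$-weight of $K$ is paired against the slowly-decaying $\rho$-component $\sim r^{-2}$ of the charge part; the resolution is that the two factors of $K$ never appear simultaneously against $\rho$ in the contraction (the contraction pairs an $L$-slot with a $\Lb$-slot), so at least one factor sees a fast-decaying component, and the spacetime integral still converges because the $du$-integral produces the extra decay in $r_1$.
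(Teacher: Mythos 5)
There is a genuine gap, and it stems from taking the lemma's statement at face value: as printed the norm is missing a factor of the scalar field, and the estimate without that factor is actually false, so your argument cannot close. The quantity that is bounded in the paper's proof and that is actually needed to control $\R_{13}$ is $\|r\,F_{Z_1\mu}F_{Z_2}{}^{\mu}\,\phi\|_{L^2(\D_{r_1})}$ (note that $\R_{13}$ contains $|F_{Z_1\mu}F_{Z_2}{}^{\mu}\phi|$, and it is paired by Cauchy--Schwarz against $r^{-1}(|D_L\phi^{(\mathbf{2})}|+|D_\Lb\phi^{(\mathbf{2})}|)$). The extra factor $|\phi|\lesssim \sqrt{\epsilonc}\,r^{-1}u_+^{-5/2-2\varepsilon_0}$ from Proposition \ref{Proposition pointwise decay of scalar field} is essential for two reasons: it is the only source of the smallness $\sqrt{\epsilonc}$ (the charge $q_0$ is large, so any bound built purely from the Maxwell field cannot carry a $\sqrt{\epsilonc}$), and it is what makes the spacetime integral converge. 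Without it the estimate fails already at the level of your intermediate pointwise claim $|F_{Z_1\mu}F_{Z_2}{}^{\mu}|\lesssim r^{\xi(\mathbf{2})-3}u_+^{-1}$: for $Z_1=Z_2=T$ the contraction contains $\rho^2\approx q_0^2 r^{-4}$, which has no $u$-decay and is not $O(r^{-5}u_+^{-1})$; and your structural remark that "the contraction pairs an $L$-slot with a $\Lb$-slot so at least one factor sees a fast-decaying component" is not correct, because $\rho$ sits in both slots ($F_{Z L}\supset u^{1+\xi(Z)}F_{\Lb L}$ and $F_{Z\Lb}\supset v^{1+\xi(Z)}F_{L\Lb}$), so $\rho\cdot\rho$ terms with the full boost weights do occur. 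Concretely, for $Z_1=Z_2=K$ one gets $r|F_{K\mu}F_{K}{}^{\mu}|\gtrsim q_0^2\,u^2 r^{-1}$, whose square against the measure $r^2\,du\,dv\,d\vartheta$ has a divergent $dv$-integral; even in the mildest case $Z_1,Z_2$ rotations, the best available bound is $r|F_{\Omega\mu}F_{\Omega'}{}^{\mu}|\lesssim r^{-1}u_+^{-2}$ (since $\alpha\lesssim r^{-3}$, $\alphab\lesssim r^{-1}u_+^{-2}$ after adding the charge part), and $\int_{\D_{r_1}} r^{-2}u_+^{-4}\,r^2\,du\,dv$ again diverges in $v$. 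So the convergence issue you flag only "in the borderline case $\xi(\mathbf{2})=2$" is in fact present for every choice of $Z_1,Z_2$ once the scalar factor is dropped, and no interpolation with flux decay repairs it.

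The paper's proof is short once the $\phi$ factor is restored: it splits into the three cases $(\Omega,\Omega')$, $(\Omega, v^{1+\xi}L+u^{1+\xi}\Lb)$ and two boosts, writes the contraction in null components exactly as you do, inserts the pointwise Maxwell bounds of Proposition \ref{Proposition pointwise decay of Maxwell} together with the charge bounds, multiplies by $|\phi|\lesssim\sqrt{\epsilonc}\,r^{-1}u_+^{-5/2-2\varepsilon_0}$, and then integrates the resulting pointwise bounds over $\D_{r_1}$; the borderline charge--boost terms such as $u^{1+\xi(Z_1)}v^{1+\xi(Z_2)}\rho^2$ are precisely the ones absorbed by the $u_+^{-5/2-2\varepsilon_0}$ decay of $\phi$, yielding the stated rate $r_1^{\xi(\mathbf{2})-2.5-2\varepsilon_0}$ (your heuristic null-frame decomposition of $F_{Z\mu}$ is fine and is essentially the paper's; it is the omission of $\phi$ and the resulting divergences that break the argument). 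To repair your write-up, restate the lemma with the factor $\phi$ inside the norm and carry $|\phi|$ through every case before integrating.
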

\begin{proof}
According to the different choices of $Z_1$ and $Z_2$, we have
\begin{itemize}
\item[Case 1]$(Z_1,Z_2) =(\Omega, \Omega')$. We have
\begin{align*}
r |F_{\Omega \mu} F_{\Omega}{}^\mu \phi|&\lesssim r^3|\phi|\big( |\alpha||\alphab|+|\sigma|^2\big)\lesssim\frac{\sqrt{{\epsilonc}}}{r^{4}|u|^{\frac{5}{2}+2\varepsilon_0}}.
\end{align*}
\item[Case 2]$Z_1=\Omega$ and $Z_2 =v^{1+\xi(Z_2)}L+u^{1+\xi(Z_2)}\Lb$. Thus,
\begin{align*}
r|F_{\Omega \mu} F_{Z_2}{}^\mu\phi|&\lesssim r^2|\phi|\big(|\sigma|+|\rho|\big)\big(v^{1+\xi(Z_2)}|\alpha|+u^{1+\xi(Z_2)}|\alphab|\big)\lesssim \frac{\sqrt{{\epsilonc}}}{r^{3-\xi(Z_2)}|u|^{\frac{5}{2}+2\varepsilon_0}}
\end{align*}
\item[Case 3]$Z_1 =v^{1+\xi(Z_1)}L+u^{1+\xi(Z_1)}\Lb$ and $Z_2 =v^{1+\xi(Z_2)}L+u^{1+\xi(Z_2)}\Lb$. We have
\begin{align*}
|F_{Z_1 \mu} F_{Z_2}{}^\mu|&\lesssim v^{2+\xi(Z_1)+\xi(Z_2)}|\alpha|^2+|u|^{2+\xi(Z_1)+\xi(Z_2)}|\alphab|^2\\
&+\big( |u|^{1+\xi(Z_1)} v^{1+\xi(Z_2)}+ |u|^{1+\xi(Z_2)} v^{1+\xi(Z_1)}\big)\big(|\alpha||\alphab|+|\rho|^2\big).
\end{align*}
Thus, we have
\begin{align*}
r|F_{Z_1 \mu} F_{Z_2}{}^\mu \phi|&\lesssim \sqrt{\epsilonc}|u|^{-\frac{5}{2}-2\varepsilon_0}\big(r^{-4+\xi{(\mathbf{2})}}+|u|^{-4+\xi{(\mathbf{2})}}r^{-2}\epsilonc+|u|^{1+\xi(Z_1)} r^{-3+\xi(Z_2)}+ |u|^{1+\xi(Z_2)} r^{-3+\xi(Z_1)}\big).
\end{align*}
\end{itemize}
Then we can simply integrate the above pointwise bounds to conclude.
\end{proof}
This lemma leads to the estimate of $\R_{13}$:
\begin{equation*}
\begin{split}
\int_{\D_{r_1}} \R_{13}  & \lesssim \|r F_{Z_1 \mu}F_{Z_2}{}^\mu\|_{L^2(\D_{r_1})} \Big(\int_{\D_{r_1}}r^{-2}(| D_L \phi^{(\mathbf{2})}|^2+|D_\Lb\phi^{(\mathbf{2})}|^2)\Big)^{\frac{1}{2}}\\
&\lesssim {\epsilonc} r_1^{2\xi({\bf 2})-6-1.5\varepsilon_0}.
\end{split}
\end{equation*}
Finally, from the estimates of $\R_{11},\R_{12}$ and $\R_{13}$, we conclude that
\begin{align*}
\int_{\D_{r_1}}\R_1 & \lesssim {\epsilonc} r_1^{-6+2\xi({\bf 2})-1.5\varepsilon_0}.
\end{align*}
Based on \eqref{bound on r box phi 2} and \eqref{bound on r box phi 2 2}, one can also proceed exactly in the same manner to prove that
\begin{align*}
\int_{\D_{r_1}}\R_2 &\lesssim {\epsilonc} r_1^{-4+2\xi({\bf 1})-1.5\varepsilon_0}.
\end{align*}
Therefore, for sufficiently large $R_*$, since $r_1\geq R_*$, we have
\begin{equation*}
\begin{split}
 \mathcal{E}^{{({\bf 2})}}(\phi;r_1) &\lesssim \epsilonc r_1^{-6+2\xi({{\bf 2}})-1.5\varepsilon_0}+\int_{\D_{r_1}}\underbrace{|\rho|\big(|D_L\phi^{({\bf 2})}|+|D_\Lb\phi^{({\bf 2})}|\big)|\phi^{({\bf 2})}|}_{\T_1},\\
\mathcal{E}^{({\bf 2})}(\phi;p=2;r_1) & \lesssim \epsilonc r_1^{-4+2\xi({{\bf 2}})-1.5\varepsilon_0}+ \int_{\D_{r_1}}\underbrace{|\rho||D_L\psi^{({\bf 2})}||\psi^{({\bf 2})}|}_{\T_2}.
\end{split}
\end{equation*}
For $\T_1$ and $\T_2$ we can the proceed exactly in the same manner as in the previous subsection. Finally, for sufficiently large $R_*$, we can close the bootstrap argument for second order energy quantities on scalar field in $\mathbf{(B)}$:
\begin{equation}\label{improved bootstrap second order scalar}
\begin{split}
 \mathcal{E}^{{({\bf 2})}}& \leq 2 {\epsilonc}r_1^{-6+2\xi({\bf 1})-2\varepsilon_0},\ \
\mathcal{E}^{{({\bf 2})}}(\phi;p=2;r_1) \leq 2{\epsilonc}r_1^{-4+2\xi({\bf 1})-2\varepsilon_0}.
\end{split}
\end{equation}

\subsubsection{The estimates on the current terms}

 We now recover the estimates for the current terms in $(\mathbf{C})$.

\medskip

\underline{$\blacktriangleright$~~Zeroth order estimates}

\medskip

For $\k=(\mathbf{0})$, $J^{(\mathbf{0})} = r^2 J = \Im(\overline{\psi}\cdot D\psi )$, according to the pointwise bound on $\phi$,  we have
\begin{align*}
|J^{(\mathbf{0})}_L|&\lesssim \epsilonc r^{-2}|u|^{-\frac{7}{2}-3\varepsilon_0},\ \ |\slashed{J}^{(\mathbf{0})}|\lesssim \epsilonc r^{-1}|u|^{-\frac{9}{2}-3\varepsilon_0}, \ \ |J^{(\mathbf{0})}_L| \lesssim \epsilonc |u|^{-\frac{11}{2}-3\varepsilon_0}.
\end{align*}
We then can directly integrate these bounds and we obtain
\begin{align*}
& r_1^{-2}\int_{\H_{r_1}}{|J^{(\mathbf{0})}_L|^2}+\int_{\H_{r_1}}\frac{|\slashed{J}^{(\mathbf{0})}|^2}{r^2}+\sup_{r_2 \geq r_1}\int_{\Hb_{r_2}^{r_1}} \frac{|\slashed{J}^{(\mathbf{0})}|^2}{r^2} +\sup_{r_2 \geq r_1}r_1^{\frac{3}{2}}\int_{\Hb_{r_2}^{r_1}} \frac{|J^{(\mathbf{0})}_\Lb|^2}{r^{\frac{7}{2}}}\lesssim {\epsilonc}^2 r_1^{-10+2\xi(\k)-6\varepsilon_0}.
\end{align*}
Thus, for sufficiently large $R_*$, it immediately closes $(\mathbf{C})$ for $\k=(\mathbf{0})$.

\medskip

\underline{$\blacktriangleright$~~First order estimates}

\medskip

For $\k=(\mathbf{1})$, we use a vector field $Z$ to represent this index. Notice that we have
\begin{align*}
J^{(\mathbf{1})}_\mu&=\Im\Big(\overline{D_Z\psi} \cdot D_\mu\psi + \overline{\psi}\cdot D_\mu(r\Dt_Z\phi) +i F_{Z\mu}|\psi|^2\Big)\\
&=\Im\Big(\overline{\psi^{(\mathbf{1})}} \cdot D_\mu\psi + \overline{\psi}\cdot D_\mu \psi^{(\mathbf{1})} \Big)+ F_{Z\mu}|\psi|^2.
\end{align*}

$\blacklozenge$~For $J^{(\mathbf{1})}_L$, we have
\begin{align*}
|J^{(\mathbf{1})}_L|&\lesssim |\psi^{(\mathbf{1})}||D_L\psi| + |\psi||D_L \psi^{(\mathbf{1})}|+ |F_{ZL}||\psi|^2\\
&\lesssim \underbrace{\sqrt{\epsilonc}r^{-1}|u|^{-1-\varepsilon_0}|\phi^{(\mathbf{1})}|}_{\I_{L1}} + \underbrace{\sqrt{\epsilonc}|u|^{-\frac{5}{2}-2\varepsilon_0}|D_L \psi^{(\mathbf{1})}|}_{\I_{L2}}+\underbrace{{\epsilonc} |u|^{-5-4\varepsilon_0}|F_{ZL}|}_{\I_{L3}}.
\end{align*}
For $k\leq 2$, by Lemma \ref{Sobolev trace estimates on outgoing null hypersurfaces}, we have
\begin{equation}\label{L^2 estimates on spheres for k=1 2}
\begin{split}
\|\phi^{(\mathbf{k})}\|^2_{L^2(\S_{r_1}^{r_2})}&\lesssim \|\phi^{(\mathbf{k})}\|^2_{L^2(\S_{r_1}^{r_1})}+\frac{1}{r_1}\int_{\H_{r_1}} |D_L \psi^{(\mathbf{k})}|^2 \lesssim  {\epsilonc} r_1^{-5+2\xi(\mathbf{k})-2\varepsilon_0}.
\end{split}
\end{equation}
For $\I_{L1}$, we have
\begin{align*}
r_1^{-2}\int_{\H_{r_1}}  |\I_{L1}|^2&\lesssim \epsilonc r_1^{-2}  |u|^{-2-2\varepsilon_0} \int_{r_1}^{r_2}r^{-2}\big(\int_{\S_{r_1}^r}|\phi^{(\mathbf{1})}|^2\big) dr 
\lesssim \epsilonc^2r_1^{-10+2\xi(\mathbf{1})-4\varepsilon_0}.
\end{align*}
For $\I_{L2}$, we have
\begin{align*}
r_1^{-2}\int_{\H_{r_1}}  |\I_{L2}|^2&\lesssim \epsilonc r_1^{-2}  |u|^{-5-4\varepsilon_0} \int_{\H_{r_1}}|D_L \psi^{(\mathbf{1})}|^2\lesssim \epsilonc^2r_1^{-11+2\xi(\mathbf{1})-6\varepsilon_0}.
\end{align*}
For $\I_{L3}$, we have two cases:
\begin{equation*}
\I_{L3} \leq \begin{cases}{\epsilonc} |u|^{-5-4\varepsilon_0}r|\alpha|\lesssim {\epsilonc} |u|^{-5-4\varepsilon_0}r^{-2}, \ \text{if} \ Z=\Omega;\\
{\epsilonc} |u|^{-5-4\varepsilon_0}|u|^{1+\xi(\mathbf{1})}|\rho|\lesssim {\epsilonc} |u|^{-4+\xi(\mathbf{1})-4\varepsilon_0}r^{-2}, \ \text{if} \ Z=v^{1+\xi(\mathbf{1})}L+u^{1+\xi(\mathbf{1})}\Lb.
\end{cases}
\end{equation*}
In both cases, we can simply directly integrate the pointwise bounds on $\H_{r_1}$. Therefore, the contribution from $\I_{L3}$ is also bounded by $\epsilonc^2r_1^{-11+\xi(\mathbf{1})-8\varepsilon_0}$.
Hence, we conclude that
\begin{align*}
r_1^{-2}\int_{\H_{r_1}}{|J^{(\mathbf{1})}_L|^2} \lesssim {\epsilonc}^2 r_1^{-10+2\xi(\mathbf{1})-4\varepsilon_0}.
\end{align*}

$\blacklozenge$~For $\slashed{J}^{(\mathbf{1})}$, we have
\begin{align*}
|\slashed{J}^{(\mathbf{1})}|&\lesssim |\psi^{(\mathbf{1})}||\slashed{D}\psi| + |\psi||\slashed{D} \psi^{(\mathbf{1})}|+ |F_{ZA}||\psi|^2\\
&\lesssim \underbrace{\sqrt{\epsilonc}|u|^{-2-\varepsilon_0}|\phi^{(\mathbf{1})}|}_{\slashed{\I}_{1}} + \underbrace{\sqrt{\epsilonc}r|u|^{-\frac{5}{2}-2\varepsilon_0}|\slashed{D} \phi^{(\mathbf{1})}|}_{\slashed{\I}_{2}}+\underbrace{{\epsilonc} |u|^{-5-4\varepsilon_0}|F_{ZA}|}_{\slashed{\I}_{3}}.
\end{align*}
For $\slashed{\I}_{1}$, according to \eqref{L^2 estimates on spheres for k=1 2}, we have
\begin{align*}
\int_{\H_{r_1}}  \frac{|\slashed{\I}_{1}|^2}{r^2}+\int_{\Hb_{r_1}^{r_2}}  \frac{|\slashed{\I}_{1}|^2}{r^2}&\lesssim \epsilonc   \int_{r_1}^{r_2}r^{-2}|u|^{-4-2\varepsilon_0} \big(\int_{\S_{r_1}^r}|\phi^{(\mathbf{1})}|^2\big) dr \lesssim \epsilonc^2r_1^{-10+2\xi(\mathbf{1})-2\varepsilon_0}.
\end{align*}
For $\slashed{\I}_{2}$, we have
\begin{align*}
\int_{\H_{r_1}}  \frac{|\slashed{\I}_{2}|^2}{r^2}+\int_{\Hb_{r_1}^{r_2}}  \frac{|\slashed{\I}_{2}|^2}{r^2}&\lesssim \epsilonc   |u|^{-5-4\varepsilon_0} \Big(\int_{\H_{r_1}}|\slashed{D} \phi^{(\mathbf{1})}|^2+ \int_{\Hb_{r_1}^{r_2}}|\slashed{D} \phi^{(\mathbf{1})}|^2\Big)
\lesssim \epsilonc^2r_1^{-11+2\xi(\mathbf{1})-8\varepsilon_0}.
\end{align*}
For $\slashed{\I}_{3}$, we have two cases:
\begin{equation*}
|\slashed{\I}_{3}| \leq \begin{cases}&{\epsilonc} |u|^{-5-4\varepsilon_0}r|\sigmac|\lesssim {\epsilonc}^\frac{3}{2} |u|^{-7-6\varepsilon_0}r^{-1},  \ \text{if} \ Z=\Omega;\\
&
\lesssim {\epsilonc} |u|^{-5-4\varepsilon_0}\big(r^{-2+\xi(\mathbf{1})}+\sqrt{\varepsilon_0}r^{-1}|u|^{-2+\xi(\mathbf{1})-\varepsilon_0}\big), \ \text{if} \ Z=v^{1+\xi(\mathbf{1})}L+u^{1+\xi(\mathbf{1})}\Lb.
\end{cases}
\end{equation*}
In both cases, the contribution of $\slashed{I}_3$ can be estimated directly by integrating the above bounds and it is bounded by $\epsilonc^2r_1^{-13+2\xi(\mathbf{1})-8\varepsilon_0}$.
Hence, we conclude that
\begin{align*}
\int_{\H_{r_1}}  \frac{|\slashed{J}^{(\mathbf{1})}|^2}{r^2}+\sup_{r_2\geq r_1}\int_{\Hb_{r_1}^{r_2}}  \frac{|\slashed{J}^{(\mathbf{1})}|^2}{r^2}&\lesssim \epsilonc^2r_1^{-10+2\xi(\mathbf{1})-2\varepsilon_0}.
\end{align*}

$\blacklozenge$~For $J^{(\mathbf{1})}_\Lb$, we have
\begin{align*}
|J^{(\mathbf{1})}_\Lb|&\lesssim |\psi^{(\mathbf{1})}||D_\Lb\psi| + |\psi||D_\Lb \psi^{(\mathbf{1})}|+ |F_{Z\Lb}||\psi|^2\\
&\lesssim \underbrace{\sqrt{\epsilonc}r|u|^{-\frac{5}{2}-2\varepsilon_0}|\phi^{(\mathbf{1})}|}_{\I_{\Lb1}} + \underbrace{\sqrt{\epsilonc}|u|^{-\frac{5}{2}-2\varepsilon_0}|D_\Lb \psi^{(\mathbf{1})}|}_{\I_{\Lb2}}+\underbrace{{\epsilonc} |u|^{-5-4\varepsilon_0}|F_{Z\Lb}|}_{\I_{\Lb3}}.
\end{align*}
For $\I_{\Lb1}$, according to \eqref{L^2 estimates on spheres for k=1 2}, we have
\begin{align*}
r_1^{\frac{3}{2}}\int_{\Hb_{r_1}^{r_2}}  \frac{|{\I}_{\Lb1}|^2}{r^{\frac{7}{2}}}&\lesssim \epsilonc   r_1^{\frac{3}{2}}\int_{r_1}^{r_2}r^{-\frac{3}{2}}r_1^{-5-4\varepsilon_0} \big(\int_{\S_{r_1}^r}|\phi^{(\mathbf{1})}|^2\big) dr \lesssim \epsilonc^2r_1^{-10+2\xi(\mathbf{1})-6\varepsilon_0}.
\end{align*}
For $\I_{\Lb2}$, we first notice that $|D_\Lb \psi^{(\mathbf{1})}|\lesssim r |D_\Lb \phi^{(\mathbf{1})}|+|\phi^{(\mathbf{1})}|$. The contribution from $|\phi^{(\mathbf{1})}|$ can be ignored since it has been already treated in $\I_{\Lb1}$. Thus, modulo this term, we have
\begin{align*}
r_1^{\frac{3}{2}}\int_{\Hb_{r_1}^{r_2}}  \frac{|{\I}_{\Lb2}|^2}{r^{\frac{7}{2}}}&\lesssim \epsilonc   r_1^{\frac{3}{2}}\int_{\Hb_{r_1}^{r_2}} r^{-\frac{3}{2}}|u|^{-5-4\varepsilon_0}|D_\Lb\phi^{(\mathbf{1})}|^2 \lesssim \epsilonc^2r_1^{-11+2\xi(\mathbf{1})-6\varepsilon_0}.
\end{align*}
For $\I_{\Lb3}$, we have two cases:
\begin{equation*}
|\I_{L3}| \leq\begin{cases} {\epsilonc} |u|^{-5-4\varepsilon_0}r|\alphab|\lesssim {\epsilonc} |u|^{-5-4\varepsilon_0}r^{-2}+{\epsilonc}^\frac{3}{2} |u|^{-8-5\varepsilon_0},  \ \text{if} \ Z=\Omega;\\
 {\epsilonc} |u|^{-5-4\varepsilon_0}|v|^{1+\xi(\mathbf{1})}|\rho|\lesssim |u|^{-5-4\varepsilon_0}r^{-1+\xi(\mathbf{1})} ,\ \text{if} \ Z=v^{1+\xi(\mathbf{1})}L+u^{1+\xi(\mathbf{1})}\Lb.
\end{cases}
\end{equation*}
In both cases, we can simply directly integrate the pointwise bounds on $\Hb_{r_1}^{r_2}$ to obtain bound $r_1^{\frac{3}{2}}\int_{\Hb_{r_1}^{r_2}}$ by $\epsilonc^2r_1^{-11+2\xi(\mathbf{1})-2\varepsilon_0}$.
Hence, we conclude that
\begin{align*}
\sup_{r_2\geq r_1}r_1^{\frac{3}{2}}\int_{\Hb_{r_1}^{r_2}}  \frac{|J^{(\mathbf{1})}_\Lb|^2}{r^{\frac{7}{2}}} &\lesssim \epsilonc^2r_1^{-10+2\xi(\mathbf{1})-2\varepsilon_0}.
\end{align*}

Putting all the estimates together, we obtain that
\begin{align*}
& r_1^{-2}\int_{\H_{r_1}}{|J^{(\mathbf{1})}_L|^2}+\int_{\H_{r_1}}\frac{|\slashed{J}^{(\mathbf{1})}|^2}{r^2}+\sup_{r_2 \geq r_1}\int_{\Hb_{r_2}^{r_1}} \frac{|\slashed{J}^{(\mathbf{1})}|^2}{r^2} +\sup_{r_2 \geq r_1}r_1^{\frac{3}{2}}\int_{\Hb_{r_2}^{r_1}} \frac{|J^{(\mathbf{1})}_\Lb|^2}{r^{\frac{7}{2}}}\lesssim {\epsilonc}^2 r_1^{-10+2\xi(\k)-2\varepsilon_0}.
\end{align*}
Thus, for sufficiently large $R_*$, it immediately closes $(\mathbf{C})$ for $\k=(\mathbf{1})$.

\medskip

\underline{$\blacktriangleright$~~Second order estimates}

\medskip

For $\k=(\mathbf{2})$, we assume that the vector fields $Z_1$ and $Z_2$ represent this index. We also use $(\mathbf{1})$ to denote $Z_1$ and $(\mathbf{1'})$ to denote $Z_2$. We first derive the a formula for $J^{(\mathbf{2})} =\mathcal{L}_{Z_1}\mathcal{L}_{Z_2}(\Im(\overline{\psi}\cdot D\psi ))$. Indeed, we have
\begin{align*}
\mathcal{L}_{Z_1}\mathcal{L}_{Z_2}(\overline{\psi}\cdot D\psi )_\mu&= \overline{\psi} \cdot D_\mu D_{Z_1} D_{Z_2}\psi+ \overline{ D_{Z_1} D_{Z_2}\psi}\cdot D_\mu\psi + \overline{D_{Z_2}\psi} \cdot D_\mu D_{Z_1} \psi+ \overline{D_{Z_1}\psi} \cdot D_\mu D_{Z_2} \psi\\
&+ 2i\Re\big(\overline{D_{Z_{2}}\psi}\cdot \psi\big) F_{Z_1 \mu}+i\big(\mathcal{L}_{Z_1}F\big)_{Z_2 \mu}|\psi|^2+iF_{[Z_2,Z_1]\mu}|\psi|^2+iF_{Z_2\mu}Z_1(|\psi|^2).
\end{align*}
Hence,
\begin{align*}
J^{(\mathbf{2})}_\mu&= \Im\Big(\overline{\psi} \cdot D_\mu \psi^{(\mathbf{2})}	+ \overline{ \psi^{(\mathbf{2})}}\cdot D_\mu\psi + \overline{\psi^{(\mathbf{1'})}} \cdot D_\mu \psi^{(\mathbf{1})}+ \overline{\psi^{(\mathbf{1})}} \cdot D_\mu \psi^{(\mathbf{1'})} \Big)\\
& \ \ + 2\Re\big(\overline{\psi^{(\mathbf{1'})}}\cdot \psi\big) F_{Z_1 \mu}+2\Re\big(\overline{\psi^{(\mathbf{1})}}\cdot \psi\big) F_{Z_2 \mu}+\big(\mathcal{L}_{Z_1}F\big)_{Z_2 \mu}|\psi|^2+F_{[Z_2,Z_1]\mu}|\psi|^2.
\end{align*}
In view of the symmetry of the indices $(\mathbf{1})$ and $(\mathbf{1'})$, we may drop the terms with similar structures and bound $J^{(\mathbf{2})}_\mu$ as follows:
\begin{align*}
|J^{(\mathbf{2})}_L|&\lesssim |\psi| |D_\mu \psi^{(\mathbf{2})}|	+ |\psi^{(\mathbf{2})}||D_\mu\psi| +|\psi^{(\mathbf{1'})}||D_\mu \psi^{(\mathbf{1})}|+ |\psi||\psi^{(\mathbf{1'})}||F_{Z_1 \mu}|+\big(|\big(\mathcal{L}_{Z_1}F\big)_{Z_2 \mu}|+|F_{[Z_2,Z_1]\mu}|\big)|\psi|^2.
\end{align*}
In order to bound $|J^{(\mathbf{2})}_L|$ in an efficient way, we first bound $\psi^{(\mathbf{1'})}$ in $L^\infty$. We have
\begin{equation*}
|\psi^{(\mathbf{1'})}|\leq\begin{cases} r^2|\slashed{D}\varphi|,  \ &\text{if} \ Z=\Omega;\\
 |v^{1+\xi(\mathbf{1'})}||D_L \psi|+ |u^{1+\xi(\mathbf{1'})}|\big(|rD_\Lb \phi|+|\phi|\big),\ &\text{if} \ Z=v^{1+\xi(\mathbf{1})}L+u^{1+\xi(\mathbf{1})}\Lb.
\end{cases}
\end{equation*}
By virtue of the pointwise bounds on $D\phi$, we have	
\begin{equation*}
|\psi^{(\mathbf{1'})}| \lesssim \sqrt{\epsilonc}|u|^{-\frac{3}{2}+\xi(\mathbf{1'})-2\varepsilon_0}.
\end{equation*}
We can now bound $\phi$ and $\psi^{(\mathbf{1'})}$ in $J^{(\mathbf{2})}_\mu$ to derive
\begin{equation}\label{J2 pointwise bound}
\begin{split}
|J^{(\mathbf{2})}_\mu|&\lesssim \Big(\sqrt{\epsilonc}|u|^{-\frac{5}{2}-2\varepsilon_0} |D_\mu \psi^{(\mathbf{2})}|	 +\sqrt{\epsilonc}|u|^{-\frac{3}{2}+\xi(\mathbf{1'})-2\varepsilon_0}|D_\mu \psi^{(\mathbf{1})}|\Big)
 +|\psi^{(\mathbf{2})}||D_\mu\psi|\\
 &\ \ \ \ \ \ \ \ \ \ +\Big( {\epsilonc}|u|^{-4+\xi(\mathbf{1'})-4\varepsilon_0}|F_{Z_1 \mu}|+{\epsilonc}|u|^{-5-4\varepsilon_0}\big(|\big(\mathcal{L}_{Z_1}F\big)_{Z_2 \mu}|+|F_{[Z_2,Z_1]\mu}|\big)\Big).
 \end{split}
\end{equation}

$\blacklozenge$~For $J^{(\mathbf{2})}_L$, we can use the pointwise bound for $D_L\psi$ in \eqref{J2 pointwise bound} (where $\mu = L$) and we obtain
\begin{align*}
|J^{(\mathbf{2})}_L|&\lesssim \underbrace{\Big(\sqrt{\epsilonc}|u|^{-\frac{5}{2}-2\varepsilon_0} |D_L \psi^{(\mathbf{2})}|	 +\sqrt{\epsilonc}|u|^{-\frac{3}{2}+\xi(\mathbf{1'})-2\varepsilon_0}|D_L \psi^{(\mathbf{1})}|\Big)}_{\II_{L2}}
 +\underbrace{\sqrt{\epsilonc}r^{-2}|u|^{-1-\varepsilon_0}|\psi^{(\mathbf{2})}|}_{\II_{L1}}\\
 &\ \ \ \ \ \ \ \ \ \ +\underbrace{\Big( {\epsilonc}|u|^{-4+\xi(\mathbf{1'})-4\varepsilon_0}|F_{Z_1 L}|+{\epsilonc}|u|^{-5-4\varepsilon_0}|F_{[Z_2,Z_1]L}|\Big)}_{\II_{L3}}+\underbrace{{\epsilonc}|u|^{-5-4\varepsilon_0}|\big(\mathcal{L}_{Z_1}F\big)_{Z_2 L}|}_{\II_{L4}}.
\end{align*}
For $\II_{L1}$, in view of \eqref{L^2 estimates on spheres for k=1 2}, we have
\begin{align*}
r_1^{-2}\int_{\H_{r_1}}  |\II_{L1}|^2&\lesssim \epsilonc r_1^{-2}  |u|^{-2-2\varepsilon_0} \int_{r_1}^{r_2}r^{-2}\big(\int_{\S_{r_1}^r}|\phi^{(\mathbf{2})}|^2\big) dr \lesssim \epsilonc^2r_1^{-10+2\xi(\mathbf{2})-4\varepsilon_0}.
\end{align*}
For $\II_{L2}$, by the $r^p$-weighted energy estimates, we have
\begin{align*}
r_1^{-2}\int_{\H_{r_1}}  |\II_{L2}|^2&\lesssim \epsilonc r_1^{-2}  \int_{\H_{r_1}}|u|^{-5-4\varepsilon_0} |D_L \psi^{(\mathbf{2})}|^2+ |u|^{-3+2\xi(\mathbf{1'})-4\varepsilon_0} |D_L \psi^{(\mathbf{1})}|^2\lesssim \epsilonc^2r_1^{-9+2\xi(\mathbf{2})-6\varepsilon_0}.
\end{align*}
For $\II_{L3}$, we will need the following two inequalities:
\begin{equation*}
|F_{Z_1L}|\lesssim r^{-2}|u|^{1+\xi(\mathbf{1})}, \ \ |F_{[Z_1,Z_2]L}|\lesssim r^{-2}|u|^{1+\xi(\mathbf{2})}.
\end{equation*}
The first one can be checked by a direction computation. For the second, we notice that the only non-vanishing $[Z_1,Z_2]$'s for $Z_1,Z_2\in \mathcal{Z}$ are $[T,S]=T$, $[T,K]=2S$ and $[S,K]=K$. For those vector fields, it is clear that $\xi([Z_1,Z_2])=\xi(Z_1)+\xi(Z_2)$. Therefore, the second inequality follows from the first one. In particular, we have
\begin{align*}
|\II_{L3}| \lesssim {\epsilonc} r^{-2}|u|^{-3+\xi(\mathbf{2})-4\varepsilon_0}.
\end{align*}
We can integrate this pointwise bound on $\H_{r_1}$ to bound $r_1^{-2}\int_{\H_{r_1}}|\II_{L3}|^2$ by $\lesssim \epsilonc^2r_1^{-9+2\xi(\mathbf{2})-8\varepsilon_0}$.

\noindent For $\II_{L4}$,  we have two cases
\begin{equation*}
|\big(\mathcal{L}_{Z_1}F\big)_{\Omega L}|\leq\begin{cases} r|\alphaone| + r^{-2}|u|^{\xi(\mathbf{1})},  \ &\text{if} \ Z_2=\Omega;\\
 	|u|^{1+\xi(\mathbf{1'})}\big(|\rhoone| + r^{-3}|u|^{\xi(\mathbf{1})}\big),\ &\text{if} \ Z_2=v^{1+\xi(\mathbf{1'})}L+u^{1+\xi(\mathbf{1'})}\Lb.
\end{cases}
\end{equation*}
For the first case, we have \begin{align*}
r_1^{-2}\int_{\H_{r_1}}  |\II_{L4}|^2&\lesssim r_1^{-2} \epsilonc^2|u|^{-10-4\varepsilon_0}\int_{\H_{r_1}} r^2|\alphaone|^2+r^{-4}|u|^{2\xi(\mathbf{1})}\lesssim \epsilonc^2r_1^{-13+2\xi(\mathbf{1})-4\varepsilon_0}.
\end{align*}
For the second case,  we have \begin{align*}
r_1^{-2}\int_{\H_{r_1}}  |\II_{L4}|^2&\lesssim r_1^{-2} \epsilonc^2|u|^{-8+2\xi(\mathbf{1'})-4\varepsilon_0}\int_{\H_{r_1}} |\rhoone|^2+r^{-6}|u|^{2\xi(\mathbf{1})}\lesssim \epsilonc^2r_1^{-13+2\xi(\mathbf{2})-4\varepsilon_0}.
\end{align*}
Hence, $r_1^{-2}\int_{\H_{r_1}}  |\II_{L4}|^2$ is bounded by $\epsilonc^2r_1^{-13+2\xi(\mathbf{2})-4\varepsilon_0}$
Together with previous estimates, we obtain
\begin{align*}
r_1^{-2}\int_{\H_{r_1}}  |J^{(\mathbf{2})}_L|^2&\lesssim \epsilonc^2r_1^{-9+2\xi(\mathbf{2})-6\varepsilon_0}.
\end{align*}

$\blacklozenge$~For $\slashed{J}^{(\mathbf{2})}$, we bound $\slashed{D}\psi$ in \eqref{J2 pointwise bound} in $L^\infty$ (where $\mu = e_A$) and we obtain
\begin{align*}
|J^{(\mathbf{2})}_\mu|&\lesssim \underbrace{\Big(\sqrt{\epsilonc}r|u|^{-\frac{5}{2}-2\varepsilon_0} |\slashed{D} \phi^{(\mathbf{2})}|	 +\sqrt{\epsilonc}r|u|^{-\frac{3}{2}+\xi(\mathbf{1'})-2\varepsilon_0}|\slashed{D}\phi^{(\mathbf{1})}|\Big)}_{\slashed{\II}_{2}}
 +\underbrace{\sqrt{\epsilonc}|u|^{-2-\varepsilon_0}|\phi^{(\mathbf{2})}|}_{\slashed{\II}_{1}}\\
 &\ \ \ \ \ \ \ \ \ \ +\underbrace{\Big( {\epsilonc}|u|^{-4+\xi(\mathbf{1'})-4\varepsilon_0}|F_{Z_1 A}|+{\epsilonc}|u|^{-5-4\varepsilon_0}|F_{[Z_2,Z_1]A}|\Big)}_{\slashed{\II}_{3}}+\underbrace{{\epsilonc}|u|^{-5-4\varepsilon_0}|\big(\mathcal{L}_{Z_1}F\big)_{Z_2 A}|}_{\slashed{\II}_{4}}.
\end{align*}
For $\slashed{\II}_{1}$, according to \eqref{L^2 estimates on spheres for k=1 2}, we have
\begin{align*}
\int_{\H_{r_1}}  \frac{|\slashed{\II}_{1}|^2}{r^2}&\lesssim \epsilonc   \int_{r_1}^{r_2}r^{-2}|u|^{-4-2\varepsilon_0} \big(\int_{\S_{r_1}^r}|\phi^{(\mathbf{2})}|^2\big) dr \lesssim \epsilonc^2r_1^{-10+2\xi(\mathbf{2})-2\varepsilon_0}.
\end{align*}
For $\slashed{\II}_{2}$, we have
\begin{align*}
\int_{\H_{r_1}}  \frac{|\slashed{\II}_{2}|^2}{r^2}&\lesssim \epsilonc   |u|^{-5-4\varepsilon_0} \int_{\H_{r_1}}|\slashed{D} \phi^{(\mathbf{2})}|^2+ \epsilonc   |u|^{-5-4\varepsilon_0} \int_{\Hb_{r_1}^{r_2}}|\slashed{D} \phi^{(\mathbf{2})}|^2\\
& \ \ + \epsilonc   |u|^{-3+2\xi(\mathbf{1'})-4\varepsilon_0} \int_{\H_{r_1}}|\slashed{D} \phi^{(\mathbf{1})}|^2+ \epsilonc   |u|^{-3+2\xi(\mathbf{1'})-4\varepsilon_0} \int_{\Hb_{r_1}^{r_2}}|\slashed{D} \phi^{(\mathbf{1})}|^2\lesssim \epsilonc^2r_1^{-9+2\xi(\mathbf{2})-8\varepsilon_0}.
\end{align*}
For $\slashed{\II}_{3}$, since
\begin{equation*}
|F_{Z_1A}|\lesssim \sqrt{\epsilonc}r^{-1}|u|^{-2+\xi(\mathbf{1})-\varepsilon_0}+r^{-2+\xi(\mathbf{1})},\ \
|F_{[Z_1,Z_2]A}|\lesssim \sqrt{\epsilonc}r^{-1}|u|^{-2+\xi(\mathbf{2})-\varepsilon_0}+r^{-2+\xi(\mathbf{2})},
\end{equation*}
we can integrate these pointwise bounds to derive
\begin{align*}
\int_{\H_{r_1}}  \frac{|\slashed{\II}_{3}|^2}{r^2}
&\lesssim \epsilonc^2r_1^{-11+2\xi(\mathbf{2})-8\varepsilon_0}.
\end{align*}
For $\slashed{\II}_{4}$,  we have two cases
\begin{equation*}
|\big(\mathcal{L}_{Z_1}F\big)_{\Omega A}|\leq\begin{cases} r|\sigmaone|,  \ &\text{if} \ Z_2=\Omega;\\
 	r^{1+\xi(\mathbf{1'})}|\alphaone| + |u|^{1+\xi(\mathbf{1'})}|\alphabone|+r^{-2+\xi(\mathbf{1'})}|u|^{\xi(\mathbf{1})},\ &\text{if} \ Z_2=v^{1+\xi(\mathbf{1'})}L+u^{1+\xi(\mathbf{1'})}\Lb.
\end{cases}
\end{equation*}
We claim that in both cases we all have
\begin{align*}
	\int_{\H_{r_1}}  \frac{|\slashed{\II}_{4}|^2}{r^2}&\lesssim \epsilonc^2r_1^{-16+2\xi(\mathbf{2})-8\varepsilon_0}.
\end{align*}
The proof for the first case is straightforward.
For the second case,  we need the following bound on $\alphaone$:
\begin{equation}\label{L2 sphere bound on alphabone}
\|\alphabone\|_{L^2(\S_{r_1}^{r_2})}\lesssim  \sqrt{\epsilonc}r_1^{-3+\xi(\mathbf{1})-\varepsilon_0}.
\end{equation}
In fact,
\begin{align*}
 \|\alphabone\|_{L^2(\S_{r_1}^{r_2})}^2- \|\alphabone\|_{L^2(\S_{r_1}^{r_1})}^2&= \int_{\frac{r_2}{2}}^{\frac{r_1}{2}}\int_{\mathbf{S}^2}L\big(\big| r \alphabone \big|^2\big)d\vartheta dv  \lesssim  \int_{\frac{r_2}{2}}^{\frac{r_1}{2}}\int_{\mathbf{S}^2} |\slashed{\nabla}_L\big(r\alphabone \big)|| r\alphabone|d\vartheta dv\\
 &\lesssim  \|\slashed{\nabla}_L(r\alphabone)\|_{L^2(\H_{r_1}^{r_2})} \Big(\int_{\frac{r_2}{2}}^{\frac{r_1}{2}}\frac{1}{r^2}\big(\int_{\S_{r_1}^{r}} |\alphabone|^2\big)dr\Big)^\frac{1}{2}.
 \end{align*}
 To bound $\|\slashed{\nabla}_L(r\alphabone)\|_{L^2(\H_{r_1}^{r_2})}$, according to \eqref{Maxwell null commuted} and the facts that $r|\slashed{\nabla}\rho(\mathcal{L}_{Z_1} \Fc)|\simeq |\rho(\mathcal{L}_{\Omega}\mathcal{L}_{Z_1}\Fc )|$ and $r|\slashed{\nabla}\sigma(\mathcal{L}_{Z_1} \Fc)|\simeq |\sigma(\mathcal{L}_{\Omega}\mathcal{L}_{Z_1}\Fc )|$, we have
 \begin{align*}
 \|\slashed{\nabla}_L(r\alphabone)\|_{L^2(\H_{r_1}^{r_2})}^2 &\leq \int_{\H_{r_1}^{r_2}}|\rho(\mathcal{L}_{\Omega}\mathcal{L}_{Z_1}\Fc )|^2+|\sigma(\mathcal{L}_{\Omega}\mathcal{L}_{Z_1}\Fc )|^2+\frac{|\slashed{J}^{(\mathbf{1})}|^2}{r^2}\lesssim \epsilonc r_1^{-6+2\xi(\mathbf{1})-4\varepsilon_0}.
 \end{align*}
Similar to the proof of Lemma \ref{lemma A7}, we use Gronwall's inequality to obtain \eqref{L2 sphere bound on alphabone}. Thus,
\begin{align*}
\int_{\H_{r_1}}  \frac{|\slashed{\II}_{4}|^2}{r^2}
&\lesssim {\epsilonc}^2\int_{\H_{r_1}} |u|^{-10-8\varepsilon_0}\Big(r^{2\xi(\mathbf{1'})} {|\alphaone|^2} + |u|^{2+2\xi(\mathbf{1'})}\frac{|\alphabone|^2}{r^2}+r^{-6+2\xi(\mathbf{1'})}|u|^{2\xi(\mathbf{1})}\Big)	
\end{align*}
The first term can be bounded by the $r^p$-weighted energy estimates. The last term can bounded directly. For the second term, we use \eqref{L2 sphere bound on alphabone} to get its $L^2$ bound on $\S_{r_1}^{r_2}$ then integrate over $r$. This proves the estimate for $\slashed{\II}_{4}$. Together with other estimates, we obtain
\begin{align*}
\int_{\H_{r_1}}  \frac{|\slashed{\II}|^2}{r^2}&\lesssim \epsilonc^2r_1^{-9+2\xi(\mathbf{2})-6\varepsilon_0}.
\end{align*}

$\blacklozenge$~For $J^{(\mathbf{2})}_\Lb$, we can use the pointwise bound for $D_L\psi$ in \eqref{J2 pointwise bound} (where $\mu = L$) and we obtain
\begin{align*}
|J^{(\mathbf{2})}_\mu|&\lesssim \underbrace{\Big(\sqrt{\epsilonc}|u|^{-\frac{5}{2}-2\varepsilon_0} |D_\Lb \psi^{(\mathbf{2})}|	 +\sqrt{\epsilonc}|u|^{-\frac{3}{2}+\xi(\mathbf{1'})-2\varepsilon_0}|D_\Lb \psi^{(\mathbf{1})}|\Big)}_{\II_{\Lb2}}
 +\underbrace{\sqrt{\epsilonc}|u|^{-3-\varepsilon_0}|\psi^{(\mathbf{2})}|}_{\II_{\Lb1}}\\
 &\ \ \ \ \ \ \ \ \ \ +\underbrace{\Big( {\epsilonc}|u|^{-4+\xi(\mathbf{1'})-4\varepsilon_0}|F_{Z_1 \Lb}|+{\epsilonc}|u|^{-5-4\varepsilon_0}|F_{[Z_2,Z_1]\Lb}|\Big)}_{\II_{\Lb3}}+\underbrace{{\epsilonc}|u|^{-5-4\varepsilon_0}|\big(\mathcal{L}_{Z_1}F\big)_{Z_2 \Lb}|}_{\II_{\Lb4}}.
\end{align*}
For $\II_{\Lb1}$, according to \eqref{L^2 estimates on spheres for k=1 2}, we have
\begin{align*}
r_1^{\frac{3}{2}}\int_{\Hb_{r_1}^{r_2}}  \frac{|{\II}_{\Lb1}|^2}{r^{\frac{7}{2}}}&\lesssim \epsilonc   r_1^{\frac{3}{2}}\int_{r_1}^{r_2}r^{-\frac{3}{2}}r_1^{-6-2\varepsilon_0} \big(\int_{\S_{r_1}^r}|\phi^{(\mathbf{2})}|^2\big) dr \lesssim \epsilonc^2r_1^{-11+2\xi(\mathbf{2})-6\varepsilon_0}.
\end{align*}
For $\II_{\Lb2}$, we first notice that $|D_\Lb \psi^{(\mathbf{k})}|\lesssim r |D_\Lb \phi^{(\mathbf{k})}|+|\phi^{(\mathbf{k})}|$. The contribution from $|\phi^{(\mathbf{1})}|$ and  $|\phi^{(\mathbf{2})}|$ can be ignored since they have been already treated in $\I_{\Lb1}$ and $\II_{\Lb1}$. Thus, modulo those terms, we have
\begin{align*}
r_1^{\frac{3}{2}}\int_{\Hb_{r_1}^{r_2}}  \frac{|{\II}_{\Lb2}|^2}{r^{\frac{7}{2}}}&\lesssim \epsilonc   r_1^{\frac{3}{2}}\int_{\Hb_{r_1}^{r_2}} r^{-\frac{3}{2}}\big(|u|^{-5-4\varepsilon_0}|D_\Lb\phi^{(\mathbf{2})}|^2+ |u|^{-3+2\xi(\mathbf{1'})-4\varepsilon_0}|D_\Lb\phi^{(\mathbf{1})}|^2\big)	\lesssim \epsilonc^2r_1^{-9+2\xi(\mathbf{1})-6\varepsilon_0}.
\end{align*}
For $\II_{\Lb3}$, we notice that
\begin{equation*}
|F_{Z_1\Lb}|\lesssim \begin{cases} \sqrt{\epsilonc}|u|^{-3-\varepsilon_0}+r^{-2}, \ \ &\text{if} \ \ Z_1=\Omega;\\
r^{-1+\xi(\mathbf{1})},\ \ &\text{if} \ \ Z_1=v^{1+\xi(\mathbf{1})}L+u^{1+\xi(\mathbf{1})}\Lb.
\end{cases}.
\end{equation*}
Since $\Omega$ can not be a commutator $[Z_1,Z_2]$, we have
\begin{equation*}
|F_{[Z_1,Z_2]\Lb}|\lesssim r^{-1}|u|^{1+\xi(\mathbf{2})}.
\end{equation*}
We remark that the $\xi(\mathbf{2})$ in the above formula is at most $1$.
Thus, we can directly integrate those pointwise bounds and we obtain
\begin{align*}
r_1^{\frac{3}{2}}\int_{\Hb_{r_1}^{r_2}}  \frac{|\II_\Lb|^2}{r^{\frac{7}{2}}} &\lesssim \epsilonc^2r_1^{-9+2\xi(\mathbf{1})-2\varepsilon_0}.
\end{align*}
For $\II_{\Lb4}$,  we have two cases
\begin{itemize}
\item If $Z_2=\Omega$, by \eqref{onederivative of F}, we have
\begin{equation*}
|\big(\mathcal{L}_{Z_1}F\big)_{\Omega \Lb}|\lesssim	r|\alphabone| + r^{-2+\xi(\mathbf{1})}.
\end{equation*}
Thus, \begin{align*}
r_1^{\frac{3}{2}}\int_{\Hb_{r_1}^{r_2}}  \frac{|\II_{\Lb4}|^2}{r^{\frac{7}{2}}} &\lesssim \epsilonc^2r_1^{\frac{3}{2}}\int_{\Hb_{r_1}^{r_2}}   r^{-\frac{3}{2}}|u|^{-10-8\varepsilon_0}|\alphabone|^2+r^{-\frac{13}{2}+2\xi(\mathbf{1})}|u|^{-10-8\varepsilon_0}.
\end{align*}
\item If $Z_2=v^{1+\xi(\mathbf{1'})}L+u^{1+\xi(\mathbf{1'})}\Lb$, by \eqref{onederivative of F}, we have
\begin{align*}
|\big(\mathcal{L}_{Z_1}F\big)_{Z_2 \Lb}|\lesssim	r^{1+\xi(\mathbf{1'})}|\rhoone| + r^{-2+\xi(\mathbf{2})}.
\end{align*}
Thus, \begin{align*}
r_1^{\frac{3}{2}}\int_{\Hb_{r_1}^{r_2}}  \frac{|\II_{\Lb4}|^2}{r^{\frac{7}{2}}} &\lesssim \epsilonc^2r_1^{\frac{3}{2}}\int_{\Hb_{r_1}^{r_2}}   r^{2\xi(\mathbf{1'})-\frac{7}{2}}|u|^{-10-8\varepsilon_0}r^2|\rhoone|^2+r^{-\frac{15}{2}+2\xi(\mathbf{2})}|u|^{-10-8\varepsilon_0}.
\end{align*}
\end{itemize}
In both cases, we have
\begin{align*}
r_1^{\frac{3}{2}}\int_{\Hb_{r_1}^{r_2}}  \frac{|{\II}_{\Lb4}|^2}{r^{\frac{7}{2}}} &\lesssim \epsilonc^2r_1^{-12+2\xi(\mathbf{1})-4\varepsilon_0}.
\end{align*}
Hence, we conclude that
\begin{align*}
\sup_{r_2\geq r_1}r_1^{\frac{3}{2}}\int_{\Hb_{r_1}^{r_2}}  \frac{|J^{(\mathbf{2})}_\Lb|^2}{r^{\frac{9}{2}}} &\lesssim \epsilonc^2r_1^{-10+2\xi(\mathbf{1})-2\varepsilon_0}.
\end{align*}

Putting all the estimates together, we obtain that
\begin{align*}
& r_1^{-2}\int_{\H_{r_1}}{|J^{(\mathbf{2})}_L|^2}+\int_{\H_{r_1}}\frac{|\slashed{J}^{(\mathbf{2})}|^2}{r^2}+\sup_{r_2 \geq r_1}r_1^{\frac{3}{2}}\int_{\Hb_{r_2}^{r_1}} \frac{|J^{(\mathbf{2})}_\Lb|^2}{r^{\frac{7}{2}}}\lesssim {\epsilonc}^2 r_1^{-9+2\xi(\k)-2\varepsilon_0}.
\end{align*}
Thus, for sufficiently large $R_*$, it immediately closes $(\mathbf{C})$ for $\k=(\mathbf{2})$.

\section{The analysis in the interior region}
\subsection{The conformal theory of Maxwell-Klein-Gordon equations}
We review the conformal theory of the Maxwell-Klein-Gordon equations.  We refer the readers to Chapter 4 of the the booklet \cite{Christodoulou:Book:GR1} of Christodoulou for more details. Let $\mathbf{L}$ be a line bundle over a four dimension Lorentzian manifold $(M,g)$ with a given hermitian metric $h$. Let $D_A$ be a $\mathbf{U}(1)$-connection compatible with $h$ where $A$ is the corresponding connection 1-form and let $\phi$ be a section of $\mathbf{L}$. The action for Maxwell-Klein-Gordon equations is
\begin{equation*}
\mathcal{L}(A,\phi;g,h)=\int_{M} \underbrace{\frac{1}{2}g^{\mu\nu}h\big((D_A)_{\partial_\mu} \phi,(D_A)_{\partial_\nu}\phi\big)}_{L_s(A,\phi;g,h)} +\underbrace{\frac{1}{4}F^{\mu\nu}F_{\mu\nu}}_{L_m(A,\phi;g,h)} d\text{vol}_g.
\end{equation*}
Let $\Lambda,\lambda$ be two positive smooth functions on $M$. We conformally change the metrics $g$ and $h$ by the following rules:
\begin{equation*}
\widetilde{g} = \Lambda^2 g, \ \ \widetilde{h} = \lambda^2 h.
\end{equation*}
The $\widetilde{D_A}\phi=D_A \phi +(d\log\gamma)\phi$ is a connection compatible with $\widetilde{h}$ ($A$ is viewed as a given $1$-form which gives a connection compatible with $\widetilde{h}$ via this formula). We remark that this is not a gauge transformation. The action in the new conformal settings is related to the old one by the following formulae:
\begin{align*}
L_s(A,\phi;g,h)&=\frac{\Lambda^2}{\lambda^2}\Big(L_s(A,\phi;\widetilde{g},\widetilde{h})+\frac{1}{2}|\phi|_{\widetilde{h}}^2\gamma^{-1}\Box_{\widetilde{g}}\gamma-\frac{1}{2}\text{Div}_{\widetilde{g}}\big(|\phi|^2_{\widetilde{h}}\cdot\text{grad}_{\widetilde{g}}(\log\gamma)\big)\Big),\\
L_m(A,\phi;g,h)&=\Lambda^4 L_m(A,\phi;\widetilde{g},\widetilde{h}).
\end{align*}
We also notice that $d\text{vol}_g = \Lambda^{-4} d\text{vol}_{\widetilde{g}}$. By setting $\lambda=\Lambda^{-1}$, we obtain
\begin{equation*}
\mathcal{L}(A,\phi;g,h)=\mathcal{L}(A,\phi;\widetilde{g},\widetilde{h})+\int_M \frac{1}{2}|\phi|_{\widetilde{h}}^2\gamma^{-1}\Box_{\widetilde{g}}\gamma-\frac{1}{2}\text{Div}_{\widetilde{g}}\big(|\phi|^2_{\widetilde{h}}\cdot\text{grad}_{\widetilde{g}}(\log\gamma)\big)\Big).
\end{equation*}
We now impose a condition on $\gamma$: $\Box_{\widetilde{g}}\gamma =0$. In applications, we will take $\widetilde{g}$ to be the Minkowski metric and $\gamma(t,x)=\big((t+C)^2-|x|^2\big)^{-1}$ where $C$ is a constant so that this condition is always satisfied. Thus, we conclude that the difference between the two actions  $\mathcal{L}(A,\phi;g,h)$ and $\mathcal{L}(A,\phi;\widetilde{g},\widetilde{h})$ is simply a divergence term. In particular, this implies that the two actions give the same Euler-Lagrange equations. Therefore $(A,\phi)$ being a solution of \eqref{MKG} with $(g,h)$ is equivalent to being a solution of \eqref{MKG} with $(\widetilde{g},\widetilde{h})$.  We point out that the two sets of (identical) solutions need to be measured in different metrics.

We study a special case. Let ${\Phi}:(U,m) \rightarrow (\widetilde{U},\widetilde{m})$ be a conformal mapping between two domains of Minkowski space. We assume that
\begin{equation*}
{\Phi}^*\widetilde{m}_{\mu\nu} = \Lambda^{-2} m_{\mu\nu}.
\end{equation*}
where $\Lambda(t,x) = (t+R_*+1)^2-|x|^2$ is a smooth function on $U$. By setting $\widetilde{g}=m$ and $g={\Phi}^*\widetilde{m}$, we apply the previous constructions. This yields the following lemma:
\begin{lemma}
Let $\widetilde{\mathbf{L}}$ be a complex line bundle on $\widetilde{U}$ and ${\Phi}:U \rightarrow \widetilde{U}$ be a conformal diffeomorphism described above. If $(\widetilde{\phi},\widetilde{A})$ is a solution of \eqref{MKG} for $\widetilde{\mathbf{L}}$, then $(\phi,A):=({\Phi}^*\widetilde{\phi},{\Phi}^*\widetilde{A})$ is solution of \eqref{MKG} for $\mathbf{L}={\Phi}^*\widetilde{\mathbf{L}}$ with respect to $(m,\Lambda^{-2}{\Phi}^*\widetilde{h})$. In particular, one takes $h= {\Phi}^*\widetilde{h}$ on $\mathbf{L}$ (this is always the case since we usually identify scalar fields with a $\mathbb{C}-$valued functions by fixing a unit section in $\mathbf{L}$ or $\widetilde{\mathbf{L}}$). We can reformulate the statement as follows: If $(\widetilde{\phi},\widetilde{A})$ ($\phi$ is a complex function) a solution of \eqref{MKG} on $\widetilde{U}$, then $(\Lambda^{-1} {\Phi}^*\widetilde{\phi}, {\Phi}^*\widetilde{A})$ is also a solution of  \eqref{MKG} on $U$.
\end{lemma}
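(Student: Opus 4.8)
The plan is to derive this from the two structural facts already recorded above: the Maxwell--Klein--Gordon system is geometric, hence covariant under diffeomorphisms, and, by the preceding conformal computation, it is invariant under Weyl rescalings up to a total spacetime divergence once the auxiliary factor $\gamma$ is chosen harmonic. So the argument is essentially bookkeeping, and I would organize it in three steps.

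First I would invoke diffeomorphism covariance. Since $\Phi:U\to\widetilde U$ is a diffeomorphism and \eqref{MKG} is the Euler--Lagrange system of a geometric action, pulling back a solution $(\widetilde\phi,\widetilde A)$ on $(\widetilde U,\widetilde m)$ for the bundle $\widetilde{\mathbf L}$ yields a solution $(\phi,A):=(\Phi^*\widetilde\phi,\Phi^*\widetilde A)$ on $(U,g)$ for $\mathbf L:=\Phi^*\widetilde{\mathbf L}$, where $g:=\Phi^*\widetilde m$ and the hermitian metric on $\mathbf L$ is $h:=\Phi^*\widetilde h$. By the defining property $\Phi^*\widetilde m=\Lambda^{-2}m$ we have $g=\Lambda^{-2}m$, i.e. we are in exactly the situation $\widetilde g=\Lambda^2 g$ of the conformal transformation rules recalled above, with $\widetilde g=m$.

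Second, I would apply those rules with $\lambda=\Lambda^{-1}$, equivalently $\gamma=\Lambda^{-1}=\big((t+R_*+1)^2-|x|^2\big)^{-1}$, on the domain $U$ where $\Lambda>0$. The one nonroutine point is to check the hypothesis $\Box_{\widetilde g}\gamma=\Box_m\gamma=0$: this is the classical fact that $\big((t+C)^2-|x|^2\big)^{-1}$ is annihilated by the flat wave operator (it is the image of a constant under an inversion, up to the translation $C=R_*+1$), which I would verify by a direct computation. Granting this, the action identity shows that $\mathcal{L}(A,\phi;g,h)$ and $\mathcal{L}(A,\phi;m,\Lambda^{-2}h)$ differ only by a divergence, hence have the same Euler--Lagrange equations; therefore $(\phi,A)$ is also a solution of \eqref{MKG} with respect to $(m,\Lambda^{-2}\Phi^*\widetilde h)$, which is the first assertion. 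For the reformulation in terms of $\mathbb C$-valued functions I would trivialize $\mathbf L$ by a $\Phi^*\widetilde h$-unit section $\epsilon$ (the pullback of a $\widetilde h$-unit section on $\widetilde U$), so that $\Phi^*\widetilde\phi$ is represented by the function $\widetilde\phi\circ\Phi$; passing to a unit section $\Lambda\,\epsilon$ for the rescaled metric $\Lambda^{-2}\Phi^*\widetilde h$, the same section is represented by $\Lambda^{-1}(\widetilde\phi\circ\Phi)$, while the connection $1$-form stays $\Phi^*\widetilde A$ (the shift from $D_A$ to $\widetilde{D_A}$ by $d\log\gamma$ is precisely this change-of-frame effect, not a gauge transformation, and both connections read $d+\sqrt{-1}A$ in their respective unit frames). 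Hence $(\Lambda^{-1}\Phi^*\widetilde\phi,\ \Phi^*\widetilde A)$ solves \eqref{MKG} on $U$ with the standard flat hermitian structure, as claimed.

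I expect the main obstacle to be exactly this last weight-matching: keeping straight how the Weyl factor $\Lambda$ enters the scalar field --- through the change of hermitian metric and the choice of unit section, producing the factor $\Lambda^{-1}$ --- versus how it leaves the connection form untouched, and making sure the divergence-term mechanism of the conformal computation genuinely applies, which hinges on the $m$-harmonicity of $\gamma=\Lambda^{-1}$.
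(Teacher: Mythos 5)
Your proposal is correct and follows essentially the same route as the paper: pull back by $\Phi$ (diffeomorphism covariance), then apply the conformal action identity already established with $\widetilde g=m$, $g=\Phi^*\widetilde m=\Lambda^{-2}m$, $\lambda=\gamma=\Lambda^{-1}$, noting $\Box_m\big((t+R_*+1)^2-|x|^2\big)^{-1}=0$ so the two actions differ by a divergence, and finally the unit-section bookkeeping that produces the factor $\Lambda^{-1}$ on the scalar while leaving the connection $1$-form $\Phi^*\widetilde A$ unchanged. Your explicit verification of the harmonicity of $\gamma$ and of the frame change is exactly the content the paper leaves implicit.
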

We can also reverse the direction:
\begin{corollary}
 If $(\phi,A)$ is a solution of \eqref{MKG} on ${U}$, then $(\big({\Phi}^{-1}\big)^*\big(\Lambda \cdot \phi\big), \big({\Phi}^{-1}\big)^*{A})$ is also a solution of  \eqref{MKG} on $\widetilde{U}$.
\end{corollary}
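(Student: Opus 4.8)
The plan is to deduce the corollary from the lemma --- more precisely, from the general conformal construction that precedes and proves it --- applied to the inverse conformal diffeomorphism $\Phi^{-1}\colon\widetilde U\to U$, followed by a short simplification. First I would verify that $\Phi^{-1}$ is again a conformal map of the type to which that construction applies. Pulling the relation $\Phi^*\widetilde m=\Lambda^{-2}m$ back by $(\Phi^{-1})^*$ and using $(\Phi^{-1})^*\circ\Phi^*=\mathrm{id}$ gives
\[
(\Phi^{-1})^*m=\big((\Phi^{-1})^*\Lambda\big)^2\,\widetilde m ,
\]
so $\Phi^{-1}$ has conformal factor $\widetilde\Lambda:=\big((\Phi^{-1})^*\Lambda\big)^{-1}$, which is smooth and strictly positive on $\widetilde U$ since $\Phi$ takes values in the region $\{\Lambda>0\}$ inside the forward light cone.

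Second, I would check the harmonicity condition that the construction requires of the rescaling function in the reverse direction, namely $\Box_{\widetilde m}\big(\widetilde\Lambda^{-1}\big)=0$. Using the conformal covariance of the scalar wave operator between the two flat metrics $\widetilde m$ and $(\Phi^{-1})^*m=\widetilde\Lambda^{-2}\widetilde m$ on $\widetilde U$, one has, for every function $v$ on $U$,
\[
\Box_{\widetilde m}\!\big(\widetilde\Lambda^{-1}(\Phi^{-1})^*v\big)=\widetilde\Lambda^{-3}\,(\Phi^{-1})^*\big(\Box_m v\big);
\]
taking $v\equiv 1$ (so that $\widetilde\Lambda^{-1}(\Phi^{-1})^*(1)=\widetilde\Lambda^{-1}$) yields $\Box_{\widetilde m}\big(\widetilde\Lambda^{-1}\big)=0$. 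Equivalently, $\widetilde\Lambda^{-1}=(\Phi^{-1})^*\Lambda=(\Phi^{-1})^*(\gamma^{-1})$ is, up to the conformal weight, the image of the harmonic reciprocal-quadratic function $\gamma=\Lambda^{-1}$ under the inversion, which is the symmetric counterpart of the identity $\Box_m(\Lambda^{-1})=0$ already used for $\Phi$.

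Third, I would run the construction with $(\Phi,\Lambda)$ replaced by $(\Phi^{-1},\widetilde\Lambda)$: it sends a solution $(\phi,A)$ of \eqref{MKG} on $U$ to the solution $\big(\widetilde\Lambda^{-1}(\Phi^{-1})^*\phi,\ (\Phi^{-1})^*A\big)$ of \eqref{MKG} on $\widetilde U$, with the hermitian structure transported by $\Phi^{-1}$ --- so, exactly as in the lemma, no bundle bookkeeping beyond the identification of sections with $\mathbb C$-valued functions is needed. Simplifying the scalar factor via $\widetilde\Lambda^{-1}=(\Phi^{-1})^*\Lambda$ and the multiplicativity of pullback,
\[
\widetilde\Lambda^{-1}(\Phi^{-1})^*\phi=(\Phi^{-1})^*\Lambda\cdot(\Phi^{-1})^*\phi=(\Phi^{-1})^*(\Lambda\cdot\phi),
\]
gives precisely the data $\big((\Phi^{-1})^*(\Lambda\cdot\phi),(\Phi^{-1})^*A\big)$ of the statement.

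I do not anticipate a serious obstacle. The corollary is essentially the lemma with source and target interchanged, and the underlying reason it holds in both directions is that the identity $\mathcal L(A,\phi;g,h)=\mathcal L(A,\phi;\widetilde g,\widetilde h)+(\text{divergence})$ established in the proof of the lemma is symmetric in the two conformal structures, so the Euler--Lagrange equations agree no matter which metric one starts from. The only points demanding a little care are that the inverse of the chosen conformal map is again of the admissible quadratic / reciprocal-quadratic form (so the harmonicity hypothesis persists), and that $\widetilde\Lambda$ is smooth and positive on all of $\widetilde U$, which follows from $\Phi(U)\subseteq\{\Lambda>0\}$.
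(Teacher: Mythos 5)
Your argument is correct. Note, though, that the paper itself treats the corollary as an immediate reversal of the lemma: the conformal construction preceding the lemma shows the two actions $\mathcal{L}(A,\phi;g,h)$ and $\mathcal{L}(A,\phi;\widetilde g,\widetilde h)$ differ by a divergence, hence solving \eqref{MKG} with respect to one metric pair is \emph{equivalent} to solving it with respect to the other; given $(\phi,A)$ on $U$ one simply sets $\widetilde\phi=(\Phi^{-1})^*(\Lambda\phi)$, observes $\Lambda^{-1}\Phi^*\widetilde\phi=\phi$, and reads the lemma's correspondence backwards with no further verification. You instead re-run the one-directional construction with $\Phi$ replaced by $\Phi^{-1}$, which obliges you to check afresh that the inverse map is conformal with factor $\widetilde\Lambda=\big((\Phi^{-1})^*\Lambda\big)^{-1}$ and that $\Box_{\widetilde m}\big(\widetilde\Lambda^{-1}\big)=0$; your verification of the latter via the flat-space conformal covariance $\Box_{\widetilde m}\big(\widetilde\Lambda^{-1}(\Phi^{-1})^*v\big)=\widetilde\Lambda^{-3}(\Phi^{-1})^*(\Box_m v)$ with $v\equiv1$ is valid, and it is consistent with the paper's explicit identities ($\Phi^*\widetilde u=-\tfrac{1}{4u_*}$, $\Phi^*\widetilde v=-\tfrac{1}{4v_*}$), which show $\widetilde\Lambda^{-1}$ is again of the admissible reciprocal-quadratic form $\big((\widetilde t+C)^2-|\widetilde x|^2\big)^{-1}$ already covered by the remark in the paper. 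So your route buys a self-contained deduction using only the lemma as stated (one direction), at the cost of redoing the harmonicity check, whereas the paper's implicit argument leans on the symmetry of the action identity and needs no new computation. One small slip: positivity of $\widetilde\Lambda$ follows from $U=\Phi^{-1}(\widetilde U)\subseteq\{\Lambda>0\}$, i.e.\ from $\Lambda>0$ on the domain of $\Phi$, not from "$\Phi(U)\subseteq\{\Lambda>0\}$" as you wrote; the conclusion is unaffected.
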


\subsection{The conformal picture}\label{section conformal picture}
From now on until the end of the paper, the radius $R_*$ is fixed. And in the sequel we allow the implicit constant depends also on $R_*$ and the size of the data $C_0$. More precisely, $B\les P$ means that there is a constant $C$ depending only on $C_0$ such that $B\leq CP$. Here we point out that the radius $R_*$ as well as the charge $q_0$ only depends on the size of the data $C_0$. We define a hyperboloid $\Sigma$ in the Minkowski space by the following equation:
\begin{equation*}
\Sigma =\Big\{ (t,x) \Big| -\big(t+R_*+1-\frac{2R_*+1}{2(R_*+1)}\big)^2+|x|^2=-\Big( \frac{2R_*+1}{2(R_*+1)}\Big)^2\Big\}.
\end{equation*}
Geometrically, $\Sigma$ (drawn as the bold black curve in the left figure) is the unique hyperboloid passing through $\S_{R_*}^{R_*}$ and asymptotic to $\H_{R_*+\frac{1}{2R_*+2}}$ (the dash-dot-dot line in the left figure). We denote its causal future by $\mathcal{J}^+(\Sigma)$ and it is the grey region in the left figure.

\ \ \ \ \ \ \ \ \ \ \ \ \ \ \ \ \ \ \includegraphics[width=5in]{conformal.pdf}

We now define a map ${\Phi}: \mathcal{J}^+(\Sigma) \rightarrow \mathbf{R}^{3+1}$. To distinguish the domain and the target of the map, we will use $\widetilde{t}$ and $\widetilde{x_i}$'s as coordinate system on the target Minkowski space $(\mathbb{R}^{3+1},\widetilde{m}_{\alpha\beta})$ where $\widetilde{m}_{\alpha\beta}$ is the Minkowski metric on the target. The map ${\Phi}$ is given by the following formula:
\begin{equation*}
{\Phi}: (t,x)\mapsto (\widetilde{t},\widetilde{x})=\Big(-\frac{t+R_*+1}{(t+R_*+1)^2-|x|^2}+\frac{R_*+1}{2R_*+1},\frac{x}{(t+R_*+1)^2-|x|^2}\Big)
\end{equation*}
Geometrically, it is the composition of a time translation with the standard inversion map centered at $(-R_*-1,0,0,0)$.  It is straightforward to see that the image of $\Sigma$ is given by $\widetilde{t}=0$ and $|\widetilde{x}|<\frac{R_*+1}{2R_*+1}$. It is a 3-dimensional open ball denoted by $\widetilde{\B}_{\frac{R_*+1}{2R_*+1}}$ (see the bold straight line segment in the right figure). We also define $\Sigma_\pm = \Sigma \cap \{\pm t\geq 0\}$ and their images under ${\Phi}$ are denoted by $\B_{\pm}$. We remark that the $\Sigma_+$ are entirely inside the exterior region where we have already obtained good control on the solutions. The image of $\mathcal{J}^+(\Sigma)$ is indeed the future domain of dependence of $\widetilde{\B}_{\frac{R_*+1}{2R_*+1}}$ and denoted by $\mathcal{D}^+(\widetilde{\B}_{\frac{R_*+1}{2R_*+1}})$. It is depicted as the grey region in the right figure. As a result, we obtain a diffeomorphism:
\begin{equation*}
{\Phi}: \mathcal{J}^+(\Sigma) \rightarrow \mathcal{D}^+(\widetilde{\B}_{\frac{R_*+1}{2R_*+1}}).
\end{equation*}
With the naturally induced flat metrics, ${\Phi}$ is indeed a conformal map:
\begin{equation*}
{\Phi}^*\widetilde{m}_{\mu\nu} = \frac{1}{\Lambda(t,x)^2}m_{\mu\nu}  \ \ \text{with}\ \ \Lambda(t,x) = (t+R_*+1)^2-|x|^2.
\end{equation*}
We define functions on the domain of definition on the target of ${\Phi}$：
\begin{align*}
&u_{*}=u+\frac{R_*+1}{2}, \ v_*=v+\frac{R_*+1}{2},\ \widetilde{u}=\frac{1}{2}(\widetilde{t}-\frac{R_*+1}{2R_*+1}-\widetilde{r}),\\ &\widetilde{v}=\frac{1}{2}(\widetilde{t}-\frac{R_*+1}{2R_*+1}+\widetilde{r}),\quad
\widetilde{\Lambda}(\widetilde{t},\widetilde{x})=\big(\widetilde{t}-\frac{R_*+1}{2R_*+1}\big)^2-|\widetilde{x}|^2.
\end{align*}
It is straightforward to check that
\begin{align*}
\Lambda &= 4u_*v_*, \ \ \widetilde{\Lambda}=4\widetilde{u}\widetilde{v},\ \
{\Phi}^*(\widetilde{u})=-\frac{1}{4u_*}, \ \ {\Phi}^*(\widetilde{v})=-\frac{1}{4v_*}, \ \ \big({\Phi}^{-1}\big)^*\widetilde{\Lambda}=\Lambda^{-1}.
\end{align*}
We can also define two principal null vector fields on the target:
\begin{equation*}
\widetilde{L}=\partial_{\widetilde{t}}+\partial_{\widetilde{r}}, \ \ \widetilde{\Lb}=\partial_{\widetilde{t}}-\partial_{\widetilde{r}}.
\end{equation*}
We can compute the tangent map ${\Phi}_*$ as follows:
\begin{equation*}
\begin{split}
{\Phi}_* L&= 4\widetilde{v}^2\widetilde{L}, \ \ {\Phi}_* \Lb= 4\widetilde{u}^2\widetilde{\Lb}, \ \ \ {\Phi}_* (x_i\partial_{x_j}-x_j\partial_{x_i}) = \widetilde{x}_i\partial_{\widetilde{x}_j}-\widetilde{x}_j\partial_{\widetilde{x}_i}.
\end{split}
\end{equation*}
Now define $\widetilde{e}_1$ and $\widetilde{e}_2$ via the following formula:
\begin{equation*}
{\Phi}_* e_A = \widetilde{\Lambda}(\widetilde{t},\widetilde{x})\widetilde{e}_A.
\end{equation*}
Thus, $(\widetilde{e}_1,\widetilde{e}_2,\widetilde{L}, \widetilde{\Lb})$ consists of a null frame for the target space $\mathcal{D}^+(\widetilde{\B}_{\frac{R_*+1}{2R_*+1}})$.
The following set of formulae gives the image of $\mathcal{Z}$ under ${\Phi}_*$:
\begin{equation*}
{\Phi}_* T= 2\widetilde{v}^2\widetilde{L}+ 2\widetilde{u}^2\widetilde{\Lb}, \ \ {\Phi}_* (x_i\partial_{x_j}-x_j\partial_{x_i}) = \widetilde{x}_i\partial_{\widetilde{x}_j}-\widetilde{x}_j\partial_{\widetilde{x}_i}, \ \ {\Phi}_* K = \frac{1}{2}\partial_{\widetilde{t}}.
\end{equation*}

The next objective is to define the fields on the target of ${\Phi}$ corresponding to $(G = dA,{f})$ (on the domain of definition). The correspondence is given by the following formulae: \footnote{In the sequel of the section, when we write $(t,x)$ and $(\widetilde{t},\widetilde{x})$, it is always understood as ${\Phi}: (t,x)\mapsto (\widetilde{t},\widetilde{x})$.}
\begin{align*}
\widetilde{{f}}(\widetilde{t},\widetilde{x})&:=\Big(\big({\Phi}^{-1}\big)^*{(\Lambda \cdot {f})}\Big)\big(\widetilde{t},\widetilde{x}\big)=\Lambda(t,x){{f}}(t,x), \\  \widetilde{A}(\widetilde{t},\widetilde{x})&:=\Big(\big({\Phi}^{-1}\big)^*{A}\Big)\big(\widetilde{t},\widetilde{x}\big) \ \ (\text{hence} \  \widetilde{\Omega}(\widetilde{t},\widetilde{x}):=\Big(\big({\Phi}^{-1}\big)^*{\Omega}\Big)\big(\widetilde{t},\widetilde{x}\big)).
\end{align*}
In view of the conformal theory presented at the beginning of the section, if we take $(G,{f})=(F,\phi)$ the solution of \eqref{MKG}, the pair $(\widetilde{\phi},\widetilde{F})$ is a solution of \eqref{MKG} on $\mathcal{D}^+(\widetilde{\B}_{\frac{R_*+1}{2R_*+1}})$.

In the rest of this subsection (Section \ref{section conformal picture}), we will use the following shorthand notations for the null components of $G$ and $\widetilde{G}$:
\begin{align*}
\alpha &= \alpha(G),\ \ \rho = \rho(G), \ \ \sigma=\sigma(G), \ \ \alphab =\alphab(G),\\
\alphat &= \alpha(\widetilde{G}),\ \ \rhot = \rho(\widetilde{G}), \ \ \sigmat=\sigma(\widetilde{G}), \ \ \alphabt =\alphab(\widetilde{G}).
\end{align*}
Since ${\Phi}_*$ and ${\Phi}^*$ behave well in the functorial way, the following formulae are immediate consequences of the previous computations:
\begin{equation}\label{conformal formulae for null components}
\begin{split}
&\alphat_A(\widetilde{t},\widetilde{x}) = 16u_*v_*^3\alpha_A(t,x), \ \ \rhot(\widetilde{t},\widetilde{x})= 16u_*^2v_*^2\rho(t,x), \\
 &\sigmat(\widetilde{t},\widetilde{x})= 16u_*^2v_*^2\sigma(t,x),\ \
 \alphabt_A(\widetilde{t},\widetilde{x}) = 16u_*^3v_*\alphab_A(t,x),\\
\widetilde{D}_{\widetilde{L}}\widetilde{\phi}(\widetilde{t},\widetilde{x})&=4v_*^2 D_L(\Lambda \phi)(t,x), \ \ \widetilde{D}_{\widetilde{e}_A}\widetilde{\phi}=4u_*v_* D_{e_A}(\Lambda \phi)(t,x), \ \ \widetilde{D}_{\widetilde{\Lb}}\widetilde{\phi}(\widetilde{t},\widetilde{x})=4u_*^2 D_\Lb(\Lambda \phi)(t,x).
\end{split}
\end{equation}
The correspondence also behaves well with respect to taking derivatives for $Z \in \mathcal{Z}$:\footnote{We also use $\widetilde{D}$ to denote the covariant derivatives corresponding to $\widetilde{A}$ on the target. This should not be confused with the $\widetilde{D}$ on the domain of definition of ${\Phi}$.}
\begin{equation}\label{commutator for derivative and conformal compactification}
\begin{split}
\widetilde{\mathcal{L}_Z G}&=\mathcal{L}_{{\Phi}_* Z}\widetilde{G},\ \ \forall Z \in \mathcal{Z},\ \ \ \
\widetilde{\Dt_{\Omega_{ij}} {f}} =\widetilde{D}_{{\Phi}_*\Omega_{ij}}\widetilde{{f}}, \\ \widetilde{\Dt_T {f}}&=\widetilde{D}_{{\Phi}_*T}\widetilde{{f}}+\big(\widetilde{t}-\frac{R_*+1}{2R_*+1}\big)\widetilde{{f}}, \ \ \widetilde{\Dt_K {f}}=\widetilde{D}_{{\Phi}_*K}\widetilde{{f}}+(R_*+1)^2\big(\widetilde{t}-\frac{R_*^2}{(R_*+1)(2R_*+1)}\big)\widetilde{{f}}.
\end{split}
\end{equation}

We will study the energy flux through the space-like hypersurface $\Sigma$. For this purpose, we need to study the geometry of $\Sigma$. It is more convenient to use a new coordinate system to characterize $\Sigma$. We define
\begin{equation*}
U=\sqrt{\big(t+R_*+\frac{1}{2R_*+2}\big)^2-r^2}, \ \ V=\sqrt{\big(t+R_*+\frac{1}{2R_*+2}\big)^2+r^2}.
\end{equation*}
The new coordinates system is $(U,V,\vartheta)$ where $\vartheta \in \mathbf{S}^2$ is the standard spherical coordinates. According to the definition, $\Sigma$ is defined by $U= \frac{2R_*+1}{2(R_*+1)}$. Thus, $(V,\vartheta)$ should be regarded as local coordinate system on $\Sigma$. To simplify notations, we also define
\begin{equation*}
t_*=t+R_*+\frac{1}{2R_*+2}.
\end{equation*}
In the new coordinate system, the volume form of the Minkowski metric $m$ can be written as
\begin{equation}\label{volume form spacetime}
d\text{vol}_m = r^2 dt dr d\vartheta =\frac{rUV}{2t_*}dUdV d\vartheta.
\end{equation}
The hypersurface $\Sigma$ can also be viewed as the graph of the function $g$ over $\mathbb{R}^3$, where $g$ is defined as
\begin{equation*}
t=g(x)=\sqrt{|x|^2+\Big( \frac{2R_*+1}{2(R_*+1)}\Big)^2}-\frac{2R_*^2+2R_*+1}{2(R_*+1)}.
\end{equation*}
Therefore the surface measure on $\Sigma_+$ is given by (using $r$ and $\vartheta$ as coordinate function):
\begin{equation*}
d\mu_{\Sigma}=\sqrt{1+|\nabla g|^2}dx= \sqrt{\frac{2r^2+\big(\frac{2R_*+1}{2(R_*+1)}\big)^2}{r^2+\big(\frac{2R_*+1}{2(R_*+1)}\big)^2}}r^2dr d\vartheta.
\end{equation*}
In view of the defining equation of $\Sigma$, one can express it in terms of  $(V,\vartheta)$ on $\Sigma$:
\begin{equation}\label{volume form of Sigma}
d\mu_\Sigma = \frac{rV^2}{t_*}dV d\vartheta.
\end{equation}
The following lemma gives a formula to compute the contraction of a vector field with the spacetime volume form on $\Sigma$. It will play a key role in the computations of the local energy density on $\Sigma$.
\begin{lemma}\label{lemma contraction of volume form}Let $J$ be a smooth vector field on $\mathbb{R}^{3+1}$ and  $\iota: \Sigma \hookrightarrow \mathbb{R}^{3+1}$ be the canonical embedding. We use $i_J d\text{vol}_m$ to denote the contraction of $J$ and the spacetime volume form. Then we have
\begin{align*}
\iota^*\big(i_J d\text{vol}_m\big)
&=-\frac{rV}{4t_*}\big((t_*-r)J_\Lb+(t_*+r)J_L\big)dV d\vartheta.
\end{align*}
\end{lemma}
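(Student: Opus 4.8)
\textbf{Proof plan for Lemma \ref{lemma contraction of volume form}.}

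The plan is to compute $\iota^*(i_J d\text{vol}_m)$ by a direct change of variables, using the new coordinate system $(U,V,\vartheta)$ in which $\Sigma$ is the level set $U = \tfrac{2R_*+1}{2(R_*+1)}$. First I would rewrite $d\text{vol}_m$ in the $(U,V,\vartheta)$ coordinates; this is essentially \eqref{volume form spacetime}, namely $d\text{vol}_m = \frac{rUV}{2t_*}\,dU\,dV\,d\vartheta$. Contracting with $J$ and then pulling back by $\iota$ kills every term in $i_J d\text{vol}_m$ that contains $dU$ (since $\iota^*dU = 0$ on $\Sigma$), so the only surviving contribution comes from the $dU$-slot of the volume form, i.e. $\iota^*(i_J d\text{vol}_m) = \frac{rUV}{2t_*}\,(dU)(J)\, dV\, d\vartheta$ evaluated on $\Sigma$, where $U = \tfrac{2R_*+1}{2(R_*+1)}$. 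The task therefore reduces to expressing $dU(J) = J(U)$ in terms of the null components $J_L$ and $J_\Lb$.

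The key step is the computation of the differential $dU$ in the null frame. From $U^2 = t_*^2 - r^2$ with $t_* = t + R_* + \tfrac{1}{2R_*+2}$ we get $U\,dU = t_*\,dt - r\,dr$, hence $dU = \frac{1}{U}(t_*\,dt - r\,dr)$. Writing $dt = \tfrac12(du^* + dv^*)$-type combinations, or more directly using $\partial_t = \tfrac12(L+\Lb)$, $\partial_r = \tfrac12(L-\Lb)$ and the dual relations, one finds that as a one-form $dt - dr$ is dual to $L$ and $dt+dr$ is dual to $\Lb$ up to the standard normalization; carrying this through gives $dU = \frac{1}{2U}\big((t_*+r)(dt-dr) + (t_*-r)(dt+dr)\big)$ in a form adapted to pairing with $L$ and $\Lb$. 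Evaluating $dU(J)$ and recalling the sign conventions $J_L = -\tfrac12 m(J,\Lb)$, $J_\Lb = -\tfrac12 m(J,L)$ (as in the definitions of $\LJk$, $\LbJk$), one obtains $dU(J) = -\frac{1}{2U}\big((t_*-r)J_\Lb + (t_*+r)J_L\big)$. Plugging this back together with $U = \tfrac{2R_*+1}{2(R_*+1)}$, the factor $U$ cancels between the numerator $\frac{rUV}{2t_*}$ and the $\frac{1}{2U}$ in $dU(J)$, leaving exactly $\iota^*(i_J d\text{vol}_m) = -\frac{rV}{4t_*}\big((t_*-r)J_\Lb + (t_*+r)J_L\big)\,dV\,d\vartheta$, which is the claimed formula.

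The main obstacle here is purely bookkeeping: getting all the signs and the factors of $\tfrac12$ right when passing between the coordinate one-forms $dt, dr$, the null coframe dual to $(L,\Lb,e_1,e_2)$, and the index-lowering conventions for $J_L, J_\Lb$ used elsewhere in the paper. It is worth cross-checking the final expression against the already-established volume form \eqref{volume form of Sigma} on $\Sigma$: taking $J$ to be the future unit normal to $\Sigma$ should reproduce $d\mu_\Sigma = \frac{rV^2}{t_*}\,dV\,d\vartheta$ up to the appropriate normalization, which pins down the overall constant and sign. Once the coframe computation is done correctly, no further analytic input is needed — this is an algebraic identity about contractions of the Minkowski volume form restricted to the hyperboloid $\Sigma$.
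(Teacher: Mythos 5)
Your overall route is the same as the paper's: pass to the $(U,V,\vartheta)$ coordinates, use $d\text{vol}_m=\frac{rUV}{2t_*}\,dU\,dV\,d\vartheta$ from \eqref{volume form spacetime}, note that $\iota^*$ annihilates every term of $i_J d\text{vol}_m$ containing $dU$, so only $\frac{rUV}{2t_*}\,dU(J)\,dV\,d\vartheta$ survives, and then evaluate $dU(J)$ in the null frame; the paper does precisely this, phrased dually through \eqref{L in terms of U and V}, which is equivalent to your $dU(L)=\frac{t_*-r}{U}$, $dU(\Lb)=\frac{t_*+r}{U}$. Your computation $dU=\frac1U(t_*\,dt-r\,dr)$ and the cancellation of the factor $U$ against \eqref{volume form spacetime} are correct.

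The one point to repair is the convention you invoke for $J_L,J_\Lb$. You cite $J_L=-\frac12 m(J,\Lb)$, $J_\Lb=-\frac12 m(J,L)$, but that is the paper's convention for the current components $\LJk$, $\LbJk$ only; under it $J_L,J_\Lb$ are the contravariant coefficients in $J=J_L L+J_\Lb\Lb+\text{angular}$, and carried through consistently it gives $dU(J)=\frac1U\big((t_*-r)J_L+(t_*+r)J_\Lb\big)$ — not the expression you wrote — so the lemma's identity would come out off by a factor $-2$ with $J_L$ and $J_\Lb$ interchanged. In this lemma the components are the metric-lowered ones, $J_L=m(J,L)=J_\mu L^\mu$ and $J_\Lb=m(J,\Lb)$, as is clear from how it is applied in the proof of Lemma \ref{energy identities hyperboloid}, where $r^2\,^{(X)}\widetilde{J}[G,f]_L$ is computed as $T(L,X)+\cdots$. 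Since $m(L,\Lb)=-2$, this reading gives $J=-\frac12 J_\Lb\,L-\frac12 J_L\,\Lb+\text{angular}$, which is exactly what produces your displayed $dU(J)=-\frac{1}{2U}\big((t_*-r)J_\Lb+(t_*+r)J_L\big)$ and hence the stated formula. So your final identities are correct, but the cited convention contradicts the step that derives them; your own proposed cross-check settles it (take $J=\partial_t$: directly, $\iota^*(i_J d\text{vol}_m)=r^2\,dr\,d\vartheta\big|_\Sigma=\frac{rV}{2}\,dV\,d\vartheta$, which matches the lemma only with $J_L=J_\Lb=-1$, i.e. the lowered-index convention).
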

\begin{proof}
Since we have already derived the formulae for volume/surface measures in terms of $(U,V,\vartheta)$, it just remains to relate $L$ and $\Lb$ to $\partial_U$ and $\partial_V$. This is recorded in the following formulae:
\begin{equation}\label{L in terms of U and V}
L=\frac{t_*-r}{U}\partial_U+\frac{t_*+r}{V}\partial_V, \ \ \Lb=\frac{t_*+r}{U}\partial_U+\frac{t_*-r}{V}\partial_V.
\end{equation}
\end{proof}

We turn to the study of energy quantities. Given fields $(\widetilde{{f}},\widetilde{G})$ on $\widetilde{\B}_{\frac{R_*+1}{2R_*+1}}$, the standard energy is defined as
\begin{equation*}
\E[\widetilde{{f}},\widetilde{G}](\widetilde{\B}_{\frac{R_*+1}{2R_*+1}})=\int_{0}^{\frac{R_*+1}{2R_*+1}}\int_{\mathbf{S}^2} \Big(|\alphat|^2+|\rhot|^2+|\sigmat|^2+|\alphabt|^2+|\widetilde{D}_{\widetilde{L}}\widetilde{\phi}|^2+\sum_{A=1}^2|\widetilde{D}_{\widetilde{e}_A}\widetilde{\phi}|^2+|\widetilde{D}_{\widetilde{\Lb}}\widetilde{\phi}|^2\Big)\widetilde{r}^2d\widetilde{r} d\widetilde{\vartheta}.
\end{equation*}
In view of our analysis in the exterior region, the more relevant part of the energy is as follows:
\begin{equation*}
\E[\widetilde{{f}},\widetilde{G}](\widetilde{\B}_{+})=\int_{\frac{R_*}{2R_*+1}}^{\frac{R_*+1}{2R_*+1}}\int_{\mathbf{S}^2} \Big(|\alphat|^2+|\rhot|^2+|\sigmat|^2+|\alphabt|^2+|\widetilde{D}_{\widetilde{L}}\widetilde{\phi}|^2+\sum_{A=1}^2|\widetilde{D}_{\widetilde{e}_A}\widetilde{\phi}|^2+|\widetilde{D}_{\widetilde{\Lb}}\widetilde{\phi}|^2\Big)\widetilde{r}^2d\widetilde{r} d\widetilde{\vartheta}.
\end{equation*}
The main objective is to rewrite this energy in terms of $({f}, G)$ on $\Sigma_+$:
\begin{proposition}\label{proposition conformal energy on Sigma}
Given the conformal correspondence $({f},G)\mapsto (\widetilde{{f}},\widetilde{G})$, we have
\begin{equation}\label{conformal energy estimate on Sigma plus}
\E[\widetilde{{f}},\widetilde{G}](\widetilde{\B}_{+})\lesssim \int_{\Sigma_+} r^2|\alphat|^2+|\rhot|^2+|\sigmat|^2+\frac{|\alphabt|^2}{r^2}+|D_L \Psi|^2+|D_L{f}|^2+|\slashed{D} {f}|^2+\frac{|D_\Lb{f}|^2}{r^2}+\frac{|{f}|^2}{r^2}.
\end{equation}
where we use the hypersurface measure $d\mu_\Sigma$ for the integration and $\Psi = r{f}$.
\end{proposition}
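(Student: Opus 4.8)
The plan is to express the energy density on $\widetilde{\B}_+$ pointwise in terms of the null components of $(f,G)$ on $\Sigma_+$, using the explicit conformal dictionary already assembled. First I would use the identities in \eqref{conformal formulae for null components} together with \eqref{commutator for derivative and conformal compactification} to substitute each factor $|\alphat|^2, |\rhot|^2, |\sigmat|^2, |\alphabt|^2$ and each $|\widetilde{D}_{\widetilde{L}}\widetilde{\phi}|^2$, $|\widetilde{D}_{\widetilde{e}_A}\widetilde{\phi}|^2$, $|\widetilde{D}_{\widetilde{\Lb}}\widetilde{\phi}|^2$ by the corresponding quantities for $(\Lambda f, G)$ on the domain, picking up the weights $u_*,v_*$ recorded there (respectively $16 u_*v_*^3$, $16u_*^2v_*^2$, $16u_*^3v_*$ for the curvature components and $4v_*^2,\ 4u_*v_*,\ 4u_*^2$ for the covariant derivatives, all squared). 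I would also rewrite the Cartesian measure $\widetilde r^2\, d\widetilde r\, d\widetilde\vartheta$ on $\widetilde\B_+$ in terms of the variables pulled back along $\Phi$; this is where the Jacobian of the inversion enters, and on the image of $\Sigma_+$ (where $U = \tfrac{2R_*+1}{2(R_*+1)}$ is fixed) the change of variables $\widetilde r \leftrightarrow V$ and the relation $\Phi^*\widetilde v = -\tfrac{1}{4v_*}$ convert $d\widetilde r$ into a multiple of $dV$, matching the surface measure $d\mu_\Sigma = \tfrac{rV^2}{t_*}\, dV\, d\vartheta$ from \eqref{volume form of Sigma} up to powers of $u_*,v_*,r$.

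Next I would collect the total $u_*$- and $v_*$-weight appearing in front of each term after these substitutions, and compare it against the weight carried by $d\mu_\Sigma$. On $\Sigma_+$ we have $t\geq 0$, so $|u| = \tfrac12(r-t)$ is bounded above by a fixed multiple of $R_*+1$, hence $u_* = |u|+\tfrac{R_*+1}{2}\approx 1$ (more precisely comparable to constants depending only on $R_*$), while $v_* = v + \tfrac{R_*+1}{2}\approx r$ for large $r$ and $\approx 1$ for bounded $r$; in all cases $v_*\approx 1+r$. Since we are allowing the implicit constant in $\lesssim$ to depend on $R_*$ and $C_0$, every bounded power of $u_*$ is harmless, and each power of $v_*$ is traded for a power of $(1+r)$. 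Carrying this bookkeeping through, $|\alphat|^2$ on the left matches $r^2|\alpha(\widetilde G)|^2$ on the right (the notational clash is resolved by noting the right-hand $\alphat$ in \eqref{conformal energy estimate on Sigma plus} denotes $\alpha(\widetilde G)$ as set in the running shorthand), $|\rhot|^2,|\sigmat|^2$ match $|\rho(\widetilde G)|^2,|\sigma(\widetilde G)|^2$, and $|\alphabt|^2$ matches $r^{-2}|\alphab(\widetilde G)|^2$; similarly $|\widetilde D_{\widetilde L}\widetilde\phi|^2$ produces $|D_L(\Lambda\phi)|^2$ with weight giving $|D_L\Psi|^2$ after writing $\Psi = rf$ and absorbing $\Lambda = 4u_*v_*\approx 1+r$, the angular derivative produces $|\slashed D f|^2$, and $|\widetilde D_{\widetilde\Lb}\widetilde\phi|^2$ produces $r^{-2}|D_\Lb f|^2$. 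The extra terms $|D_L f|^2$ and $r^{-2}|f|^2$ on the right arise when one expands $D_L(\Lambda f) = \Lambda D_L f + (L\Lambda) f$ and estimates the commutator term $(L\Lambda)f$ crudely by $r|f|$ times bounded weights, then divides by the $v_*$-weight.

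The main obstacle I anticipate is the careful reconciliation of the two measures and the weight accounting near $r\to\infty$ on $\Sigma_+$ — that is, confirming that no term on the left picks up a net positive power of $v_*$ (equivalently of $1+r$) that is not absorbed by the corresponding factor of $r^2$, $1$, or $r^{-2}$ explicitly written on the right. Concretely: the curvature weight $16 u_*^3 v_*$ for $\alphabt$ combined with the measure weight forces the $r^{-2}$ in front of $|\alphabt|^2$, and one must check this balances rather than diverges; the analogous check for $\alphat$ forces the $r^2$. Writing $\iota^*(i_J d\text{vol}_m)$ via Lemma \ref{lemma contraction of volume form} is not needed here since we are integrating a scalar density, but the relation \eqref{L in terms of U and V} and \eqref{volume form of Sigma} are exactly what makes the $dV$ substitution explicit. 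Once the per-term weight comparison is done — which is a finite, mechanical check using $u_*\approx 1$ and $v_*\approx 1+r$ on $\Sigma_+$ — summing the finitely many terms and invoking the $\lesssim$ convention (constants depending on $R_*$, $C_0$) yields \eqref{conformal energy estimate on Sigma plus}. I would present the proof as: (i) substitute via \eqref{conformal formulae for null components} and \eqref{commutator for derivative and conformal compactification}; (ii) change measure using \eqref{volume form of Sigma}, \eqref{L in terms of U and V} and $\Phi^*\widetilde v = -\tfrac1{4v_*}$; (iii) expand $D_L(\Lambda f)$ and bound commutator terms; (iv) use $u_*\approx 1$, $v_*\approx 1+r$ on $\Sigma_+$ to absorb all bounded weights, concluding the stated inequality.
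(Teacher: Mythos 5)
Your overall route is the same as the paper's: substitute the conformal dictionary \eqref{conformal formulae for null components}, convert the measure on $\widetilde{\B}_+$ to the surface measure on $\Sigma_+$ (the paper computes $\widetilde{r}$ in the $(U,V)$ variables and gets $\widetilde{r}^2d\widetilde{r}d\widetilde{\vartheta}\approx v_*^{-4}d\mu_{\Sigma_+}$; your route via $\Phi^*\widetilde{v}=-\frac{1}{4v_*}$ amounts to the same), and then use $u_*\approx 1$, $v_*\approx r$ on $\Sigma_+$. One remark before the main issue: for the proposition to serve its purpose (and as the paper's proof and the later applications \eqref{basic energy est for phik Fk on Sigma plus}, \eqref{weighted energy est for phik Fk on Sigma plus} make clear), the components on the right of \eqref{conformal energy estimate on Sigma plus} are those of the \emph{untilded} $G$ --- the tildes there are a slip --- so reading the right-hand $\alphat$ as $\alpha(\widetilde{G})$ turns your matching into the vacuous $v_*^{-4}|\alphat|^2\leq r^2|\alphat|^2$ instead of the intended conversion $v_*^{-4}|\alphat|^2\approx u_*^2v_*^2|\alpha|^2\approx r^2|\alpha|^2$, which is what the subsequent exterior estimates can absorb.

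The genuine gap is in your step (iii). After the measure conversion the $\widetilde{L}$-derivative term carries net weight one: it is exactly (a constant times) $|D_L(\Lambda f)|^2$, with no leftover negative power of $v_*$ to ``divide by''. If you expand $D_L(\Lambda f)=\Lambda D_Lf+(L\Lambda)f$ and bound $(L\Lambda)f$ crudely by $r|f|$, you are left with a term $r^2|f|^2$ in the integrand (and even with the exact value $L\Lambda=4u_*$ you are left with $|f|^2$); neither is dominated by the $\frac{|f|^2}{r^2}$ allowed on the right-hand side, so the inequality as stated would not close. The correct move is to regroup before estimating, using $Lu_*=0$, $Lv_*=1$: $D_L(\Lambda f)=4u_*\big(u_*D_Lf+D_L\Psi\big)$, so the $L$-term contributes only $|D_Lf|^2+|D_L\Psi|^2$ --- this is precisely why $|D_L\Psi|^2$ appears on the right. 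Correspondingly, the term $\frac{|f|^2}{r^2}$ does not come from the $L$-part at all: it comes from the $\widetilde{\Lb}$-term, where $\Lb u_*=1$, $\Lb v_*=0$ give $D_\Lb(\Lambda f)=4v_*\big(u_*D_\Lb f+f\big)$, and the prefactor $u_*^4v_*^{-4}$ turns the zeroth-order piece into $\approx r^{-2}|f|^2$; your expansion of the $\Lb$-term records only $r^{-2}|D_\Lb f|^2$ and drops this piece. With these two corrections your bookkeeping reproduces the paper's proof.
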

\begin{proof}
We start to express the volume form $\widetilde{r}^2d\widetilde{r} d\widetilde{\vartheta}$ in terms of $dVd\vartheta$. We notice that $\big({\Phi}^{-1}\big)^*(U) = \frac{2R_*+1}{2R_*+2}$ on $\Sigma_+$. In terms of $V$ and $U$, we have
\begin{equation*}
\widetilde{r}=\frac{\sqrt{\frac{V^2-U^2}{2}}}{U^2+\big(\frac{2R_*+1}{2R_*+2}\big)^2+\frac{2R_*+1}{R_*+1}\sqrt{\frac{V^2+U^2}{2}}}=\frac{\sqrt{V^2-U^2}}{2^{\frac{3}{2}}U^2+2U\sqrt{V^2+U^2}}.
\end{equation*}
Thus,
\begin{equation*}
d\widetilde{r}=\frac{2^{\frac{3}{2}}U^2\sqrt{V^2+4U^3}V}{\big(2^{\frac{3}{2}}U^2+2U\sqrt{V^2+U^2}\big)^2\sqrt{V^2-U^2}\sqrt{V^2+U^2}}dV.
\end{equation*}
Since $U\approx 1$ and $V\approx r$ on $\Sigma_+$ (provided $R_*\geq 1$ which always holds). Thus, $\widetilde{r}\approx 1$ and we have
\begin{equation}
\widetilde{r}^2d\widetilde{r} d\widetilde{\vartheta} \approx r^{-2}dV d\vartheta.
\end{equation}
In view of \eqref{volume form of Sigma} and the facts that $t_* \approx r \approx v_*$ on $\Sigma_+$, we conclude that
\begin{equation}\label{vol form estimate on target}
\widetilde{r}^2d\widetilde{r} d\widetilde{\vartheta} \approx v_*^{-4}d\mu_{\Sigma_{+}}.
\end{equation}
Thus, combing with \eqref{conformal formulae for null components}, the above equation yields
\begin{equation}\label{conformal energy on Sigma plus}
\begin{split}
\E[\widetilde{{f}},\widetilde{G}](\widetilde{\B}_{+})=\int_{\Sigma_+} & u_*^2v_*^2|\alphat|^2+u_*^4\big(|\rhot|^2+|\sigmat|^2\big)+u_*^6v_*^{-2}|\alphabt|^2+|D_L(\Lambda{f})|^2\\
&+u_*^2v_*^{-2}|\slashed{D}(\Lambda{f})|^2+u_*^4v_*^{-4}|D_\Lb(\Lambda{f})|^2.
\end{split}
\end{equation}
The above integration is understood over the measure $d\mu_{\Sigma_+}$. On the other hand, since $Lu_*=\Lb v_*=1$, $Lv_*=\Lb u_* =0$ and $\Lambda = 4u_* v_*$, we can easily obtain that
\begin{align*}
|D_L(\Lambda{f})|^2&=|4u_* D_L(v_*{f})|^2=|4u_*D_L\big((u_*+r){f}\big))|^2\lesssim u_*^4 |D_L{f}|^2+u_*^2|D_L\Psi|^2,\\
|\slashed{D}(\Lambda{f})|^2 &\approx u_*^2v_*^2 |\slashed{D}{f}|^2, \ \ |D_\Lb(\Lambda{f})|^2 \lesssim v_*^2u_*^2|D_\Lb {f}|^2+|v_*|^2|{f}|^2.
\end{align*}
Since $\frac{1}{2}-\frac{1}{4(R_*+1)}\leq u_* \leq \frac{1}{2}$, thus $u_*\approx 1$. The above estimate together with \eqref{conformal energy on Sigma plus} completes the proof of the proposition.
\end{proof}
We can further more eliminate the term $\frac{|{f}|^2}{r^2}$ in \eqref{conformal energy estimate on Sigma plus}:
\begin{corollary}\label{corollary conformal energy}
Given the conformal correspondence $({f},G)\mapsto (\widetilde{{f}},\widetilde{G})$, we have
\begin{equation}\label{conformal energy estimate on Sigma plus without zeroth order terms}
\E[\widetilde{{f}},\widetilde{G}](\widetilde{\B}_{+})\lesssim R_*^{-1} \int_{\S_{R_*}^{R_*}} |{f}|^2+\int_{\Sigma_+} r^2|\alphat|^2+|\rhot|^2+|\sigmat|^2+\frac{|\alphabt|^2}{r^2}+|D_L \Psi|^2+|D_L{f}|^2+|\slashed{D} {f}|^2+\frac{|D_\Lb{f}|^2}{r^2}.
\end{equation}
\end{corollary}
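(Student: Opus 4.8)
The plan is to derive \eqref{conformal energy estimate on Sigma plus without zeroth order terms} from Proposition \ref{proposition conformal energy on Sigma} by controlling the only extra term, namely $\int_{\Sigma_+} \frac{|f|^2}{r^2}\, d\mu_\Sigma$, in terms of the two terms appearing on the right-hand side of \eqref{conformal energy estimate on Sigma plus without zeroth order terms}: the boundary term $R_*^{-1}\int_{\S_{R_*}^{R_*}}|f|^2$ coming from the tip $\S_{R_*}^{R_*}$ (the past boundary sphere of $\Sigma_+$), and the flux term $\int_{\Sigma_+}\frac{|D_\Lb f|^2}{r^2}\, d\mu_\Sigma$. So the whole content is a one-dimensional weighted Hardy-type inequality along $\Sigma_+$.

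Concretely, I would foliate $\Sigma_+$ by the spheres $\Sigma_+\cap\{r=\text{const}\}$ and integrate a total derivative in the $r$-direction. Using the coordinates $(V,\vartheta)$ on $\Sigma$ from Section~\ref{section conformal picture}, together with \eqref{volume form of Sigma} and the expression \eqref{L in terms of U and V} for $\Lb$ restricted to $U=\frac{2R_*+1}{2(R_*+1)}$, one sees that moving along $\Sigma_+$ in the direction of increasing $r$ is, up to a positive bounded factor (since $t_*\approx r\approx v_*$ and $u_*\approx 1$ on $\Sigma_+$), differentiation by $\Lb$. Thus for a fixed direction $\vartheta$ one writes, with $\Psi=rf$,
\begin{equation*}
|\Psi(r,\vartheta)|^2 = |\Psi(R_*,\vartheta)|^2 + \int_{R_*}^{r} \partial_s\big(|\Psi(s,\vartheta)|^2\big)\, ds,
\end{equation*}
estimates $|\partial_s(|\Psi|^2)| \lesssim |\Psi|\,|\Lb\Psi|$ (the covariant derivative on $|\Psi|^2$ agrees with the ordinary one), expands $|\Lb\Psi|\lesssim r|D_\Lb f| + |f|$, integrates over $\mathbf{S}^2$ with weight $r^{-4}$ (so that $r^{-4}|\Psi|^2 = r^{-2}|f|^2$ matches the measure $d\mu_\Sigma\approx r^2 dr d\vartheta$), and applies Cauchy--Schwarz in $s$ followed by absorbing the resulting $\int \frac{|f|^2}{r^2}$ into the left-hand side once $R_*$ is large enough. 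The boundary term at $r=R_*$ is exactly $\int_{\mathbf{S}^2}R_*^{-2}|\Psi(R_*,\vartheta)|^2 d\vartheta = \int_{\S_{R_*}^{R_*}}|f|^2$ up to the correct power of $R_*$, which, after multiplying by the appropriate weight coming from the measure, reproduces the $R_*^{-1}\int_{\S_{R_*}^{R_*}}|f|^2$ term.

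I would therefore organize the proof as: (i) recall \eqref{conformal energy estimate on Sigma plus} from Proposition \ref{proposition conformal energy on Sigma}; (ii) record that $r^{-1}\partial_r$ along $\Sigma_+$ is comparable to $\Lb$ and that $|\Lb\Psi|\lesssim r|D_\Lb f|+|f|$; (iii) run the fundamental-theorem-of-calculus/Hardy argument above to bound $\int_{\Sigma_+}\frac{|f|^2}{r^2}$ by $R_*^{-1}\int_{\S_{R_*}^{R_*}}|f|^2 + \int_{\Sigma_+}\frac{|D_\Lb f|^2}{r^2} + \varepsilon\int_{\Sigma_+}\frac{|f|^2}{r^2}$; (iv) absorb the last term and substitute back into \eqref{conformal energy estimate on Sigma plus}. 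The main (and essentially only) obstacle is the bookkeeping in step (ii)–(iii): one must make sure the weight $r^{-4}$ that turns $|\Psi|^2$ into $|f|^2 r^2$ is consistent with the surface measure $d\mu_\Sigma$ and with the $\Lb$-vs-$\partial_r$ comparison on $\Sigma$, so that the powers of $r$ (and of $R_*$ in the boundary term) come out exactly as in \eqref{conformal energy estimate on Sigma plus without zeroth order terms}; everything else is routine Cauchy--Schwarz and absorption valid for $R_*\geq 1$ (indeed for $R_*$ large, which is the regime we work in).
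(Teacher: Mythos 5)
Your overall strategy (start from Proposition \ref{proposition conformal energy on Sigma}, control the extra term $\int_{\Sigma_+}|{f}|^2/r^2$ by a fundamental-theorem-of-calculus/Hardy argument along $\Sigma_+$ with boundary term at $\S_{R_*}^{R_*}$, then absorb) is exactly the paper's, and your bookkeeping of the boundary term is fine. But the key geometric input in your step (ii) is wrong: moving along $\Sigma_+$ in the direction of increasing $r$ is \emph{not} comparable to $\Lb$; it is essentially $L$. Indeed $\Sigma_+$ is asymptotic to the \emph{outgoing} cone $\H_{R_*+\frac{1}{2R_*+2}}$, so $u$ is nearly constant along it, and from \eqref{L in terms of U and V} the coordinate vector field tangent to $\Sigma$ is
\begin{equation*}
\partial_V=\frac{V}{4t_*r}\big((t_*+r)L-(t_*-r)\Lb\big),\qquad t_*+r\approx 2r,\quad t_*-r=\frac{U^2}{t_*+r}\approx r^{-1}\ \text{on }\Sigma_+,
\end{equation*}
so $\partial_V\approx L$ up to bounded factors, with only an $O(r^{-2})$ admixture of $\Lb$; the $\Lb$ direction is transverse to the (spacelike) hypersurface $\Sigma_+$.

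Consequently your claim in step (iii) — that $\int_{\Sigma_+}\frac{|{f}|^2}{r^2}$ can be bounded by $R_*^{-1}\int_{\S_{R_*}^{R_*}}|{f}|^2+\int_{\Sigma_+}\frac{|D_\Lb{f}|^2}{r^2}$ alone — is false: since $\Lb$ is transverse, one may prescribe ${f}$ freely on $\Sigma_+$ together with $D_\Lb{f}=0$ there (e.g.\ ${f}$ identically $1$ along $\Sigma_+$ in a trivial gauge), making the left side divergent while the right side stays finite. The tangential radial variation of ${f}$ along $\Sigma_+$ must be paid for by $D_L{f}$. The repair is exactly the paper's computation: run your integration by parts in the $V$-variable, use the displayed identity to get $|D_{\partial_V}{f}|^2\lesssim |D_L{f}|^2+\frac{|D_\Lb{f}|^2}{r^2}$, and absorb the quadratic $|{f}|^2/r^2$ term; since $|D_L{f}|^2$ also appears on the right-hand side of \eqref{conformal energy estimate on Sigma plus without zeroth order terms}, the corollary follows. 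As written, though, your proposal's central step does not go through.
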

\begin{proof}
According to \eqref{volume form of Sigma}, by Newton-Leibniz formula, we have
\begin{align*}
\int_{\Sigma_+} \frac{|{f}|^2}{r^2}&\approx \int_{R_*}^\infty \int_{\mathbf{S}^2}|{f}|^2dVd\vartheta = R_*^{-1} \int_{\S_{R_*}^{R_*}} |{f}|^2-\lim_{V_0\rightarrow \infty}\Big( V_0^{-1}\int_{\Sigma_+\cap V=V_0}|{f}|^2+2\int_{\Sigma_{+}}V_0^{-1}\Re(\overline{D_{\partial_V}{f}}\cdot {f})\Big)\\
& \lesssim R_*^{-1} \int_{\S_{R_*}^{R_*}} |{f}|^2+\int_{\Sigma_{+}}V^{-1}|D_{\partial_V}{f}||{f}| \les R_*^{-1} \int_{\S_{R_*}^{R_*}} |{f}|^2+\int_{\Sigma_{+}}|D_{\partial_V}{f}|^2)+\frac{1}{2}\int_{\Sigma_{+}}\frac{|{f}|^2}{r^2}.
\end{align*}
Thus,
\begin{align*}
\int_{\Sigma_+} \frac{|{f}|^2}{r^2}\lesssim R_*^{-1} \int_{\S_{R_*}^{R_*}} |{f}|^2+\int_{\Sigma_{+}}|D_{\partial_V}{f}|^2.
\end{align*}
According to \eqref{L in terms of U and V}, on $\Sigma_+$, we have
\begin{align*}
|D_{\partial_V}{f}|^2 =|\frac{V}{4t_*r}\big((t_*+r)L-(t_*-r)\Lb\big){f}|^2\lesssim |D_L {f}|^2+\frac{|D_\Lb{f}|^2}{r^2}.
\end{align*}
Therefore,
\begin{align*}
\int_{\Sigma_+} \frac{|{f}|^2}{r^2}\lesssim R_*^{-1} \int_{\S_{R_*}^{R_*}} |{f}|^2+\int_{\Sigma_{+}}|D_L {f}|^2+\frac{|D_\Lb{f}|^2}{r^2}.
\end{align*}
In view of \eqref{conformal energy estimate on Sigma plus}, this completes the proof.
\end{proof}

To bound the righthand side of \eqref{conformal energy estimate on Sigma plus}, we need standard energy estimate and $r^p$-weighted energy estimates on $\Sigma_+$. We take the integration domain $\mathcal{D}$ to be the spacetime slab bounded by $\Sigma_+$ and $\B_{R_*}$, see the grey region in the following picture:

\ \ \ \ \ \ \ \ \ \ \ \ \ \ \ \ \ \ \ \ \ \ \ \ \ \ \ \ \ \ \ \ \ \ \ \ \ \ \includegraphics[width=3 in]{hyperboloidenergy.pdf}

\begin{lemma}\label{energy identities hyperboloid}
For all $G$ and ${f}$, $1\leq p\leq 2$, we have
\begin{equation}\label{basic energy hyperboloid}
\begin{split}
& \ \ \ \int_{\Sigma_+}\frac{(t_*+r)\big(|D_L{f}|^2+|\alpha|^2\big)+ 2t_*\big(|\slashed{D}{f}|^2+|\rho|^2+|\sigma|^2\big)+(t_*-r)\big(|D_\Lb{f}|^2+|\alphab|^2\big)}{8V}\\
&= \E[G,{f}](\B_{R_*})-\int_{\D}\Re\big(\overline{\Box_A{f}} \cdot D_{\partial_t} {f} \big)+\nabla^\mu G_{\mu\nu}\cdot G_0{}^{\nu} + F_{0\mu} {J}[{f}]^\mu,
\end{split}
\end{equation}
and
\begin{equation}\label{r wieghted hyperboloid}
\begin{split}
&\ \ \ \ \int_{\mathcal{B}_{R_*}}r^{p-2}\big(|D_L \Psi|^2+|\slashed{D} \Psi|^2\big)+r^{p}\big(|\alpha(G)|^2+|\rho(G)|^2+|\sigma(G)|^2\big)\\
&=\int_{\Sigma_+}\frac{r^p \big(|D_L\Psi|^2+r^2|\alpha|^2\big)(t_*+r)+r^p \big(|\slashed{D}\Psi|^2+r^2(|\rho|^2+|\sigma|^2)\big)(t_*-r)}{4r^2V} \\
&\ \ \ +\int_{\D}r^{p-3}\Big(p(|D_L \Psi|^2+r^2|\alpha(G)|^2\big)+(2-p)(|\slashed{D}\Psi|^2+r^2|\rho(G)|^2+r^2|\sigma(G)|^2)\Big) \\
&\ \ \ + \int_{\D} r^{p-1}\Re\big(\overline{\Box_A{f}} \cdot D_L \Psi\big)+r^p \nabla^\mu G_{\mu\nu}\cdot G_{L}{}^{\delta}+r^p F_{L\mu} {J}[{f}]^\mu.
\end{split}
\end{equation}
Here $\Psi=rf$.
\end{lemma}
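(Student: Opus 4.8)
The plan is to run the usual multiplier argument: feed the appropriate current into the divergence identity \eqref{divergence of J}, integrate over the spacetime slab $\D$ bounded below by $\B_{R_*}$ and above by $\Sigma_+$, apply the divergence theorem, and read off the three pieces. For \eqref{basic energy hyperboloid} I take $X=\partial_t$, $\chi=0$, $Y=0$, which is exactly the choice behind the classical energy identity \eqref{classical energy inequality}; then $\mathbf{D}_1=0$ (since $^{(\partial_t)}\pi=0$ and $\chi=Y=0$) and the bulk term is precisely $\mathbf{D}_2$, giving the stated error integral. For \eqref{r wieghted hyperboloid} I take $X=r^pL$, $\chi=r^{p-1}$, $Y=\frac{p}{2}r^{p-2}|f|^2L$, the choice behind Lemma \ref{r weighted energy estimates}; the interior computation of $\Divergence\big({}^{(X)}\widetilde{J}[G,f]\big)$ is then verbatim the one already carried out there (formula \eqref{C3} for $\mathbf{D}_1$ and $\mathbf{D}_2$, including the substitution $r^2|D_Lf|^2=|D_L\Psi|^2-L(r|f|^2)$). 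So the only genuinely new work is the boundary flux through $\Sigma_+$; the $\B_{R_*}$ boundary term has normal $\partial_t$ and reproduces $\E[G,f](\B_{R_*})$, resp. the weighted data integral over $\B_{R_*}$, exactly as in the proofs of those two lemmas.

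The key computational input is Lemma \ref{lemma contraction of volume form}: with $\iota\colon\Sigma\hookrightarrow\mathbb{R}^{3+1}$ the embedding,
\[
\iota^*\big(i_J\,d\mathrm{vol}_m\big)=-\frac{rV}{4t_*}\big((t_*-r)J_\Lb+(t_*+r)J_L\big)\,dV\,d\vartheta .
\]
Applying this with $J={}^{(X)}\widetilde{J}[G,f]$, I expand $J_L=T[G,f]_{L\nu}X^\nu+(\text{lower order})$ and $J_\Lb=T[G,f]_{\Lb\nu}X^\nu+(\text{lower order})$ in the null frame. For $X=\partial_t=\tfrac12(L+\Lb)$ one gets that the standard outgoing flux density $|\alpha|^2+|\rho|^2+|\sigma|^2+|D_Lf|^2+|\slashed{D}f|^2$ appears with the large weight $(t_*+r)$ and the incoming flux density with the degenerate weight $(t_*-r)$; collecting the $\rho,\sigma,\slashed{D}$ contributions from the $L$- and $\Lb$-slots into the single $2t_*$-weighted block and combining the $\tfrac14$ of the contraction lemma with the $\tfrac12$ of $\partial_t=\tfrac12(L+\Lb)$ yields the $\frac{1}{8V}$ normalisation of \eqref{basic energy hyperboloid}. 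For $X=r^pL$ the same bookkeeping, now using the $X=r^pL$ row of the deformation-tensor table together with \eqref{C3}, produces the $\Sigma_+$-integrand of \eqref{r wieghted hyperboloid}; the zeroth-order $|f|^2$ contributions from $\chi$ and $Y$ assemble, as in the null case, into total $\partial_V$-derivatives whose boundary values cancel at the common corner sphere $\S_{R_*}^{R_*}$ shared by $\Sigma_+$ and $\B_{R_*}$, which is why no $|f|^2$ term survives in the statement. Throughout I use the frame identities \eqref{L in terms of U and V} and the measure formulae \eqref{volume form spacetime}, \eqref{volume form of Sigma} so that all $r$, $t_*$, $U$, $V$ weights match up.

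The main technical obstacle is the behaviour of $\Sigma_+$ near its asymptotic null cone $\H_{R_*+\frac{1}{2R_*+2}}$, i.e. the regime $r\to\infty$ on $\Sigma_+$ where $t_*-r\to 0$ and $V\to\infty$: I must justify that the divergence theorem applies with no contribution escaping through infinity. I handle this by truncating $\D$ at $V=V_0$, applying Stokes on the compact piece, and letting $V_0\to\infty$; the flux through $\{V=V_0\}$, as well as the leftover $|f|^2$ boundary value there, is controlled by the pointwise and energy decay estimates already established in the exterior region (Proposition \ref{Proposition pointwise decay of Maxwell}, Proposition \ref{Proposition pointwise decay of scalar field} and the improved bootstrap bounds), which force it to vanish in the limit. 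Everything else is the routine null-frame algebra of re-expressing $T[G,f]_{\mu\nu}X^\nu$, which I would not spell out in detail.
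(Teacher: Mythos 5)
Your proposal follows the paper's own proof essentially verbatim: the same multiplier choices ($X=\partial_t$ with $\chi=Y=0$, and $X=r^pL$ with $\chi=r^{p-1}$, $Y=\frac{p}{2}r^{p-2}|f|^2L$), the same reuse of the bulk computation \eqref{C3}, the same application of Lemma \ref{lemma contraction of volume form} together with \eqref{L in terms of U and V} and \eqref{volume form of Sigma} to evaluate the flux through $\Sigma_+$, and the same cancellation of the $|f|^2$ corner contributions on $\S_{R_*}^{R_*}$ against the $\B_{R_*}$ boundary term. The only difference is your explicit truncation at $V=V_0$ to justify Stokes near the asymptotic null cone and at spatial infinity, which the paper leaves implicit; this is a harmless, slightly more careful addition.
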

\begin{proof}
We first derive $r^p$-weighted energy identity. The following computations are similar to those in the proof of Lemma \ref{r weighted energy estimates}. We take $X=r^p L$, $\chi = r^{p-1}$ and $Y=\frac{p}{2}r^{p-2} |{f}|^2 L$ in \eqref{divergence of J} and then integrate on $\mathcal{D}$. The expressions of $\mathbf{D_1}$ and $\mathbf{D_2}$ (in \eqref{divergence of J}) are given in \eqref{C3}. According to Stokes formula, we have
\begin{equation*}
\int_{\mathcal{B}_{R_*}}\widetilde{J}[G, {f}]^\mu n_\mu+\int_{\Sigma_+}\iota^*\big(i_X d\text{vol}_m\big)=\int_{\mathcal{D}} \mathbf{D_1}+\mathbf{D_2}
\end{equation*}
On $\B_{R_*}$, the normal $n^\mu$ is $\partial_t$, we have
\begin{equation*}
^{(X)}\widetilde{J}[G, {f}]^\mu n_\mu = \frac{1}{2}r^{p-2}\big(r^2\alpha(G)^2+r^2\rho(G)^2+r^2\sigma(G)^2+|D_L\Psi|^2+|\slashed{D}\Psi|^2\big)-\frac{1}{2}\big((p+1)r^{p-2}|{f}|^2+r^{p-1}\partial_r(|{f}|^2)\big).
\end{equation*}
Therefore, we have
\begin{equation*}
\begin{split}
\int_{\B_{R_*}}\,^{(X)}\widetilde{J}[G, {f}]^\mu n_\mu &=\frac{1}{2}\int_{\B_{r_1}^{r_2}} r^2\alpha(G)^2+r^2\rho(G)^2+r^2\sigma(G)^2+|D_L\Psi|^2+|\slashed{D}\Psi|^2 \\ &\quad-\frac{1}{2}\int_{r_1}^{r_2}\int_{\mathbf{S}^2}\underbrace{(p+1)r^{p}|{f}|^2+r^{p+1}\partial_r(|{f}|^2)}_{=\partial_r(r^{p+1}|{f}|^2)} d\vartheta dr\\
&=\frac{1}{2}L_1+\frac{1}{2}\int_{\S_{r_1}^{r_1}}r^{p-1}|{f}|^2.
\end{split}
\end{equation*}
On the other hand, in view of Lemma \ref{lemma contraction of volume form}, we need to compute $\,^{(X)}\widetilde{J}[G, {f}]_L$ and $\,^{(X)}\widetilde{J}[G, {f}]_\Lb$. In fact, we have
\begin{align*}
r^2\cdot \,^{(X)}\widetilde{J}[G, {f}]_L &= r^p \big(|D_L\Psi|^2+r^2|\alpha|^2\big)-\frac{1}{2}L(r^{p+1}|{f}|^2),\\
r^2\cdot \,^{(X)}\widetilde{J}[G, {f}]_\Lb &= r^p \big(|\slashed{D}\Psi|^2+r^2(|\rho|^2+|\sigma|^2)\big)+\frac{1}{2}\Lb(r^{p+1}|{f}|^2).
\end{align*}
Therefore, by Lemma \ref{lemma contraction of volume form} and replacing $L$ and $\Lb$ in the above formulae by \eqref{L in terms of U and V}, we obtain that
\begin{align*}
\iota^*\big(i_{\widetilde{J}}d\text{vol}_m\big) 
&=-\frac{r^p \big(|D_L\Psi|^2+r^2|\alpha|^2\big)(t_*+r)+r^p \big(|\slashed{D}\Psi|^2+r^2(|\rho|^2+|\sigma|^2)\big)(t_*-r)}{4r^2V}d\mu_{\Sigma}\\
&\quad +\frac{1}{2}\partial_V (r^{p+1}|{f}|^2)dVd\vartheta
\end{align*}
Therefore, we have
\begin{equation}\label{CC6}
\begin{split}
\int_{\Sigma_+}\iota^*\big(i_X d\text{vol}_m\big) &=-\int_{\Sigma_+}\frac{r^p \big(|D_L\Psi|^2+r^2|\alpha|^2\big)(t_*+r)+r^p \big(|\slashed{D}\Psi|^2+r^2(|\rho|^2+|\sigma|^2)\big)(t_*-r)}{4r^2V} \\
&\quad -\frac{1}{2}\int_{\S_{r_1}^{r_1}}r^{p-1}|{f}|^2.
\end{split}
\end{equation}
The $r^p$-weighted energy identity follows immediately.

For the basic energy identity, we simply take  $X=\partial_t$, $\chi = 0$ and $Y=0$ in \eqref{divergence of J}. The identity easily follows if we observe that
\begin{align*}
\iota^*\big(i_{\widetilde{J}}d\text{vol}_m\big) &=-\frac{(t_*+r)|D_L{f}|^2+ 2t_*|\slashed{D}{f}|^2+(t_*-r)|D_\Lb{f}|^2}{8V}d\mu_\Sigma.
\end{align*}
\end{proof}

\subsection{The energy estimates on the conformal conformal compactification}\label{section conformal energy}

We first apply the theory of the previous section to the static solution $({f}, G)=(0,F[q_0])$. By the definition of the charge field $F[q_0]$, according to \eqref{conformal energy on Sigma plus}, we can bound that
\begin{equation}\label{conformal energy for static solution}
\E\big[F[q_0]\big](\widetilde{\B}_{+}) \les \int_{\Sigma_+} u_*^4|r^{-2}|^2+u_+^2 |r^{-3}|^2\les 1.
\end{equation}
This is due to the fact that $|\rho(F[q_0])|$ decays like $r^{-2}$ while all the other components decay at least $r^{-3}$. We also note that on $\Sigma_+$, $u_* \approx 1$.

\begin{remark}
Despite the simplicity, the computation in \eqref{conformal energy for static solution} is of great conceptual importance. Indeed, if one considers conformal energy on a constant time slice, the factor $u_*^4$ would be replace by $r^2$ (near spatial infinity) so that the contribution of the charge part of the field would be divergent. However, \eqref{conformal energy for static solution} shows that the charge part behaves very well near null infinity. This is the reason we choose the inversion to compactify the spacetime over the usually Penrose compactification.
\end{remark}

The main purpose of the current section is to obtain (the $L^2$ bounds up to two derivatives) of $(\widetilde{\phi},\widetilde{F})$ on $\B_{+}$. In view of \eqref{conformal energy estimate on Sigma plus without zeroth order terms}, it is reasonable to define the following quantity:
\begin{equation}
\E_+[{f},{G}]=\int_{\Sigma_+} r^2|\alphat|^2+|\rhot|^2+|\sigmat|^2+\frac{|\alphabt|^2}{r^2}+|D_L \Psi|^2+|D_L{f}|^2+|\slashed{D} {f}|^2+\frac{|D_\Lb{f}|^2}{r^2}.
\end{equation}
In what follows, we take the $({f},{G})$ to be $(\phi^{(\k)},\mathcal{L}_Z^{(\k)} \Fc)$ for $|\k|\leq 2$. In this specific set-up, we first simplify the estimates in Lemma \ref{energy identities hyperboloid}.

We start with identity \eqref{basic energy hyperboloid}. Because $t_*\approx V\approx r$ and $|t_*-r|\approx 1$, its lefthand side is approximately
\begin{equation*}
\int_{\Sigma_+} |\alphak|^2+|\rhok|^2+|\sigmak|^2+|D_L\phi^{(\k)}|^2+|\slashed{D}\phi^{(\k)}|^2+\frac{|\alphabk|^2+|D_\Lb\phi^{(\k)}|^2}{r^2}.
\end{equation*}
The first term of the righthand side is coming from the data hence bounded by ${\epsilonc}R_*^{-6-2\varepsilon_0}$. The second one is precisely the error terms that we have controlled in the exterior region (indeed, $\D\subset \D_{R_*}$), hence also bounded by ${\epsilonc}R_*^{-6-2\varepsilon_0}$. Therefore, via \eqref{energy identities hyperboloid}, we arrive at the following estimates
\begin{equation}\label{basic energy est for phik Fk on Sigma plus}
\int_{\Sigma_+} |\alphak|^2+|\rhok|^2+|\sigmak|^2+|D_L\phi^{(\k)}|^2+|\slashed{D}\phi^{(\k)}|^2+\frac{|\alphabk|^2+|D_\Lb\phi^{(\k)}|^2}{r} \lesssim {\epsilonc}R_*^{-6-2\varepsilon_0}.
\end{equation}

We use the $p=2$ case of \eqref{r wieghted hyperboloid}. The first term of the left hand side is coming from the data hence bounded by ${\epsilonc}R_*^{-4-2\varepsilon_0}$. The second and third terms of the righthand side are precisely the error terms that we have controlled in the exterior region hence also bounded by ${\epsilonc}R_*^{-4-2\varepsilon_0}$. Finally, we use $t_*\approx V\approx r$ and $|t_*-r|\approx 1$ for the first term on the righthand side. Therefore, we arrive at the following estimate:
\begin{equation}\label{weighted energy est for phik Fk on Sigma plus}
\int_{\Sigma_+}|D_L\psi^{(\k)}|^2+r^2|\alpha|^2+r\big(|\slashed{D}\phi^{(\k)}|^2+|\rho|^2+|\sigma|^2)\big) \lesssim_{R_*} {\epsilonc}R_*^{-4-2\varepsilon_0}.
\end{equation}
As a conclusion, we obtain that
\begin{equation}\label{conf estimates aux}
\E_+[\phi^{(\k)},\mathcal{L}_Z^{(\k)}\Fc]\lesssim 1.
\end{equation}
Here we recall that the implicit constant here depends only on the size of the initial data $C_0$ defined before the main theorem.
\begin{remark}
From this point till the end of the paper, we will ignore the dependence on $R_*$ for the universal constants since $R_*$ is already fixed.
\end{remark}
We now have all the preparations to bound the $H^2$ norms of $(\widetilde{\phi},\widetilde{F})$ on $\mathcal{B}_+$.

\bigskip

We take $({f}, G)=(\phi, F)$ in \eqref{conformal energy estimate on Sigma plus without zeroth order terms}. The first term on the righthand of \eqref{conformal energy estimate on Sigma plus without zeroth order terms} is bounded by the initial datum. Therefore, by taking $(\k)=(0)$ in \eqref{conf estimates aux}, the estimate \eqref{conformal energy estimate on Sigma plus without zeroth order terms} gives
\begin{equation*}
\E(\widetilde{\phi}, \widetilde{\Fc})(\B_+) \lesssim 1.
\end{equation*}
On the other hand, we know that $\widetilde{F}=\widetilde{\Fc}+\widetilde{F_{q_0}}$ and we have already shown that $\E(\widetilde{F}[q_0])\lesssim  \mathcal{E}_{\text{initial}}$. Hence, we have
\begin{equation}\label{conformal energy zeroth order final}
\E(\widetilde{\phi}, \widetilde{F})(\B_+) \lesssim 1.
\end{equation}

We now assume $|\k|=1$ and we take $({f},G)=(\Dt_Z \phi, \mathcal{L}_Z F)$ where $Z$ corresponds to the index $(\k)$. If $Z=\Omega_{ij}$, the commutator formula \eqref{commutator for derivative and conformal compactification} has no error term for $\Omega_{ij}$'s. We then can repeat the above procedure. Therefore, we have
\begin{equation*}
\E(\widetilde{D}_{\widetilde{\Omega_{ij}}}\widetilde{\phi}, \mathcal{L}_{\widetilde{\Omega_{ij}}}\widetilde{F})(\B_+) \lesssim 1,
\end{equation*}
where $\widetilde{\Omega_{ij}} =\widetilde{x_i}\partial_{\widetilde{x_j}}-\widetilde{x_j}\partial_{\widetilde{x_i}}$.

If $Z=T$, according to \eqref{commutator for derivative and conformal compactification}, we have $\widetilde{D}_{{\Phi}_*T}\widetilde{\phi}=\widetilde{\Dt_T \phi}-\big(\widetilde{t}-\frac{R_*+1}{2R_*+1}\big)\widetilde{\phi}$.
Similar to the previous case, $\E(\widetilde{\widetilde{D}_T \phi},\mathcal{L}_{{\Phi}_* T}\widetilde{F}=\widetilde{\mathcal{L}_T \Fc})(\B_+)$ are bounded by  \eqref{conformal energy estimate on Sigma plus without zeroth order terms} and \eqref{conf estimates aux}. Since $\widetilde{t}$ and its derivatives on $\B_+$ is bounded, in view of the $L^\infty$ estimates on $\phi$ (which implies the $\widetilde{\phi}$ is bounded in $L^2$) and \eqref{conformal energy zeroth order final}, the energy contributed by $\big(\widetilde{t}-\frac{R_*+1}{2R_*+1}\big)\widetilde{\phi}$ is also bounded. Thus, we conclude that
\begin{equation}\label{A2}
\E(\widetilde{D}_{\widetilde{v}^2\widetilde{L}+ \widetilde{u}^2\widetilde{\Lb}}\widetilde{\phi}, \mathcal{L}_{\widetilde{v}^2\widetilde{L}+ \widetilde{u}^2 \widetilde{\Lb}}\widetilde{F})(\B_+) \lesssim 1.
\end{equation}
If $Z=K$, according to\eqref{commutator for derivative and conformal compactification}, we have $
\widetilde{D}_{{\Phi}_*K}\widetilde{\phi}= \widetilde{\Dt_K \phi}-(R_*+1)^2\big(\widetilde{t}-\frac{R_*^2}{(R_*+1)(2R_*+1)}\big)\widetilde{\phi}$.
Similarly, since $\widetilde{t}$ and its derivatives on $\B_+$ are bounded, we can argue exactly in the same manner that
\begin{equation}\label{A3}
\E(\widetilde{D}_{\partial_{\widetilde{t}}}\widetilde{\phi}, \mathcal{L}_{\partial_{\widetilde{t}}}\widetilde{F})(\B_+) \lesssim 1.
\end{equation}
On the other hand, on $\B_+$, both $\widetilde{r}$ and its inverse are bounded (as well as their derivatives). We also have
\begin{equation*}
\widetilde{v}^2\widetilde{L}+ \widetilde{u}^2\widetilde{\Lb} = \frac{1}{2}\Big(\widetilde{r}^2+\big(\frac{R_*+1}{2R_*+1}\big)\Big)\partial_{\widetilde{t}}+\frac{R_*+1}{2R_*+1}\widetilde{r}\partial_{\widetilde{r}}.
\end{equation*}
Therefore, \eqref{A2} and $\eqref{A3}$ together with all the previous estimates imply that
\begin{equation}\label{conformal side first order}
\E({\widetilde{\phi}, \widetilde{F}})(\B_+) + \E(\widetilde{D}\widetilde{\phi}, \nabla \widetilde{F})(\B_+) \lesssim 1.
\end{equation}

We now assume $|\k|=2$ and we take $({f}, G)=(\phi^{(\mathbf{2})}, \mathcal{L}_Z^{(\mathbf{2})}F)$. Since \eqref{commutator for derivative and conformal compactification} has no error term for $\Omega_{ij}$'s, it is immediate to see that
\begin{equation*}
\E(\widetilde{D}_{\widetilde{\Omega_{ij}}}\widetilde{D}_{\widetilde{\Omega'_{ij}}}\widetilde{\phi}, \mathcal{L}_{\widetilde{\Omega_{ij}}}\mathcal{L}_{\widetilde{\Omega'_{ij}}}\widetilde{F})(\B_+) \lesssim 1.
\end{equation*}
We now consider the case $({f}, G)=(\Dt_T\Dt_T \phi, \mathcal{L}_T\mathcal{L}_T F)$. On $\widetilde{t}=0$ (or $\mathcal{B}_+$), we have
\begin{equation*}
\widetilde{D}_{{\Phi}_*T}\widetilde{D}_{{\Phi}_*T}\widetilde{\phi}=\widetilde{\Dt_T\Dt_T \phi}+\frac{2R_*+2}{2R_*+1}\widetilde{D}_{{\Phi}_* T}\widetilde{\phi}+\Big(2\big(\frac{R_*+1}{2R_*+1}\big)^2+r^2\Big)\widetilde{\phi}.
\end{equation*}
We can bound all the terms on the righthand side and we obtain
\begin{equation}\label{A4}
\E(\widetilde{D}_{\widetilde{v}^2\widetilde{L}+ \widetilde{u}^2\widetilde{\Lb}}\widetilde{D}_{\widetilde{v}^2\widetilde{L}+ \widetilde{u}^2\widetilde{\Lb}}\widetilde{\phi}, \mathcal{L}_{\widetilde{v}^2\widetilde{L}+ \widetilde{u}^2 \widetilde{\Lb}}\mathcal{L}_{\widetilde{v}^2\widetilde{L}+ \widetilde{u}^2 \widetilde{\Lb}}\widetilde{F})(\B_+) \lesssim 1.
\end{equation}
We now consider the case $({f}, G)=(\Dt_K\Dt_K \phi, \mathcal{L}_K\mathcal{L}_K F)$. On $\mathcal{B}_+$, we have
\begin{equation*}
\frac{1}{4}\widetilde{D}_{\widetilde{T}}\widetilde{D}_{\widetilde{T}}\widetilde{\phi}=\widetilde{\Dt_T\Dt_T \phi}+\frac{R_*^2(R_*+1)}{2R_*+1}\widetilde{D}_{\widetilde{T}}\widetilde{\phi}+\Big(\frac{1}{2}(R_*+1)^2+\frac{(R_*+1)^2R_*^4}{(2R_*+1)^2}\Big)\widetilde{\phi}.
\end{equation*}
This implies
\begin{equation}\label{A5}
\E(\widetilde{D}_{\partial_{\widetilde{t}}}\widetilde{D}_{\partial_{\widetilde{t}}}\widetilde{\phi}, \mathcal{L}_{\partial_{\widetilde{t}}}\mathcal{L}_{\partial_{\widetilde{t}}}\widetilde{F})(\B_+) \lesssim 1.
\end{equation}
Similar to \eqref{conformal side first order}, by combining \eqref{A4}, \eqref{A5} and all the previous estimates, we finally obtain that
\begin{equation}\label{conformal side second order}
\E({\widetilde{\phi}, \widetilde{F}})(\B_+) + \E(\widetilde{D}\widetilde{\phi}, \nabla \widetilde{F})(\B_+)+\E(\widetilde{D}^2\widetilde{\phi}, \nabla^2 \widetilde{F})(\B_+) \les 1.
\end{equation}

Finally, to obtain the $H^2$ bound of $({\widetilde{\phi}, \widetilde{F}})$ on the entire ball $\mathcal{B}_{\frac{R_*+1}{2R_*+1}}$, it remains to bound the contribution from $\mathcal{B}_{\frac{R_*+1}{2R_*+1}}-\B_+ =\Phi(\Sigma_-)$. On the other hand, according to the theory of Klainerman-Machedon \cite{MKGkl}, since $\Sigma_-$ is bounded, the solution is also bounded up to two derivatives in $L^2$ norms. This implies immediately that the $H^2$ energy of $({\widetilde{\phi}, \widetilde{F}})$ on $\mathcal{B}_{\frac{R_*+1}{2R_*+1}}-\B_+$ is bounded. Finally, we obtain that
\begin{equation}\label{conformal side H2 bound}
\E({\widetilde{\phi}, \widetilde{F}})(\mathcal{B}_{\frac{R_*+1}{2R_*+1}}) + \E(\widetilde{D}\widetilde{\phi}, \nabla \widetilde{F})(\mathcal{B}_{\frac{R_*+1}{2R_*+1}})+\E(\widetilde{D}^2\widetilde{\phi}, \nabla^2 \widetilde{F})(\mathcal{B}_{\frac{R_*+1}{2R_*+1}})  \lesssim 1.
\end{equation}
Once again, by the Main Theorem proved in Klainerman-Machedon \cite{MKGkl}, we have uniform $H^2$ control of $\widetilde{\phi}$ and $\widetilde{F}$. According to Sobolev inequality on $\mathcal{B}_{\frac{R_*+1}{2R_*+1}}$, we conclude that there exists a constant $C$, so that we have the pointwise bound
\begin{equation*}
|\widetilde{\phi}|+|\widetilde{D}\widetilde{\phi}|+|\widetilde{F}|\les 1.
\end{equation*}
Finally, we use the formulae in \eqref{conformal formulae for null components} and this provides the peeling estimates for the null components of $D\phi$ and $F$ in the interior region. Together with the pointwise estimates derived in the exterior region, this finishes the proof of the main theorem.

\appendix

\section{Tool kit}\label{Appendix tools}
We collect some frequently used inequalities and calculations in this appendix.
\subsection{Gronwall type inequalities}
\begin{lemma}[A variant of Gronwall's inequality]\label{lemma gronwall}
Let $f(t) \in C^0\big([0,T]\big)$ and $C_1$ and $C_2$ are two positive constants. If we have
\begin{equation*}
f(t) \leq C_1 + C_2 \int_{t}^T s^{-1}f(s)ds,
\end{equation*}
for all $t\leq T$, then we have
\begin{equation*}
\begin{split}
f(t)&\leq C_1 + \frac{C_1}{C_2}\Big(\big(\frac{T}{t}\big)^{C_2}-1\Big) \ \ \text{and} \ \ \int_{t}^T s^{-1}f(s)ds\leq \frac{C_1}{C_2}\Big(\big(\frac{T}{t}\big)^{C_2}-1\Big).
\end{split}
\end{equation*}
\end{lemma}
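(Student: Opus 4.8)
\textbf{Proof proposal for Lemma \ref{lemma gronwall}.}

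The plan is to reduce this to the standard Gronwall inequality by a logarithmic change of variables, which converts the kernel $s^{-1}$ into a constant. First I would set $\tau = \log s$, so that $\frac{ds}{s} = d\tau$, and introduce $g(\tau) = f(e^\tau)$. The hypothesis
\begin{equation*}
f(t) \leq C_1 + C_2 \int_t^T s^{-1} f(s)\, ds
\end{equation*}
then becomes, writing $\tau_0 = \log t$ and $\tau_1 = \log T$,
\begin{equation*}
g(\tau_0) \leq C_1 + C_2 \int_{\tau_0}^{\tau_1} g(\tau)\, d\tau, \qquad \tau_0 \leq \tau_1.
\end{equation*}
This is a backward Gronwall inequality with constant kernel. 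Applying the classical (backward) Gronwall lemma on $[\tau_0,\tau_1]$ gives $g(\tau_0) \leq C_1 e^{C_2(\tau_1 - \tau_0)}$, i.e.
\begin{equation*}
f(t) \leq C_1 e^{C_2(\log T - \log t)} = C_1 \Big(\frac{T}{t}\Big)^{C_2}.
\end{equation*}

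To get the sharper stated form, I would instead argue directly with the auxiliary function $F(\tau) = \int_{\tau}^{\tau_1} g(s)\, ds$, which is $C^1$, nonincreasing, with $F(\tau_1)=0$ and $-F'(\tau) = g(\tau) \leq C_1 + C_2 F(\tau)$. Hence $\frac{d}{d\tau}\big(e^{C_2\tau} F(\tau)\big) = e^{C_2\tau}(F'(\tau) + C_2 F(\tau)) \geq -C_1 e^{C_2\tau}$. Integrating this from $\tau_0$ to $\tau_1$ and using $F(\tau_1)=0$ yields
\begin{equation*}
- e^{C_2 \tau_0} F(\tau_0) \geq -\frac{C_1}{C_2}\big(e^{C_2\tau_1} - e^{C_2\tau_0}\big),
\end{equation*}
so $F(\tau_0) \leq \frac{C_1}{C_2}\big(e^{C_2(\tau_1-\tau_0)} - 1\big)$. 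Translating back, $\int_t^T s^{-1}f(s)\, ds = F(\tau_0) \leq \frac{C_1}{C_2}\big((T/t)^{C_2} - 1\big)$, which is the second assertion. Feeding this bound back into the original hypothesis $f(t) \leq C_1 + C_2 F(\tau_0)$ gives $f(t) \leq C_1 + \frac{C_1}{C_2}\big((T/t)^{C_2}-1\big)$, the first assertion.

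The argument is entirely routine; the only mild technical point is the regularity bookkeeping, namely that $f \in C^0([0,T])$ makes $F$ genuinely $C^1$ on $(0,\tau_1]$ so that the differentiation of $e^{C_2\tau}F(\tau)$ is justified, and that the change of variables $s \mapsto \log s$ is harmless since all the integrals are over $[t,T]$ with $t>0$ (the statement is only asserted for $t\leq T$, and the cases of interest have $t$ bounded below, e.g. $t \geq R_* \geq 1$ in the applications). There is no real obstacle here: this is the standard "Gronwall with $ds/s$" lemma, included for completeness because it is invoked repeatedly (for instance in the proof of Lemma \ref{lemma key} and in the energy estimates of Section 4).
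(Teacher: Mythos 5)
Your change of variables and the integrating-factor argument on $F(\tau)=\int_\tau^{\tau_1}g(s)\,ds$ are exactly the routine computation the paper has in mind (it omits the proof as "straightforward"), and they do establish the second assertion correctly: $F(\tau_0)\le \frac{C_1}{C_2}\big(e^{C_2(\tau_1-\tau_0)}-1\big)$, i.e. $\int_t^T s^{-1}f(s)\,ds\le \frac{C_1}{C_2}\big((T/t)^{C_2}-1\big)$. The regularity bookkeeping is also fine. The problem is the very last line. Plugging your integral bound back into the hypothesis gives
\begin{equation*}
f(t)\;\le\; C_1+C_2F(\tau_0)\;\le\; C_1+C_2\cdot\frac{C_1}{C_2}\Big(\big(\tfrac{T}{t}\big)^{C_2}-1\Big)\;=\;C_1+C_1\Big(\big(\tfrac{T}{t}\big)^{C_2}-1\Big)\;=\;C_1\big(\tfrac{T}{t}\big)^{C_2},
\end{equation*}
which is precisely the bound you already obtained from the backward Gronwall step, not $C_1+\frac{C_1}{C_2}\big((T/t)^{C_2}-1\big)$: you dropped the factor $C_2$ when multiplying $\frac{C_1}{C_2}$ by $C_2$.

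This is not a step you can repair, because the first inequality as printed is actually false when $C_2>1$. Take $C_1=1$, $C_2=2$, $T=1$ and $f(t)=\min(t^{-2},M)$ with $M$ large: this $f$ is continuous on $[0,1]$ and satisfies the hypothesis (with equality for $t\ge M^{-1/2}$), yet at $t=M^{-1/2}$ one has $f(t)=M>\frac{M+1}{2}=C_1+\frac{C_1}{C_2}\big((T/t)^{C_2}-1\big)$. The sharp conclusion available from the hypothesis is $f(t)\le C_1(T/t)^{C_2}=C_1+C_1\big((T/t)^{C_2}-1\big)$, which coincides with the printed bound only when $C_2=1$; the $\frac{C_1}{C_2}$ in the first displayed inequality of the lemma is evidently a misprint, and it is harmless for the paper since the application in Lemma \ref{lemma key} only uses the growth $(r_2/r_1)^{CC_2}$ together with the (correct) integral bound. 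So your write-up should state and prove $f(t)\le C_1(T/t)^{C_2}$ alongside your second assertion rather than claiming the misprinted form, and you should also delete the remark that the stated form is "sharper" than $C_1(T/t)^{C_2}$: for $C_2<1$ it is weaker, and for $C_2>1$ it is false.
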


\begin{lemma}\label{lemma changing decay rates}
Let $f(t) \in C^1\big([1,+\infty)\big)$ be a positive function. If for all $r_1\geq 1$, there are positive constants $C$ and $p$ so that
\begin{equation*}
\int_{r_1}^{\infty}f(s)ds\leq C r_1^{-p}.
\end{equation*}
Then, for all $q<p$, we have
\begin{equation*}
\int_{r_1}^{\infty}s^{q}f(s)ds \leq \frac{C p}{p-q} r_1^{q-p}.
\end{equation*}
\end{lemma}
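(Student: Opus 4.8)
\textbf{Proof plan for Lemma \ref{lemma changing decay rates}.}

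The statement is a standard "summation by parts in the radial variable" fact: an integral tail bound of order $r_1^{-p}$ upgrades to a weighted tail bound with the expected loss in the exponent, provided the new weight grows slower than the borderline rate. The plan is to introduce the tail function $g(s):=\int_s^\infty f(\tau)\,d\tau$, which by hypothesis satisfies $0\le g(s)\le C s^{-p}$ and $g'(s)=-f(s)$, and then integrate by parts.

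First I would write, for any $R>r_1$,
\begin{equation*}
\int_{r_1}^{R}s^{q}f(s)\,ds=-\int_{r_1}^{R}s^{q}g'(s)\,ds=-\Big[s^{q}g(s)\Big]_{r_1}^{R}+q\int_{r_1}^{R}s^{q-1}g(s)\,ds.
\end{equation*}
The boundary term at $s=R$ is $-R^{q}g(R)$, which is bounded in absolute value by $C R^{q-p}\to 0$ as $R\to\infty$ since $q<p$; the boundary term at $s=r_1$ is $r_1^{q}g(r_1)\ge 0$, and it is bounded above by $C r_1^{q-p}$. For the remaining integral I would use $g(s)\le C s^{-p}$ to get
\begin{equation*}
q\int_{r_1}^{R}s^{q-1}g(s)\,ds\le qC\int_{r_1}^{\infty}s^{q-1-p}\,ds=\frac{qC}{p-q}\,r_1^{q-p},
\end{equation*}
where convergence of the last integral again uses $q<p$ (and one should note $q-1-p<-1$). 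Adding the boundary contribution $C r_1^{q-p}$ and letting $R\to\infty$ gives $\int_{r_1}^{\infty}s^q f(s)\,ds\le \big(1+\tfrac{q}{p-q}\big)C r_1^{q-p}=\frac{p}{p-q}C r_1^{q-p}$, which is exactly the claimed bound. (If one prefers to avoid even the mild regularity of $g$, the same computation can be run purely at the level of Riemann integrals using Fubini: $\int_{r_1}^\infty s^q f(s)\,ds = \int_{r_1}^\infty f(s)\big(q\int_{r_1}^s \tau^{q-1}d\tau + r_1^q\big)ds$, then swap the order of integration in the double integral.)

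There is essentially no obstacle here; the only points requiring a line of care are the vanishing of the boundary term at infinity and the convergence of the weighted tail integral, both of which are immediate from $q<p$. One should also record the harmless assumption $q\ge 0$ is not needed — the argument works for any $q<p$ — and that positivity of $f$ is used only to make $g$ monotone decreasing and nonnegative, which keeps the boundary term at $r_1$ of the right sign.
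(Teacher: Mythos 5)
Your integration-by-parts argument via the tail function $g(s)=\int_s^\infty f(\tau)\,d\tau$ is correct and is the natural proof; the paper itself omits the proof of this lemma (it only remarks that the proofs in the appendix are straightforward), so there is nothing to compare against, and your computation does deliver exactly the constant $\frac{Cp}{p-q}$ for $0\le q<p$. The alternative Fubini formulation you mention is an equally valid way to phrase the same argument.

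One correction to your closing remark: the claim that ``$q\ge 0$ is not needed'' is false, and in fact your own argument breaks precisely there. In the step
\begin{equation*}
q\int_{r_1}^{R}s^{q-1}g(s)\,ds\le qC\int_{r_1}^{\infty}s^{q-1-p}\,ds
\end{equation*}
you multiply the bound $g(s)\le Cs^{-p}$ by $q$; for $q<0$ this reverses the inequality, so the estimate does not follow. Moreover the conclusion with the constant $\frac{p}{p-q}$ is genuinely false for $q<0$: since $\frac{p}{p-q}<1$ in that case, a positive $f$ whose mass is concentrated just to the right of $s=r_1$ (with total mass close to $Cr_1^{-p}$, which is compatible with the hypothesis for all radii) gives $\int_{r_1}^\infty s^qf(s)\,ds$ close to $Cr_1^{q-p}$, exceeding $\frac{Cp}{p-q}r_1^{q-p}$. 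For $q<0$ one only gets the cruder bound $Cr_1^{q-p}$ by monotonicity of $s^q$. This does not affect the paper, which invokes the lemma only with a small positive exponent $q$, but you should either restrict your statement to $0\le q<p$ or weaken the constant in the negative range.
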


The proofs  are straightforward and we omit the details.

\subsection{Sobolev inequalities}
We first recall the standard Sobolev inequalities on spheres.
\begin{lemma}[Sobolev inequalities on spheres]
Let $\Xi$ be a $(p_0,q_0)$-tensor field ($p_0+q_0\leq 3$ in the current work) on $\mathbf{S}_r$ (the standard sphere of radius $r$). Then, for $p>2$, we have
\begin{equation*}
\begin{split}
\|\Xi\|_{L^\infty(\mathbf{S}_r)} &\lesssim_p r^{-\frac{2}{p}}\big(\|\Xi\|_{L^p(\mathbf{S}_r)}+ r \|\slashed{\nabla}\Xi\|_{L^p(\mathbf{S}_r)}\big),\\
\|\Xi\|_{L^4(\mathbf{S}_r)} &\lesssim r^{-\frac{1}{2}}\big(\|\Xi\|_{L^2(\mathbf{S}_r)}+ r \|\slashed{\nabla}\Xi\|_{L^2(\mathbf{S}_r)}\big),
\end{split}
\end{equation*}
where $\slashed{\nabla}$ is the covariant derivative on $\mathbf{S}_r$.
\end{lemma}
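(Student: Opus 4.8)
The plan is to reduce everything to the classical Sobolev embeddings on the \emph{unit} sphere $\mathbf{S}^2 \cong \mathbf{S}_1$ by a rescaling argument, and then to keep careful track of the powers of $r$ that this rescaling introduces. First I would fix $r>0$ and let $\lambda_r : \mathbf{S}^2 \to \mathbf{S}_r$ denote the dilation $x \mapsto rx$, which is a diffeomorphism. Under $\lambda_r$ the round metric scales as $\slashed{g}_{\mathbf{S}_r} = r^2 \slashed{g}_{\mathbf{S}^2}$, so that for a $(p_0,q_0)$-tensor $\Xi$ the pulled-back tensor $\widehat{\Xi} := \lambda_r^* \Xi$ has pointwise norm $|\widehat{\Xi}|_{\slashed{g}_{\mathbf{S}^2}} = r^{\,q_0 - p_0}\,|\Xi|_{\slashed{g}_{\mathbf{S}_r}} \circ \lambda_r$ (each lower index contributes a factor $r$, each upper index a factor $r^{-1}$), and the covariant derivatives are related by $\slashed{\nabla}_{\mathbf{S}^2}\widehat{\Xi} = r\, \lambda_r^*\!\big(\slashed{\nabla}_{\mathbf{S}_r}\Xi\big)$ up to the same index-weight factor. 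The area forms satisfy $d\mu_{\mathbf{S}_r} = r^2\, d\mu_{\mathbf{S}^2}$ under $\lambda_r$.

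The second step is to write down the classical Sobolev inequalities on the \emph{fixed} manifold $\mathbf{S}^2$ applied to $\widehat{\Xi}$ (reducing a tensor field to its finitely many components in a fixed atlas, these are just the scalar Sobolev embeddings $W^{1,p}(\mathbf{S}^2)\hookrightarrow L^\infty(\mathbf{S}^2)$ for $p>2$, and $W^{1,2}(\mathbf{S}^2)\hookrightarrow L^4(\mathbf{S}^2)$):
\begin{equation*}
\|\widehat{\Xi}\|_{L^\infty(\mathbf{S}^2)} \lesssim_p \|\widehat{\Xi}\|_{L^p(\mathbf{S}^2)} + \|\slashed{\nabla}_{\mathbf{S}^2}\widehat{\Xi}\|_{L^p(\mathbf{S}^2)},
\qquad
\|\widehat{\Xi}\|_{L^4(\mathbf{S}^2)} \lesssim \|\widehat{\Xi}\|_{L^2(\mathbf{S}^2)} + \|\slashed{\nabla}_{\mathbf{S}^2}\widehat{\Xi}\|_{L^2(\mathbf{S}^2)}.
\end{equation*}
The third step is bookkeeping: substitute the scaling relations from Step 1. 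For the $L^p$ inequality, $\|\widehat{\Xi}\|_{L^\infty(\mathbf{S}^2)} = r^{\,q_0-p_0}\|\Xi\|_{L^\infty(\mathbf{S}_r)}$, while $\|\widehat{\Xi}\|_{L^p(\mathbf{S}^2)}^p = r^{\,p(q_0-p_0)} r^{-2}\|\Xi\|_{L^p(\mathbf{S}_r)}^p$, i.e.\ $\|\widehat{\Xi}\|_{L^p(\mathbf{S}^2)} = r^{\,q_0-p_0-\frac2p}\|\Xi\|_{L^p(\mathbf{S}_r)}$, and similarly $\|\slashed{\nabla}_{\mathbf{S}^2}\widehat{\Xi}\|_{L^p(\mathbf{S}^2)} = r^{\,1+q_0-p_0-\frac2p}\|\slashed{\nabla}_{\mathbf{S}_r}\Xi\|_{L^p(\mathbf{S}_r)}$. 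Dividing through by the common factor $r^{\,q_0-p_0}$ yields exactly $\|\Xi\|_{L^\infty(\mathbf{S}_r)} \lesssim_p r^{-\frac2p}\big(\|\Xi\|_{L^p(\mathbf{S}_r)} + r\|\slashed{\nabla}\Xi\|_{L^p(\mathbf{S}_r)}\big)$. The $L^4$ case is identical with $p=4$ there and $p=2$ on the right: the weight from $L^4(\mathbf{S}^2)$ contributes $r^{-\frac12}$ and the one from $L^2(\mathbf{S}^2)$ contributes $r^{-1}$ relative to $L^\infty$-weight, matched so that after cancelling $r^{q_0-p_0}$ one gets $\|\Xi\|_{L^4(\mathbf{S}_r)}\lesssim r^{-\frac12}\big(\|\Xi\|_{L^2(\mathbf{S}_r)}+r\|\slashed{\nabla}\Xi\|_{L^2(\mathbf{S}_r)}\big)$, as claimed.

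There is no real obstacle here; the only point requiring minor care is that the implicit constant must be uniform in $r$, which is automatic because after rescaling the inequality lives on the fixed manifold $\mathbf{S}^2$ and the constant there depends only on $p$ and on the (fixed) dimension and tensor type. One should also note that the restriction $p_0+q_0 \le 3$ is used only to guarantee that the relevant components and their first derivatives lie in the stated spaces in the applications; the rescaling identity itself holds for tensors of arbitrary type. I would present this as a half-page argument, spelling out the scaling of $|\Xi|$, of $\slashed{\nabla}\Xi$, and of $d\mu$, then invoking the fixed-manifold Sobolev embedding and collecting powers of $r$.
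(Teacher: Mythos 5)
Your argument is correct. The paper does not prove this lemma at all — it simply recalls it as a standard fact in the appendix — so your rescaling proof (pull back by the dilation $\lambda_r:\mathbf{S}^2\to\mathbf{S}_r$, apply the fixed-manifold embeddings $W^{1,p}(\mathbf{S}^2)\hookrightarrow L^\infty$ for $p>2$ and $W^{1,2}(\mathbf{S}^2)\hookrightarrow L^4$, and track the powers of $r$ from the metric, the area form, and the extra covariant index on $\slashed{\nabla}\Xi$) is a perfectly adequate way to supply the missing details; the exponents $r^{-2/p}$, the factor $r$ in front of $\slashed{\nabla}\Xi$, the $r^{-1/2}$ in the $L^4$ bound, and the cancellation of the tensor-weight factor $r^{q_0-p_0}$ all check out, and the uniformity of the constant in $r$ is indeed automatic once the inequality is posed on the fixed unit sphere.
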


\begin{remark}
We use $\mathcal{L}_{\Omega} \Xi$ to denote all possible Lie derivatives with respect to the rotation vector fields $\Omega_{12}$, $\Omega_{23}$ and $\Omega_{31}$. It is straightforward to check that
\begin{equation*}
|\mathcal{L}_{\Omega}\Xi|^2 \approx |\Xi|^2+ r^2 |\slashed{\nabla}\Xi|^2.
\end{equation*}
Thus, the above Sobolev inequalities can also be stated as
\begin{equation}\label{Sobolev on sphere}
\begin{split}
\|\Xi\|_{L^\infty(\mathbf{S}_r)} &\lesssim_p r^{-\frac{2}{p}}\big(\|\Xi\|_{L^p(\mathbf{S}_r)}+ \|\mathcal{L}_{\Omega}\Xi\|_{L^p(\mathbf{S}_r)}\big),\\
\|\Xi\|_{L^4(\mathbf{S}_r)} &\lesssim r^{-\frac{1}{2}}\big(\|\Xi\|_{L^2(\mathbf{S}_r)}+ \|\mathcal{L}_{\Omega}\Xi\|_{L^2(\mathbf{S}_r)}\big).
\end{split}
\end{equation}
\end{remark}

\begin{lemma}[A Sobolev inequality on parameterized hypersurfaces]
Let $\Xi$ be a tensor field on a parameterized hypersurface with parameters $(s,\vartheta) \in [a,b]\times \mathbf{S}^2$ (equipped with the product measure). Then, we have
\begin{equation*}
\begin{split}
\sup_{s\in [a,b]}\|\Xi(s,\vartheta)\|_{L^4(\mathbf{S}_{\vartheta})} &\lesssim  \|\Xi(s,\vartheta)\|_{L^2(\mathbf{S}_{\vartheta})}+\|\partial_s \Xi(s,\vartheta)\|_{L^2(\mathbf{S}_{\vartheta})}+\|\partial_\vartheta\Xi(s,\vartheta)\|_{L^2(\mathbf{S}_{\vartheta})}
\end{split}
\end{equation*}
\end{lemma}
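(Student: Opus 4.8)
The plan is to recognise this as a trace-type estimate and to prove it by combining two standard ingredients: a trace inequality controlling the $H^{1/2}(\mathbf{S}^2)$-norm of each slice $\Xi(s,\cdot)$ by the full first-order energy of $\Xi$ over the slab $Q:=[a,b]\times\mathbf{S}^2$, together with the critical two-dimensional Sobolev embedding $H^{1/2}(\mathbf{S}^2)\hookrightarrow L^4(\mathbf{S}^2)$. Concretely, I would first establish
\[
\sup_{s\in[a,b]}\|\Xi(s,\cdot)\|_{H^{1/2}(\mathbf{S}^2)}\ \lesssim\ \|\Xi\|_{L^2(Q)}+\|\partial_s\Xi\|_{L^2(Q)}+\|\partial_\vartheta\Xi\|_{L^2(Q)},
\]
and then conclude via $\|\Xi(s,\cdot)\|_{L^4(\mathbf{S}^2)}\lesssim\|\Xi(s,\cdot)\|_{H^{1/2}(\mathbf{S}^2)}$, which is the endpoint $q=4$ of the $2$-dimensional Sobolev embedding (the exponent relation $\tfrac1q=\tfrac12-\tfrac{1/2}{2}$ with $\tfrac12<1$, hence genuine) and holds on the compact manifold $\mathbf{S}^2$. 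The implicit constants are allowed to depend on $b-a$ and on $\mathbf{S}^2$.

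For the trace inequality I would expand each slice in eigenfunctions of the relevant Laplacian on $\mathbf{S}^2$ (for a tensor field, componentwise in a frame or via the Hodge Laplacian), writing $\Xi=\sum_\ell\Xi_\ell$ with $\|\Xi(s,\cdot)\|_{L^2(\mathbf{S}^2)}^2\approx\sum_\ell\|\Xi_\ell(s,\cdot)\|_{L^2(\mathbf{S}^2)}^2$, $\|\Xi(s,\cdot)\|_{H^{1/2}(\mathbf{S}^2)}^2\approx\sum_\ell(1+\ell)\|\Xi_\ell(s,\cdot)\|_{L^2(\mathbf{S}^2)}^2$ and $\|\Xi(s,\cdot)\|_{H^{1}(\mathbf{S}^2)}^2\approx\sum_\ell(1+\ell)^2\|\Xi_\ell(s,\cdot)\|_{L^2(\mathbf{S}^2)}^2$. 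Each mode being smooth in $s$, the fundamental theorem of calculus in $s$ followed by averaging the base point over $[a,b]$ gives, for every $s_0\in[a,b]$,
\[
\|\Xi_\ell(s_0,\cdot)\|_{L^2(\mathbf{S}^2)}^2\ \lesssim\ \int_a^b\|\Xi_\ell(\tau,\cdot)\|_{L^2(\mathbf{S}^2)}^2\,d\tau+\int_a^b\|\Xi_\ell(\tau,\cdot)\|_{L^2(\mathbf{S}^2)}\,\|\partial_\tau\Xi_\ell(\tau,\cdot)\|_{L^2(\mathbf{S}^2)}\,d\tau.
\]
Multiplying by the weight $(1+\ell)$, summing over $\ell$, and using $2ab\le a^2+b^2$ on the second sum so that $(1+\ell)\|\Xi_\ell\|_{L^2}\|\partial_\tau\Xi_\ell\|_{L^2}\le\tfrac12(1+\ell)^2\|\Xi_\ell\|_{L^2}^2+\tfrac12\|\partial_\tau\Xi_\ell\|_{L^2}^2$, the right-hand side becomes $\int_a^b\big(\|\Xi(\tau,\cdot)\|_{H^1(\mathbf{S}^2)}^2+\|\partial_\tau\Xi(\tau,\cdot)\|_{L^2(\mathbf{S}^2)}^2\big)\,d\tau$, which is precisely $\|\Xi\|_{L^2(Q)}^2+\|\partial_\vartheta\Xi\|_{L^2(Q)}^2+\|\partial_s\Xi\|_{L^2(Q)}^2$. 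This is just the standard proof of the trace theorem adapted to $\mathbf{S}^2$; alternatively one may simply invoke the trace estimate $H^1(Q)\to H^{1/2}(\{s\}\times\mathbf{S}^2)$ directly.

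The part I expect to be the real content — rather than an obstacle in the usual sense — is the half-derivative gain built into the trace step. One cannot instead apply a slicewise Gagliardo--Nirenberg (Ladyzhenskaya) inequality $\|\Xi(s,\cdot)\|_{L^4(\mathbf{S}^2)}^2\lesssim\|\Xi(s,\cdot)\|_{L^2(\mathbf{S}^2)}\big(\|\Xi(s,\cdot)\|_{L^2(\mathbf{S}^2)}+\|\partial_\vartheta\Xi(s,\cdot)\|_{L^2(\mathbf{S}^2)}\big)$ and then take $\sup_s$, since the hypotheses control $\|\partial_\vartheta\Xi\|_{L^2(\mathbf{S}^2)}$ only in $L^2_s$, not in $L^\infty_s$; it is precisely integrating in $s$ that converts this $L^2_s$ information into the borderline-in-$\mathbf{S}^2$ regularity needed to reach $L^4$, and any attempt to bound $\int_a^b\|\partial_\tau\Xi(\tau,\cdot)\|_{L^4(\mathbf{S}^2)}^2\,d\tau$ by a naive slicewise Sobolev inequality forces a mixed second derivative $\partial_s\partial_\vartheta\Xi$ that is not available. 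Finally, in the paper's applications this abstract lemma is used after a change of variables identifying a truncated null cone (or the initial slab) with $[a,b]\times\mathbf{S}^2$ carrying a product measure comparable, up to powers of $r$, to the induced one; rescaling $\Xi$ by the appropriate power of $r$ to normalise the spheres to unit radius then yields the concrete estimates such as \eqref{Sobolve on incoming null hypersurfaces}.
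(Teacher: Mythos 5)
Your argument is correct and follows essentially the same route as the paper: the paper also reduces the estimate to the Sobolev embedding $\|\Xi(s_0,\cdot)\|_{L^4(\mathbf{S}^2)}\lesssim\|\Xi(s_0,\cdot)\|_{W^{1/2,2}(\mathbf{S}^2)}$ combined with the trace inequality $W^{1,2}([a,b]\times\mathbf{S}^2)\to W^{1/2,2}(\{s_0\}\times\mathbf{S}^2)$, which it simply invokes. Your spectral-decomposition proof of the trace step is a correct, self-contained substitute for that citation, so the only difference is that you prove an ingredient the paper takes as standard.
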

\begin{proof}
It suffices to prove the inequality for any fixed $s_0\in [a,b]$. According to Sobolev inequality on $\mathbf{S}^2$, we have
\begin{equation*}
\|\Xi(s_0,\vartheta)\|_{L^4(\mathbf{S}^2)} \lesssim \|\Xi(s_0,\vartheta)\|_{W^{\frac{1}{2},2}(\mathbf{S}^2)}.
\end{equation*}
Regarding $s=s_0$ as a codimension 1 hypersurface in $[a,b]\times \mathbf{S}^2$, the standard trace theorem yields
\begin{equation*}
\|\Xi(s_0,\vartheta)\|_{W^{\frac{1}{2},2}(\mathbf{S}^2)} \lesssim \|\Xi(s,\vartheta)\|_{W^{1,2}([a,b]\times \mathbf{S}^2)}.
\end{equation*}
This gives the proof of the inequality.
\end{proof}
The following corollary of the above inequality will be extremely useful when one derives pointwise decay:
\begin{corollary}
For a smooth tensor field $\Xi$ on the incoming null hypersurface $\H_{r_2}^{r_1}$, $\Hb_{r_1}^{r_2}$ or $\B_{r_1}$, we have
\begin{equation}\label{Sobolve on incoming null hypersurfaces}
\begin{split}
 r_2 \|\Xi\|_{L^4(\S_{r_1}^{r_2})}^2&\lesssim \int_{\Hb_{r_2}^{r_1}} |\Xi|^2+\int_{\Hb_{r_2}^{r_1}} |\mathcal{L}_\Lb \Xi|^2+\int_{\Hb_{r_2}^{r_1}} |\mathcal{L}_\Omega \Xi|^2,\\
  r_2 \|\Xi\|_{L^4(\S_{r_1}^{r_2})}^2&\lesssim \int_{\H_{r_2}^{r_1}} |\Xi|^2+\int_{\H_{r_2}^{r_1}} |\mathcal{L}_L \Xi|^2+\int_{\H_{r_2}^{r_1}} |\mathcal{L}_\Omega \Xi|^2,\\
  r_1 \|\Xi\|_{L^4(\S_{r_1}^{r_1})}^2&\lesssim \int_{\B_{r_1}} |\Xi|^2+\int_{\B_{r_1}} |\mathcal{L}_{\partial_r} \Xi|^2+\int_{\B_{r_1}} |\mathcal{L}_\Omega \Xi|^2.
\end{split}
\end{equation}
For the a scalar field ${f}$ (as a section of a line bundle $\mathbf{L}$) and a given connection $A$, we have
\begin{equation}\label{Sobolve for scalar filed on incoming null hypersurfaces}
\begin{split}
 r_2 \|{f}\|_{L^4(\S_{r_1}^{r_2})}^2&\lesssim \int_{\Hb_{r_2}^{r_1}} |{f}|^2+\int_{\Hb_{r_2}^{r_1}} |D_\Lb {f}|^2+\int_{\Hb_{r_2}^{r_1}} |D_\Omega {f}|^2,\\
  r_2 \|{f}\|_{L^4(\S_{r_1}^{r_2})}^2&\lesssim \int_{\H_{r_2}^{r_1}} |{f}|^2+\int_{\H_{r_2}^{r_1}} |D_L {f}|^2+\int_{\H_{r_2}^{r_1}} |D_\Omega {f}|^2,\\
  r_1 \|{f}\|_{L^4(\S_{r_1}^{r_1})}^2&\lesssim \int_{\B_{r_1}} |{f}|^2+\int_{\B_{r_1}} |D_{\partial_r} {f}|^2+\int_{\B_{r_1}} |D_\Omega {f}|^2.
  \end{split}
\end{equation}
\end{corollary}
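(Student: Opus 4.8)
The plan is to obtain all six estimates as direct consequences of the parameterized–hypersurface Sobolev inequality just proved, after (i) foliating each hypersurface by the round spheres it contains, (ii) rescaling the field by an appropriate power of $r$ so that the induced measure becomes the product measure required by that lemma, and (iii) converting the abstract coordinate derivatives $\partial_s,\partial_\vartheta$ into the geometric derivatives that appear on the right-hand sides. I will carry out the case of the incoming cone $\Hb_{r_2}^{r_1}$ for a tensor field $\Xi$; the outgoing cone $\H_{r_2}^{r_1}$ and the initial slice $\B_{r_1}$ are word-for-word the same up to relabelling, and the scalar-field version follows by applying the same argument to $|f|$.

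On $\Hb_{r_2}^{r_1}$ freeze $v=\tfrac{r_2}{2}$ and use $u\in[-\tfrac{r_2}{2},-\tfrac{r_1}{2}]$ together with $\vartheta\in\mathbf{S}^2$ as parameters; then $r=\tfrac{r_2}{2}-u$, the restriction of $\Lb$ to the cone is $\partial_u$, and the induced measure is $r^2\,du\,d\vartheta$. Set $\widetilde\Xi=r\Xi$, so that $\int|\widetilde\Xi|^2\,du\,d\vartheta=\int_{\Hb_{r_2}^{r_1}}|\Xi|^2$. Applying the parameterized Sobolev inequality to $\widetilde\Xi$ and keeping from the resulting supremum only the endpoint value $u=-\tfrac{r_1}{2}$ — where the level sphere is precisely $\S_{r_1}^{r_2}$, whose area radius $\tfrac{r_1+r_2}{2}$ is comparable to $r_2$ — one gets, after squaring,
\begin{equation*}
r_2\,\|\Xi\|_{L^4(\S_{r_1}^{r_2})}^2\lesssim \|\widetilde\Xi\|_{L^2(du\,d\vartheta)}^2+\|\partial_u\widetilde\Xi\|_{L^2(du\,d\vartheta)}^2+\|\partial_\vartheta\widetilde\Xi\|_{L^2(du\,d\vartheta)}^2 .
\end{equation*}
The first term is $\int_{\Hb_{r_2}^{r_1}}|\Xi|^2$ by construction. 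For the second, $\partial_u\widetilde\Xi=-\Xi+r\,\mathcal{L}_\Lb\Xi$ since $\partial_u r=-1$, hence $|\partial_u\widetilde\Xi|^2\lesssim|\Xi|^2+r^2|\mathcal{L}_\Lb\Xi|^2$, and using $r\gtrsim1$ we get $\int|\partial_u\widetilde\Xi|^2\,du\,d\vartheta\lesssim\int_{\Hb_{r_2}^{r_1}}|\Xi|^2+|\mathcal{L}_\Lb\Xi|^2$. For the third, the coordinate angular derivative is controlled by the rotation Lie derivatives through the comparison $|\mathcal{L}_\Omega\Xi|^2\approx|\Xi|^2+r^2|\snabla\Xi|^2$ of the remark above, which gives $|\partial_\vartheta\widetilde\Xi|^2\lesssim r^2|\mathcal{L}_\Omega\Xi|^2$ and hence $\int|\partial_\vartheta\widetilde\Xi|^2\,du\,d\vartheta\lesssim\int_{\Hb_{r_2}^{r_1}}|\mathcal{L}_\Omega\Xi|^2$. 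Combining the three bounds yields the first line of \eqref{Sobolve on incoming null hypersurfaces}; the second line is identical with $\Lb$ replaced by $L=\partial_v$ on $\H_{r_2}^{r_1}$, and the third with $\partial_r$ on $\B_{r_1}$, where the endpoint sphere is $\S_{r_1}^{r_1}$ of radius $r_1$.

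For the scalar estimates \eqref{Sobolve for scalar filed on incoming null hypersurfaces} one runs the same computation on the ordinary function $|f|$ instead of $f$, invoking the diamagnetic (Kato) inequality $|\partial_\mu|f||\le|D_\mu f|$ to bound $|\partial_u(r|f|)|\lesssim|f|+r|D_\Lb f|$ and $|\partial_\vartheta(r|f|)|\lesssim|D_\Omega f|$; everything else is unchanged. The only delicate points are purely bookkeeping ones: keeping separate track of the three sources of $r$-weights — the $r^2$ area element, the factor $r$ in the rescaled field $\widetilde\Xi=r\Xi$ (resp.\ $r|f|$), and the fact that the area radius of $\S_{r_1}^{r_2}$ is $\tfrac{r_1+r_2}{2}\approx r_2$ — and making sure, when differentiating $\widetilde\Xi$ and translating $\partial_u,\partial_\vartheta$ into $\mathcal{L}_\Lb,\mathcal{L}_\Omega$ (resp.\ $D_\Lb,D_\Omega$), that the lower-order terms produced by hitting the weight $r$ are genuinely absorbed using $r\gtrsim1$. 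There is no essential analytic obstacle beyond the parameterized Sobolev inequality already established.
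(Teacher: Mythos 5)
Your argument is correct and follows essentially the same route as the paper: apply the parameterized Sobolev lemma to the rescaled field $r\Xi$ (resp.\ $r|f|$) on the cone foliated by spheres with parameters $(u,\vartheta)$, convert $\partial_u,\partial_\vartheta$ into $\mathcal{L}_{\Lb},\mathcal{L}_\Omega$ (resp.\ $D_{\Lb},D_\Omega$ via the Kato inequality, which the paper implements through the regularization $\sqrt{|f|^2+\varepsilon^2}$), and absorb the weight mismatches using $r\geq 1$. Only cosmetic issues: your displayed bound $|\partial_\vartheta(r|f|)|\lesssim |D_\Omega f|$ should read $\lesssim r|D_\Omega f|$ (which is exactly what you need against the measure $r^2\,du\,d\vartheta$), and the non-smoothness of $|f|$ at its zeros is most cleanly handled by the paper's $\varepsilon$-regularization.
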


\begin{proof}
We will apply the previous lemma by taking $a=\frac{-r_2}{2}$, $b=-\frac{r_1}{2}$ and using $s=u$ and $\vartheta$ to parameterize $\Hb_{r_2}^{r_1}$. Since we have
\begin{align*}
\int_{\Hb_{r_2}^{r_1}} |\mathcal{L}_L \Xi|^2 &= \int_{b}^{a} |\partial_u (r\Xi)|^2 du d\vartheta- \int_{b}^{a} |\Xi|^2 du d\vartheta\ \ \text{and} \ \ \int_{\Hb_{r_2}^{r_1}} |\mathcal{L}_\Omega \Xi|^2 = \int_{b}^{a} |\partial_\vartheta (r\Xi)|^2 du d\vartheta,
\end{align*}
we obtain that (notice that $r\geq 1$)
\begin{align*}
\|r\Xi\|_{W^{1,2}([a,b]\times \mathbf{S}^2)} \lesssim \int_{\Hb_{r_2}^{r_1}} |\Xi|^2+\int_{\Hb_{r_2}^{r_1}} |\mathcal{L}_L \Xi|^2+\int_{\Hb_{r_2}^{r_1}} |\mathcal{L}_\Omega \Xi|^2.
\end{align*}
According to the previous lemma, we obtain that
\begin{align*}
\sup_{u\in [a,b]}\big(\int_{\mathbf{S}^2} r^4 |\Xi(s,\vartheta)|^4 d\vartheta\big)^\frac{1}{2}  \lesssim \int_{\Hb_{r_2}^{r_1}} |\Xi|^2+\int_{\Hb_{r_2}^{r_1}} |\mathcal{L}_L \Xi|^2+\int_{\Hb_{r_2}^{r_1}} |\mathcal{L}_\Omega \Xi|^2.
\end{align*}
By setting $u=b$, this completes the proof the corollary.

For the second inequality, we take $\Xi_\varepsilon = \sqrt{|{f}|^2+\varepsilon^2}$ and apply the first inequality. Since $|\Lb(\Xi_\varepsilon)| =\big|\frac{(D_\Lb {f}, {f})}{\sqrt{|{f}|^2+\varepsilon^2}}\big|$, we have $|\Lb(\Xi_\varepsilon)|\leq |D_\Lb {f}|$  uniformly in $\varepsilon$. Similarly, $|\Omega(\Xi_\varepsilon)| \leq |D_\Omega {f}|$. The first inequality then gives
\begin{equation}
 r_2 \|\sqrt{|{f}|^2+\varepsilon^2}\|_{L^4(\S_{r_1}^{r_2})}^2\lesssim \int_{\Hb_{r_2}^{r_1}} \big(|{f}|^2+\varepsilon^2\big)+\int_{\Hb_{r_2}^{r_1}} |D_\Lb {f}|^2+\int_{\Hb_{r_2}^{r_1}} |D_\Omega {f}|^2.
\end{equation}
By passing to the limit $\varepsilon \rightarrow 0$, we obtain the second inequality.
\end{proof}

The following lemma is also useful to deal with lower order terms.
\begin{lemma}\label{lemma A7} For a scalar field ${f}$ on an outgoing null hypersurface $\H_{r_1}$, for all $r_2 \geq r_1$ or on an incoming null hypersurface $\Hb_{r_2}^{r_1}$, we have
\begin{equation}\label{Sobolev trace estimates on outgoing null hypersurfaces}
\begin{split}
  \|{f}\|_{L^2(\S_{r_1}^{r_2})}^2&\lesssim \|{f}\|_{L^2(\S_{r_1}^{r_1})}^2+\frac{1}{r_1}\int_{\H_{r_1}} |D_L (r{f})|^2,\\
  \|{f}\|_{L^2(\S_{r_1}^{r_2})}^2&\lesssim \|{f}\|_{L^2(\S_{r_2}^{r_2})}^2+\frac{1}{r_1}\int_{\Hb^{r_1}_{r_2}} |D_\Lb (r{f})|^2.
\end{split}
\end{equation}
\end{lemma}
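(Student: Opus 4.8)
\textbf{Proof proposal for Lemma \ref{lemma A7}.}

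The plan is to prove both inequalities as one-dimensional fundamental-theorem-of-calculus estimates along the null generators, after first converting from $f$ to the rescaled quantity $\Psi = rf$, which is the natural object controlled by the $D_L$ (resp. $D_\Lb$) flux. First I would write the $L^2$ norm on the sphere in the intrinsic measure, $\|f\|_{L^2(\S_{r_1}^{r_2})}^2 = \int_{\mathbf{S}^2} |f|^2 \, r^2 \, d\vartheta = \int_{\mathbf{S}^2} |\Psi|^2 \, d\vartheta$, so that the $r^2$ Jacobian is absorbed and the right object to differentiate is $|\Psi|^2$, a genuine function on $\S_{r_1}^{r_2}$ times the fixed measure $d\vartheta$ on $\mathbf{S}^2$. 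On $\H_{r_1}$ the parameter running along the cone is $v$ (equivalently $r$, since $u = -\tfrac12 r_1$ is fixed so $r = 2v - |u| \approx v$), and $L$ is tangent to $\H_{r_1}$ with $L(r) = 1$; so $L$ acts as (twice) $\partial_v$ and we may integrate $L(|\Psi|^2)$ in $v$ from the sphere $\S_{r_1}^{r_1}$ (where $r = r_1$) out to $\S_{r_1}^{r_2}$.

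The key computation is the identity $L(|\Psi|^2) = 2\Re(\overline{D_L \Psi}\cdot \Psi)$, valid because the connection term in $D_L$ is purely imaginary and cancels in the real part — exactly the same cancellation used throughout the $r^p$-weighted energy identities. Thus for each $\vartheta$,
\begin{equation*}
|\Psi(r_2,\vartheta)|^2 = |\Psi(r_1,\vartheta)|^2 + \int_{\frac{r_1}{2}}^{\frac{r_2}{2}} 2\Re\big(\overline{D_L\Psi}\cdot\Psi\big)(v,\vartheta)\, dv.
\end{equation*}
Integrating over $\mathbf{S}^2$ and applying Cauchy--Schwarz on the spacetime cone,
\begin{equation*}
\int_{\mathbf{S}^2}|\Psi(r_2,\vartheta)|^2 d\vartheta \leq \int_{\mathbf{S}^2}|\Psi(r_1,\vartheta)|^2 d\vartheta + 2\Big(\int_{\H_{r_1}}\frac{|\Psi|^2}{r^2}\Big)^{\frac12}\Big(\int_{\H_{r_1}}|D_L\Psi|^2\Big)^{\frac12},
\end{equation*}
where the $r^2$ in the denominator is the measure weight $r^2\,dv\,d\vartheta$ on $\H_{r_1}$; since $r \approx r_1$ on $\H_{r_1}$ and $v$ ranges in an interval, $\int_{\H_{r_1}}r^{-2}|\Psi|^2 \lesssim r_1^{-1}\sup_{r\in[r_1,r_2]}\int_{\mathbf{S}^2}|\Psi|^2 + (\text{lower order})$. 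One then absorbs the resulting supremum: taking the supremum over $r_2$ on the left, the cross term is bounded by $\tfrac12 \sup \|f\|_{L^2(\S)}^2 + C r_1^{-1}\int_{\H_{r_1}}|D_L\Psi|^2$, and the absorbed half is moved to the left. A cleaner route avoiding the absorption is to apply Cauchy--Schwarz with a small parameter directly in the $v$-integral and use Gronwall in $v$ (as in Lemma \ref{lemma gronwall}); I would present whichever is shortest. The second inequality is identical with the roles of $L,\Lb$, $v,u$, and $\H_{r_1},\Hb_{r_2}^{r_1}$ interchanged: on $\Hb_{r_2}^{r_1}$ one has $v = \tfrac12 r_2$ fixed, $\Lb(r) = -1$, $r \approx r_1$ near the inner sphere $\S_{r_1}^{r_2}$, and one integrates from $\S_{r_2}^{r_2}$ inward.

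I do not expect a genuine obstacle here — this is a standard Sobolev-trace/flux estimate. The only point requiring minor care is the bookkeeping of the measure: one must be consistent about which $r$-weights come from the sphere Jacobian $r^2\,d\vartheta$ versus from the cone measure $r^2\,dv\,d\vartheta$, and use $r\approx r_1$ on the relevant portion of the cone to produce the single factor $r_1^{-1}$ in front of the flux integral. A secondary subtlety is that to make the Cauchy--Schwarz step fully rigorous for a section of a line bundle (rather than a genuine function), one either works with $|\Psi|$ directly using $\big|\,L|\Psi|\,\big| \le |D_L\Psi|$, or regularizes by $\sqrt{|\Psi|^2+\varepsilon^2}$ and passes to the limit, exactly as done in the proof of the preceding corollary; I would invoke that same device in one sentence.
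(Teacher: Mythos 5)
Your proposal is correct and is essentially the paper's own argument: both rest on the exact identity $L(|rf|^2)=2\Re\big(\overline{D_L(rf)}\cdot rf\big)$ (the connection term cancels in the real part) integrated in $v$ along the generators of the cone, followed by Cauchy--Schwarz. The only divergence is the final bookkeeping: the paper splits the cross term pointwise by a weighted AM--GM and then invokes Gronwall in $v$ --- precisely the alternative you mention --- whereas your primary route performs a global Cauchy--Schwarz over the cone and absorbs $\sup_{r}\|f\|_{L^2(\S_{r_1}^{r})}^2$ into the left-hand side; that also works, provided you run it first on finite truncations $\H_{r_1}^{r_2}$ so that the supremum being absorbed is finite, and it actually makes the $r_1^{-1}$ prefactor more transparent than the paper's one-line appeal to Gronwall. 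Two small repairs to your write-up: (i) it is not true that $r\approx r_1$ on $\H_{r_1}$ ($r$ runs up to $r_2$, possibly $\infty$), nor that $r\approx r_1$ near $\S_{r_1}^{r_2}$ on $\Hb_{r_2}^{r_1}$ (there $r\ge \frac{r_1+r_2}{2}$); what your estimate needs, and what is true, is only $r\ge r_1$ together with $\int_{r_1/2}^{\infty}r^{-2}\,dv\approx r_1^{-1}$, and your key bound should be read with the correct measure, i.e. $\int\!\!\int r^{-2}|\Psi|^2\,dv\,d\vartheta\le\big(\sup_{r}\int_{\mathbf{S}^2}|\Psi|^2\,d\vartheta\big)\int r^{-2}dv\lesssim r_1^{-1}\sup_{r}\|f\|_{L^2(\S_{r_1}^{r})}^2$ (equivalently $\int_{\H_{r_1}}|\Psi|^2/r^4$ in the cone measure $r^2\,dv\,d\vartheta$, not $|\Psi|^2/r^2$); (ii) since you differentiate $|\Psi|^2$ rather than $|\Psi|$, the $\sqrt{|\Psi|^2+\varepsilon^2}$ regularization is unnecessary --- the identity for $L(|\Psi|^2)$ is exact for sections.
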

\begin{proof}
Indeed, we have
\begin{align*}
 \|{f}\|_{L^2(\S_{r_1}^{r_2})}^2- \|{f}\|_{L^2(\S_{r_1}^{r_1})}^2&= \int_{\frac{r_2}{2}}^{\frac{r_1}{2}}\int_{\mathbf{S}^2}L(|rf|^2)d\vartheta dv  \leq 2 \int_{\frac{r_2}{2}}^{\frac{r_1}{2}}\int_{\mathbf{S}^2} |D_L(rf)||rf|d\vartheta dv\\
 &\leq  2 \int_{\frac{r_2}{2}}^{\frac{r_1}{2}}\int_{\mathbf{S}^2} |D_L(rf)|^2 r^2 d\vartheta dv+ 2 \int_{\frac{r_2}{2}}^{\frac{r_1}{2}}\frac{1}{r^2}\int_{\mathbf{S}^2} |rf|^2d\vartheta dv.
\end{align*}
The Gronwall's inequality then completes the proof.
\end{proof}

\subsection{Geometric Calculations}
We frequently compare Lie derivative $\mathcal{L}_{\Lb}$ and covariant derivative $\slashed{\nabla}$. Indeed, we have
\begin{equation}\label{formula compare Lie and nablaslash}
\slashed{\nabla}_\Lb X_A -\mathcal{L}_{\Lb} X_A =\frac{2}{r}X_A, \ \ \slashed{\nabla}_L X_A -\mathcal{L}_{L} X_A =-\frac{2}{r}X_A
\end{equation}
and for vector fields from $\mathcal{Z}$, we have
\begin{align*}
\mathcal{L}_{T}L &= 0, \ \mathcal{L}_{T} \Lb = 0, \ \mathcal{L}_{T}e_A = 0,\ \ \mathcal{L}_{\Omega_{ij}} L = 0, \ \mathcal{L}_{\Omega_{ij}} \Lb=0, \ \mathcal{L}_{\Omega_{ij}} e_A\perp e_A \ \mathcal{L}_{\Omega_{ij}} e_A\perp L , \ \mathcal{L}_{\Omega_{ij}} e_A\perp \Lb, \\
\mathcal{L}_{K} L &= -2vL, \ \mathcal{L}_{K} \Lb = -2u\Lb, \ \mathcal{L}_{K} e_A = -te_A, \ \mathcal{L}_{S} L = -L, \ \mathcal{L}_{S} \Lb = -\Lb, \ \mathcal{L}_{K} e_A = -\frac{t}{r}e_A.
\end{align*}
For a tensor field $\Xi$, we frequently take Lie derivatives along $Z$ or decompose it in null frames. The next lemma record the commutators of these two operations. The proof is a straightforward computation.

For a 2-form $G$, it is straightforward to check that
\begin{align*}
\mathcal{L}_Z \alpha(G)_A =\alpha(\mathcal{L}_Z G)_A + G(\mathcal{L}_Z L,e_A), & \ \ \mathcal{L}_Z \alphab(G)_A =\alphab(\mathcal{L}_Z G)_A + G(\mathcal{L}_Z \Lb,e_A),\\
\mathcal{L}_Z \rho(G) =\rho(\mathcal{L}_Z G) + \frac{1}{2}G(\mathcal{L}_Z \Lb,L)+\frac{1}{2}G(\Lb,\mathcal{L}_Z L), & \ \ \mathcal{L}_Z \sigma(G) =\sigma(\mathcal{L}_Z G) + G(\mathcal{L}_Z e_1,e_2)+G(e_1,\mathcal{L}_Z e_2).
\end{align*}
Based on these formulas, we have
\begin{lemma}\label{lemma commuting Z with null decomposition} For $Z \in \mathcal{Z}$, if $Z\notin \{S,K\}$, we have
\begin{equation*}
\mathcal{L}_Z \alpha(G)_A =\alpha(\mathcal{L}_Z G)_A, \ \ \mathcal{L}_Z \alphab(G)_A =\alphab(\mathcal{L}_Z G)_A, \ \
\mathcal{L}_Z \rho(G) =\rho(\mathcal{L}_Z G), \ \ \mathcal{L}_Z \sigma(G) =\sigma(\mathcal{L}_Z G).
\end{equation*}
Otherwise, we have
\begin{equation*}
\begin{split}
\mathcal{L}_S \alpha(G)_A &=\alpha(\mathcal{L}_S G)_A -\alpha(G)_A, \ \ \mathcal{L}_S \alphab(G)_A =\alphab(\mathcal{L}_S G)_A-\alphab(G)_A, \\
\mathcal{L}_S \rho(G) &=\rho(\mathcal{L}_S G)-2\rho(G), \ \ \mathcal{L}_S \sigma(G) =\sigma(\mathcal{L}_S G)-2\frac{t}{r}\sigma(G).
\end{split}
\end{equation*}
and
\begin{equation*}
\begin{split}
\mathcal{L}_K \alpha(G)_A &=\alpha(\mathcal{L}_K G)_A -2v\alpha(G)_A, \ \ \mathcal{L}_K \alphab(G)_A =\alphab(\mathcal{L}_K G)_A-2u\alphab(G)_A, \\
\mathcal{L}_K \rho(G) &=\rho(\mathcal{L}_K G)-2t\rho(G), \ \ \mathcal{L}_K \sigma(G) =\sigma(\mathcal{L}_K G)-2t\sigma(G).
\end{split}
\end{equation*}
\end{lemma}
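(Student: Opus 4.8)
The plan is to prove the stated commutator relations by reducing them to the four general identities displayed immediately above the statement, each of which writes $\mathcal{L}_Z$ of a null component of $G$ as the corresponding null component of $\mathcal{L}_Z G$ plus a correction term built out of the frame commutators $\mathcal{L}_Z L$, $\mathcal{L}_Z\Lb$, $\mathcal{L}_Z e_A$; once those identities are granted, the lemma follows by inserting, for each $Z\in\mathcal{Z}$, the already tabulated values of those commutators and simplifying. First I would record why the general identities hold. For a $2$-form $G$ and arbitrary vector fields $X,Y$ the Leibniz rule for the Lie derivative reads $Z\big(G(X,Y)\big)=(\mathcal{L}_Z G)(X,Y)+G(\mathcal{L}_Z X,Y)+G(X,\mathcal{L}_Z Y)$. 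Taking $(X,Y)=(L,e_A)$ and recalling that, since $\alpha(G)$ is a $1$-form on the $2$-sphere, $(\mathcal{L}_Z\alpha(G))_A=Z\big(G(L,e_A)\big)-G(L,\mathcal{L}_Z e_A)$, the two occurrences of $G(L,\mathcal{L}_Z e_A)$ cancel and one is left with $\mathcal{L}_Z\alpha(G)_A=\alpha(\mathcal{L}_Z G)_A+G(\mathcal{L}_Z L,e_A)$. Running the same computation with $(\Lb,e_A)$, with $\tfrac12(\Lb,L)$, and with $(e_1,e_2)$ produces the corresponding identities for $\alphab(G)$, $\rho(G)$ and $\sigma(G)$, the last two acquiring a correction in each of the two slots.

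Next I would specialize $Z$. For $Z=T=\partial_t$ one has $\mathcal{L}_T L=\mathcal{L}_T\Lb=\mathcal{L}_T e_A=0$, so every correction term vanishes and the first group of formulas is immediate. For $Z=\Omega_{ij}$ the relations $\mathcal{L}_{\Omega_{ij}}L=\mathcal{L}_{\Omega_{ij}}\Lb=0$ kill the corrections for $\alpha(G)$, $\alphab(G)$ and $\rho(G)$; for $\sigma(G)$ one uses that $\mathcal{L}_{\Omega_{ij}}e_A$ is tangent to the sphere and orthogonal to $e_A$, together with the antisymmetry of $\mathcal{L}_{\Omega_{ij}}$ on the orthonormal pair $(e_1,e_2)$, so that $G(\mathcal{L}_{\Omega_{ij}}e_1,e_2)+G(e_1,\mathcal{L}_{\Omega_{ij}}e_2)=0$. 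For $Z=S$ and $Z=K$ I would plug in the tabulated commutators ($\mathcal{L}_S L=-L$, $\mathcal{L}_S\Lb=-\Lb$, $\mathcal{L}_S e_A=-\tfrac{t}{r}e_A$, and $\mathcal{L}_K L=-2vL$, $\mathcal{L}_K\Lb=-2u\Lb$, $\mathcal{L}_K e_A=-t e_A$); since each $G(\mathcal{L}_Z L,e_A)$ is then a scalar multiple of $\alpha(G)_A=G(L,e_A)$, and likewise for the remaining slots, the stated multiples fall out directly, with the relation $u+v=t$ accounting for the coefficient $-2t$ in the $\rho$-identity for $K$.

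I do not expect a genuine obstacle: the statement is a bookkeeping identity whose only inputs beyond the Leibniz rule are the elementary commutators of the null frame with the fields in $\mathcal{Z}$. The one point that deserves care is the $\sigma$-component, where one must be consistent about whether $\sigma(G)$ is treated as the scalar $G(e_1,e_2)$ or as the $2$-tensor $\sigma_{AB}=\sigma\,\slashed{\mathscr{E}}_{AB}$, since the sphere volume form $\slashed{\mathscr{E}}_{AB}$ carries its own weight under the non-Killing fields $S$ and $K$, and it is exactly this bookkeeping that produces the recorded $\sigma$-coefficients. It is also worth noting that $\alpha(G)$, $\alphab(G)$, $\rho(G)$ and $\sigma(G)$ are all defined invariantly, so the identities do not depend on the freedom to rotate the frame $(e_1,e_2)$ within its span.
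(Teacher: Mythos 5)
Your argument is correct and is essentially the paper's own proof: the paper likewise records the general Leibniz-rule identities $\mathcal{L}_Z \alpha(G)_A=\alpha(\mathcal{L}_Z G)_A+G(\mathcal{L}_Z L,e_A)$ (and their analogues for $\alphab$, $\rho$, $\sigma$) and then substitutes the tabulated frame commutators $\mathcal{L}_Z L$, $\mathcal{L}_Z \Lb$, $\mathcal{L}_Z e_A$ for each $Z\in\mathcal{Z}$, exactly as you do, including your treatment of the $\Omega_{ij}$ case for $\sigma$. Note only that, just as in the paper, your $S$- and $K$-cases (in particular the stated $\sigma$-coefficients) rest directly on the paper's table of Lie derivatives of the null frame, whose value for $\mathcal{L}_S e_A$ you adopt unchanged.
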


Finally, we collect some calculation on integrated quantities on hypersurfaces.
For $\gamma\neq 3$, we define
\begin{equation}\label{D14}
\mathbf{E}_\gamma^{\slash}=\int_{\H_{r_1}^{r_2}}\frac{1}{r^\gamma}|{f}|^2,\ \ \mathbf{E}_\gamma^{\backslash} \int_{\Hb_{r_1}^{r_2}}\frac{1}{r^\gamma}|{f}|^2,  \ \ \mathbf{E}_\gamma^{-} =\int_{\B_{r_1}^{r_2}}\frac{1}{r^\gamma}|{f}|^2.
\end{equation}
We have
\begin{equation}\label{D15}
\mathbf{E}_\gamma^{\slash}=\underbrace{\frac{1}{3-\gamma}\big(\frac{r_1+r_2}{2}\big)^{1-\gamma}\int_{\S_{r_1}^{r_2}}|{f}|^2-\frac{1}{3-\gamma}r_1^{1-\gamma}\int_{\S_{r_1}^{r_1}}|{f}|^2
}_{\mathbf{E}_{\gamma,0}^{\slash}}-\frac{2}{3-\gamma}\int_{\H_{r_1}^{r_2}}r^{1-\gamma}\Re(\overline{D_L{f}}\cdot {f}).
\end{equation}
Similarly, we have
\begin{equation}
\mathbf{E}_\gamma^{\backslash}=\underbrace{\frac{1}{3-\gamma}r_2^{1-\gamma}\int_{\S_{r_2}^{r_2}}|{f}|^2
-\frac{1}{3-\gamma}\big(\frac{r_1+r_2}{2}\big)^{1-\gamma}\int_{\S_{r_1}^{r_2}}|{f}|^2}_{\mathbf{E}_{\gamma,0}^{\backslash}}+\frac{2}{3-\gamma}\int_{\Hb_{r_1}^{r_2}}r^{1-\gamma}\Re(\overline{D_\Lb{f}}\cdot {f}),
\end{equation}
and
\begin{equation}
\mathbf{E}_\gamma^{-}=\underbrace{\frac{1}{3-\gamma}r_2^{1-\gamma}\int_{\S_{r_2}^{r_2}}|{f}|^2
-\frac{1}{3-\gamma}r_1^{1-\gamma}\int_{\S_{r_1}^{r_1}}|{f}|^2}_{\mathbf{E}_{-,0}^{\backslash}}+\frac{2}{3-\gamma}\int_{\B_{r_1}^{r_2}}r^{1-\gamma}\Re(\overline{D_{\partial_r}{f}}\cdot {f}),
\end{equation}
As an application, we prove the following Hardy type inequality:
\begin{lemma}\label{lemma Hardy on H}
For $\gamma > 3$, we have
\begin{equation}\label{inequality hardy on H}
\int_{\H_{r_1}^{r_2}} r^{-\gamma}|{f}|^2 +\frac{1}{r_2^{\gamma-1}}\int_{\S_{r_1}^{r_2}}|{f}|^2\lesssim_\gamma   r_1^{-\gamma+1}\int_{\S_{r_1}^{r_1}}|{f}|^2+r_1^{-\gamma+2}\int_{\H_{r_1}^{r_2}}\big|D_L{f}\big|^2.
\end{equation}
\end{lemma}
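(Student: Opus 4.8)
The plan is to run the weighted integration-by-parts identity \eqref{D15} with $G=dA$ and $f$ the given scalar field (as a section of $\mathbf{L}$), then estimate the two resulting terms: the boundary term on $\S_{r_1}^{r_2}$ and the spacetime cross term $\int_{\H_{r_1}^{r_2}} r^{1-\gamma}\Re(\overline{D_L f}\cdot f)$. Since $\gamma>3$, the factor $\frac{1}{3-\gamma}$ is negative, so the sign of the boundary contribution is favorable: from \eqref{D15} we get
\begin{equation*}
\mathbf{E}_\gamma^{\slash}+\frac{1}{\gamma-3}\Big(\tfrac{r_1+r_2}{2}\Big)^{1-\gamma}\int_{\S_{r_1}^{r_2}}|f|^2
=\frac{1}{\gamma-3}r_1^{1-\gamma}\int_{\S_{r_1}^{r_1}}|f|^2+\frac{2}{\gamma-3}\int_{\H_{r_1}^{r_2}}r^{1-\gamma}\Re(\overline{D_Lf}\cdot f).
\end{equation*}
Both terms on the left are nonnegative, and since $\frac{r_1+r_2}{2}\le r_2$ and $\gamma-1>0$ we have $\big(\tfrac{r_1+r_2}{2}\big)^{1-\gamma}\ge r_2^{1-\gamma}$, so the left side dominates $\mathbf{E}_\gamma^{\slash}+\frac{1}{\gamma-3}r_2^{1-\gamma}\int_{\S_{r_1}^{r_2}}|f|^2$, which is (up to the $\gamma$-dependent constant) exactly the left side of \eqref{inequality hardy on H}.

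It remains to absorb the cross term. First I would use $r\ge r_1$ on $\H_{r_1}^{r_2}$ to bound $r^{1-\gamma}=r^{1-\gamma/2}\cdot r^{-\gamma/2}\le r_1^{1-\gamma/2}\cdot r^{-\gamma/2}$ — more precisely, split the weight as $r^{1-\gamma}=(r^{-\gamma})^{1/2}\cdot(r^{2-\gamma})^{1/2}$ and on the $r^{2-\gamma}$ factor use $r\ge r_1$ to replace it by $r_1^{2-\gamma}$. Then Cauchy–Schwarz with a small parameter $\varepsilon$ gives
\begin{equation*}
\frac{2}{\gamma-3}\int_{\H_{r_1}^{r_2}}r^{1-\gamma}|\overline{D_Lf}||f|
\le \varepsilon\int_{\H_{r_1}^{r_2}}r^{-\gamma}|f|^2+\frac{C_\gamma}{\varepsilon}r_1^{2-\gamma}\int_{\H_{r_1}^{r_2}}|D_Lf|^2.
\end{equation*}
Choosing $\varepsilon$ small enough (say $\varepsilon=\tfrac12$) absorbs the first piece into $\mathbf{E}_\gamma^{\slash}$ on the left, and what remains is exactly the right side of \eqref{inequality hardy on H}. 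The passage from $|f|^2$ to a genuine scalar function (to legitimize the pointwise derivative computations underlying \eqref{D15}) is handled by the standard $\sqrt{|f|^2+\varepsilon^2}$ regularization already used in the proof of \eqref{Sobolve for scalar filed on incoming null hypersurfaces}, with $|D_L f|$ dominating $|L\sqrt{|f|^2+\varepsilon^2}|$ uniformly in $\varepsilon$, followed by $\varepsilon\to0$.

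I do not expect a serious obstacle here; the only point requiring care is bookkeeping of the sign — one must use $\gamma>3$ so that $\frac{1}{3-\gamma}<0$ and hence the boundary term $\big(\tfrac{r_1+r_2}{2}\big)^{1-\gamma}\int_{\S_{r_1}^{r_2}}|f|^2$ lands on the correct side of the identity with a coefficient one can keep, rather than having to be estimated. A secondary subtlety is making sure the weight splitting respects the constraint that we only want $r_1$-powers (not $r_2$-powers) in the final bound; this forces the asymmetric split $r^{1-\gamma}=(r^{-\gamma})^{1/2}(r^{2-\gamma})^{1/2}$ with the decaying half left as is and the $r_1^{2-\gamma}$ extracted from the other half. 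The case $\gamma=4$, $r_2=\infty$, which is the one actually invoked in Lemma \ref{lemma key}, follows by monotone convergence once \eqref{inequality hardy on H} is established for finite $r_2$ with a constant independent of $r_2$.
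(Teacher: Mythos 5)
Your proposal is correct and follows essentially the same route as the paper: the identity \eqref{D15}, the observation that for $\gamma>3$ the coefficient $\frac{1}{3-\gamma}$ makes the $\S_{r_1}^{r_2}$ boundary term land with a favorable sign (dominated below by $r_2^{1-\gamma}$), and then Cauchy--Schwarz with a small parameter to absorb the cross term, extracting $r_1^{2-\gamma}$ from the weight $r^{2-\gamma}$ since $r\geq r_1$. The regularization step you mention is not even needed, since $L(|f|^2)=2\Re(\overline{D_Lf}\cdot f)$ holds directly for sections and \eqref{D15} is already stated at that level of generality.
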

\begin{proof}
In view of \eqref{D15}, by discarding the first term on the righthand side, we have
\begin{align*}
\int_{\H_{r_1}^{r_2}}\frac{1}{r^\gamma}|{f}|^2 +\frac{1}{r_2^{\gamma-1}}\int_{\S_{r_1}^{r_2}}|{f}|^2& \lesssim_\gamma r_1^{-(\gamma-1)}\int_{\S_{r_1}^{r_1}}|{f}|^2
+2\int_{\H_{r_1}^{r_2}}r^{-(\gamma-1)}\big|D_L(rf)\big|\big|{f}\big|\\
&\leq C_\gamma r_1^{-3}\int_{\S_{r_1}^{r_1}}|{f}|^2
+C_\gamma\int_{\H_{r_1}^{r_2}}r^{-\gamma+2}\big|D_L{f}\big|^2+\frac{1}{2}\int_{\H_{r_1}^{r_2}}\frac{1}{r^\gamma}|{f}|^2
\end{align*}
Thus,
\begin{equation*}
\int_{\H_{r_1}^{r_2}} r^{-\gamma}|{f}|^2 +\frac{1}{r_2^{\gamma-1}}\int_{\S_{r_1}^{r_2}}|{f}|^2\lesssim r_1^{-\gamma+1}\int_{\S_{r_1}^{r_1}}|{f}|^2+\int_{\H_{r_1}^{r_2}}r^{-\gamma+2}\big|D_L{f}\big|^2.
\end{equation*}
This completes the proof.
\end{proof}

\bibliography{shiwu}{}
\bibliographystyle{plain}

\end{document}